\newtheorem{theorem}{Theorem}[section]
\newtheorem*{theorem*}{Theorem}
\newtheorem{proposition}[theorem]{Proposition}
\newtheorem{corollary}[theorem]{Corollary}
\newtheorem*{corollary*}{Corollary}
\newtheorem{definition}[theorem]{Definition}
\numberwithin{equation}{section}
\title{The pre-symplectic geometry of opers and the holonomy map}
\author{Andrew Sanders}
\begin{document}

\address{Department of Mathematics, Heidelberg University}
\email{asanders@mathi.uni-heidelberg.de}
\thanks{Sanders gratefully acknowledges partial support from U.S. National Science Foundation grants DMS 1107452, 1107263, 1107367 "RNMS: GEometric structures And Representation varieties" (the GEAR Network).}

\keywords{}

\date{}

\begin{abstract}
In this paper, we construct the moduli space of marked oper structures on a closed, oriented smooth surface of negative Euler characteristic as a holomorphic fiber bundle over Teichm\"{u}ller space.  We prove that the holonomy map from the space of marked oper structures to the moduli space of reductive flat bundles is a holomorphic immersion, generalizing the known results for the moduli space of marked complex projective structures.  Finally, we prove that the symplectic structure on the moduli space of marked complex projective structures extends to a pre-symplectic structure on the moduli space of marked opers whose reduced phase space is the space of marked complex projective structures.
\end{abstract}
\maketitle

\section{Introduction}

A complex projective structure on a compact Riemann surface $X$ of negative Euler characteristic is a maximal atlas of holomorphic charts with values in $\mathbb{CP}^{1}$ whose transition functions are given by restrictions of M\"{o}bius transformations.  Varying the Riemann surface structure on the underlying smooth surface $\Sigma$, complex projective structures collect into the moduli space of marked complex projective structures $\mathcal{CP}_{\Sigma}$ which is a holomorphic affine bundle modelled on the cotangent bundle of Teichm\"{u}ller space $\mathcal{T}_{\Sigma}.$  

Furthermore, the moduli space admits a holonomy map 
\begin{align}
\mathcal{CP}_{\Sigma}\rightarrow
\textnormal{Hom}(\pi_{1}(\Sigma), \textnormal{PSL}_{2}(\mathbb{C}))// \textnormal{PSL}_{2}(\mathbb{C})
\end{align}
which is a local bi-holomorphism.  In this paper, we will generalize all of these results to the moduli space of $\mathrm{G}$-opers where $\mathrm{G}$ is a complex simple Lie group of adjoint type.

Given a reductive complex Lie group $\mathrm{G},$ Beilinson-Drinfeld \cite{BD91} introduced a higher rank generalization of complex projective structures, called $\mathrm{G}$-opers, which share many of their interesting properties.  For $\textnormal{SL}(n, \mathbb{C})$, these objects were previously studied in the arena of $n$-th order linear ordinary differential equations by Teleman \cite{TEL59}.

A $\mathrm{G}$-oper on a Riemann surface $X$ is a triple $(E_{\mathrm{G}}, E_{\mathrm{B}}, \omega)$, where $E_{\mathrm{G}}$ is a holomorphic principal $\mathrm{G}$-bundle on $X$, $E_{\mathrm{B}}$ is a holomorphic reduction to a Borel subgroup $\mathrm{B}<\mathrm{G}$, and $\omega$ is a holomorphic flat connection on $E_{\mathrm{G}}$ which satisfies a certain non-degeneracy condition with respect to the sub-bundle $E_{\mathrm{B}}$.  In the case that $\mathrm{G}=\textnormal{PSL}_{2}(\mathbb{C}),$ the notion of a $\mathrm{G}$-oper on $X$ reduces to the standard encoding of a complex projective structure on $X$ via a holormophic flat $\mathbb{CP}^{1}$-bundle over $X$ equipped with a holomorphic section transverse to the flat structure.

Fixing a connected, closed Riemann surface $X$ of genus at least two and a complex simple Lie group $\mathrm{G}$ of adjoint group, the space of $\mathrm{G}$-opers on $X$ has a (non-unique) parameterization by the \emph{Hitchin base} 
\begin{align}\label{base param}
\mathcal{B}_{X}(\mathrm{G}):=\bigoplus_{i=1}^{\ell} \textnormal{H}^{0}(X, \mathcal{K}^{m_{i}+1}).
\end{align}
Here, $\mathcal{K}$ is the canonical sheaf of holomorphic one forms on $X$ and the integers 
$1=m_{1}\leq m_{2}\leq... \leq m_{\ell}$ are the exponents of the Lie algebra $\mathfrak{g}$ of $\mathrm{G}.$  The situation for general semi-simple groups $\mathrm{G}$ is not very different, and amounts to taking products and discrete phenomena (see \cite{BD91}).  To avoid various Lie-theoretic subtleties, in this paper we focus on the case of simple groups of adjoint type.

By way of the parameterization \eqref{base param} by the Hitchin base, the space of $\mathrm{G}$-opers on $X$ acquires the the structure of a complex manifold, and as with complex projective structures, there is a \emph{holonomy} map to the space of gauge equivalence classes of $C^{\infty}$-flat $\mathrm{G}$-bundles on the smooth surface $\Sigma$ underlying the Riemann surface $X.$  

When $X$ is a closed, connected Riemann surface of genus at least two, Beilinson-Drinfeld proved \cite{BD91}, \cite{BD05} (see also \cite{WEN16}) that the holonomy map is a proper, holomorphic Lagrangian embedding for the Atiyah-Bott-Goldman \cite{AB83}\cite{GOL84} complex symplectic structure on the moduli space of flat reductive $\mathrm{G}$-bundles.

The primary goal of this paper is to extend the above results to the setting where the Riemann surface is allowed to vary, using the theory of complex projective structures as a guide.  

Let $\Sigma$ be a closed, oriented, smooth connected surface of genus at least two. A $\Sigma$-marked Riemann surface is a pair $X:=(\Sigma, J)$ comprising a complex structure $J$ on $\Sigma$ whose induced orientation agrees with the ambient orientation of $\Sigma.$ Two $\Sigma$-marked Riemann surfaces are isomorphic if they are bi-holomorphic via a diffeomorphism of $\Sigma$ isotopic to the identity. The Teichm\"{u}ller space $\mathcal{T}_{\Sigma}$ is the space parameterizing isomorphism classes of $\Sigma$-marked Riemann surfaces: $\mathcal{T}_{\Sigma}$ is a complex manifold of dimension $3g-3$ where $g$ is the genus of $\Sigma.$

 Abusing the equivalence relation, we sometimes refer to an element of $\mathcal{T}_{\Sigma}$ as a $\Sigma$-marked Riemann surface, and we call a $\mathrm{G}$-oper on a $\Sigma$-marked Riemann surface a $\Sigma$-marked $\mathrm{G}$-oper.  

Our first theorem constructs the moduli space of $\Sigma$-marked $\mathrm{G}$-opers as a complex manifold.

\begin{theorem}\label{def space}
Let $\mathrm{G}$ be a complex simple Lie group of adjoint type.  Then, there is a Hausd\"{o}rff complex manifold $\mathcal{O}\mathfrak{p}_{\Sigma}(\mathrm{G})$ parameterizing isomorphism classes of $\Sigma$-marked $\mathrm{G}$-opers and a holomorphic submersion
\begin{align}
\mathcal{O}\mathfrak{p}_{\Sigma}(\mathrm{G})\rightarrow \mathcal{T}_{\Sigma}.
\end{align}
The space $\mathcal{O}\mathfrak{p}_{\Sigma}(\mathrm{G})$ is a fine moduli space.
\end{theorem}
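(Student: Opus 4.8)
The plan is to realize $\mathcal{O}\mathfrak{p}_{\Sigma}(\mathrm{G})$ as the total space of a holomorphic affine bundle over $\mathcal{T}_{\Sigma}$ and then to verify directly that this total space represents the moduli functor of $\Sigma$-marked $\mathrm{G}$-opers. The first move is to relativize the problem over Teichm\"{u}ller space. Since $\mathcal{T}_{\Sigma}$ is a fine moduli space for $\Sigma$-marked Riemann surfaces, it carries a universal family $\pi\colon \mathcal{C}\rightarrow \mathcal{T}_{\Sigma}$ through which every family of marked Riemann surfaces over a base $S$ factors by a unique classifying map $S\rightarrow \mathcal{T}_{\Sigma}$. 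Consequently a family of $\Sigma$-marked $\mathrm{G}$-opers over $S$ is the same data as a map $f\colon S\rightarrow \mathcal{T}_{\Sigma}$ together with a relative $\mathrm{G}$-oper on the pulled-back curve $f^{*}\mathcal{C}$, so it suffices to construct the space of relative opers on the universal curve itself.

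To see that the fibers assemble holomorphically, I would pass to the canonical-bundle description of opers. By Beilinson--Drinfeld the underlying holomorphic $\mathrm{G}$-bundle $E_{\mathrm{G}}$ and its Borel reduction $E_{\mathrm{B}}$ are rigid and built functorially from the (relative) canonical bundle $\mathcal{K}_{\mathcal{C}/\mathcal{T}_{\Sigma}}$ via the principal $\mathrm{SL}_{2}$-homomorphism into $\mathrm{G}$, hence vary holomorphically over $\mathcal{C}$; this puts every oper on a fiber $X$ into a canonical companion form whose free parameters are exactly a tuple $(q_{1},\dots,q_{\ell})$ with $q_{i}\in \textnormal{H}^{0}(X,\mathcal{K}^{m_{i}+1})$, identifying the space $\mathrm{Op}_{\mathrm{G}}(X)$ of $\mathrm{G}$-opers on $X$ holomorphically with the Hitchin base $\mathcal{B}_{X}(\mathrm{G})$ of \eqref{base param}. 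Riemann--Roch gives $h^{1}(X,\mathcal{K}^{m_{i}+1})=0$ and $\dim \textnormal{H}^{0}(X,\mathcal{K}^{m_{i}+1})=(2m_{i}+1)(g-1)$ independently of $X$, so Grauert's base-change theorem makes each pushforward $\pi_{*}\mathcal{K}_{\mathcal{C}/\mathcal{T}_{\Sigma}}^{m_{i}+1}$ locally free; their direct sum is a holomorphic vector bundle $\mathcal{V}\rightarrow \mathcal{T}_{\Sigma}$ with fiber $\mathcal{B}_{X}(\mathrm{G})$. The companion-form trivialization then exhibits the relative oper space as the total space of a holomorphic affine bundle modelled on $\mathcal{V}$, and I take this to be $\mathcal{O}\mathfrak{p}_{\Sigma}(\mathrm{G})$. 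Since $\mathcal{T}_{\Sigma}$ is a domain of holomorphy, this affine bundle is in fact holomorphically trivial, in parallel with the Bers sections of $\mathcal{CP}_{\Sigma}$.

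It remains to check the universal property and the stated conclusions. The tautological relative oper on the pullback of $\mathcal{C}$ along the projection $\mathcal{O}\mathfrak{p}_{\Sigma}(\mathrm{G})\rightarrow \mathcal{T}_{\Sigma}$ is a candidate universal family; to promote this to representability I would use that $\mathrm{G}$ has adjoint type, so $\mathrm{G}$-opers have trivial automorphism group and there are no nontrivial gluing ambiguities, which is precisely what upgrades a coarse moduli space to a fine one. Given any family of marked opers over $S$, the classifying map $f\colon S\rightarrow \mathcal{T}_{\Sigma}$ together with the section of $f^{*}\mathcal{V}$ recording its relative Hitchin differentials produces a unique classifying map $S\rightarrow \mathcal{O}\mathfrak{p}_{\Sigma}(\mathrm{G})$, and functoriality is immediate. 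Finally, the bundle projection is a holomorphic submersion by construction, and since $\mathcal{T}_{\Sigma}$ is a Hausd\"{o}rff complex manifold the total space of an affine bundle over it is again a Hausd\"{o}rff complex manifold.

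The main obstacle is the holomorphic variation of the affine structure: making the Beilinson--Drinfeld companion form relative over the universal curve and proving that the family of torsors $\{\mathrm{Op}_{\mathrm{G}}(X)\}$ is genuinely a holomorphic affine bundle rather than merely a fiberwise-affine family. This rests on two inputs, the deformation-theoretic vanishing $h^{1}(X,\mathcal{K}^{m_{i}+1})=0$ that makes the pushforwards locally free (packaged via Grauert above) and the holomorphic, functorial construction of the rigidified underlying bundle $E_{\mathrm{G}}$ and reduction $E_{\mathrm{B}}$ from $\mathcal{K}_{\mathcal{C}/\mathcal{T}_{\Sigma}}$ and the principal embedding. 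A secondary subtlety, to be handled carefully in setting up the moduli functor, is the precise notion of a family of marked opers over an arbitrary analytic base and the verification that isomorphisms in the marked category are rigid; here the adjoint hypothesis on $\mathrm{G}$ does the decisive work.
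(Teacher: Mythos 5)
Your route is genuinely different from the paper's: the paper builds the complex structure intrinsically by Kuranishi theory, producing for each oper $\Xi$ a universal small deformation over an open set in $\mathbb{H}^{1}(X,\mathcal{A}^{\bullet})$ for the two-term complex $\mathcal{A}(E_{\mathrm{B}})\xrightarrow{[\hat{\omega},\ ]}\mathcal{K}\otimes\mathcal{E}_{\mathrm{B}}[\mathfrak{g}^{-1}]$, proving $\mathbb{H}^{0}=\mathbb{H}^{2}=0$ and the dimension count, and then using these bases as charts; the identification with $\mathcal{B}_{\Sigma}(\mathrm{G})$ is derived \emph{afterwards}. You instead propose to \emph{define} the space by relativizing the Beilinson--Drinfeld companion form over the universal Teichm\"{u}ller curve. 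The pieces of your argument that are routine (Riemann--Roch giving $h^{1}(X,\mathcal{K}^{m_{i}+1})=0$, Grauert making $\pi_{*}\mathcal{K}_{\mathcal{C}/\mathcal{T}_{\Sigma}}^{m_{i}+1}$ locally free, triviality of automorphisms in the adjoint case, Hausdorffness of the total space) are all fine, and indeed the paper itself uses the fiberwise Hitchin-base parameterization to get Hausdorffness.

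The gap is the step you yourself flag as ``the main obstacle'' and then do not close: the relative, holomorphically varying version of the Beilinson--Drinfeld theorem. Everything in your construction --- the holomorphic structure, the affine-bundle claim, the tautological family, and the verification of the universal property (``the section of $f^{*}\mathcal{V}$ recording its relative Hitchin differentials'') --- is downstream of the assertion that a family of opers over an analytic base $S$ can be put into companion form holomorphically in $S$. That is precisely the content that must be proved, not assumed; the paper's substitute for it is the entire Kuranishi apparatus together with the hypercohomology vanishing. A second, related problem is that for $\mathrm{rk}(\mathrm{G})>1$ the fiber $\mathcal{O}\mathfrak{p}_{X}(\mathrm{G})$ is \emph{not} canonically a torsor under $\mathcal{B}_{X}(\mathrm{G})$: the bijection of Theorem \ref{param opers 2} requires choosing a base $\textnormal{PSL}_{2}(\mathbb{C})$-oper, one can only ``subtract'' a $\mathrm{G}$-oper from a $\textnormal{PSL}_{2}(\mathbb{C})$-oper, and under a change of base section the higher Hitchin components do not transform by translation. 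So ``holomorphic affine bundle modelled on $\mathcal{V}$'' is not the correct structure (the paper calls the actual structure ``partially affine''), and your appeal to Steinness of $\mathcal{T}_{\Sigma}$ to trivialize an affine bundle does not apply as stated. To repair the argument you would need to (i) prove the relative companion-form theorem over an arbitrary analytic base, and (ii) check that for two holomorphic sections $s,s'$ of $\mathcal{CP}_{\Sigma}\rightarrow\mathcal{T}_{\Sigma}$ the transition map $\phi_{s'}\circ\phi_{s}^{-1}$ on $\mathcal{B}_{\Sigma}(\mathrm{G})$ is a biholomorphism, so that the transported complex structure is independent of $s$; both reduce to the same unproven relative statement.
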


To prove this result, we take a complex-analytic approach by first constructing universal Kuranishi families \cite{KUR62} deforming a given $\Sigma$-marked $\mathrm{G}$-oper.  The bases of these universal families are used to simultaneously construct a topology and a coordinate atlas on $\mathcal{O}\mathfrak{p}_{\Sigma}(\mathrm{G}).$

Varying $X$ over the Teichm\"{u}ller space of $\Sigma$ gives rise to a holomorphic vector bundle $\mathcal{B}_{\Sigma}(\mathrm{G})$ over $\mathcal{T}_{\Sigma},$ whose fiber over $X$ is the associated Hitchin base $\mathcal{B}_{X}(\mathrm{G})$.  Our next theorem, which is an analogue of the parameterization result of Beilinson-Drinfeld \cite{BD91}, establishes the relation between the moduli space of $\mathrm{G}$-opers $\mathcal{O}\mathfrak{p}_{\Sigma}(\mathrm{G})$ and the bundle $\mathcal{B}_{\Sigma}(\mathrm{G}).$

\begin{theorem}\label{id h base}
Let $\mathrm{G}$ be a complex simple Lie group of adjoint type. There is a (natural in $\mathrm{G}$) commutative diagram
\begin{center}
\begin{tikzcd}
\mathcal{CP}_{\Sigma} \arrow{r} \arrow{d}
& \mathcal{O}\mathfrak{p}_{\Sigma}(\mathrm{G}) \arrow{dl} \\
\mathcal{T}_{\Sigma}.
\end{tikzcd}
\end{center}
Furthermore, every smooth section $s$ of the projection 
\begin{align}
\mathcal{CP}_{\Sigma} \rightarrow \mathcal{T}_{\Sigma}
\end{align}
induces a diffeomorphism
\begin{align}
\phi_{s}:\mathcal{O}\mathfrak{p}_{\Sigma}(\mathrm{G})\rightarrow \mathcal{B}_{\Sigma}(\mathrm{G})
\end{align}
commuting with the projections to $\mathcal{T}_{\Sigma}.$  If $s$ is holomorphic, then $\phi_{s}$ is a bi-holomorphism.
\end{theorem}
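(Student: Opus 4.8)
The plan is to build the horizontal arrow of the diagram from the principal embedding and then to realize $\phi_{s}$ as the trivialization of an affine bundle by a reference section. First I would construct the map $\iota:\mathcal{CP}_{\Sigma}\rightarrow \mathcal{O}\mathfrak{p}_{\Sigma}(\mathrm{G})$ using the principal homomorphism $\psi:\textnormal{PSL}_{2}(\mathbb{C})\rightarrow \mathrm{G}$ attached to a principal $\mathfrak{sl}_{2}$-triple in $\mathfrak{g}$, which is well defined since $\mathrm{G}$ is of adjoint type. A $\Sigma$-marked complex projective structure on $X$ is precisely a $\textnormal{PSL}_{2}(\mathbb{C})$-oper on $X$; extending the structure group along $\psi$ transports the holomorphic bundle, its Borel reduction, and its holomorphic flat connection to the corresponding data for $\mathrm{G}$, and the compatibility of $\psi$ with the gradings induced by the respective principal nilpotents shows that the oper non-degeneracy (transversality) condition is preserved. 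This produces a holomorphic map $\iota$ over $\mathcal{T}_{\Sigma}$; the triangle commutes because extension of structure group does not alter the underlying Riemann surface, and naturality in $\mathrm{G}$ follows from functoriality of extension of structure group together with the compatibility of principal embeddings.

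The heart of the argument is to upgrade the fiberwise Beilinson--Drinfeld parameterization into a statement about the total space: namely that $\mathcal{O}\mathfrak{p}_{\Sigma}(\mathrm{G})\rightarrow \mathcal{T}_{\Sigma}$ is a holomorphic affine bundle modeled on the vector bundle $\mathcal{B}_{\Sigma}(\mathrm{G})$. Over a fixed $X$, Beilinson--Drinfeld \cite{BD91} show that the space of $\mathrm{G}$-opers is a canonical, basepoint-free affine space over $\mathcal{B}_{X}(\mathrm{G})$, the difference of two opers being the intrinsically defined element of $\mathcal{B}_{X}(\mathrm{G})$ obtained by comparing their canonical (companion) forms. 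I would run this comparison in a relative setting over the universal Kuranishi families produced in the proof of Theorem~\ref{def space}, verifying that the subtraction map $\mathcal{O}\mathfrak{p}_{X}(\mathrm{G})\times\mathcal{O}\mathfrak{p}_{X}(\mathrm{G})\rightarrow \mathcal{B}_{X}(\mathrm{G})$ depends holomorphically on $X$. This promotes the fiberwise affine structure to a holomorphic affine-bundle structure, with local triviality supplied by the coordinate atlas underlying $\mathcal{O}\mathfrak{p}_{\Sigma}(\mathrm{G})$.

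With the affine-bundle structure in hand, the map $\phi_{s}$ is produced formally. Given a smooth section $s$ of $\mathcal{CP}_{\Sigma}\rightarrow\mathcal{T}_{\Sigma}$ (such sections exist, since $\mathcal{CP}_{\Sigma}$ is itself an affine bundle and hence admits global smooth sections by a partition-of-unity argument), I compose with $\iota$ to obtain a smooth reference section $\iota\circ s$ of $\mathcal{O}\mathfrak{p}_{\Sigma}(\mathrm{G})\rightarrow\mathcal{T}_{\Sigma}$, assigning a reference oper over each point of Teichm\"{u}ller space. Writing $\pi$ for the projection, I define $\phi_{s}(\omega):=\omega-(\iota\circ s)(\pi(\omega))$ using the fiberwise affine subtraction valued in $\mathcal{B}_{\Sigma}(\mathrm{G})$. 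This is a bundle morphism covering the identity of $\mathcal{T}_{\Sigma}$ whose restriction to each fiber is the canonical isomorphism from an affine space to its space of translations; hence $\phi_{s}$ is a fiberwise diffeomorphism commuting with the projections, and therefore a diffeomorphism. If $s$ is holomorphic then $\iota\circ s$ is a holomorphic section, by the first step together with the holomorphy of the affine-bundle structure, and the trivialization of a holomorphic affine bundle by a holomorphic section is a bi-holomorphism.

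The step I expect to be the main obstacle is the relative Beilinson--Drinfeld parameterization: verifying that the affine structure, and in particular the difference map into the Hitchin base, varies holomorphically as $X$ moves in $\mathcal{T}_{\Sigma}$, so that the pointwise affine spaces genuinely assemble into a single holomorphic affine bundle. The fiberwise statements are classical, but controlling their dependence on the complex structure, equivalently the holomorphy of the canonical form in families, is where the real work lies.
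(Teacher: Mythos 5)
Your construction of the horizontal arrow via the principal embedding, and your final formula $\phi_{s}(\omega)=\omega-(\iota\circ s)(\pi(\omega))$, do match the paper's route: the paper builds the map $\mathcal{CP}_{\Sigma}\rightarrow\mathcal{O}\mathfrak{p}_{\Sigma}(\mathrm{G})$ through the principal three-dimensional subgroup and the explicit models of Section \ref{models}, and defines $\phi_{s}$ fiberwise by the Beilinson--Drinfeld parameterization of Theorem \ref{param opers 2} with base point $s(X)$, with the regularity in $X$ supplied by Theorem \ref{unique iso} and an inspection of the explicit construction in \cite{BD05}. Your suggestion to control the $X$-dependence by running the comparison relatively over the Kuranishi families is a reasonable way to make that last step precise, and you are right that this is where the real work lies.

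The genuine problem is your middle paragraph. You assert that $\mathcal{O}\mathfrak{p}_{X}(\mathrm{G})$ is a \emph{canonical, basepoint-free} affine space over $\mathcal{B}_{X}(\mathrm{G})$, with an intrinsic difference map $\mathcal{O}\mathfrak{p}_{X}(\mathrm{G})\times\mathcal{O}\mathfrak{p}_{X}(\mathrm{G})\rightarrow\mathcal{B}_{X}(\mathrm{G})$, and hence that $\mathcal{O}\mathfrak{p}_{\Sigma}(\mathrm{G})\rightarrow\mathcal{T}_{\Sigma}$ is a holomorphic affine bundle modeled on $\mathcal{B}_{\Sigma}(\mathrm{G})$. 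For $\textnormal{rk}(\mathrm{G})>1$ this is false, and the paper is explicit about it: by Theorem \ref{unique iso} one can only subtract an arbitrary $\mathrm{G}$-oper from a $\textnormal{PSL}_{2}(\mathbb{C})$-oper; the identification of $\mathcal{O}\mathfrak{p}_{X}(\mathrm{G})$ with $\mathcal{B}_{X}(\mathrm{G})$ depends on the chosen projective structure, and the change-of-basepoint maps are not translations, so the fibers carry only a \emph{partially affine} structure (this is exactly the ``suspiciousness'' of \cite{BD05} discussed in the introduction, and it is why the theorem is stated for sections of $\mathcal{CP}_{\Sigma}\rightarrow\mathcal{T}_{\Sigma}$ rather than for arbitrary sections of $\mathcal{O}\mathfrak{p}_{\Sigma}(\mathrm{G})\rightarrow\mathcal{T}_{\Sigma}$). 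Your final construction survives because the only subtraction it actually performs is from the reference oper $\iota(s(X))$, which comes from a $\textnormal{PSL}_{2}(\mathbb{C})$-oper and is therefore the legitimate one; but the justification that $\phi_{s}$ is a fiberwise bijection cannot be ``the canonical isomorphism from an affine space to its space of translations'' --- it must be taken directly from the bijection of Theorem \ref{param opers 2}, which is what the paper does.
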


This theorem shows that the holomorphic fiber bundle $\mathcal{O}\mathfrak{p}_{\Sigma}(\mathrm{G})$ has a structure that resembles an affine bundle over $\mathcal{T}_{\Sigma}$ whose underlying vector bundle is $\mathcal{B}_{\Sigma}(\mathrm{G}).$  For $\mathrm{G}=\textnormal{PSL}_{2}(\mathbb{C})$, this is literally true, and it is well known that $\mathcal{O}\mathfrak{p}_{\Sigma}(\textnormal{PSL}_{2}(\mathbb{C}))\simeq \mathcal{CP}_{\Sigma}$ is a holomorphic affine bundle over $\mathcal{T}_{\Sigma}$ modeled on the cotangent bundle $T^{\star}\mathcal{T}_{\Sigma}\simeq \mathcal{B}_{\Sigma}(\textnormal{PSL}_{2}(\mathbb{C}))$ of the Teichm\"{u}ller space of $\Sigma.$

For general $\mathrm{G}$, instead of being able to subtract arbitrary elements in a fiber over $X\in \mathcal{T}_{\Sigma}$ of $\mathcal{O}\mathfrak{p}_{\Sigma}(\mathrm{G})\rightarrow \mathcal{T}_{\Sigma}$, one can only subtract an arbitrary element of the fiber from a $\textnormal{PSL}_{2}(\mathbb{C})$-oper on $X.$  This is intimately related to a fact we shall discuss later, namely that $\mathcal{O}\mathfrak{p}_{\Sigma}(\mathrm{G})$ admits a constant rank closed holomorphic $2$-form which is degenerate if $\textnormal{rk}(G)>1$, and for which the fibers of the projection $\mathcal{O}\mathfrak{p}_{\Sigma}(\mathrm{G})\rightarrow \mathcal{T}_{\Sigma}$ are maximal isotropic sub-manifolds.  

By the results in \cite{LS17}, if $(M,\omega)$ is a symplectic manifold and $N$ is a smooth manifold, a locally trivial fiber bundle $(M, \omega)\rightarrow N$ with Lagrangian fibers has the property that the fibers have a canonical flat affine structure.\footnote{This fact was known long before the paper \cite{LS17}, but the discussion in \cite{LS17} makes this issue very explicit.  Furthermore, the content of \cite{LS17} is closely related to the circumstances addressed in this paper.}  For pre-symplectic manifolds which are the total space of a maximally isotropic fibration, a weaker result is true.

 In light of this, the strange \emph{partially affine} structure on the fibers of $\mathcal{O}\mathfrak{p}_{\Sigma}(\mathrm{G})\rightarrow \mathcal{T}_{\Sigma}$ for general $\mathrm{G}$ is ultimately a reflection of the fact that $\mathcal{O}\mathfrak{p}_{\Sigma}(\mathrm{G})\rightarrow \mathcal{T}_{\Sigma}$ is a maximally isotropic fibration.  We hope that this discussion relieves some of the "suspiciousness" regarding the bijection between $\mathrm{G}$-opers on $X$ and $\mathcal{B}_{X}(\mathrm{G})$ 
referred to in \cite{BD05}[pg. 21].

Now we move on to a discussion of the forgetful map from the moduli space of $\Sigma$-marked $\mathrm{G}$-opers to the space of $C^{\infty}$-flat $\mathrm{G}$-bundles on $\Sigma$.  This is the map sending a $\Sigma$-marked $\mathrm{G}$-oper $(E_{\mathrm{G}}, E_{\mathrm{B}}, \omega, X)$ to the $C^{\infty}$-flat $\mathrm{G}$-bundle $(E_{\mathrm{G}}, \omega)$ over $\Sigma$.

Let $\mathcal{F}_{\Sigma}^{\star}(\mathrm{G})$ be the set of isomorphism classes of smooth, irreducible flat $\mathrm{G}$-bundles over $\Sigma$ with isotropy equal to the center of $\mathrm{G}$. Utilizing the results of Goldman \cite{GOL84} and standard techniques of defomormation theory, $\mathcal{F}_{\Sigma}^{\star}(\mathrm{G})$ has the structure of a complex symplectic manifold.  

The next theorem generalizes the (independent) classical result of Earle \cite{EAR81}, Hejhal \cite{HEJ78} and Hubbard \cite{HUB81} concerning the local injectivity of the holonomy map from the moduli space of marked complex projective structures on $\Sigma$ to the space of flat bundles $\mathcal{F}_{\Sigma}^{\star}(\textnormal{PSL}_{2}(\mathbb{C})).$
\begin{theorem}\label{immersion}
Let $\mathrm{G}$ be a complex simple Lie group of adjoint type.  Then, the holonomy map
\begin{align}
\textnormal{H}: \mathcal{O}\mathfrak{p}_{\Sigma}(\mathrm{G})\rightarrow \mathcal{F}_{\Sigma}^{\star}(\mathrm{G})
\end{align}
is a holomorphic immersion.
\end{theorem}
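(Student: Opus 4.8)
The plan is to dispatch holomorphicity first and then reduce the immersion statement to a cohomological vanishing controlled by the oper non-degeneracy. That $\textnormal{H}$ is holomorphic should follow directly from the complex-analytic constructions underlying Theorem~\ref{def space}: over a universal Kuranishi family the flat connection $\omega$, and hence its holonomy representation, varies holomorphically in the deformation parameters, while the complex structure on $\mathcal{F}_{\Sigma}^{\star}(\mathrm{G})$ is the one produced by the deformation theory of flat bundles \cite{GOL84}. The substance is therefore the injectivity of $d\textnormal{H}$ at an oper $[\sigma]=[(E_{\mathrm{G}},E_{\mathrm{B}},\omega,X)]$. I would identify the target tangent space with flat de Rham cohomology, $T_{[\rho]}\mathcal{F}_{\Sigma}^{\star}(\mathrm{G})\cong H^{1}(\Sigma,\mathrm{ad}(E_{\mathrm{G}}),d_{\omega})$; irreducibility (isotropy equal to the center) gives $H^{0}(\Sigma,\mathrm{ad}(E_{\mathrm{G}}),d_{\omega})=0$, so this identification is clean and $d\textnormal{H}(v)=[\dot\omega]$, the de Rham class of the induced variation of the flat connection.

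Next I would carry out the reduction. Suppose $v\in\ker d\textnormal{H}$, so $\dot\omega=d_{\omega}\xi$ is exact; the vanishing $H^{0}=0$ makes $\xi$ unique, and since $\mathcal{O}\mathfrak{p}_{\Sigma}(\mathrm{G})$ parameterizes \emph{isomorphism} classes I may change representative to arrange $\dot\omega=0$. Thus the underlying $C^{\infty}$ flat connection, and with it the flat flag bundle $\mathbb{F}=E_{\mathrm{G}}/\mathrm{B}$ carrying its induced flat connection, is fixed to first order. The oper is then encoded by a holomorphic section $s$ of $\mathbb{F}\to X$ transverse to the flat connection in the oper sense, and $v$ becomes the data of a Kodaira--Spencer class $\mu\in T_{X}\mathcal{T}_{\Sigma}\cong H^{1}(X,\mathcal{K}^{-1})$ together with a variation $\dot s$ of the section. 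Moving $X$ by $\mu$ while holding $\omega$ fixed deforms the complex structure of $\mathbb{F}$, and the requirement that the deformed section stay holomorphic is the linear equation $\bar\partial_{N_{s}}\dot s=-\Theta(\mu)$, where $N_{s}\cong E_{\mathrm{G}}\times_{\mathrm{B}}(\mathfrak{g}/\mathfrak{b})$ is the normal bundle of the section and $\Theta(\mu)\in\Omega^{0,1}(X,N_{s})$ is the obstruction obtained by contracting $\mu$ against $\omega^{1,0}$.

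The cohomological heart is then as follows. The oper condition equips $\mathrm{ad}(E_{\mathrm{G}})$ with the principal filtration whose associated graded identifies $N_{s}\cong\bigoplus_{j\ge 1}(E_{\mathrm{G}}\times_{\mathrm{B}}\mathfrak{g}_{-j})$ with a sum of negative powers $\mathcal{K}^{-j}$ of the canonical bundle, and strict Griffiths transversality says the symbol of $\omega^{1,0}$ is an isomorphism onto the lowest graded piece. Consequently the projection of $\Theta(\mu)$ to the top quotient $\mathrm{gr}_{-1}N_{s}\cong(\mathcal{K}^{-1})^{\oplus\ell}$ is, up to nowhere-vanishing factors, the Kodaira--Spencer representative of $\mu$ itself. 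Because $v$ is a genuine oper deformation the equation is solvable, so $[\Theta(\mu)]=0$ in $H^{1}(X,N_{s})$; projecting to $H^{1}(X,\mathrm{gr}_{-1}N_{s})$ and using that the symbol induces the injective (diagonal) map $H^{1}(X,\mathcal{K}^{-1})\to H^{1}(X,\mathcal{K}^{-1})^{\oplus\ell}$ forces $[\mu]=0$, i.e. $\mu=0$. With $\mu=0$ the equation reduces to $\bar\partial_{N_{s}}\dot s=0$, and since $\deg\mathcal{K}=2g-2>0$ we have $H^{0}(X,N_{s})=\bigoplus_{j\ge 1}H^{0}(X,\mathcal{K}^{-j})=0$, whence $\dot s=0$; therefore $v=0$. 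Note the case $\mu=0$ recovers, and is subsumed by, the fixed-surface injectivity of Beilinson--Drinfeld \cite{BD91},\cite{BD05}.

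The hard part will be the precise deformation-theoretic bookkeeping in the middle two steps: rigorously identifying the normal bundle $N_{s}$ with its oper filtration, and proving that the leading symbol of the obstruction $\Theta(\mu)$ is exactly the Kodaira--Spencer class of $\mu$. This is where the full force of the oper non-degeneracy enters, and where the variation of the complex structure over $\mathcal{T}_{\Sigma}$---the genuinely new feature beyond the fixed-surface embedding of \cite{BD91}---must be treated carefully; once it is in place, the two vanishing inputs follow immediately from $\deg\mathcal{K}>0$. Since every map above is manifestly compatible with the ambient complex structures, the resulting injective differential is complex-linear, which completes the proof that $\textnormal{H}$ is a holomorphic immersion.
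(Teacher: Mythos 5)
Your overall strategy is sound and in fact converges to the same key statement as the paper: after stripping away the variation of the flat connection, everything reduces to showing that the map $\Psi_{*}:\textnormal{H}^{1}(X,\Theta)\rightarrow \textnormal{H}^{1}(X,\mathcal{E}_{\mathrm{B}}[\mathfrak{g}/\mathfrak{b}])$ induced by the second fundamental form $\xi\mapsto\xi\otimes f_{1}$ is injective. (The paper packages the reduction differently, via an exact sequence of two-term complexes $0\rightarrow\mathcal{A}^{\bullet}\rightarrow\mathcal{B}^{\bullet}\rightarrow\mathcal{N}^{\bullet}\rightarrow 0$ with $\mathcal{N}^{0}=\textnormal{coker}(\Psi)$, and shows $\ker(d\textnormal{H})=0$ is implied by $\textnormal{H}^{0}(X,\mathcal{N}^{0})=0$, which by the long exact sequence of $0\rightarrow\Theta\rightarrow\mathcal{E}_{\mathrm{B}}[\mathfrak{g}/\mathfrak{b}]\rightarrow\mathcal{N}^{0}\rightarrow 0$ is the same injectivity; your gauge-fixing of $\dot\omega$ and normal-bundle formulation is a legitimate alternative route to the same point.)

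The gap is in your proposed proof of that injectivity. You want to ``project $[\Theta(\mu)]$ to $\textnormal{H}^{1}(X,\mathrm{gr}_{-1}N_{s})$,'' but no such projection exists holomorphically: in the $\mathrm{B}$-invariant filtration of $\mathfrak{g}/\mathfrak{b}$ the piece $\mathfrak{g}^{-1}/\mathfrak{b}\simeq\mathfrak{g}_{-1}$ is the \emph{smallest sub-object}, not a quotient, so $E_{\mathrm{B}}[\mathfrak{g}^{-1}/\mathfrak{b}]$ sits inside $N_{s}=E_{\mathrm{B}}[\mathfrak{g}/\mathfrak{b}]$ as a holomorphic sub-bundle and the complementary ``graded'' projection is only $C^{\infty}$ (the $\overline{\partial}$-operator of the oper model has the off-diagonal term $h\cdot\beta_{i}\otimes\textnormal{ad}(e_{1})(V_{i})$ coupling adjacent graded pieces). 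What you actually need is injectivity of $\textnormal{H}^{1}(X,E_{\mathrm{B}}[\mathfrak{g}^{-1}/\mathfrak{b}])\rightarrow\textnormal{H}^{1}(X,N_{s})$ induced by the \emph{inclusion}, and this is not formal: by the long exact sequence it amounts to the vanishing $\textnormal{H}^{0}(X,\mathcal{E}_{\mathrm{B}}[\mathfrak{g}/\mathfrak{g}^{-1}])=0$, which must itself be proved by an induction up the principal filtration using $\textnormal{H}^{0}(X,\mathcal{K}^{-j}\otimes\mathfrak{g}_{-j})=0$. This induction is exactly the content of the paper's proof: writing a putative primitive $s=\sum_{j}\sum_{i}\beta_{i}^{j}\otimes\textnormal{ad}(f_{1})^{j}(H_{i})$ in the uniformizing Dolbeault model, one kills the $\beta_{i}^{j}$ for $j=m_{\ell},m_{\ell}-1,\dots,2$ in turn before the equation decouples at $j=1$ and yields $\mu=2\overline{\partial}_{-1}\beta_{1}^{1}$, hence $[\mu]=0$. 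You have correctly flagged this as ``the hard part,'' but the specific mechanism you propose in its place (a graded projection followed by the diagonal map on $\textnormal{H}^{1}$) is not available, so as written the argument does not close; it becomes correct once the graded projection is replaced by the inclusion together with the inductive $\textnormal{H}^{0}$-vanishing of the quotient bundle.
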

Our proof is similar in spirit to the proof of Hubbard \cite{HUB81}, with sheaf cohomology playing a central role.  In particular, we identify a complex of sheaves $\mathcal{A}^{\bullet}$ on $X$ whose hyper-cohomology governs the deformations of $(E_{\mathrm{G}}, E_{\mathrm{B}}, \omega, X)$.  We identify the derivative of the map
\begin{align}\label{immm}
\textnormal{H}:\mathcal{O}\mathfrak{p}_{\Sigma}(\mathrm{G})\rightarrow \mathcal{F}_{\Sigma}^{\star}(\mathrm{G})
\end{align}
at the given $\mathrm{G}$-oper with the induced map in hyper-cohomology for a suitable morphism $\mathcal{A}^{\bullet}\rightarrow \mathcal{B}^{\bullet},$ where $\mathcal{B}^{\bullet}$ is the holomorphic De-Rham complex of the holomorphic flat bundle $(E_{\mathrm{G}}, \omega, X).$ Working in the Dolbeault resolution, a differential-geometric calculation shows that the resulting map between hyper-cohomology groups is complex-linear and injective.  

We now mention the translation of this result to the space of $\mathrm{G}$-valued homomorphisms of the fundamental group of $\Sigma.$  Let $\widetilde{\Sigma}\rightarrow \Sigma$ be a fixed universal covering of $\Sigma$ with deck group $\pi.$  For our purposes, the deck group $\pi,$ which is abstractly isomorphic to the fundamental group of $\Sigma,$ is a more convenient model.

Taking the holonomy of a flat connection yields a bi-holomorphism
\begin{align}\label{hol iso}
\textnormal{hol}: \mathcal{F}_{\Sigma}^{\star}(\mathrm{G})\rightarrow \textnormal{Hom}^{\star}(\pi, G)/G,
\end{align}
where $\textnormal{Hom}^{\star}(\pi, G)/\mathrm{G}$ is the space of conjugacy classes of irreducible\footnote{A homomorphism $\rho: \pi\rightarrow \mathrm{G}$ is irreducible if the image of $\rho$ does not lie in any proper parabolic subgroup $P<G.$} homomorphisms $\rho: \pi\rightarrow \mathrm{G}$ with centralizer equal to the center of $G.$  Therefore, Theorem \ref{immersion} in combination with \eqref{hol iso} implies that the map
\begin{align}
\textnormal{hol}\circ \textnormal{H}: \mathcal{O}\mathfrak{p}_{\Sigma}(\mathrm{G})\rightarrow 
\textnormal{Hom}^{\star}(\pi, G)/G
\end{align}
is a holomorphic immersion.

Moving to symplectic geometry, Theorem \ref{immersion} equips the moduli space of $\Sigma$-marked $\mathrm{G}$-opers $\mathcal{O}\mathfrak{p}_{\Sigma}(\mathrm{G})$ with a closed, holomorphic $2$-form of constant rank defined as the pull-back of the Atiyah-Bott-Goldman (see \cite{GOL84}) symplectic form on $\mathcal{F}_{\Sigma}^{\star}(\mathrm{G})$ via the holomorphic immersion \eqref{immm}. 

\begin{theorem}\label{pre sym}
Let $\mathrm{G}$ be a complex simple Lie group of adjoint type.
The space $\mathcal{O}\mathfrak{p}_{\Sigma}(\mathrm{G})$ admits a closed holomorphic $2$-form $\tau_{\mathrm{G}}$ of constant rank for which the fibers of the projection to $\mathcal{T}_{\Sigma}$ are maximal isotropic sub-manifolds.
\end{theorem}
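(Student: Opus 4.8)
The plan is to take the form produced just before the statement, namely
\[
\tau_{\mathrm{G}}\ :=\ \textnormal{H}^{\ast}\Omega,
\]
where $\Omega$ denotes the Atiyah--Bott--Goldman holomorphic symplectic form on $\mathcal{F}_{\Sigma}^{\star}(\mathrm{G})$ and $\textnormal{H}$ is the holonomy immersion of Theorem \ref{immersion}. That $\tau_{\mathrm{G}}$ is holomorphic and closed is immediate: $\textnormal{H}$ is holomorphic, so $\textnormal{H}^{\ast}$ sends holomorphic forms to holomorphic forms, and $d\tau_{\mathrm{G}} = \textnormal{H}^{\ast}(d\Omega) = 0$. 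The content is therefore entirely in (i) constancy of the rank and (ii) maximal isotropy of the fibers of $\pi\colon \mathcal{O}\mathfrak{p}_{\Sigma}(\mathrm{G})\to\mathcal{T}_{\Sigma}$. I would deduce both from a pointwise linear-algebra analysis of $\ker\tau_{\mathrm{G}}$, fed by the Beilinson--Drinfeld theorem that the restriction of $\textnormal{H}$ to a single fiber is a Lagrangian embedding.

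Fix $p\in\mathcal{O}\mathfrak{p}_{\Sigma}(\mathrm{G})$ and set $W := \textnormal{im}(d\textnormal{H}_{p})\subset T_{\textnormal{H}(p)}\mathcal{F}_{\Sigma}^{\star}(\mathrm{G})$. Since $\textnormal{H}$ is an immersion, $d\textnormal{H}_{p}$ is injective and $\dim W = \dim\mathcal{O}\mathfrak{p}_{\Sigma}(\mathrm{G})$. Unwinding the pullback, a vector $u$ lies in $\ker(\tau_{\mathrm{G}})_{p}$ precisely when $d\textnormal{H}_{p}(u)$ is $\Omega$-orthogonal to all of $W$, so injectivity of $d\textnormal{H}_{p}$ yields a linear isomorphism $\ker(\tau_{\mathrm{G}})_{p}\cong W\cap W^{\perp}$, with $W^{\perp}$ the $\Omega$-orthogonal complement. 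Now let $V_{p}:=\ker(d\pi_{p})$ be the vertical (fiber-direction) subspace and $L:=d\textnormal{H}_{p}(V_{p})\subseteq W$. The fiber $\pi^{-1}(X)$ is exactly the space of $\mathrm{G}$-opers on $X$, so by Beilinson--Drinfeld $\textnormal{H}|_{\pi^{-1}(X)}$ is a Lagrangian embedding and $L$ is a Lagrangian subspace, i.e. $L^{\perp}=L$. From $L\subseteq W$ I then get $W^{\perp}\subseteq L^{\perp}=L\subseteq W$, whence $W\cap W^{\perp}=W^{\perp}$ and
\[
\dim_{\mathbb{C}}\ker(\tau_{\mathrm{G}})_{p}\ =\ \dim W^{\perp}\ =\ \dim\mathcal{F}_{\Sigma}^{\star}(\mathrm{G})-\dim\mathcal{O}\mathfrak{p}_{\Sigma}(\mathrm{G}).
\]
The right-hand side is independent of $p$, so $\tau_{\mathrm{G}}$ has constant rank; moreover $W^{\perp}\subseteq L=d\textnormal{H}_{p}(V_{p})$ and injectivity of $d\textnormal{H}_{p}$ force $\ker(\tau_{\mathrm{G}})_{p}\subseteq V_{p}$, so the null directions are tangent to the fibers.

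It remains to identify dimensions. Each fiber is isotropic since $\tau_{\mathrm{G}}|_{V_{p}}=(\textnormal{H}|_{\pi^{-1}(X)})^{\ast}\Omega=0$. By Riemann--Roch $\dim_{\mathbb{C}}\textnormal{H}^{0}(X,\mathcal{K}^{m_{i}+1})=(2m_{i}+1)(g-1)$, and summing over the exponents with $\sum_{i}(2m_{i}+1)=\dim\mathrm{G}$ gives $\dim\mathcal{B}_{X}(\mathrm{G})=(g-1)\dim\mathrm{G}$; together with $\dim\mathcal{F}_{\Sigma}^{\star}(\mathrm{G})=(2g-2)\dim\mathrm{G}$ this exhibits the fiber dimension as half of $\dim\mathcal{F}_{\Sigma}^{\star}(\mathrm{G})$, reconfirming the Lagrangian property and giving $\textnormal{rk}\,\tau_{\mathrm{G}}=6g-6=2\dim\mathcal{T}_{\Sigma}$. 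For a constant-rank form a maximal isotropic subspace has dimension $\dim\ker+\tfrac{1}{2}\textnormal{rk}=(g-1)(\dim\mathrm{G}-3)+(3g-3)=(g-1)\dim\mathrm{G}$, which equals the fiber dimension; as the fibers are isotropic and contain the kernel, they are maximal isotropic.

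The main obstacle is not the linear algebra but securing its two external inputs in the present, varying-$X$ setting: that $\textnormal{H}$ restricted to each fiber is a Lagrangian embedding into $\mathcal{F}_{\Sigma}^{\star}(\mathrm{G})$ (Beilinson--Drinfeld, together with the fact from Theorem \ref{immersion} that oper holonomies already lie in the irreducible locus), and that $\textnormal{H}$ is a global immersion (Theorem \ref{immersion}). Granting these, the only genuinely delicate internal point is constancy of the rank, and the argument above reduces it to the formal inclusion $W^{\perp}\subseteq W$ — forced by the presence of the Lagrangian subspace $L\subseteq W$ — so that $\dim(W\cap W^{\perp})$ is governed by a dimension count alone and cannot jump.
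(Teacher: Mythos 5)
Your proposal is correct and follows essentially the same route as the paper: define $\tau_{\mathrm{G}}$ as the pullback of the Atiyah--Bott--Goldman form along the holonomy immersion, and deduce constant rank and maximal isotropy of the fibers from the Beilinson--Drinfeld theorem that $\textnormal{H}$ restricted to each fiber over $X\in\mathcal{T}_{\Sigma}$ is a Lagrangian embedding. The only difference is one of exposition: you spell out the linear algebra (the chain $W^{\perp}\subseteq L^{\perp}=L\subseteq W$ forcing $\ker(\tau_{\mathrm{G}})_{p}\cong W^{\perp}$ to have $p$-independent dimension and to lie in the vertical subspace) that the paper compresses into a single sentence, which is a welcome clarification rather than a departure.
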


A closed holomorphic $2$-form of constant rank on a complex manifold is called a complex pre-symplectic form, so Theorem \ref{pre sym} equips $\mathcal{O}\mathfrak{p}_{\Sigma}(\mathrm{G})$ with a complex pre-symplectic form.  

We finish the discussion with the following theorem concerning the behavior of the isomorphism in Theorem \ref{id h base} with respect to the pre-symplectic structure on $\mathcal{O}\mathfrak{p}_{\Sigma}(\mathrm{G}).$

\begin{theorem}
There is a complex pre-symplectic form $\omega_{\mathcal{B}_{G}}$ on $\mathcal{B}_{\Sigma}(\mathrm{G})$ such that, given any holomorphic Lagrangian section $s$ of
\begin{align}
\mathcal{CP}_{\Sigma} \rightarrow \mathcal{T}_{\Sigma},
\end{align}
the induced bi-holomorphism
\begin{align}
\phi_{s}: \mathcal{O}\mathfrak{p}_{\Sigma}(\mathrm{G})\rightarrow \mathcal{B}_{\Sigma}(\mathrm{G})
\end{align}
satisfies $\phi_{s}^{\star}\omega_{\mathcal{B}_{G}}=\sqrt{-1}\tau_{\mathrm{G}},$ where $\tau_{\mathrm{G}}$ is the complex pre-symplectic form from Theorem \ref{pre sym}.
\end{theorem}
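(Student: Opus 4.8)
The plan is to produce $\omega_{\mathcal{B}_G}$ as a \emph{canonical} form on $\mathcal{B}_\Sigma(\mathrm{G})$, so that independence of the section is built in, and then to verify the stated identity infinitesimally. Since $m_1=1$, the first summand of the Hitchin base is $\mathcal{B}^{(1)}_\Sigma\cong T^\star\mathcal{T}_\Sigma$, the fiberwise isomorphism being the Kodaira--Spencer/Serre-duality identification $\textnormal{H}^0(X,\mathcal{K}^2)\cong T^\star_X\mathcal{T}_\Sigma$. Let $p_1\colon\mathcal{B}_\Sigma(\mathrm{G})\to T^\star\mathcal{T}_\Sigma$ be the projection onto this summand and $\omega_{\mathrm{can}}$ the canonical symplectic form. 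I would \emph{define} $\omega_{\mathcal{B}_G}:=p_1^\star\omega_{\mathrm{can}}$; it is holomorphic, closed, and of constant rank $2\dim_{\mathbb{C}}\mathcal{T}_\Sigma$, hence a complex pre-symplectic form, and it is manifestly independent of all choices. With this definition the theorem reduces to the single identity $\phi_s^\star\omega_{\mathcal{B}_G}=\sqrt{-1}\,\tau_{\mathrm{G}}$ for every holomorphic Lagrangian section $s$, after which the section-independence clause is automatic.

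Fix such an $s$ and transport everything to $E:=\mathcal{B}_\Sigma(\mathrm{G})$ by setting $\sigma_s:=(\phi_s^{-1})^\star(\sqrt{-1}\,\tau_{\mathrm{G}})$, so that the goal becomes $\delta:=\sigma_s-\omega_{\mathcal{B}_G}=0$. Here I would exploit that $E\to\mathcal{T}_\Sigma$ is a holomorphic vector bundle with zero section $Z$ and, for each holomorphic section $\xi$, an associated vertical translation field $V_\xi$ whose values span the vertical tangent spaces. Because $\delta$ is closed, if $\iota_{V_\xi}\delta=0$ for all $\xi$ then $\delta$ annihilates every vertical vector and, via $\mathcal{L}_{V_\xi}\delta=d\,\iota_{V_\xi}\delta+\iota_{V_\xi}d\delta=0$, is invariant under vertical translation; hence $\delta=\pi^\star\gamma$ for a $2$-form $\gamma$ on $\mathcal{T}_\Sigma$, and evaluating on $Z$ gives $\gamma=Z^\star\delta$. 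Thus it suffices to prove (I) $Z^\star\sigma_s=Z^\star\omega_{\mathcal{B}_G}$ and (II) $\iota_{V_\xi}\sigma_s=\iota_{V_\xi}\omega_{\mathcal{B}_G}$ for all $\xi$.

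Step (I) is where the Lagrangian hypothesis enters. The zero section $Z$ corresponds under $\phi_s^{-1}$ to the reference family $j\circ s\colon\mathcal{T}_\Sigma\to\mathcal{O}\mathfrak{p}_\Sigma(\mathrm{G})$, where $j\colon\mathcal{CP}_\Sigma\to\mathcal{O}\mathfrak{p}_\Sigma(\mathrm{G})$ is the principal embedding of Theorem \ref{id h base}. Hence $Z^\star\sigma_s=\sqrt{-1}\,s^\star(j^\star\tau_{\mathrm{G}})$, and $j^\star\tau_{\mathrm{G}}$ is, up to the normalization of the invariant form on $\mathfrak{g}$ fixed in Theorem \ref{pre sym}, the holonomy pull-back of the Goldman form to $\mathcal{CP}_\Sigma$; since $s$ is Lagrangian its image is Lagrangian and this pulls back to zero, while the zero section of $T^\star\mathcal{T}_\Sigma$ is Lagrangian for $\omega_{\mathrm{can}}$, so both sides of (I) vanish. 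For (II) write $\xi=\xi_1\oplus\xi_{\ge2}$ according to the grading of the base. On the cotangent side the tautological identity gives $\iota_{V_\xi}\omega_{\mathcal{B}_G}=-\pi^\star\xi_1$, depending only on $\xi_1$ and vanishing on the higher summands. Transporting $V_\xi$ through $\phi_s$ produces the affine vertical field $W_\xi$ translating a $\mathrm{G}$-oper by $\xi(X)\in\mathcal{B}_X(\mathrm{G})$, so (II) becomes the key lemma $\sqrt{-1}\,\iota_{W_\xi}\tau_{\mathrm{G}}=-\pi^\star\xi_1$. This splits as (K2): the first-summand contraction reproduces the tautological one, which is the principal $\mathrm{PSL}_2(\mathbb{C})$ computation transplanted through $j$ (first-summand translation being exactly the projective-structure deformation), and here I would cite the classical comparison of the Goldman and canonical forms on $\mathcal{CP}_\Sigma$, the factor $\sqrt{-1}$ being precisely its content; and (K1): the higher summands lie in $\ker\tau_{\mathrm{G}}$.

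Part (K1) is the heart of the matter and the step I expect to be the main obstacle. Theorem \ref{pre sym} already constrains $\ker\tau_{\mathrm{G}}$ sharply: isotropy of the fibers $F$ together with constant rank $2\dim_{\mathbb{C}}\mathcal{T}_\Sigma$ forces the image of $TF$ in the reduced symplectic quotient to be isotropic, whence $\dim(TF\cap\ker\tau_{\mathrm{G}})\ge\dim F-\dim_{\mathbb{C}}\mathcal{T}_\Sigma=(g-1)(\dim\mathfrak{g}-3)=\dim\ker\tau_{\mathrm{G}}$; thus $\ker\tau_{\mathrm{G}}$ is vertical of rank exactly $(g-1)(\dim\mathfrak{g}-3)$, equal to the rank of the higher-summand distribution $\mathcal{H}:=\bigoplus_{i\ge2}\mathcal{B}^{(i)}$. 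It therefore remains to prove the inclusion $\mathcal{H}\subseteq\ker\tau_{\mathrm{G}}$, equality following by dimension. I would establish this in the Dolbeault/hyper-cohomology model of Theorem \ref{immersion}: a deformation in the summand $\textnormal{H}^0(X,\mathcal{K}^{m_i+1})$ is represented by a cocycle supported in the $\mathrm{ad}$-weight space of the principal $\mathfrak{sl}_2$ determined by $m_i$, while Goldman's pairing is the cup product coupled to the invariant form on $\mathfrak{g}$, which pairs weight $w$ only with weight $-w$; using the Kostant-slice normal form of the oper connection, the relevant cup products against \emph{every} other oper deformation vanish for weight reasons, once the $(1,0)/(0,1)$ form types are tracked against the grading. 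Carrying out this weight bookkeeping is the essential calculation. Granting (I), (K1) and (K2), the reduction of the second paragraph yields $\delta=0$, hence $\phi_s^\star\omega_{\mathcal{B}_G}=\sqrt{-1}\,\tau_{\mathrm{G}}$ for every holomorphic Lagrangian section simultaneously.
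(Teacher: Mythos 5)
Your definition of $\omega_{\mathcal{B}_{G}}$ coincides with the paper's: the paper sets $\omega_{\mathcal{B}_{G}}=R^{\star}\omega_{\mathcal{B}_{\textnormal{PSL}(2,\mathbb{C})}}$ for the first-summand projection $R$, which is exactly your $p_{1}^{\star}\omega_{\textnormal{can}}$, and both arguments ultimately rest on the same two inputs, namely the Kawai--Loustau identity $\theta_{s}^{\star}\omega_{\textnormal{can}}=\sqrt{-1}\,\tau_{\textnormal{PSL}_{2}(\mathbb{C})}$ on $\mathcal{CP}_{\Sigma}$ and the fact that the higher Hitchin summands span $\ker\tau_{\mathrm{G}}$. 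Where you genuinely diverge is in how that second input is established and deployed. The paper packages it into Corollary \ref{reduced}: the symplectic embedding $\iota_{\mathrm{G}}$ from Theorem \ref{sym form}, together with the rank count, identifies $(\mathcal{O}\mathfrak{p}_{\Sigma}(\textnormal{PSL}_{2}(\mathbb{C})),\tau_{\textnormal{PSL}_{2}(\mathbb{C})})$ with the reduced phase space, after which the theorem is a one-line chase around the square $P,\theta_{s},R,\phi_{s}$ using $P^{\star}\tau^{\textnormal{red}}=\tau_{\mathrm{G}}$. You instead run an infinitesimal verification (zero-section value plus contractions with vertical translation fields, using closedness of the discrepancy $\delta$), and you propose to prove the kernel inclusion $\mathcal{H}\subseteq\ker\tau_{\mathrm{G}}$ directly by a cup-product weight computation in the Dolbeault model. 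Your dimension count pinning $\ker\tau_{\mathrm{G}}$ inside the vertical distribution with the correct rank is correct, and it is in fact the same count that underlies the paper's Corollary \ref{reduced}; what you gain is that your route makes explicit the one geometric fact the paper leaves largely implicit (that the leaves of $\ker\tau_{\mathrm{G}}$ are precisely the fibers of $R\circ\phi_{s}$, equivalently that the $P$--$R$ square commutes), while the paper's route gets the global identity for free from functoriality of pullbacks once the reduced phase space is identified, with no Cartan calculus and no choice of representatives. Two caveats on your version: the weight bookkeeping in (K1) is the entire mathematical content of that step and is only sketched --- the point to nail down is that the $(0,1)$-component of an arbitrary oper deformation can be represented with $\mathfrak{g}$-valued part concentrated in $\mathfrak{g}_{-1}$ (via the second fundamental form $\xi\mapsto\xi\otimes f_{1}$), so that the Killing pairing against $\alpha_{i}\otimes e_{i}$ with $e_{i}\in\mathfrak{g}_{m_{i}}$, $m_{i}\geq 2$, vanishes; and your step (K2) tacitly uses $\iota_{\mathrm{G}}^{\star}\tau_{\mathrm{G}}=\tau_{\textnormal{PSL}_{2}(\mathbb{C})}$ from Theorem \ref{sym form} to identify the descended form on the quotient before Kawai--Loustau can be applied at a general $\mathrm{G}$-oper not lying on the image of $j$; you should cite that explicitly. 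With those two points supplied, your argument is a valid, more computational alternative to the paper's.
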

This generalizes a result of Kawai \cite{KAW96}, which was later clarified by 
Loustau \cite{LOU15}, to the setting of $\mathrm{G}$-opers.

Lastly, we remark that all of the constructions in this paper are invariant under the mapping class group $\textnormal{Mod}(\Sigma)$ of isotopy classes of orientation-preserving diffeomorphisms of $\Sigma.$  In particlar, there is a holomorphic action of $\textnormal{Mod}(\Sigma)$ on $\mathcal{O}\mathfrak{p}_{\Sigma}(\mathrm{G})$ lifting the usual action on $\mathcal{T}_{\Sigma}$, and the \emph{holonomy map} to $\mathcal{F}_{\Sigma}^{\star}(\mathrm{G})$ is 
$\textnormal{Mod}(\Sigma)$-equivariant.  

\subsection{Conventions and content}

We close this introduction with some comments about notational conventions.

In using the dictionary between locally free sheaves and holomorphic vector bundles, calligraphic lettters denote a locally free sheaf $\mathcal{F}$ of rank $n>0$, and Roman letters denote the corresponding rank $n$-holomorphic vector bundle $F.$  

The main technical tool in this paper is the theory of hyper-cohomology of complexes of sheaves.  For readers unfamiliar with this theory, we recommend the beautiful book of Voisin \cite{VOI07} for an elementary introduction.
\subsection{Roadmap}

In Section \ref{G theory}, we rapidly review the theory of holomorphic connections on principal $\mathrm{G}$-bundles and fix notation which will be used throughout the paper.

Section \ref{g opers} serves as an introduction to $\mathrm{G}$-opers.  In particular, we give two equivalent definitions, one of which is the original definition of Beilinson-Drinfeld \cite{BD91}, and the second of which is a translation of this definition which is more in the spirit of the theory of locally homogeneous geometric structures.  After these definitions, we explain the connection between complex projective structures and $\mathrm{G}$-opers.  

In Section \ref{models}, we survey the basic structure theory of $\mathrm{G}$-opers on a fixed Riemann surface $X.$  Most importantly, we construct explicit differential-geometric models of $\mathrm{G}$-opers, upon which all of our calculations depend.  
We also review the formal side of the theory, and the results here are all essentially due to Beilinson-Drinfeld \cite{BD91}, \cite{BD05}. But, our point of view is a bit different, and we hope that it is more accessible to the differential geometrically minded reader.

In Section \ref{families}, we build the machinery which allows us to prove that the moduli space of $\mathrm{G}$-opers admits a complex manifold structure.  This includes a discussion of Kuranishi families and the infinitesimal deformation theory of $\mathrm{G}$-opers.  We close Section \ref{families} with a proof of the identifications of the moduli space of $\mathrm{G}$-opers with the bundle of Hitchin bases $\mathcal{B}_{\Sigma}(\mathrm{G}).$ 

In the final Section \ref{hol map pre}, we prove that the map from the moduli space of $\mathrm{G}$-opers to the moduli space of flat $\mathrm{G}$-bundles over $\Sigma$ is a holomorphic immersion.  Using this result, we prove that the moduli space of $\mathrm{G}$-opers admits a natural holomorphic pre-symplectic form.  Finally, we show that there is a family of identifications of the moduli space of $\mathrm{G}$-opers with $\mathcal{B}_{\Sigma}(\mathrm{G})$ that are complex pre-symplectomorphisms for a natural complex pre-symplectic form on $\mathcal{B}_{\Sigma}(\mathrm{G}).$

\textbf{Acknowledgements}:  We deeply thank David Dumas, Bill Goldman, Brice Loustau and Richard Wentworth for many years of conversations. Furthermore, we are grateful to Patrick Brosnan for explaining how to use the Grothendieck-Riemann-Roch theorem.

\section{Gauge theory preliminaries}\label{G theory}
We begin with a rapid introduction to holomorphic flat $\mathrm{G}$-bundles and holomorphic reductions of structure.

For the purposes of these definitions, $\mathrm{G}$ may be taken to be any complex Lie group with Lie algebra $\mathfrak{g}$.  Let $E_{\mathrm{G}}$ be a holomorphic, right principal $\mathrm{G}$-bundle over a Riemann surface $X.$  For $g\in G,$ let $R_{g}: E_{\mathrm{G}}\rightarrow E_{\mathrm{G}}$ denote the holomorphic right $\mathrm{G}$-action.

\begin{definition}
A holomorphic connection on $E_{\mathrm{G}}$ is a holomorphic $1$-form
\begin{align}
\omega: TE_{\mathrm{G}}\rightarrow \mathfrak{g},
\end{align}
which satisfies:
\begin{enumerate}
\item $R_{g}^{\star}\omega=\textnormal{Ad}(g^{-1})\circ \omega$
\item If $X\in \mathfrak{g}$ and $X^{\sharp}$ is the $\mathrm{G}$-invariant vertical vector field on $E_{\mathrm{G}}$ induced by the infinitesimal $\mathrm{G}$-action, then $\omega(X^{\sharp})=X.$
\end{enumerate}
\end{definition}

If $L: G\rightarrow \textnormal{GL}[V]$ is a holomorphic representation of $\mathrm{G}$ on a complex vector space $V$, we denote the associated holomorphic vector bundle by $E_{G}[V].$ A $V$-valued holomorphic differential $k$-form $\overline{\beta}$ on $E_{\mathrm{G}}$ is called $\mathrm{G}$-equivariant
if
\begin{align}
R_{g}^{\star}\overline{\beta}=L(g^{-1})\circ \overline{\beta}
\end{align}
for all $g\in G.$

The $k$-form $\overline{\beta}$ is \emph{horizontal} if the interior product with any vertical tangent vector $Y^{\sharp}$ on $E_{\mathrm{G}}$ satisfies 
\begin{align}
\iota_{\sharp}\circ \overline{\beta}=0.
\end{align}
Given any $\mathrm{G}$-equivariant, horizontal holomorphic $1$-form $\overline{\beta}$, there exists a unique 
$\beta\in \textnormal{H}^{0}(X,\mathcal{K}\otimes \mathcal{E}_{G}[V])$ whose pullback to $E_{\mathrm{G}}$ is equal to $\overline{\beta}.$  Here, $\mathcal{K}$ is the canonical sheaf of germs of holomorphic $1$-forms on $X.$ Throughout this article, we will implicitly identify horizontal, equivariant forms $\overline{\beta}$ with their basic companion $\beta$.

The curvature of a holomorphic connection $\omega$ is the horizontal, $\mathrm{G}$-equivariant holomorphic $2$-form
\begin{align}
F(\omega):=d\omega+\frac{1}{2}[\omega,\omega].
\end{align}
In the above definition, the bracket $[\omega, \omega]$ is the result of tensoring the wedge product of $1$-forms with the Lie bracket on $\mathfrak{g}.$  The curvature descends to a global section $F(\omega)\in \textnormal{H}^{0}(X, \Omega_{X}^{2}\otimes \mathcal{E}_{\mathrm{G}}[\mathfrak{g}]).$  A holomorphic connection $\omega$ is \emph{flat} if $F(\omega)=0.$

Since $X$ is a Riemann surface, the vanishing of any holomorphic $2$-form on $X$ implies that a holomorphic connection $\omega$ on a holomorphic principal $\mathrm{G}$-bundle over $X$ is automatically flat.  

If $H<\mathrm{G}$ is a closed complex Lie subgroup and $E_{H}$ is a holomorphic reduction of structure of the bundle $E_{\mathrm{G}}$ to the subgroup $H<G,$ then the composition
\begin{align}
TE_{H}\rightarrow TE_{\mathrm{G}} \xrightarrow{\omega} \mathfrak{g}\rightarrow \mathfrak{g}/\mathfrak{h}
\end{align}
yields
  a horizontal, $H$-equivariant $1$-form
\begin{align}
\overline{\Psi}: TE_{H}\rightarrow \mathfrak{g}/\mathfrak{h}.
\end{align}
Since $\overline{\Psi}$ is horizontal and $H$-equivariant, there is a unique global section $\Psi\in \textnormal{H}^{0}(X, \mathcal{K}\otimes \mathcal{E}_{H}[\mathfrak{g}/\mathfrak{h}])$ whose pullback to $E_{H}$ agrees with $\overline{\Psi}.$  The section $\Psi$ is called the \emph{second fundamental form} of $\omega$ relative to the reduction $E_{H}.$  

Next, let $\mathcal{O}\subset \mathfrak{g}/\mathfrak{h}$ be an $H\times \mathbb{C}^{*}$-invariant subset.  

\begin{definition}\label{con position}
Let $E_{H}$ be a holomorphic reduction to a subgroup $H<\mathrm{G}$ of a holomorphic flat bundle $(E_{\mathrm{G}}, \omega).$ Then the we say that $\omega$ has relative position $\mathcal{O}$, written $\textnormal{pos}_{E_{H}}(\omega)=\mathcal{O},$ if for all non-zero tangent vectors $v\in TX,$
the second fundamental form satisfies $\Psi(v)\in E_{H}[\mathcal{O}].$
\end{definition} 
Let $\mathrm{G}$ be a connected complex semi-simple Lie group.  Given a holomorphic principal $\mathrm{G}$-bundle $E_{\mathrm{G}}$ over a complex manifold $M$ equipped with a holomorphic connection $\omega,$ the pair $(E_{\mathrm{G}}, \omega)$ is \textit{irreducible} if for every proper parabolic subgroup $P<\mathrm{G}$ and every holomorphic reduction $E_{P}$ of $E_{\mathrm{G}}$, the second fundamental form of $\omega$ relative to $E_{P}$ is non-vanishing.

\section{$\mathrm{G}$-opers}\label{g opers}

In this section, we define $\mathrm{G}$-opers on a Riemann surface $X$ where $\mathrm{G}$ is a connected complex semi-simple Lie group.  

\subsection{Lie theory preliminaries}

Before moving forward to the definiton of $\mathrm{G}$-opers, we need some Lie-theoretic preliminaries.  Choose a Borel subgroup $\mathrm{B}<\mathrm{G}$ and the corresponding Lie sub-algebra $\mathfrak{b}<\mathfrak{g}.$  Furthermore, choose a Cartan subgroup $\mathrm{H}<\mathrm{B}.$

There is an $\mathrm{H}$-invariant Lie algebra grading
\begin{align}\label{grading}
\mathfrak{g}\simeq \bigoplus_{i=-K}^{K}\mathfrak{g}_{i}
\end{align}
called the \emph{height grading}.

The height grading \eqref{grading} defines a $\mathrm{B}$-invariant filtration
\begin{align}
\mathfrak{g}^{K}\subset \mathfrak{g}^{K-1}\subset ...\subset \mathfrak{g}^{0}\subset \mathfrak{g}^{-1}\subset...\subset \mathfrak{g}^{-K}=\mathfrak{g}
\end{align}
with
\begin{align}
\mathfrak{g}^{j}=\bigoplus_{i=j}^{K} \mathfrak{g}_{i}
\end{align}
for $-K\leq j\leq K.$  In particular, $\mathfrak{g}^{0}=\mathfrak{b}.$  

The induced filtration 
\begin{align}\label{g filtration}
\mathfrak{g}^{-1}/\mathfrak{b}\subset\mathfrak{g}^{-2}/\mathfrak{b}\subset...\subset \mathfrak{g}/\mathfrak{b}
\end{align}
is $\mathrm{B}$-invariant and independent of the choice of Cartan sub-algebra.

In terms of the associated flag variety $\mathrm{G}/\mathrm{B}$, there is a $\mathrm{G}$-equivariant isomomorphism
\begin{align}\label{t bundle iso}
T(\mathrm{G}/\mathrm{B})\simeq G\times_{B} \mathfrak{g}/\mathfrak{b},
\end{align}
where $T(\mathrm{G}/\mathrm{B})$ is the tangent bundle of $\mathrm{G}/\mathrm{B}$ and $\mathrm{B}$ acts on $\mathfrak{g}/\mathfrak{b}$ via the adjoint action.

Using the isomorphism \eqref{t bundle iso}, the filtration $\eqref{g filtration}$ induces a filtration
\begin{align}\label{tan fil}
T^{-1}(\mathrm{G}/\mathrm{B})\subset T^{-2}(\mathrm{G}/\mathrm{B})\subset...\subset T(\mathrm{G}/\mathrm{B}).
\end{align}
of the tangent bundle of $\mathrm{G}/\mathrm{B}.$  

The following is an important basic fact which we will use throughout.
\begin{theorem}\label{open orbit}
There is a unique dense, open $\mathrm{B}$-orbit $\mathcal{O}\subset \mathfrak{g}^{-1}/\mathfrak{b}$ with respect to the adjoint $\mathrm{B}$-action.
\end{theorem}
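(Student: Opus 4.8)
The plan is to reduce the statement to a transparent computation on the Cartan torus. First I would make the $\mathrm{B}$-module $\mathfrak{g}^{-1}/\mathfrak{b}$ explicit. Since $\mathfrak{g}^{-1}=\mathfrak{g}_{-1}\oplus\mathfrak{g}^{0}$ and $\mathfrak{g}^{0}=\mathfrak{b}$, the quotient map restricts to a linear isomorphism $\mathfrak{g}_{-1}\xrightarrow{\sim}\mathfrak{g}^{-1}/\mathfrak{b}$, where $\mathfrak{g}_{-1}=\bigoplus_{j=1}^{\ell}\mathfrak{g}_{-\alpha_{j}}$ is the sum of the root spaces of height $-1$, i.e. the negative simple root spaces. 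As each $\mathfrak{g}_{-\alpha_{j}}$ is one-dimensional, $\mathfrak{g}^{-1}/\mathfrak{b}\cong\mathbb{C}^{\ell}$ as a vector space, and this identification is $\mathrm{H}$-equivariant.

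Next I would pin down the $\mathrm{B}$-action. Writing $\mathrm{B}=\mathrm{H}\ltimes\mathrm{N}$ with $\mathrm{N}$ the unipotent radical, whose Lie algebra is $\mathfrak{g}_{\geq 1}=\bigoplus_{i\geq 1}\mathfrak{g}_{i}$, the key observation is that $\mathrm{N}$ acts trivially on the quotient: for $x\in\mathfrak{g}_{i}$ with $i\geq 1$ and $y\in\mathfrak{g}_{-1}$ one has $[x,y]\in\mathfrak{g}_{i-1}\subseteq\mathfrak{b}$, so the infinitesimal action of $\mathfrak{g}_{\geq 1}$ on $\mathfrak{g}^{-1}/\mathfrak{b}$ vanishes, and since $\mathrm{N}$ is connected the whole group acts trivially. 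Hence the $\mathrm{B}$-action factors through $\mathrm{H}$, which acts diagonally on $\bigoplus_{j}\mathfrak{g}_{-\alpha_{j}}$ with weights $-\alpha_{1},\dots,-\alpha_{\ell}$.

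It then remains to produce a unique dense open $\mathrm{H}$-orbit on $\mathbb{C}^{\ell}$. I would take $\mathcal{O}$ to be the locus where all $\ell$ coordinates are non-zero, which is visibly open and dense. To see it is a single orbit, consider the homomorphism of algebraic tori $\phi:\mathrm{H}\to(\mathbb{C}^{*})^{\ell}$, $t\mapsto(\alpha_{1}(t),\dots,\alpha_{\ell}(t))$. Its differential at the identity is $h\mapsto(\alpha_{1}(h),\dots,\alpha_{\ell}(h))$, which is an isomorphism $\mathfrak{h}\to\mathbb{C}^{\ell}$ because the simple roots form a basis of $\mathfrak{h}^{*}$ and $\dim\mathrm{H}=\ell$ for semisimple $\mathrm{G}$. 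Hence $\phi$ is \'etale, its image is an open subgroup of the connected group $(\mathbb{C}^{*})^{\ell}$, and therefore equals $(\mathbb{C}^{*})^{\ell}$; transitivity of $\mathrm{H}$ on $\mathcal{O}$ follows immediately from the surjectivity of $\phi$. Uniqueness is then formal: $\mathbb{C}^{\ell}$ is irreducible, so two distinct dense orbits, being disjoint non-empty open sets, cannot coexist.

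The step requiring the most care, and the main obstacle, is the surjectivity of $\phi$, i.e. checking that the diagonal torus action genuinely reaches every non-vanishing tuple. This hinges on the linear independence of the simple roots together with the equality $\dim\mathrm{H}=\ell$; I would take pains to make the argument insensitive to the isogeny type of $\mathrm{G}$, using only that the image of $\phi$ is a full-dimensional connected subgroup rather than that $\phi$ is itself an isomorphism (the latter would require $\mathrm{G}$ to be of adjoint type).
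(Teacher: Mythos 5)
Your proof is correct and follows essentially the same route as the paper's: both identify $\mathfrak{g}^{-1}/\mathfrak{b}$ with the direct sum of the negative simple root spaces and take $\mathcal{O}$ to be the locus where every component is non-zero. The paper's proof merely asserts that this locus is a single dense open $\mathrm{B}$-orbit, whereas you supply the justification — triviality of the unipotent radical's action on the quotient, plus surjectivity of the simple-root character map $\mathrm{H}\to(\mathbb{C}^{*})^{\ell}$ via its \'etale differential — and your formulation is careful enough to work for any isogeny type of $\mathrm{G}$.
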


\textbf{Remark:}  By the previous discussion (see \eqref{t bundle iso}), this open orbit corresponds to a sub-fiber bundle
\begin{align}\label{orbit tangent}
\mathcal{O}_{\mathrm{G}/\mathrm{B}}\subset T^{-1}(\mathrm{G}/\mathrm{B}),
\end{align}
  whose fibers are open, $\mathbb{C}^{\star}$-invariant subsets of 
the vector bundle $T^{-1}(\mathrm{G}/\mathrm{B}).$  

\begin{definition}\label{imm position}
Let $Y$ be a Riemann surface and $f: Y\rightarrow \mathrm{G}/\mathrm{B}$ a holomorphic immersion.  Then we say that $f$ has position $\mathcal{O}$ if for all non-zero vectors $v\in TY$
the differential satisfies $df(v)\in \mathcal{O}_{\mathrm{G}/\mathrm{B}}.$
\end{definition}

\subsection{$\Sigma$-marked $\mathrm{G}$-opers}
Let $\Sigma$ be a closed, connected, oriented smooth surface of genus at least two.  A $\Sigma$-marked Riemann surface $X$ is a pair $X:=(\Sigma, J)$ where $J$ is a complex structure on $\Sigma$ which induces the ambient orientation of $\Sigma.$ 

For the following, recall Definition \ref{con position}, the open orbit from Theorem \ref{open orbit}, and fix a Borel subgroup $\mathrm{B}<\mathrm{G}.$ 

\begin{definition}
A $\Sigma$-marked $\mathrm{G}$-oper is a $4$-tuple $(E_{\mathrm{G}}, E_{\mathrm{B}}, \omega, X)$ where
\begin{enumerate}
\item X is a $\Sigma$-marked Riemann surface.
\item $(E_{\mathrm{G}},\omega)$ is a holomorphic flat $\mathrm{G}$-bundle on $X.$
\item $E_{\mathrm{B}}$ is a holomorphic reduction of $E_{\mathrm{G}}$ to $\mathrm{B}<\mathrm{G}.$
\item The relative position of $\omega$ satisfies $\textnormal{pos}_{E_{\mathrm{B}}}(\omega)=\mathcal{O}.$
\end{enumerate}
\end{definition}

Now we introduce the notion of isomorphism of $\Sigma$-marked $\mathrm{G}$-opers.

\begin{definition}
Let $(E_{\mathrm{G}}, E_{\mathrm{B}}, \omega, X)$ and $(F_{\mathrm{G}}, F_{\mathrm{B}}, \eta, Y)$ be a pair of $\Sigma$-marked $\mathrm{G}$-opers.  An isomorphism is a Cartesian diagram
\[
\begin{tikzcd}
E_{\mathrm{G}}\arrow{r}{\phi} \arrow{d} &
F_{\mathrm{G}} \arrow{d} \\
X \arrow{r}{f} &
Y,
\end{tikzcd}
\]
where $\phi$ is an isomorphism of holomorphic principal $\mathrm{G}$-bundles satisfying $\phi^{\star}\eta=\omega$ and
\begin{align}
\phi |_{E_{\mathrm{B}}}: E_{\mathrm{B}}\rightarrow F_{\mathrm{B}}
\end{align}
is an isomorphism of principal $\mathrm{B}$-bundles.  Furthermore, $f:X\rightarrow Y$ is a biholomorphism whose underlying smooth map $f: \Sigma\rightarrow \Sigma$ is isotopic to the identity.
\end{definition}

This defines the category/groupoid $\widetilde{\mathcal{O}\mathfrak{p}}_{\Sigma}(\mathrm{G})$\footnote{As we have defined it, the collection of objects in this category is not a set.  This could be remedied in various ways, e.g. by working in a Grothendieck universe, or by restricting the underlying sets of the principal bundles appearing.  We will make no further mention of this issue.} of $\Sigma$-marked $\mathrm{G}$-opers.  We denote by $\mathcal{O}\mathfrak{p}_{\Sigma}(\mathrm{G})$ the set of isomorphism classes of $\Sigma$-marked $\mathrm{G}$-opers.  We will call $\mathcal{O}\mathfrak{p}_{\Sigma}(\mathrm{G})$ the moduli space of $\Sigma$-marked $\mathrm{G}$-opers.  

The Teichm\"{u}ller groupoid $\widetilde{\mathcal{T}}_{\Sigma}$ is the category whose objects are $\Sigma$-marked Riemann surfaces $X$ and morphisms $f: X\rightarrow Y$ are biholomorphisms whose underlying $C^{\infty}$-map $f:\Sigma\rightarrow \Sigma$ is isotopic to the identity.  There is an obvious full functor\footnote{We will see later that this functor is faithful if and only if $\mathrm{G}$ is of adjoint type.}
\begin{align}
\widetilde{\pi}: \widetilde{\mathcal{O}\mathfrak{p}}_{\Sigma}(\mathrm{G})\rightarrow \widetilde{\mathcal{T}}_{\Sigma},
\end{align}
which descends to a set map
\begin{align}
\pi: \mathcal{O}\mathfrak{p}_{\Sigma}(\mathrm{G})\rightarrow \mathcal{T}_{\Sigma},
\end{align}
where $\mathcal{T}_{\Sigma}$ is the set of isomorphism classes of the groupoid $\widetilde{\mathcal{T}}_{\Sigma}.$  

The set $\mathcal{T}_{\Sigma}$ is the Teichm\"{u}ller space of $\Sigma.$  It is a classical fact that $\mathcal{T}_{\Sigma}$ has the structure of a Hausd\"orff complex manifold of dimension $3g-3$ where $\mathrm{G}$ is the genus of $\Sigma.$  

We now introduce the equivalent notion of a $\Sigma$-marked \emph{developed} $\mathrm{G}$-oper, which is closer in spirit to the definition of a complex projective structure.
Fix once and for all a universal cover $\widetilde{\Sigma}\rightarrow \Sigma$ and denote the corresponding group of deck transformations by $\pi.$  With respect to our definitions, given a $\Sigma$-marked Riemann surface $X,$ this gives a unique isomorphism of the group of holomorphic deck transformations of the universal covering $\widetilde{X}\rightarrow X$ with $\pi.$  In what follows, we supress this unique identification.

In the following, recall Definition \ref{imm position}.

\begin{definition}
A $\Sigma$-marked developed $\mathrm{G}$-oper is a triple $(f, \rho, X)$ where 
\begin{enumerate}
\item X is a $\Sigma$-marked Riemann surface.
\item $\rho:\pi\rightarrow \mathrm{G}$ is a homomorphism.
\item $f: \widetilde{X}\rightarrow \mathrm{G}/\mathrm{B}$ is a holomorphic immersion satisfying $\textnormal{pos}(f)=\mathcal{O}.$
\end{enumerate}
\end{definition}

Next comes the definition of an isomorphism of $\Sigma$-marked developed $\mathrm{G}$-opers.

\begin{definition}
An isomorphism of $\Sigma$-marked developed $\mathrm{G}$-opers $(f_{1}, \rho_{1}, X_{1})$ and $(f_{2}, \rho_{2}, X_{2})$ is a commutative diagram
\[
\begin{tikzcd}
\widetilde{X}_{1} \arrow{r}{f_{1}} \arrow{d}{\widetilde{h}} &
\mathrm{G}/\mathrm{B} \arrow{d}{L_{g}} \\
\widetilde{X}_{2} \arrow{r}{f_{2}} &
\mathrm{G}/\mathrm{B}.
\end{tikzcd}
\]
where
\begin{itemize}
\item The map $\widetilde{h}: \widetilde{X}_{1}\rightarrow \widetilde{X}_{2}$ is a $\pi$-equivariant biholomorphism such that the induced underlying $C^{\infty}$-map $h: \Sigma\rightarrow \Sigma$ is isotopic to the identity.
\item  The right vertical arrow $L_{g}: \mathrm{G}/\mathrm{B} \rightarrow \mathrm{G}/\mathrm{B}$ is left translation by an element $g\in G.$
\end{itemize}
\end{definition}

As before, this defines a groupoid $\widetilde{\mathcal{DO}}\mathfrak{p}_{\Sigma}(\mathrm{G})$ of $\Sigma$-marked developed $\mathrm{G}$-opers, and the corresponding \emph{moduli space} $\mathcal{DO}\mathfrak{p}_{\Sigma}(\mathrm{G})$ of isomorphism classes of $\Sigma$-marked developed $\mathrm{G}$-opers. Note the following fact which comes immediately from the definition.

\begin{proposition}
Suppose $((f_{1}, \rho_{1}, X_{1})$ and $(f_{2}, \rho_{2}, X_{2})$ are isomorphic $\Sigma$-marked developed $\mathrm{G}$-opers.  Then there exists $g\in \mathrm{G}$ such that $\rho_{1}=g\circ \rho_{2}\circ g^{-1}.$
\end{proposition}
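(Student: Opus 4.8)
The plan is to read off the conjugacy relation directly from the defining square of an isomorphism, using the one piece of structure implicit in the notion of a developed oper: the developing map is equivariant with respect to its holonomy, so that $f_i(\gamma\cdot\tilde{x})=\rho_i(\gamma)\cdot f_i(\tilde{x})$ for all $\gamma\in\pi$ and $\tilde{x}\in\widetilde{X}_i$ (this equivariance is precisely what allows the pair $(f_i,\rho_i)$ to descend to a $\mathrm{G}$-oper on $X_i$). I would combine this with the two remaining ingredients of an isomorphism: the $\pi$-equivariance $\widetilde{h}(\gamma\cdot\tilde{x})=\gamma\cdot\widetilde{h}(\tilde{x})$, and the commutativity $g\cdot f_1(\tilde{x})=f_2(\widetilde{h}(\tilde{x}))$ of the square.

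First I would compute $f_2(\widetilde{h}(\gamma\cdot\tilde{x}))$ in two ways. Transporting $\gamma$ first through $\widetilde{h}$ and then through $f_2$ produces $\rho_2(\gamma)g\cdot f_1(\tilde{x})$, whereas applying commutativity first and then the equivariance of $f_1$ produces $g\rho_1(\gamma)\cdot f_1(\tilde{x})$. Equating these gives
\[
\rho_2(\gamma)\,g\cdot f_1(\tilde{x})=g\,\rho_1(\gamma)\cdot f_1(\tilde{x})
\]
for every $\tilde{x}\in\widetilde{X}_1$ and $\gamma\in\pi$; equivalently, the element $k_\gamma:=\rho_1(\gamma)^{-1}g^{-1}\rho_2(\gamma)g$ fixes every point of the developing image $f_1(\widetilde{X}_1)\subset\mathrm{G}/\mathrm{B}$.

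The remaining, and genuinely substantive, step is to show that such a $k_\gamma$ is trivial, for then $\rho_1(\gamma)=g^{-1}\rho_2(\gamma)g$ for all $\gamma$ and the claim follows with $g$ replaced by $g^{-1}$. Since the stabilizer in $\mathrm{G}$ of a point $p\in\mathrm{G}/\mathrm{B}$ is exactly the Borel subgroup $\mathrm{B}_p$ it represents, $k_\gamma$ lies in the intersection $\bigcap_{\tilde{x}}\mathrm{B}_{f_1(\tilde{x})}$ of all the Borels occurring along the developing curve. The hard part is to see that this intersection is no larger than the center $Z(\mathrm{G})$: here the position hypothesis $\textnormal{pos}(f_1)=\mathcal{O}$ is what does the work, since it forces the tangent line of $f_1$ into the open $\mathrm{B}$-orbit of $T^{-1}(\mathrm{G}/\mathrm{B})$, and differentiating the oper relation shows the successive osculating jets of $f_1$ fill out the filtration $T^{-1}(\mathrm{G}/\mathrm{B})\subset T^{-2}(\mathrm{G}/\mathrm{B})\subset\cdots\subset T(\mathrm{G}/\mathrm{B})$; consequently the jets of the curve at a single point already determine a collection of flags in sufficiently general position that their common pointwise stabilizer in $\mathrm{G}$ reduces to $Z(\mathrm{G})$. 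An element fixing the entire curve fixes all of this osculating data and is therefore central. For $\mathrm{G}$ of adjoint type $Z(\mathrm{G})$ is trivial, so $k_\gamma=e$ and the conjugacy holds on the nose; for general semisimple $\mathrm{G}$ one obtains conjugacy only up to the central character $\gamma\mapsto k_\gamma\in Z(\mathrm{G})$, which is exactly the source of the failure of faithfulness of $\widetilde{\pi}$ noted in the footnote above. I expect the identification $\bigcap_{\tilde{x}}\mathrm{B}_{f_1(\tilde{x})}=Z(\mathrm{G})$ to be the only real obstacle; the rest is the formal diagram chase.
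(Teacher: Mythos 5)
Your argument is correct, but it is considerably more careful than what the paper offers: the paper states this proposition with only the remark that it ``comes immediately from the definition'' and supplies no proof. Your diagram chase is exactly the immediate part, and you are right that by itself it only shows that $k_\gamma=\rho_1(\gamma)^{-1}g^{-1}\rho_2(\gamma)g$ lies in the pointwise stabilizer $\bigcap_{\tilde x}\mathrm{B}_{f_1(\tilde x)}$ of the developing image, so some further input is needed to kill $k_\gamma$. You are also right to flag that step as the substantive one, and right that the tangent direction alone does not suffice: the unipotent radical of a Borel acts trivially on $\mathfrak{g}^{-1}/\mathfrak{b}$ (since $[\mathfrak{n},\mathfrak{g}_{-1}]\subset\mathfrak{b}$), so fixing the first-order data only cuts the stabilizer down to $Z(\mathrm{G})\cdot N$, and the higher osculating jets filling out the filtration $T^{-1}(\mathrm{G}/\mathrm{B})\subset T^{-2}(\mathrm{G}/\mathrm{B})\subset\cdots$ are genuinely needed. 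Your Frenet-type sketch is the standard way to see that the pointwise stabilizer reduces to $Z(\mathrm{G})$; alternatively, one can import this directly from Proposition \ref{autos} (Beilinson--Drinfeld), since an element fixing the developing curve pointwise is precisely an automorphism of the associated $\mathrm{G}$-oper covering the identity, and these form the center. Finally, your closing observation---that for non-adjoint $\mathrm{G}$ one obtains conjugacy only up to a central character $\gamma\mapsto k_\gamma\in Z(\mathrm{G})$---is consistent with the paper's footnote on the failure of faithfulness of $\widetilde{\pi}$, and is a point the paper's ``immediate'' claim quietly elides, given that the proposition is stated in the generality of connected semisimple $\mathrm{G}$ where the literal statement requires this caveat.
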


The next result is the promised equivalence between our first definition of $\Sigma$-marked $\mathrm{G}$-opers and the latter notion of $\Sigma$-marked developed $\mathrm{G}$-opers: we omit the proof since it is a standard exercise in differential geometry.

\begin{theorem}\label{dop to op equivalence}
There is an equivalence of categories 
\begin{align}
\widetilde{\mathcal{D}}:\widetilde{\mathcal{DO}}\mathfrak{p}_{\Sigma}(\mathrm{G}) \rightarrow \widetilde{\mathcal{O}\mathfrak{p}}_{\Sigma}(\mathrm{G})  
\end{align}
which descends to 
a bijection
\begin{align}
\mathcal{D}:\mathcal{DO}\mathfrak{p}_{\Sigma}(\mathrm{G})\rightarrow \mathcal{O}\mathfrak{p}_{\Sigma}(\mathrm{G})
\end{align}
\end{theorem}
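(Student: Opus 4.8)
The plan is to construct the functor $\widetilde{\mathcal{D}}$ by the suspension (associated-bundle) construction and to exhibit an explicit quasi-inverse; essential surjectivity and full faithfulness then both drop out, and the descent to a bijection $\mathcal{D}$ on isomorphism classes is automatic for an equivalence of categories. Throughout I treat the developing map $f$ as $\rho$-equivariant, $f(\gamma\cdot x)=\rho(\gamma)\cdot f(x)$, which is the natural standing hypothesis built into the notion of a developed structure and is exactly what forces the conjugacy relation between $\rho_1$ and $\rho_2$ recorded above.

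First I would define $\widetilde{\mathcal{D}}$ on objects. Given a developed oper $(f,\rho,X)$, I form the flat bundle $E_{\mathrm{G}}:=\widetilde{X}\times_{\rho}\mathrm{G}$, the quotient of $\widetilde{X}\times\mathrm{G}$ by the $\pi$-action $\gamma\cdot(x,a)=(\gamma\cdot x,\rho(\gamma)a)$; the product flat connection on $\widetilde{X}\times\mathrm{G}$ is $\pi$-invariant and descends to a holomorphic flat connection $\omega$ on $E_{\mathrm{G}}$. Since $f$ is $\rho$-equivariant it is the same datum as a section of the associated bundle $E_{\mathrm{G}}/\mathrm{B}=\widetilde{X}\times_{\rho}(\mathrm{G}/\mathrm{B})$, equivalently a holomorphic reduction $E_{\mathrm{B}}$ of $E_{\mathrm{G}}$ to $\mathrm{B}$. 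On morphisms, an isomorphism $(\widetilde{h},L_g)$ of developed opers descends $\widetilde{h}$ to a biholomorphism $X\to Y$ isotopic to the identity and induces the bundle map $[x,a]\mapsto[\widetilde{h}(x),ga]$, which one checks intertwines the flat connections and carries $E_{\mathrm{B}}$ to $F_{\mathrm{B}}$; functoriality is then immediate from the cocycle bookkeeping forced by $\pi$-equivariance.

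The main step, and the only place where anything substantive must be verified, is that the position condition is preserved; this is the ``standard exercise'' the author elides. Here I would use the identification $T(\mathrm{G}/\mathrm{B})\simeq\mathrm{G}\times_{\mathrm{B}}\mathfrak{g}/\mathfrak{b}$ of \eqref{t bundle iso} together with the induced filtration \eqref{tan fil}. Pulling $(E_{\mathrm{G}},\omega,E_{\mathrm{B}})$ back to the simply connected $\widetilde{X}$ trivializes the flat bundle, so that the covariant derivative of the section $E_{\mathrm{B}}$ with respect to $\omega$ becomes literally the differential $df$ of the developing map; under \eqref{t bundle iso} this differential is identified with the second fundamental form $\Psi$. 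Consequently $df(v)\in\mathcal{O}_{\mathrm{G}/\mathrm{B}}$ for all nonzero $v$ precisely when $\Psi(v)\in E_{\mathrm{B}}[\mathcal{O}]$ for all nonzero $v$, that is, $\textnormal{pos}(f)=\mathcal{O}$ if and only if $\textnormal{pos}_{E_{\mathrm{B}}}(\omega)=\mathcal{O}$. In particular membership in the open orbit $\mathcal{O}\subset\mathfrak{g}^{-1}/\mathfrak{b}$ forces $df$ to be injective, so the immersion hypothesis is subsumed, and $\widetilde{\mathcal{D}}(f,\rho,X)$ is a genuine $\Sigma$-marked $\mathrm{G}$-oper.

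For the quasi-inverse, given a $\Sigma$-marked $\mathrm{G}$-oper $(E_{\mathrm{G}},E_{\mathrm{B}},\omega,X)$ I would pull everything back to $\widetilde{X}$, where flatness of $\omega$ yields a trivialization $\widetilde{X}\times\mathrm{G}$ unique up to a frame at the fixed basepoint; the deck action reads off a holonomy homomorphism $\rho\colon\pi\to\mathrm{G}$, while $E_{\mathrm{B}}$ becomes a $\rho$-equivariant map $f\colon\widetilde{X}\to\mathrm{G}/\mathrm{B}$ with $\textnormal{pos}(f)=\mathcal{O}$ by the same identification run backwards. Standard checks then show the two composites are naturally isomorphic to the respective identity functors: the sole ambiguity, the choice of trivializing frame, changes $\rho$ by conjugation and $f$ by a left translation, which is exactly the $L_g$-freedom in isomorphisms of developed opers. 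Hence $\widetilde{\mathcal{D}}$ is an equivalence and $\mathcal{D}$ is the induced bijection on isomorphism classes. The hard part is precisely the identification of $df$ with $\Psi$ in the preceding paragraph; everything else is formal manipulation of associated bundles and deck transformations.
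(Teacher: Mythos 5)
Your proposal is correct and follows essentially the same route as the paper's (omitted) argument: the associated flat bundle $\widetilde{X}\times_{\rho}\mathrm{G}$ with the $\mathrm{B}$-reduction read off from the equivariant map $f$ in one direction, and the flat trivialization over the simply connected cover recovering $(f,\rho)$ up to a choice of frame in the other. You additionally spell out the one substantive check the paper leaves to the reader, namely that under the flat trivialization the differential $df$ is identified with the second fundamental form $\Psi$, so that the two position conditions match and the immersion hypothesis is automatic.
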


\textbf{Remark:}  There is an obvious action of the group $\textnormal{Diff}^{+}(\Sigma)$ of orientation preserving diffeomorphisms of $\Sigma$ on these categories, and the normal subgroup $\textnormal{Diff}_{0}(\Sigma)$ of diffeomorphisms isotopic to the identity acts via isomorphisms.  Therefore, the map $\mathcal{D}$ is equivariant for the induced actions of the mapping class
group $\textnormal{Mod}(\Sigma):=\textnormal{Diff}^{+}(\Sigma)/\textnormal{Diff}_{0}(\Sigma)$ on the corresponding moduli spaces.
Finally, we quickly describe how these equivalent categories are enhancements of the usual equivalence of categories between $C^{\infty}$-flat $\mathrm{G}$-bundles on $\Sigma$ and homomorphisms $\pi\rightarrow G.$  

Let $\widetilde{\mathcal{F}}_{\Sigma}(\mathrm{G})$ be the category of $C^{\infty}$-flat $\mathrm{G}$-bundles on $\Sigma$ and $\textnormal{Hom}(\pi, \mathrm{G})$ the set of $\mathrm{G}$-valued homomorphisms of the group $\pi.$  As in Theorem \ref{dop to op equivalence} there is an equivalence of categories
\begin{align}
\widetilde{\textnormal{hol}}:\textnormal{Hom}(\pi, \mathrm{G}) \rightarrow \widetilde{\mathcal{F}}_{\Sigma}(\mathrm{G}) ,
\end{align}
where we view $\textnormal{Hom}(\pi, \mathrm{G})$ as a transformation groupoid (hence a category) under the action of $\mathrm{G}$ by conjugation.  

Consider the forgetful functors
\begin{align}\label{forget 1}
\widetilde{\mathcal{DO}}\mathfrak{p}_{\Sigma}(\mathrm{G})&\rightarrow \textnormal{Hom}(\pi, \mathrm{G}) \\
(f, \rho, X) &\mapsto \rho,
\end{align}
and 
\begin{align}\label{forget 2}
\widetilde{\mathcal{O}\mathfrak{p}}_{\Sigma}(\mathrm{G})&\rightarrow \widetilde{\mathcal{F}}_{\Sigma}(\mathrm{G}) \\
(E_{\mathrm{G}}, E_{\mathrm{B}}, \omega, X) &\mapsto (E_{\mathrm{G}}, \omega),
\end{align}
where $\omega$ is the flat $C^{\infty}$-connection on the underlying $C^{\infty}$-bundle $E_{\mathrm{G}}$ which is induced by $\omega$ and the holomorphic structure of $E_{\mathrm{G}}.$  

The proof of the following theorem is straightforward.

\begin{theorem}\label{comm diagram opers}
There is a commutative diagram
\begin{center}
\begin{tikzcd}
\widetilde{\mathcal{DO}\mathfrak{p}}_{\Sigma}(\mathrm{G})  \arrow{d} \arrow{r}{\widetilde{\mathcal{D}}}
& \widetilde{\mathcal{O}}\mathfrak{p}_{\Sigma}(\mathrm{G}) \arrow{d} \\
\textnormal{Hom}(\pi, \mathrm{G}) \arrow{r}{\widetilde{\textnormal{hol}}}
& \widetilde{\mathcal{F}}_{\Sigma}(\mathrm{G}),
\end{tikzcd}
\end{center}
where the horizontal arrows are the previously constructed equivalences of categories and the vertical arrows are the functors defined by $\eqref{forget 1}$ and $\eqref{forget 2}.$  
\end{theorem}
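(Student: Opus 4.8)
The plan is to check that the two composite functors from $\widetilde{\mathcal{DO}\mathfrak{p}}_{\Sigma}(\mathrm{G})$ to $\widetilde{\mathcal{F}}_{\Sigma}(\mathrm{G})$ agree up to a canonical natural isomorphism. I would begin by unwinding both composites on a fixed $\Sigma$-marked developed $\mathrm{G}$-oper $(f, \rho, X)$, recalling that the developing map $f: \widetilde{X} \to \mathrm{G}/\mathrm{B}$ is $\rho$-equivariant. Travelling right then down, the equivalence $\widetilde{\mathcal{D}}$ of Theorem \ref{dop to op equivalence} produces the oper $(E_{f,\mathrm{G}}, E_{f,\mathrm{B}}, \omega_{f}, X)$: here $E_{f,\mathrm{B}}$ is the descent to $X$ of the $\pi$-equivariant principal $\mathrm{B}$-bundle $f^{\star}\mathrm{G} \to \widetilde{X}$, the bundle $E_{f,\mathrm{G}}$ is its extension of structure group to $\mathrm{G}$, and $\omega_{f}$ is the descent of the pullback of the Maurer--Cartan form along the tautological map $f^{\star}\mathrm{G} \to \mathrm{G}$; the forgetful functor \eqref{forget 2} then records the flat $\mathrm{G}$-bundle $(E_{f,\mathrm{G}}, \omega_{f})$. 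Travelling down then right, the forgetful functor \eqref{forget 1} records $\rho$, and $\widetilde{\textnormal{hol}}$ assigns to it the flat bundle $\widetilde{\Sigma} \times_{\rho} \mathrm{G}$ carrying its canonical flat connection. Commutativity on objects thus reduces to a canonical comparison of these two flat bundles.

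The key step is to exhibit this comparison over the simply connected cover and descend it. I would use the tautological identification
\[
f^{\star}\mathrm{G} \times_{\mathrm{B}} \mathrm{G} \xrightarrow{\ \simeq\ } \widetilde{X} \times \mathrm{G}, \qquad \bigl[(x, g), g'\bigr] \mapsto (x, g g'),
\]
which is immediately seen to be well defined and right $\mathrm{G}$-equivariant. Under the lift $\gamma \cdot (x, g) = (\gamma x, \rho(\gamma) g)$ that defines the $\pi$-action on $f^{\star}\mathrm{G}$, this map intertwines the induced $\pi$-action on the left with the $\rho$-twisted action $(x, h) \mapsto (\gamma x, \rho(\gamma) h)$ on the right, so it descends to an isomorphism $E_{f,\mathrm{G}} \simeq \widetilde{\Sigma} \times_{\rho} \mathrm{G}$ of smooth principal $\mathrm{G}$-bundles over $\Sigma$. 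To match the connections I would observe that the Maurer--Cartan form is precisely the connection form of the flat connection on $\widetilde{X} \times \mathrm{G}$ whose horizontal distribution is tangent to the $\widetilde{X}$-factor; transporting this across the identification shows that $\omega_{f}$ corresponds to the canonical flat connection of $\widetilde{\Sigma} \times_{\rho} \mathrm{G}$. This is a differential-geometric restatement of the fact that the holonomy of $\omega_{f}$ is $\rho$.

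Finally, to promote object-wise agreement to a genuine commutative square of functors I would verify naturality. A morphism of developed opers is presented by a $\pi$-equivariant biholomorphism $\widetilde{h}$ together with a left translation $L_{g}$ of $\mathrm{G}/\mathrm{B}$, and by the Proposition above such a morphism forces $\rho_{1} = g \rho_{2} g^{-1}$. I would trace this morphism through both composites: along the down-then-right path it becomes this conjugation followed by the resulting isomorphism of associated flat bundles, while along the right-then-down path it becomes $\widetilde{\mathcal{D}}$ applied to $(\widetilde{h}, L_{g})$ followed by \eqref{forget 2}, and the compatibility of the tautological identification above with both $L_{g}$ and $\widetilde{h}$ yields the commuting square. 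I expect the only real obstacle to be bookkeeping: keeping the $\mathrm{B}$-action, the $\mathrm{G}$-action used in the structure-group extension, and the $\rho$-twisted $\pi$-action all straight, and confirming that left translation by $g$ on $\mathrm{G}/\mathrm{B}$ corresponds on the level of flat bundles to conjugating $\rho$ by $g$. Because both horizontal equivalences rest on the same principle --- that a flat bundle over the simply connected $\widetilde{X}$ is canonically trivial and that descent reintroduces $\rho$ --- this verification is routine, which is why the statement may fairly be called straightforward.
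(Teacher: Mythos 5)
Your proposal is correct. The paper itself offers no proof of this theorem (it is simply declared ``straightforward''), but your argument supplies exactly the verification the paper has in mind: the tautological identification $f^{\star}\mathrm{G}\times_{\mathrm{B}}\mathrm{G}\simeq\widetilde{X}\times\mathrm{G}$, $[(x,g),g']\mapsto(x,gg')$, intertwines the $\pi$-actions and carries the pulled-back Maurer--Cartan form to the canonical flat connection on $\widetilde{\Sigma}\times_{\rho}\mathrm{G}$, which is precisely how the paper's (omitted) construction of $\widetilde{\mathcal{D}}$ proceeds, and your naturality check on morphisms $(\widetilde{h},L_{g})$ is the remaining routine bookkeeping.
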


\subsection{Opers for $\textnormal{PSL}_{2}(\mathbb{C})$} \label{sl2 opers}

As we have promised to exhibit $\mathrm{G}$-opers as a generalization of complex projective structures on Riemann surfaces, in this section we recall the basic properties of the space of complex projective structures and elucidate the relationship to $\mathrm{G}$-opers.

A complex projective structure on $\Sigma$ is a maximal atlas of charts with values in $\mathbb{CP}^{1}$ whose transition maps are given by restrictions of M\"obius transformations.  In particular, any complex projective structure induces a Riemann surface structure on $\Sigma.$  We refer to such a structure as a $\Sigma$-marked complex projective structure.

Given two $\Sigma$-marked complex projective structures $Z_{1}$ and $Z_{2},$ an isomorphism is a smooth map
\begin{align}
h: Z_{1}\rightarrow Z_{2}
\end{align}
whose projective coordinate representation is locally given by a M\"obius transformation, and such that the underlying smooth map $h: \Sigma\rightarrow \Sigma$ is isotopic to the identity.  The moduli space of $\Sigma$-marked complex projective structures $\mathcal{CP}_{\Sigma}$ is the set of isomorphism classes of $\Sigma$-marked complex projective structures.

Equivalently, given a $\Sigma$-marked complex projective structure $Z,$ lifting the structure to the universal cover $\widetilde{Z}$, the coordinate charts globalize to a holomorphic immersion
\begin{align}
f: \widetilde{Z}\rightarrow \mathbb{CP}^{1},
\end{align}
which is equivariant for a homomorphism $\rho: \pi\rightarrow \textnormal{PSL}_{2}(\mathbb{C}).$ 

For $G=\textnormal{PSL}_{2}(\mathbb{C}),$ the flag variety $\mathrm{G}/\mathrm{B}$ is $\mathrm{G}$-equivariantly isomorphic to the complex projective line $\mathbb{CP}^{1}.$  In this case, the definition of a $\Sigma$-marked developed $\mathrm{G}$-oper yields a triple $(f,\rho, X)$ consisting of a $\Sigma$-marked Riemann surface $X,$ a homomorphism $\rho: \pi \rightarrow \textnormal{PSL}_{2}(\mathbb{C})$, and a $\rho$-equivariant local bi-holomorphism
$f: \widetilde{X}\rightarrow \mathbb{CP}^{1}.$

Therefore, a $\Sigma$-marked developed $\textnormal{PSL}_{2}(\mathbb{C})$-oper $(f,\rho, X)$ is the same as a $\Sigma$-marked complex projective structure; or in other terminology, a locally homogeneous $(\textnormal{PSL}_{2}(\mathbb{C}), \mathbb{CP}^{1})$ geometric structure on $\Sigma$ in the sense of Ehressmann-Thurston. 

Let $(f_{1},\rho_{1}, X), (f_{2}, \rho_{2}, X)$ be two $\Sigma$-marked developed $\textnormal{PSL}(2,\mathbb{C})$-opers.  Choose a small open set $U\in \widetilde{X}$ such that $f_{1}|_{U}$ is a biholomorphism onto $f_{1}(U).$
Then,
\begin{align}
f_{2} \circ f_{1}^{-1}: f_{1}(U)\rightarrow \mathbb{CP}^{1}
\end{align}
is a locally injective holomorphic map.  

Given any holomorphic map $q: V\rightarrow \mathbb{CP}^{1}$ where $V\subset \mathbb{CP}^{1}$ is an open set, let $j_{x}^{k}(q)$ be the holomorphic k-jet of the map $q$ at $x\in V.$  

 The action of a M\"obius transformation $g\in \textnormal{PSL}(2,\mathbb{C})$ is denoted by $L_{g}: \mathbb{CP}^{1}\rightarrow \mathbb{CP}^{1}.$
 
 A proof of the following proposition may be found in \cite{HUB06}.

\begin{proposition}\label{osc map}
There exists a unique holomorphic map
\begin{align}
\mathrm{O}_{f_{1}, f_{2}}: \widetilde{X}\rightarrow \textnormal{PSL}_{2}(\mathbb{C})
\end{align}
which satisfies 
\begin{align}
j_{f_{1}(x)}^{2}(L_{\mathrm{O}_{f_{1},f_{2}}(x)})=j_{f_{1}(x)}^{2}(f_{2}\circ f_{1}^{-1}).
\end{align}
Furthermore, for every $\gamma\in \pi,$
\begin{align}
\mathrm{O}_{f_{1},f_{2}}(\gamma(x))=\rho_{2}(\gamma)\circ \mathrm{O}_{f_{1},f_{2}}(x) \circ \rho_{1}(\gamma^{-1}).
\end{align}
\end{proposition}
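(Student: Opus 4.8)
The plan is to build $\mathrm{O}_{f_1,f_2}$ pointwise out of the $2$-jet of the locally defined map $q:=f_2\circ f_1^{-1}$, and then upgrade pointwise existence to global holomorphicity and equivariance. The crux is a purely local fact about $\textnormal{PSL}_2(\mathbb{C})$, which I would isolate as a lemma: for a fixed $p\in\mathbb{CP}^1$, let $J^2_p$ denote the space of $2$-jets at $p$ of local biholomorphisms of $\mathbb{CP}^1$ (i.e.\ holomorphic germs with non-vanishing first derivative). Then the orbit map $\textnormal{PSL}_2(\mathbb{C})\to J^2_p$, $g\mapsto j^2_p(L_g)$, is a biholomorphism. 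This is the precise sense in which $\textnormal{PSL}_2(\mathbb{C})$ acts simply transitively on non-degenerate $2$-jets, and it encodes the classical principle that a projective structure is determined to second order.

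To prove the lemma I would work in an affine coordinate $z$ centered at $p$. Both $\textnormal{PSL}_2(\mathbb{C})$ and $J^2_p$ are $3$-dimensional complex manifolds. For injectivity, if $j^2_p(L_g)=j^2_p(L_h)$ then $L_{h^{-1}g}$ fixes $p$, has derivative $1$, and vanishing second derivative there; writing such a Möbius map as $z\mapsto \alpha z/(\gamma z+1)=\alpha z-\alpha\gamma z^2+\cdots$ forces $\alpha=1$ and $\gamma=0$, so $L_{h^{-1}g}=\textnormal{id}$. The same expansion shows the differential of the orbit map is injective, so the map is an immersion between equidimensional complex manifolds, hence open; matching an arbitrary prescribed value, first derivative, and second derivative confirms surjectivity onto $J^2_p$. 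Bijectivity together with holomorphicity of the orbit map yields a holomorphic inverse $J^2_p\to\textnormal{PSL}_2(\mathbb{C})$.

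Granting the lemma, pointwise existence and uniqueness of $\mathrm{O}_{f_1,f_2}(x)$ are immediate: since $f_1$ is a local biholomorphism and $f_2$ an immersion, $q$ is a local biholomorphism near $f_1(x)$, so $j^2_{f_1(x)}(q)\in J^2_{f_1(x)}$ and there is a unique $g$ with $j^2_{f_1(x)}(L_g)=j^2_{f_1(x)}(q)$. This value is independent of the open set used to define the local inverse of $f_1$, so the assignment is globally well defined on $\widetilde{X}$. Because $f_1,f_2$ are holomorphic and $f_1$ is locally invertible, $x\mapsto j^2_{f_1(x)}(q)$ is a holomorphic section of the $2$-jet bundle; composing with the holomorphic inverse orbit map of the lemma shows $\mathrm{O}_{f_1,f_2}$ is holomorphic. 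For equivariance I would exploit the relations $f_i\circ\gamma=L_{\rho_i(\gamma)}\circ f_i$. Substituting the local inverse identity $f_1^{-1}=\gamma\circ f_1^{-1}\circ L_{\rho_1(\gamma)^{-1}}$ and $f_2\circ\gamma=L_{\rho_2(\gamma)}\circ f_2$ gives, as germs near $f_1(\gamma x)$, the identity $q=L_{\rho_2(\gamma)}\circ q\circ L_{\rho_1(\gamma)^{-1}}$. Setting $g:=\rho_2(\gamma)\,\mathrm{O}_{f_1,f_2}(x)\,\rho_1(\gamma)^{-1}$ and applying the jet chain rule, the $2$-jet of $L_g=L_{\rho_2(\gamma)}\circ L_{\mathrm{O}_{f_1,f_2}(x)}\circ L_{\rho_1(\gamma)^{-1}}$ at $f_1(\gamma x)$ depends only on the $2$-jet of $L_{\mathrm{O}_{f_1,f_2}(x)}$ at the intermediate point $f_1(x)$, which by construction equals $j^2_{f_1(x)}(q)$; hence $j^2_{f_1(\gamma x)}(L_g)=j^2_{f_1(\gamma x)}(q)=j^2_{f_1(\gamma x)}(L_{\mathrm{O}_{f_1,f_2}(\gamma x)})$, and uniqueness from the lemma delivers $\mathrm{O}_{f_1,f_2}(\gamma x)=\rho_2(\gamma)\,\mathrm{O}_{f_1,f_2}(x)\,\rho_1(\gamma)^{-1}$.

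The main obstacle is the simple transitivity lemma, and within it the clean verification of surjectivity of the orbit map and holomorphicity of its inverse rather than an unstructured coordinate grind; once that is secured, existence, uniqueness, holomorphic dependence, and the equivariance relation all follow formally.
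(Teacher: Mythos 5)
Your proof is correct and is essentially the standard argument: the paper does not prove this proposition itself but cites \cite{HUB06}, and the proof there rests on exactly the simple-transitivity lemma you isolate, namely that $g\mapsto j^{2}_{p}(L_{g})$ is a biholomorphism from $\textnormal{PSL}_{2}(\mathbb{C})$ onto the space of non-degenerate $2$-jets at $p$, from which pointwise existence/uniqueness, holomorphicity, and $(\rho_{1},\rho_{2})$-equivariance follow formally as you describe.
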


In the complex projective structures literature, the map $\mathrm{O}_{f_{1},f_{2}}$ is usually called the \emph{osculating} map.

By Proposition \ref{osc map}, the map
\begin{align}
\widetilde{X}\times \textnormal{PSL}_{2}(\mathbb{C}) &\rightarrow \widetilde{X} \times \textnormal{PSL}_{2}(\mathbb{C}) \\
(x, g) &\mapsto (x, \mathrm{O}_{f_{1},f_{2}}(x)g)
\end{align}
descends to an isomorphism of holomorphic principal $\textnormal{PSL}_{2}(\mathbb{C})$-bundles
\begin{align}
\mathrm{O}_{f_{1}, f_{2}}: \widetilde{X} \times_{\rho_{1}} \textnormal{PSL}_{2}(\mathbb{C}) \rightarrow \widetilde{X} \times_{\rho_{2}} \textnormal{PSL}_{2}(\mathbb{C}).
\end{align}

Let $\{f_{1}, x, e_{1}\}$ be a fixed $\mathfrak{sl}_{2}(\mathbb{C})$-triple in $\mathfrak{sl}_{2}(\mathbb{C})$ where the span of $\{x, e_{1}\}$ is the equal to the upper triangular Borel sub-algebra.  

Recalling the $\Sigma$-marked developed $\textnormal{PSL}_{2}(\mathbb{C})$-oper $(f_{1},\rho_{1}, X_{1}),$ the locally injective holomorphic map $f_{1}:\widetilde{X}\rightarrow \mathbb{CP}^{1}$ defines a holomorphic reduction of $\widetilde{X}\times_{\rho_{1}} \textnormal{PSL}_{2}(\mathbb{C})$ to the Borel $\mathrm{B}<\mathrm{G}:$
\begin{align}
E_{\mathrm{B}}:=\{ [(x,g)]\in \widetilde{X}\times_{\rho_{1}} \textnormal{PSL}_{2}(\mathbb{C}) \ | \ L_{g^{-1}}\circ f_{1}(x)=e\mathrm{B}\in \textnormal{PSL}_{2}(\mathbb{C})/\mathrm{B}\}.
\end{align}
Note that the definition of $E_{\mathrm{B}}$ is invariant the \emph{right} $\mathrm{B}$-action on pairs in $\widetilde{X}\times_{\rho_{1}} \textnormal{PSL}_{2}(\mathbb{C}).$  

The Borel subgroup acts on $\mathfrak{g}_{1}=\textnormal{span}_{\mathbb{C}}(e_{1}),$
and therefore the associated line bundle $E_{\mathrm{B}}[\mathfrak{g}_{1}]$ is well defined.  It is a direct 
consequence of the oper condition that $E_{\mathrm{B}}[\mathfrak{g}_{1}]\simeq K.$  

Given a $\Sigma$-marked Riemann surface $X,$ let $\widetilde{\mathcal{DO}\mathfrak{p}}_{X}(\textnormal{PSL}_{2}(\mathbb{C}))$ be the fiber over $X$ of the fully faithful functor
\begin{align}
\widetilde{\mathcal{DO}\mathfrak{p}}_{\Sigma}(\textnormal{PSL}_{2}(\mathbb{C}))\rightarrow \widetilde{\mathcal{T}}_{\Sigma},
\end{align}
and $\mathcal{DO}\mathfrak{p}_{X}(\textnormal{PSL}_{2}(\mathbb{C}))$ the corresponding set of isomorphism classes.

Then we have the following proposition (see \cite{DUM09}).
\begin{proposition}\label{param sl2}
If $\omega_{\rho_{2}}$ is the canonically defined flat connection on $\widetilde{X} \times_{\rho_{2}} \textnormal{PSL}_{2}(\mathbb{C})$ and $\omega_{\rho_{1}}$ is the canonically defined flat connection on $\widetilde{X} \times_{\rho_{1}} \textnormal{PSL}_{2}(\mathbb{C})$, then
\begin{align}
\mathrm{O}_{f_{1}, f_{2}}^{\star}(\omega_{\rho_{2}})-\omega_{\rho_{1}} \in \textnormal{H}^{0}(X, \mathcal{K}\otimes \mathcal{E}_{\mathrm{B}}[\mathfrak{g}_{1}])\simeq \textnormal{H}^{0}(X, \mathcal{K}^{2}).
\end{align}
Moreover, this assignment
defines a bijection (which depends on $(f_{1}, \rho_{1})),$
\begin{align}
\mathcal{DO}\mathfrak{p}_{X}(\textnormal{PSL}_{2}(\mathbb{C}))\simeq \textnormal{H}^{0}(X, \mathcal{K}^{2}).
\end{align}
This gives the space $\mathcal{DO}\mathfrak{p}_{X}(\textnormal{PSL}_{2}(\mathbb{C}))$ the structure of an affine space with underlying vector space of translations $\textnormal{H}^{0}(X, \mathcal{K}^{2}).$
\end{proposition}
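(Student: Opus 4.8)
The plan is to exploit the osculating map of Proposition \ref{osc map} to compare the two flat connections and to show that their difference is constrained to the top graded piece $\mathfrak{g}_1$ of $\mathfrak{sl}_2(\mathbb{C})$. Since $\mathrm{O}_{f_1,f_2}$ realizes the bundle isomorphism $\widetilde{X}\times_{\rho_1}\textnormal{PSL}_2(\mathbb{C})\to \widetilde{X}\times_{\rho_2}\textnormal{PSL}_2(\mathbb{C})$, the pulled-back connection $\mathrm{O}_{f_1,f_2}^{\star}(\omega_{\rho_2})$ lives on the same bundle as $\omega_{\rho_1}$, so their difference is a priori a holomorphic $\mathfrak{g}$-valued one-form; concretely, in the flat trivialization over $\widetilde{X}$ it is the pull-back of the Maurer-Cartan form by $\mathrm{O}_{f_1,f_2}$. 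The equivariance relation $\mathrm{O}_{f_1,f_2}(\gamma(x))=\rho_2(\gamma)\circ \mathrm{O}_{f_1,f_2}(x)\circ \rho_1(\gamma^{-1})$ is exactly the cocycle condition guaranteeing that this one-form descends to a global holomorphic section over $X$. The first task is therefore to identify in which sub-bundle of $\mathcal{E}_{\textnormal{PSL}_2(\mathbb{C})}[\mathfrak{g}]$ this section takes its values.

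The heart of the argument---and the step I expect to be the main obstacle---is the local computation showing that this difference lies in the $\mathfrak{g}_1$-component relative to the Borel reduction $E_{\mathrm{B}}$ determined by $f_1$. The key input is that $\mathrm{O}_{f_1,f_2}(x)$ is defined to match the $2$-jet of $f_2\circ f_1^{-1}$ at $f_1(x)$. Working in a local coordinate in which $\omega_{\rho_1}$ is put in companion (``oper'') gauge adapted to $E_{\mathrm{B}}$, agreement of the $0$-jet pins down the same Borel reduction (the $\mathfrak{b}$-level), agreement of the $1$-jet forces the $\mathfrak{g}_{-1}$-component---that is, the second fundamental form---to coincide, and the normalization supplied by the $2$-jet forces the $\mathfrak{g}_0$ (Cartan) component to coincide as well. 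What is left is precisely the $\mathfrak{g}_1$-component, which is governed by the third-order deviation of $f_2\circ f_1^{-1}$ from its osculating M\"obius transformation, i.e., the Schwarzian derivative. Thus the difference is a holomorphic section of $\mathcal{K}\otimes\mathcal{E}_{\mathrm{B}}[\mathfrak{g}_1]$, and invoking the canonical isomorphism $\mathcal{E}_{\mathrm{B}}[\mathfrak{g}_1]\simeq\mathcal{K}$ recorded above yields an element of $\textnormal{H}^0(X,\mathcal{K}^2)$.

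For the bijection, I would argue injectivity and surjectivity separately. For injectivity, if two developed opers $(f_2,\rho_2,X)$ and $(f_2',\rho_2',X)$ produce the same quadratic differential, then their pulled-back connections agree as flat connections on $\widetilde{X}\times_{\rho_1}\textnormal{PSL}_2(\mathbb{C})$ carrying the same Borel reduction; composing the two osculating maps then exhibits an explicit isomorphism of developed opers, so the classes coincide. For surjectivity, given $q\in\textnormal{H}^0(X,\mathcal{K}^2)$ I would lift it through $\mathcal{E}_{\mathrm{B}}[\mathfrak{g}_1]\simeq\mathcal{K}$ to a $\mathfrak{g}_1$-valued holomorphic one-form and set $\omega':=\omega_{\rho_1}+q$. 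Flatness is automatic on a Riemann surface, and since $\mathfrak{g}_1\subset\mathfrak{b}$ the second fundamental form of $\omega'$ relative to $E_{\mathrm{B}}$ is unchanged, so $\textnormal{pos}_{E_{\mathrm{B}}}(\omega')=\mathcal{O}$ and $\omega'$ is again an oper connection; its monodromy $\rho_2$ together with the developing map $f_2$ obtained from the section $E_{\mathrm{B}}$ of the associated flag bundle give a developed oper whose associated difference is, by construction, $q$.

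Finally, the affine structure is obtained by transporting the additive structure of $\textnormal{H}^0(X,\mathcal{K}^2)$ through the bijection, once one checks that the intrinsic translation $[\omega]\mapsto[\omega+q]$ of the surjectivity step corresponds, under the bijection with any choice of basepoint, to vector addition in $\textnormal{H}^0(X,\mathcal{K}^2)$. This compatibility is a cocycle relation for the assignment $q$, reflecting additivity of the Maurer-Cartan differences (equivalently, the classical cocycle identity for the Schwarzian derivative); freeness and transitivity of the resulting action then follow immediately from bijectivity. The only delicate point is well-definedness of the translation on isomorphism classes, which holds because the identification $\mathcal{E}_{\mathrm{B}}[\mathfrak{g}_1]\simeq\mathcal{K}$ is canonical and is intertwined by isomorphisms of opers.
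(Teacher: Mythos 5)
The paper does not actually prove this proposition—it cites \cite{DUM09}—and your outline is a correct reconstruction of that standard argument: the Maurer--Cartan logarithmic derivative of the osculating map lies in $\mathcal{K}\otimes\mathcal{E}_{\mathrm{B}}[\mathfrak{g}_{1}]$ because matching $k$-jets for $k=0,1,2$ kills the components in $\mathfrak{g}_{-1}$ and $\mathfrak{g}_{0}$ under the correspondence between the height filtration of $\mathfrak{sl}_{2}(\mathbb{C})$ and the order of vanishing of vector fields at $f_{1}(x)$, leaving exactly the Schwarzian of $f_{2}\circ f_{1}^{-1}$ in the $\mathfrak{g}_{1}$-line. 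Your surjectivity step (adding a $\mathfrak{g}_{1}$-valued holomorphic one-form leaves the second fundamental form, hence the oper condition, unchanged because $\mathfrak{g}_{1}\subset\mathfrak{b}$) is precisely the mechanism the paper uses for general $\mathrm{G}$ in Proposition \ref{oper param 1}, so the proposal is consistent with the paper's treatment.
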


\textbf{Remark:}  Soon, we will see that this is a general phenomenon for $\mathrm{G}$-opers when $\mathrm{G}$ is a complex simple Lie group of adjoint type.  The above description is equivalent to the classical identification of complex projective structures with holomorphic quadratic differentials that arises from the Schwarzian derivative \cite{DUM09}.  In particular, up to a constant multiple, the quadratic differential appearing in Proposition \ref{param sl2} is the Schwarzian derivative of the locally univalent (multi-valued) map $f_{2}\circ f_{1}^{-1}.$

The question of understanding the moduli space of $\Sigma$-marked complex projective structures was analyzed by Hubbard in \cite{HUB81} where he proved the following theorem.
We adopt the language of opers here, whereas Hubbard worked directly with complex projectives structures since opers were not defined at the time.

\begin{theorem}\label{Hubbard}
The moduli space $\mathcal{O}\mathfrak{p}_{\Sigma}(\textnormal{PSL}_{2}(\mathbb{C}))$ of $\Sigma$-marked $\textnormal{PSL}_{2}(\mathbb{C})$-opers has the structure of a complex manifold of dimension $6g-6$ where $g$ is the genus of $\Sigma.$
Furthermore, the map
\begin{align}\label{cps proj}
\mathcal{O}\mathfrak{p}_{\Sigma}(\textnormal{PSL}_{2}(\mathbb{C})) \rightarrow \mathcal{T}_{\Sigma}
\end{align}
is a holomorphic affine bundle.

Finally, every holomorphic section of the projection
\begin{align}
\mathcal{O}\mathfrak{p}_{\Sigma}(\textnormal{PSL}_{2}(\mathbb{C})) \rightarrow \mathcal{T}_{\Sigma}
\end{align}
induces a biholomorphism
\begin{align}
\mathcal{O}\mathfrak{p}_{\Sigma}(\textnormal{PSL}_{2}(\mathbb{C})) \simeq T^{*}\mathcal{T}_{\Sigma}
\end{align}
where $T^{*}\mathcal{T}_{\Sigma}$ is the cotangent bundle of Teichm\"{u}ller space.
\end{theorem}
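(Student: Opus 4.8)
The plan is to promote the fiberwise description of Proposition \ref{param sl2} to a holomorphic bundle statement over Teichm\"uller space. First I would record the dimension count. For $g\geq 2$, Riemann--Roch gives $\dim_{\mathbb{C}}\textnormal{H}^{0}(X,\mathcal{K}^{2})=3g-3$, so by Proposition \ref{param sl2} each fiber $\mathcal{DO}\mathfrak{p}_{X}(\textnormal{PSL}_{2}(\mathbb{C}))$ of the projection is an affine space of dimension $3g-3$; combined with $\dim_{\mathbb{C}}\mathcal{T}_{\Sigma}=3g-3$ this yields the asserted total dimension $6g-6$.

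Next I would build the model vector bundle and the affine bundle structure. Over $\mathcal{T}_{\Sigma}$ there is a universal family of Riemann surfaces $p:\mathcal{X}\rightarrow \mathcal{T}_{\Sigma}$, and the pushforward $p_{\star}\mathcal{K}^{2}_{\mathcal{X}/\mathcal{T}_{\Sigma}}$ of the square of the relative canonical sheaf is, by Grauert's theorem together with the constancy of $\dim_{\mathbb{C}}\textnormal{H}^{0}(X,\mathcal{K}^{2})$, a holomorphic vector bundle $\mathcal{B}_{\Sigma}(\textnormal{PSL}_{2}(\mathbb{C}))$ of rank $3g-3$ with fiber $\textnormal{H}^{0}(X,\mathcal{K}^{2})$ over $X$. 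The relative Schwarzian of Proposition \ref{param sl2} exhibits each fiber of $\mathcal{O}\mathfrak{p}_{\Sigma}(\textnormal{PSL}_{2}(\mathbb{C}))\rightarrow \mathcal{T}_{\Sigma}$ as a torsor over the corresponding fiber of $\mathcal{B}_{\Sigma}(\textnormal{PSL}_{2}(\mathbb{C}))$. To conclude that the projection is a \emph{holomorphic} affine bundle modeled on $\mathcal{B}_{\Sigma}(\textnormal{PSL}_{2}(\mathbb{C}))$, I would produce holomorphic local sections — for instance via the Bers simultaneous uniformization, which supplies a holomorphic family of reference projective structures over a neighborhood in $\mathcal{T}_{\Sigma}$ — and check that the difference map into $\mathcal{B}_{\Sigma}(\textnormal{PSL}_{2}(\mathbb{C}))$ is holomorphic and affine on overlaps. (Alternatively, the complex-analytic construction underlying Theorem \ref{def space} furnishes the complex manifold structure directly, the present statement being its $\textnormal{PSL}_{2}(\mathbb{C})$ specialization.)

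Finally, I would identify $\mathcal{B}_{\Sigma}(\textnormal{PSL}_{2}(\mathbb{C}))$ with $T^{\star}\mathcal{T}_{\Sigma}$ and trivialize. Since $T_{X}\mathcal{T}_{\Sigma}\simeq \textnormal{H}^{1}(X,\mathcal{K}^{-1})$, relative Serre duality along $p$ gives a fiberwise-perfect, holomorphically varying pairing
\begin{align}
\textnormal{H}^{0}(X,\mathcal{K}^{2})\times \textnormal{H}^{1}(X,\mathcal{K}^{-1})\rightarrow \mathbb{C},
\end{align}
hence a bundle isomorphism $\mathcal{B}_{\Sigma}(\textnormal{PSL}_{2}(\mathbb{C}))\simeq T^{\star}\mathcal{T}_{\Sigma}$. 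Because $\mathcal{T}_{\Sigma}$ is Stein (Bers embedding), Cartan's Theorem B forces the obstruction to holomorphic sections of the affine bundle to vanish, so holomorphic sections exist; for any such section $s$, the map $\eta\mapsto \eta - s(\pi(\eta))$ (difference in the affine fiber) is a biholomorphism of $\mathcal{O}\mathfrak{p}_{\Sigma}(\textnormal{PSL}_{2}(\mathbb{C}))$ onto $\mathcal{B}_{\Sigma}(\textnormal{PSL}_{2}(\mathbb{C}))\simeq T^{\star}\mathcal{T}_{\Sigma}$ commuting with the projections.

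The main obstacle is the holomorphicity pervading the second step: establishing that $\mathcal{O}\mathfrak{p}_{\Sigma}(\textnormal{PSL}_{2}(\mathbb{C}))$ carries a complex structure for which the projection is a holomorphic affine bundle, i.e.\ that the fiberwise affine structure of Proposition \ref{param sl2} varies holomorphically and admits holomorphic local trivializations. This is exactly the analytic heart of the classical work of Earle, Hejhal and Hubbard, resting on the holomorphic dependence of the Schwarzian parametrization on the moduli of $X$; the Serre-duality identification in the last step likewise requires the relative duality over the universal curve rather than its fiberwise version alone.
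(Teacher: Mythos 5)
Your proposal is sound, but it should be noted that the paper does not actually prove Theorem \ref{Hubbard}: it is quoted as the classical result of Earle \cite{EAR81}, Hejhal \cite{HEJ78} and Hubbard \cite{HUB81}, and the route you sketch --- the Schwarzian/affine-torsor description of Proposition \ref{param sl2}, Grauert's theorem applied to the direct image of the square of the relative canonical sheaf, local holomorphic trivializations via Bers sections, relative Serre duality to identify the model bundle with $T^{\star}\mathcal{T}_{\Sigma}$, and Steinness of $\mathcal{T}_{\Sigma}$ to produce global holomorphic sections --- is precisely the classical argument being cited. You correctly flag the one point that carries all the analytic weight, namely that the Schwarzian parametrization depends holomorphically on the moduli of $X$, which is what Hubbard's paper establishes. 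By contrast, the machinery this paper develops to reprove and generalize the statement (Theorems \ref{def space}, \ref{global opers} and \ref{h base}) takes a genuinely different route: it constructs universal Kuranishi small deformations of each marked oper, identifies the tangent space with the hypercohomology $\mathbb{H}^{1}(X,\mathcal{A}^{\bullet})$ of a two-term Atiyah-type complex, uses the bases of these universal families as charts, and then obtains the identification with the bundle of Hitchin bases from the Beilinson--Drinfeld uniqueness theorem (Theorem \ref{unique iso}) rather than from Serre duality. Your approach buys a concrete, elementary description of the affine structure and a direct link to the cotangent bundle, but it is special to $\textnormal{PSL}_{2}(\mathbb{C})$, where the fibers are honest affine spaces; the paper's deformation-theoretic approach is heavier but extends to all adjoint simple $\mathrm{G}$, where the fibers carry only the weaker \emph{partially affine} structure discussed in the introduction. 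Neither route has a gap for the $\textnormal{PSL}_{2}(\mathbb{C})$ case, provided one accepts the holomorphic dependence of the Schwarzian on moduli (your route) or the Kuranishi existence theorem (the paper's route).
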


\textbf{Remark:}  There is a family of Bers' holomorphic sections of the bundle \eqref{cps proj}, each of which  yields a holomorphic identification
\begin{align}
\mathcal{O}\mathfrak{p}_{\Sigma}(\textnormal{PSL}_{2}(\mathbb{C}))\simeq T^{*}\mathcal{T}_{\Sigma}.
\end{align}
A Bers' section requires the choice of a (conjugate) Riemann surface $[\overline{Y}]\in \overline{\mathcal{T}_{\Sigma}}$, and is defined by sending a Riemann surface $[X]\in \mathcal{T}_{\Sigma}$ to the complex projective structure on the \emph{top} component of the quasi-Fuchsian manifold
determined by $(X, \overline{Y})\in \mathcal{T}_{\Sigma}\times \overline{\mathcal{T}_{\Sigma}}.$
In section \ref{families}, we will prove a generalization of Theorem \ref{Hubbard} exhibiting the space of $\mathrm{G}$-opers (for $\mathrm{G}$ complex simple of adjoint type) as a bundle over $\mathcal{T}_{\Sigma}.$

\section{Explicit Models}\label{models}

In this short section, we will give an explicit construction of $\mathrm{G}$-opers where $\mathrm{G}$ is a complex simple Lie group of adjoint type.  These explicit models will be essential for many cohomological calculations we make later in the paper.  

Let $\mathfrak{g}$ be the simple Lie algebra of $\mathrm{G}$ and $\ell$ denote the rank of $\mathfrak{g}.$  As always, we have fixed a Borel subalgebra $\mathfrak{b}<\mathfrak{g}.$  Recall the height grading 
\begin{align}\label{height}
\mathfrak{g}\simeq \bigoplus_{i=-m_{\ell}}^{m_{\ell}} \mathfrak{g}_{i}
\end{align}
corresponding to a Cartan subalgebra $\mathfrak{h}<\mathfrak{b}.$ 

Fixing $e_{1}\in \mathfrak{g}_{1}$ a principal nilpotent element (i.e. an element whose projection onto every simple root space is non-vanishing), the centralizer of $e_{1}$ is an $\ell$-dimensional subspace; we fix basis vectors $e_{i}\in \mathfrak{g}_{m_{i}}$
such that $\{e_{1},...,e_{\ell}\}\subset \mathfrak{g}$ spans the centralizer of $e_{1}.$  The numbers $1=m_{1}\leq...\leq m_{\ell}$ are the exponents of $\mathfrak{g}.$  

Next, complete the regular nilpotent element $e_{1}$ to an $\mathfrak{sl}_{2}$-triple $\{f_{1}, x, e_{1}\}\subset \mathfrak{g}.$.  Then we have
$f_{1}\in \mathfrak{g}_{-1}, x\in \mathfrak{g}_{0}, e_{1}\in \mathfrak{g}_{1}.$  Therefore, since \eqref{height} is a grading, 
\begin{align}\label{f1 action}
\textnormal{ad}(f_{1})(\mathfrak{g}_{i})\subset \mathfrak{g}_{i-1},
\end{align}
and 
\begin{align}\label{e1 action}
\textnormal{ad}(e_{1})(\mathfrak{g}_{i})\subset \mathfrak{g}_{i+1}.
\end{align}
\textbf{Remark}
The sub-algebra generated by an $\mathfrak{sl}_{2}$-triple $\{f_{1}, x, e_{1}\}\subset \mathfrak{g}$ with $e_{1}$ principal nilpotent is called a principal three-dimensional sub-algebra.

The $\mathfrak{sl}_{2}$-triple $\{f_{1}, x, e_{1}\}\subset \mathfrak{g}$ induces an injective homomorphism
\begin{align}
\iota_{\mathrm{G}}: \textnormal{PSL}_{2}(\mathbb{C})\rightarrow G
\end{align}
called the principal three-dimensional subgroup.  Moreover, there is a holomorphic embedding
\begin{align}
f_{G}: \mathbb{CP}^{1}\rightarrow \mathrm{G}/\mathrm{B}
\end{align}
which is $\iota_{\mathrm{G}}$-equivariant.  The holomorphic map $f_{G}$ is called the principal rational curve.

Fix a $\Sigma$-marked Riemann surface $X$ and let $K^{i}$ be the $i$-th tensor power of the canonical bundle of $X.$  

Consider the $C^{\infty}$-vector bundle
\begin{align}
E_{\mathrm{G}}[\mathfrak{g}]:=\bigoplus_{i=-m_{\ell}}^{m_{\ell}} K^{i}\otimes \mathfrak{g}_{i}
\end{align}
with structure group $\mathrm{G},$ where the grading of $\mathfrak{g}$ is taken from \eqref{height}.

Given a smooth section of $E_{\mathrm{G}}[\mathfrak{g}]$
\begin{align}
s=\sum_{i} \beta_{i}\otimes V_{i},
\end{align}
where $\beta_{i}\otimes V_{i}\in \mathcal{A}^{0}(X, K^{i}\otimes \mathfrak{g}_{i}),$ define a holomorphic structure on $E_{\mathrm{G}}[\mathfrak{g}]$ via the $\overline{\partial}$-operator:
\begin{align}
\overline{\partial} s=\sum_{i} \overline{\partial}_{i} \beta_{i}\otimes V_{i}+ h\cdot \beta_{i}\otimes \textnormal{ad}(e_{1})(V_{i}).
\end{align}
Above, $\overline{\partial}_{i}$ is the $\overline{\partial}$-operator on $K^{i}$ and $h$ is the Hermitian metric on $\Theta$ arising from the unique hyperbolic metric which uniformizes $X$, which we view as a tensor $h\in \mathcal{A}^{(0,1)}(X, K).$  Furthermore, the term
$h\cdot \beta_{i}$ is viewed as a tensor in $\mathcal{A}^{(0,1)}(X, K^{i+1}).$  Finally, note that this is a well-defined $\overline{\partial}$-operator by \eqref{e1 action}.

For every non-zero $i\in [-m_{\ell}, m_{\ell}]\cap \mathbb{Z},$ the Hermitian metric $h$ induces a Hermitian metric on $K^{i}$, and for $K^{0}\simeq \mathcal{O}$ we use the trivial Hermitian metric
\begin{align}
(f, g)\mapsto f\overline{g}.
\end{align}
  
If $\partial_{i}$ is the $(1,0)$-part of the Chern connection of the induced Hermitian metric on $K^{i},$ define an operator $\nabla$ by the formula:
\begin{align}
\nabla s=\sum_{i} \partial_{i} \beta_{i}\otimes V_{i}+ \beta_{i} \otimes \textnormal{ad}(f_{1})(V_{i}).
\end{align}
Viewing $\beta_{i}\in \mathcal{A}^{(1,0)}(X, K^{i-1}),$ this is a well-defined connection of type $(1,0)$ by \eqref{f1 action}.

\begin{proposition}
The operator $\nabla$ defines a holomorphic connection on the holomorphic vector bundle $(E_{\mathrm{G}}[\mathfrak{g}], \overline{\partial})$ and the $C^{\infty}$-connection $D=\nabla+\overline{\partial}$ is flat.   
\end{proposition}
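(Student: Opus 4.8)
The plan is to prove both assertions at once by computing the curvature of $D=\nabla+\overline{\partial}$ and showing it vanishes identically. First I would record that $\nabla$ obeys the $(1,0)$-Leibniz rule $\nabla(fs)=\partial f\otimes s+f\nabla s$ and $\overline{\partial}$ the $(0,1)$-Leibniz rule: in both operators the terms built from $\mathrm{ad}(f_{1})$ and $\mathrm{ad}(e_{1})$ are $C^{\infty}(X)$-linear (zeroth order), while the derivative terms $\partial_{i},\overline{\partial}_{i}$ satisfy Leibniz, so $D$ is a genuine $C^{\infty}$-connection with $D^{0,1}=\overline{\partial}$ and $D^{1,0}=\nabla$; the relations \eqref{f1 action} and \eqref{e1 action} guarantee that each summand lands in the correct graded piece of $E_{\mathrm{G}}[\mathfrak{g}]$. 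Because $X$ is a Riemann surface, every $(2,0)$- and $(0,2)$-form vanishes, so $F_{D}=D^{2}$ reduces to its $(1,1)$-part $\overline{\partial}\nabla+\nabla\overline{\partial}$. This single expression is simultaneously the obstruction to $\nabla$ being a holomorphic connection (the compatibility $\overline{\partial}\nabla+\nabla\overline{\partial}=0$, i.e. $\nabla$ preserves holomorphic sections) and the entire curvature of $D$. Hence it suffices to prove $F_{D}^{1,1}=0$.

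Next I would split $D=\nabla^{\mathrm{Ch}}+A_{f}+A_{e}$, where $\nabla^{\mathrm{Ch}}=\bigoplus_{i}(\partial_{i}+\overline{\partial}_{i})$ is the block-diagonal Chern connection of the induced Hermitian metrics on the $K^{i}$, where $A_{f}$ is the $(1,0)$-form assembled from the constant endomorphism $\mathrm{ad}(f_{1})$ and the tautological holomorphic identification $K^{i}\cong K\otimes K^{i-1}$, and where $A_{e}=h\cdot\mathrm{ad}(e_{1})$ is the $(0,1)$-form built from the metric tensor $h$. Then
\begin{align}
F_{D}=F_{\nabla^{\mathrm{Ch}}}+d^{\nabla^{\mathrm{Ch}}}(A_{f}+A_{e})+(A_{f}+A_{e})\wedge(A_{f}+A_{e}).
\end{align}
Since $A_{f}$ lowers and $A_{e}$ raises the $\mathfrak{g}$-grading by one, the contributions to $F_{D}^{1,1}$ land in distinct graded components $\mathrm{Hom}(\mathfrak{g}_{i},\mathfrak{g}_{i+k})$: the term $\overline{\partial}^{\mathrm{Ch}}A_{f}$ has $k=-1$, the term $\partial^{\mathrm{Ch}}A_{e}$ has $k=+1$, and $F_{\nabla^{\mathrm{Ch}}}$ together with $A_{f}\wedge A_{e}+A_{e}\wedge A_{f}$ are diagonal ($k=0$). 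Because these pieces are independent, each must vanish separately.

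The two off-diagonal pieces vanish for structural reasons. The endomorphism $A_{f}$ is holomorphic for the diagonal holomorphic structure defining $\nabla^{\mathrm{Ch}}$ — it is the constant $\mathrm{ad}(f_{1})$ composed with the holomorphic bundle maps $K^{i}\to K\otimes K^{i-1}$ — so $\overline{\partial}^{\mathrm{Ch}}A_{f}=0$. The endomorphism $A_{e}$ is the parallel metric tensor $h$ tensored with the constant $\mathrm{ad}(e_{1})$; since the Chern connection is metric-compatible, $h$ is covariantly constant, so $\partial^{\mathrm{Ch}}A_{e}=(\partial^{\mathrm{Ch}}h)\cdot\mathrm{ad}(e_{1})=0$. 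The remaining diagonal identity is the crux: on $K^{i}\otimes\mathfrak{g}_{i}$ one has $F_{\nabla^{\mathrm{Ch}}}=i\,F_{K}\cdot\mathrm{id}$, where $F_{K}$ is the curvature of the canonical bundle, while the anticommuting wedge of a $(1,0)$- and a $(0,1)$-form turns the quadratic term into a commutator, so $A_{f}\wedge A_{e}+A_{e}\wedge A_{f}$ acts by $\mathrm{ad}([f_{1},e_{1}])=-\mathrm{ad}(x)$, which on $\mathfrak{g}_{i}$ is the scalar $-i$ times the $(1,1)$-form determined by $h$. Both diagonal contributions are thus proportional to the grading index $i$, and their sum vanishes exactly when the curvature of $K$ equals the area form of the metric — precisely the constant-curvature (Liouville) equation satisfied by the uniformizing hyperbolic metric, with the normalization of $h$ chosen to match.

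I expect this diagonal cancellation to be the main obstacle, as it is the only place where a genuine PDE enters and where one must check that the resulting scalar equation is independent of $i$. The uniformity in $i$, which lets a single hyperbolic metric flatten all summands at once, comes from the $\mathfrak{sl}_{2}$-representation identity $\mathrm{ad}(x)|_{\mathfrak{g}_{i}}=i\cdot\mathrm{id}$ combined with the fact that the same factor $i$ appears in the curvature of $K^{i}$; tracking the precise constants, which pin down the normalizations of the $\mathfrak{sl}_{2}$-triple and of $h$, is the only delicate bookkeeping.
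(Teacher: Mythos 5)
Your proposal is correct and follows essentially the same route as the paper: both reduce everything to the vanishing of $\overline{\partial}\circ\nabla+\nabla\circ\overline{\partial}$, kill the off-diagonal pieces using the holomorphicity of the $\mathrm{ad}(f_{1})$-term and the parallelism of $h$ for the Chern connection, and cancel the diagonal piece by matching $F(\nabla^{i})=-ih$ against $[\mathrm{ad}(e_{1}),\mathrm{ad}(f_{1})]=i\cdot\mathrm{id}$ on $\mathfrak{g}_{i}$ via the constant-curvature equation. Your packaging of $D$ as the Chern connection plus the algebraic forms $A_{f},A_{e}$ and the use of the grading to separate components is a slightly more structural presentation of the identical term-by-term computation the paper carries out.
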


\begin{proof}
The $C^{\infty}$-connection $D=\nabla+ \overline{\partial}$ is flat if and only if
\begin{align}
F(D)=F(\nabla)+\overline{\partial}^{2}+\nabla\circ \overline{\partial} +\overline{\partial}\circ \nabla=0.
\end{align}
Because we are on a Riemann surface, $F(\nabla)=\overline{\partial}^{2}=0,$ and therefore if
\begin{align}
\nabla\circ \overline{\partial} +\overline{\partial}\circ \nabla=0,
\end{align}
we simultaneously obtain that $\nabla$ is holomorphic and $D$ is flat.

Letting
\begin{align}
s=\sum_{i} \beta_{i}\otimes V_{i},
\end{align}
we compute
\begin{align}
\nabla\circ \overline{\partial}s&=\sum_{i} \partial_{i}\overline{\partial_{i}}\beta_{i}\otimes V_{i} - \overline{\partial}_{i}\beta_{i}\otimes \textnormal{ad}(f_{1})(V_{i}) \\
&+\partial_{i+1}(h\cdot \beta_{i})\otimes \textnormal{ad}(e_{1})(V_{i}) 
- h\cdot \beta_{i}\otimes \textnormal{ad}(f_{1})\circ \textnormal{ad}(e_{1})(V_{i}).
\end{align}
There are some subtle identifications here; namely we must consider the following terms as elements of the following spaces:
\begin{enumerate}
\item $\overline{\partial}_{i}\beta_{i}\in \mathcal{A}^{(1,1)}(X, K^{i-1})$.
\item $\partial_{i+1}(h\cdot \beta_{i})\in \mathcal{A}^{(1,1)}(X, K^{i+1})$
\item $h\cdot \beta_{i}\in \mathcal{A}^{(1,1)}(X, K^{i}).$
\end{enumerate}

In the other direction, we compute
\begin{align}
\overline{\partial}\circ \nabla s&=\sum_{i} \overline{\partial}_{i}\partial_{i} \beta_{i}\otimes V_{i}
+ \overline{\partial}_{i}\beta_{i}\otimes \textnormal{ad}(f_{1})(V_{i}) \\
&-h\cdot \partial_{i}\beta_{i}\otimes \textnormal{ad}(e_{1})(V_{i}) 
+ h\cdot \beta_{i}\otimes \textnormal{ad}(e_{1})\circ \textnormal{ad}(f_{1})(V_{i}).
\end{align}
Summing these two terms yields
\begin{align}
\nabla\circ \overline{\partial}s+ \overline{\partial}\circ \nabla s&=\sum_{i} (\partial_{i}\overline{\partial_{i}}\beta_{i}+ \overline{\partial}_{i}\partial_{i} \beta_{i}) \otimes V_{i} \\
&+ h\cdot \beta_{i}\otimes [\textnormal{ad}(e_{1}), \textnormal{ad}(f_{1})](V_{i}) \\
&+ (\partial_{i+1}(h\cdot \beta_{i})- h\cdot \partial_{i}\beta_{i})\otimes \textnormal{ad}(e_{1})(V_{i}) \\
&=\sum_{i} F(\nabla^{i})\beta_{i} \otimes V_{i} +ih\cdot \beta_{i}\otimes V_{i} \\
&+ (\partial_{i+1}(h\cdot \beta_{i})- h\cdot \partial_{i}\beta_{i})\otimes \textnormal{ad}(e_{1})(V_{i}).
\end{align}
Here, $\nabla^{i}$ is the Chern connection of the Hermitian metric on $K^{i}$ induced by the Hermitian metric on $K^{-1}$ associated to the uniformizing hyperbolic metric on $X.$  
But, since the hyperbolic metric has constant curvature $-1$,
\begin{align}
F(\nabla^{i})\beta_{i}=-ih\cdot \beta_{i},
\end{align}
which implies
\begin{align}
\sum_{i} F(\nabla^{i})\beta_{i} \otimes V_{i} +ih\cdot \beta_{i}\otimes V_{i}=0.
\end{align}
Furthermore, since the metric $h$ is parallel for the Chern connection,
\begin{align}
\sum_{i} \partial_{i+1}(h\cdot \beta_{i})- h\cdot \partial_{i}\beta_{i}=0.
\end{align}
Hence,
\begin{align}
\nabla\circ \overline{\partial}s+ \overline{\partial}\circ \nabla s=0,
\end{align}
which completes the proof.
\end{proof}

The sub-bundle 
\begin{align}
E_{\mathrm{B}}[\mathfrak{b}]:=\bigoplus_{i=0}^{m_{\ell}} K^{i}\otimes \mathfrak{g}_{i}
\end{align}
is $\overline{\partial}$-invariant, and thus defines a holomorphic sub-bundle of $E_{\mathrm{G}}[\mathfrak{g}].$ The following proposition is immediate.

\begin{proposition}\label{unif oper}
Let $E_{\mathrm{G}}$ be the holomorphic principal $\mathrm{G}$-bundle whose associated adjoint bundle is $E_{\mathrm{G}}[\mathfrak{g}],$ and $E_{\mathrm{B}}$ the reduction of structure whose corresponding adjoint bundle is 
$E_{\mathrm{B}}[\mathfrak{b}].$  Finally, let $\omega$ be the holomorphic flat connection induced by $\nabla.$  Then $(E_{\mathrm{G}}, E_{\mathrm{B}}, \omega, X)$ is a $\Sigma$-marked $\mathrm{G}$-oper whose second fundamental form is given by
\begin{align}
\Psi: \Theta&\rightarrow K^{-1}\otimes \mathfrak{g}_{-1}\simeq E_{\mathrm{B}}[\mathfrak{g}^{-1}/\mathfrak{b}]\\
\xi &\mapsto \xi\otimes f_{1}.
\end{align}
\end{proposition}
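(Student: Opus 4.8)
The plan is to check the four conditions in the definition of a $\Sigma$-marked $\mathrm{G}$-oper in turn, with essentially all of the content concentrated in the relative-position condition and the accompanying computation of the second fundamental form. Conditions (1) and (2) are free: $X$ is the fixed $\Sigma$-marked Riemann surface, and the preceding proposition already shows that $\nabla$ is a holomorphic connection on $(E_{\mathrm{G}}[\mathfrak{g}], \overline{\partial})$ whose companion $C^{\infty}$-connection $D=\nabla+\overline{\partial}$ is flat, so $(E_{\mathrm{G}},\omega)$ is a holomorphic flat $\mathrm{G}$-bundle. For condition (3) I would note that $E_{\mathrm{B}}[\mathfrak{b}]=\bigoplus_{i\geq 0}K^{i}\otimes \mathfrak{g}_{i}$ is $\overline{\partial}$-invariant, hence a holomorphic subbundle; since $\mathrm{B}$ is the full stabilizer of $\mathfrak{b}$ under the adjoint action (here I use that $\mathrm{G}$ is of adjoint type, so $\mathrm{B}$ is self-normalizing), this $\mathfrak{b}$-subbundle is the adjoint bundle of a genuine holomorphic reduction $E_{\mathrm{B}}$ of $E_{\mathrm{G}}$ to $\mathrm{B}$.

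The main work is the second fundamental form, which by definition is the composite $TE_{\mathrm{B}}\rightarrow TE_{\mathrm{G}}\xrightarrow{\omega}\mathfrak{g}\rightarrow \mathfrak{g}/\mathfrak{b}$, packaged as a section $\Psi\in \textnormal{H}^{0}(X,\mathcal{K}\otimes \mathcal{E}_{\mathrm{B}}[\mathfrak{g}/\mathfrak{b}])$. I would read $\Psi$ directly off the shape of $\nabla$. Writing $\nabla=\nabla^{\mathrm{diag}}+\textnormal{ad}(f_{1})$, the degree-preserving part $\nabla^{\mathrm{diag}}=\sum_{i}\partial_{i}$ acts on $\mathfrak{g}_{i}$ as multiplication by its $\textnormal{ad}(x)$-eigenvalue $i$ (since the Chern connection of $K^{i}$ is $i$ times that of $K$), hence is valued in $\textnormal{ad}(\mathbb{C}x)\subset \textnormal{ad}(\mathfrak{h})\subset \mathfrak{b}$ and dies modulo $\mathfrak{b}$; meanwhile the off-diagonal piece is $\textnormal{ad}(f_{1})$ with $f_{1}\in \mathfrak{g}_{-1}\not\subset\mathfrak{b}$, lowering the height filtration by exactly one step, so that $\nabla(E_{\mathrm{B}}[\mathfrak{g}^{j}])\subset \mathcal{K}\otimes E_{\mathrm{B}}[\mathfrak{g}^{j-1}]$ for every $j$. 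Consequently the only part of $\omega$ surviving modulo $\mathfrak{b}$ is $f_{1}$, and under the identification $\mathcal{K}\otimes \mathcal{E}_{\mathrm{B}}[\mathfrak{g}^{-1}/\mathfrak{b}]\simeq \mathcal{K}\otimes K^{-1}\otimes \mathfrak{g}_{-1}$ the second fundamental form is the constant section $f_{1}$, i.e. $\Psi(\xi)=\xi\otimes f_{1}$ as claimed. Crucially, because $\textnormal{ad}(f_{1})$ lowers degree by \emph{exactly} one, $\Psi$ has no component in $\mathfrak{g}_{<-1}$, so it genuinely lands in $\mathfrak{g}^{-1}/\mathfrak{b}\simeq \mathfrak{g}_{-1}$ rather than deeper in the filtration.

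Finally I would verify the relative-position condition (4). Since $\{f_{1},x,e_{1}\}$ is a principal $\mathfrak{sl}_{2}$-triple, $f_{1}$ is a regular nilpotent whose projection onto every negative simple root space is nonzero; this is exactly the condition defining membership in the set $\mathcal{O}_{-1}\subset \mathfrak{g}_{-1}$ appearing in the proof of Theorem \ref{open orbit}, so the image of $f_{1}$ in $\mathfrak{g}^{-1}/\mathfrak{b}\simeq \mathfrak{g}_{-1}$ lies in the open dense $\mathrm{B}$-orbit $\mathcal{O}$. As $\mathcal{O}$ is $\mathbb{C}^{\star}$-invariant, for every nonzero $\xi\in \Theta$ the value $\Psi(\xi)=\xi\otimes f_{1}$ lies in $E_{\mathrm{B}}[\mathcal{O}]$, whence $\textnormal{pos}_{E_{\mathrm{B}}}(\omega)=\mathcal{O}$. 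This establishes (4) and completes the identification of $(E_{\mathrm{G}},E_{\mathrm{B}},\omega,X)$ as a $\Sigma$-marked $\mathrm{G}$-oper with the stated second fundamental form.

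I expect the one genuinely delicate point to be the clean passage from the adjoint-bundle operator $\nabla$ to the principal-bundle second fundamental form in the displayed composite above. To make this airtight I would fix a local holomorphic frame of $E_{\mathrm{B}}$ adapted to the height grading and check that the connection $1$-form of $\omega$ in this frame has the shape $x\,\eta+f_{1}\,dz$, where $\eta$ is the Chern connection form of $\Theta$, so that its non-$\mathfrak{b}$ component is precisely $f_{1}\,dz$; equivalently, one invokes the standard fact that the map on associated graded $\textnormal{gr}^{j}\rightarrow \mathcal{K}\otimes \textnormal{gr}^{j-1}$ induced by $\nabla$ coincides with $\textnormal{ad}(\Psi)$, which pins down $\Psi=f_{1}$ since the off-diagonal part of $\nabla$ is exactly $\textnormal{ad}(f_{1})$. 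Everything else is bookkeeping with the grading and the $\mathfrak{sl}_{2}$-representation theory already set up in this section.
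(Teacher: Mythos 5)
Your proposal is correct and follows essentially the same route as the paper: the paper's own proof consists only of your final observation, namely that $f_{1}$ is a principal nilpotent in $\mathfrak{g}_{-1}$, hence its image in $\mathfrak{g}^{-1}/\mathfrak{b}$ lies in the open $\mathrm{B}$-orbit $\mathcal{O}$, so that $\Psi(\xi)=\xi\otimes f_{1}\in E_{\mathrm{B}}[\mathcal{O}]$ for all nonzero $\xi$. The paper treats the identification of $\Psi$ with the constant section $f_{1}$ (your decomposition of $\nabla$ into the $\mathfrak{b}$-valued diagonal part and the off-diagonal $\textnormal{ad}(f_{1})$ part) as immediate from the construction of $\nabla$ in Section \ref{models}, whereas you spell it out; that is additional detail, not a different argument.
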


\begin{proof}
The only thing to recognize is that since $f_{1}\in \mathfrak{g}_{-1}$ is a principal nilpotent element, $f_{1}\in \mathcal{O}$ where $\mathcal{O}\in \mathfrak{g}^{-1}/\mathfrak{b}$ is the unique open orbit from the proof of Theorem \ref{open orbit}.  Hence,
\begin{align}
\Psi(\xi)\in E_{\mathrm{B}}(\mathcal{O})
\end{align}
for all non-zero tangent vectors $\xi.$  This completes the proof.
\end{proof}

\textbf{Remark:} Under the correspondence with $\Sigma$-marked developed $\mathrm{G}$-opers, the $\mathrm{G}$-oper of Proposition \ref{unif oper} is the triple $(f, \rho, X)=(f_{G}\circ f_{0}, \iota_{\mathrm{G}}\circ \rho_{0}, X)$ where
\begin{align}
\rho_{0}: \pi\rightarrow \textnormal{PSL}(2, \mathbb{R})
\end{align}
is the Fuchsian homomorphism uniformizing $X$, and
\begin{align}
f_{0}: \widetilde{X}\rightarrow \mathbb{H}^{2}\subset \mathbb{CP}^{1}
\end{align}
is the developing map of the uniformizing hyperbolic structure on $X.$  

Given a tuple $\vec{\alpha}:=(\alpha_{1},...\alpha_{\ell})\in \bigoplus_{i=1}^{\ell} \textnormal{H}^{0}(X, \mathcal{K}^{m_{i}+1})$, form the new operator
\begin{align}
\nabla^{\vec{\alpha}}=\nabla + \sum_{i=1}^{\ell} \alpha_{i}\otimes \textnormal{ad}(e_{i}),
\end{align}
where we recall that $\{e_{i}\}_{i=1}^{\ell}$ is a homogeneous basis of the centralizer of $e_{1}$ with respect to the grading \eqref{height}.

Since the $\alpha_{i}$ are holomorphic and 
\begin{align}
[e_{1}, e_{i}]=0
\end{align}
for all $1\leq i \leq \ell,$ it immediately follows that 
\begin{align}
\sum_{i=1}^{\ell} \alpha_{i}\otimes e_{i}\in \textnormal{H}^{0}(X, \mathcal{K}\otimes \mathcal{E}_{\mathrm{G}}[\mathfrak{g}]).
\end{align}
Therefore, $\nabla^{\vec{\alpha}}$ is a holomorphic connection on $(E_{\mathrm{G}}[\mathfrak{g}], \overline{\partial}).$  Furthermore, the second fundamental form of $\nabla^{\vec{\alpha}}$ with respect to $E_{\mathrm{B}}[\mathfrak{b}]$ is still given by the $\Psi$ of Proposition \ref{unif oper}.

Let $\omega^{\vec{\alpha}}$ denote the corresponding holomorphic connection on the principal bundle $E_{\mathrm{G}}.$  The following proposition follows immediately from Proposition \ref{unif oper}.

\begin{proposition}\label{oper param 1}
For every $\vec{\alpha}\in \bigoplus_{i=1}^{\ell} \textnormal{H}^{0}(X, \mathcal{K}^{m_{i}+1}), \
(E_{\mathrm{G}}, E_{\mathrm{B}}, \omega^{\vec{\alpha}}, X)$ defines a $\Sigma$-marked $\mathrm{G}$-oper.
\end{proposition}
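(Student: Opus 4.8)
The plan is to verify the four conditions in the definition of a $\Sigma$-marked $\mathrm{G}$-oper directly, tracking how each behaves under the modification $\nabla \rightsquigarrow \nabla^{\vec{\alpha}}$. Conditions (1) and (3) are inherited verbatim from Proposition \ref{unif oper}: the Riemann surface $X$ is unchanged, and since the perturbation $\sum_{i=1}^{\ell}\alpha_{i}\otimes \textnormal{ad}(e_{i})$ alters only the $(1,0)$-part $\nabla$ and leaves the $\overline{\partial}$-operator on $E_{\mathrm{G}}[\mathfrak{g}]$ untouched, the holomorphic structure on $E_{\mathrm{G}}$ and its holomorphic reduction $E_{\mathrm{B}}$ are exactly those of the uniformizing oper. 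For condition (2), I would simply invoke the discussion immediately preceding the statement: since each $\alpha_{i}$ is holomorphic and $[e_{1},e_{i}]=0$, the section $\sum_{i}\alpha_{i}\otimes e_{i}$ lies in $\textnormal{H}^{0}(X,\mathcal{K}\otimes \mathcal{E}_{\mathrm{G}}[\mathfrak{g}])$, so $\nabla^{\vec{\alpha}}$ is a holomorphic connection and induces a holomorphic connection $\omega^{\vec{\alpha}}$ on $E_{\mathrm{G}}$. Flatness is then free: as recorded in Section \ref{G theory}, the curvature $F(\omega^{\vec{\alpha}})$ is a holomorphic $2$-form on a Riemann surface and hence vanishes identically.

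The only step with any content is condition (4), the relative position. The key observation is that the perturbation is valued in the Borel: each basis vector $e_{i}$ lies in $\mathfrak{g}_{m_{i}}$ with $m_{i}\geq 1$, so $e_{i}\in \mathfrak{g}^{0}=\mathfrak{b}$, and therefore the added connection term $\sum_{i=1}^{\ell}\alpha_{i}\otimes e_{i}$ is a section of $\mathcal{K}\otimes \mathcal{E}_{\mathrm{B}}[\mathfrak{b}]$ that projects to zero under $\mathfrak{g}\to \mathfrak{g}/\mathfrak{b}$. Since the second fundamental form is by definition the composite $TE_{\mathrm{B}}\to TE_{\mathrm{G}}\xrightarrow{\omega^{\vec{\alpha}}}\mathfrak{g}\to \mathfrak{g}/\mathfrak{b}$, it is unaffected by the modification and therefore equals the $\Psi$ computed in Proposition \ref{unif oper}, namely $\xi\mapsto \xi\otimes f_{1}$. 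Because $f_{1}$ is a principal nilpotent element of $\mathfrak{g}_{-1}$, it represents a point of the unique open $\mathrm{B}$-orbit $\mathcal{O}\subset \mathfrak{g}^{-1}/\mathfrak{b}$ of Theorem \ref{open orbit}; hence $\Psi(\xi)\in E_{\mathrm{B}}[\mathcal{O}]$ for every nonzero $\xi$, which is precisely $\textnormal{pos}_{E_{\mathrm{B}}}(\omega^{\vec{\alpha}})=\mathcal{O}$.

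I do not anticipate any genuine obstacle here, and indeed the result is essentially a bookkeeping corollary of Proposition \ref{unif oper}: the perturbation lives in $\mathfrak{b}$, so it changes neither the holomorphic structure nor the $\mathfrak{g}/\mathfrak{b}$-valued second fundamental form, while flatness comes for free on a curve. The one place deserving a moment of care is the identification $e_{i}\in \mathfrak{g}_{m_{i}}\subset \mathfrak{b}$, which is exactly what guarantees that projection to $\mathfrak{g}/\mathfrak{b}$ annihilates the added term; with that in hand, all four defining conditions are verified and $(E_{\mathrm{G}}, E_{\mathrm{B}}, \omega^{\vec{\alpha}}, X)$ is a $\Sigma$-marked $\mathrm{G}$-oper.
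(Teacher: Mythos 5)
Your proof is correct and follows the same route as the paper, which simply observes that the perturbation $\sum_{i}\alpha_{i}\otimes e_{i}$ is a holomorphic section of $\mathcal{K}\otimes\mathcal{E}_{\mathrm{G}}[\mathfrak{g}]$ (so $\nabla^{\vec{\alpha}}$ remains a holomorphic, hence flat, connection) and that, being valued in $\mathfrak{b}$, it leaves the second fundamental form $\xi\mapsto\xi\otimes f_{1}$ of Proposition \ref{unif oper} unchanged, so the position condition is inherited. Your explicit verification of the four conditions, in particular the remark that $e_{i}\in\mathfrak{g}_{m_{i}}\subset\mathfrak{b}$ is what makes the $\mathfrak{g}/\mathfrak{b}$-projection insensitive to the modification, is exactly the content the paper leaves implicit.
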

Proposition \ref{oper param 1} yields a well-defined map 
\begin{align}
\bigoplus_{i=1}^{\ell} \textnormal{H}^{0}(X, \mathcal{K}^{m_{i}+1})\rightarrow \mathcal{O}\mathfrak{p}_{X}(\mathrm{G}),
\end{align}
which depends upon the choice of a $\textnormal{PSL}_{2}(\mathbb{C})$-oper on $X$ and a homogeneous basis of the centralizer in $\mathfrak{g}$ of $e_{1}\in \mathfrak{g}_{-1}.$  In the next section, we will see that this map is a bijection.

\subsection{Parameterizing $\mathrm{G}$-opers}\label{param}

In this short section, we quickly review the parameterization of $\mathrm{G}$-opers (for $\mathrm{G}$ simple of adjoint type) on $X$ via pluri-canonical sections on $X$, see \cite{BD91} and \cite{BD05} for details.  In particular, this gives a more intrinsic construction of the explicit models from Section \ref{models}.

Let $\mathrm{G}$ be a complex simple Lie group of adjoint type and rank $\ell.$  Fix a Borel subgroup $\mathrm{B}<\mathrm{G}$ with corresponding sub-algebra $\mathfrak{b}<\mathfrak{g}.$ Let
\begin{align}
\iota_{\mathfrak{g}}: \mathfrak{sl}_{2}(\mathbb{C})\rightarrow \mathfrak{g}
\end{align}
be a principal three-dimensional sub-algebra
 sending the upper triangular Borel subalgebra in $\mathfrak{sl}_{2}(\mathbb{C})$ to the fixed Borel sub-algebra $\mathfrak{b}<\mathfrak{g}$: this uniquely defines an injective homomorphism $\iota_{\mathrm{G}}: \textnormal{PSL}_{2}(\mathbb{C})\rightarrow \mathrm{G}.$  

Let $\{f_{1}, x, e_{1}\}\subset \mathfrak{g}$ be the corresponding $\mathfrak{sl}_{2}(\mathbb{C})$-triple generating the image of $\iota_{\mathfrak{g}},$ and set $V=\textnormal{Ker}(\textnormal{ad}(e_{1})).$  Because $e_{1}\in \mathfrak{g}$ is a principal nilpotent element, the vector space $V$ is $\ell$-dimensional.  

The regular semi-simple element $x$ induces a line decomposition 
\begin{align}
V=\bigoplus_{i=1}^{\ell} V\cap \mathfrak{g}_{m_{i}}=\bigoplus_{i=1}^{\ell} V_{m_{i}},
\end{align}
 where $\{m_{1},...m_{\ell}\}$ are the exponents of $\mathfrak{g}.$  If $\mathrm{B}_{0}< \textnormal{PSL}_{2}(\mathbb{C})$ is the standard Borel of upper triangular matrices, then $V$ is $\mathrm{B}_{0}$-invariant, where $\mathrm{B}_{0}$ acts using the homomorphism $\iota_{\mathrm{G}}: \textnormal{PSL}_{2}(\mathbb{C})\rightarrow G.$  Therefore, to any principal $\mathrm{B}_{0}$-bundle $E_{\mathrm{B}_{0}},$ there is an associated vector bundle $E_{\mathrm{B}_{0}}[V].$

The following theorem due to \cite{BD05} is a generalization of Proposition \ref{param sl2}.

\begin{theorem} \label{unique iso}
Let $\mathrm{G}$ be a complex simple Lie group of adjoint type.
Let $(E_{\mathrm{G}_{0}}, E_{\mathrm{B}_{0}}, \mu, X)$ be a $\Sigma$-marked $\textnormal{PSL}_{2}(\mathbb{C})$-oper and $(E_{\mathrm{G}}, E_{\mathrm{B}}, \omega, X)$ be a $\Sigma$-marked $\mathrm{G}$-oper.  Then there exists a unique
isomorphism $\phi: E_{\mathrm{B}_{0}}[\mathrm{B}]\rightarrow E_{\mathrm{B}}$ such that $\phi^{*}\omega - \mu \in \textnormal{H}^{0}(X, \mathcal{K}\otimes \mathcal{E}_{\mathrm{B}_{0}}[V]).$
\end{theorem}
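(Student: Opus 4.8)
The plan is to prove existence and uniqueness of $\phi$ by reducing the pair $(E_{\mathrm{B}}, \omega)$ to a canonical normal form using holomorphic $\mathrm{B}$-valued gauge transformations, with the purely Lie-theoretic engine being Kostant's slice theorem. The algebraic backbone is the $\mathfrak{sl}_2$-decomposition $\mathfrak{g} = V \oplus \textnormal{im}(\textnormal{ad}(f_1))$ attached to the principal triple $\{f_1, x, e_1\}$, together with its refinement that the affine space $f_1 + \mathfrak{b}$ is a single free orbit over the Kostak slice $f_1 + V$: the action map $\mathrm{N} \times (f_1 + V) \to f_1 + \mathfrak{b}$ is an isomorphism of varieties. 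This is the pointwise model for the normalization I will globalize.

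First I would pin down the underlying holomorphic $\mathrm{B}$-bundle, showing $E_{\mathrm{B}} \simeq E_{\mathrm{B}_0}[\mathrm{B}]$. The oper condition forces the second fundamental form $\Psi$ into the open $\mathrm{B}$-orbit $\mathcal{O} \subset \mathfrak{g}_{-1}$ of Theorem \ref{open orbit}, so its projection to each negative simple root line is nowhere vanishing, giving $E_{\mathrm{H}}[\mathfrak{g}_{-\alpha_j}] \simeq K^{-1}$ for every simple root $\alpha_j$. Since $\mathrm{G}$ is of adjoint type the simple roots are a $\mathbb{Z}$-basis of the character lattice of $\mathrm{H}$, so the induced $\mathrm{H}$-bundle $E_{\mathrm{H}}$ agrees with that of $E_{\mathrm{B}_0}[\mathrm{B}]$. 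Lifting this agreement from $\mathrm{H}$ to $\mathrm{B}$ uses the flat connection: the oper structure realizes $E_{\mathrm{B}}$ as the canonical iterated (jet-type) extension of its associated graded, which is exactly the extension carried by $E_{\mathrm{B}_0}[\mathrm{B}]$, and I transport $\omega$ across the resulting isomorphism so that $\mu$ and $\omega$ live on the common bundle $E := E_{\mathrm{B}_0}[\mathrm{B}]$.

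With $\mu$ and $\omega$ on $E$, I would normalize by holomorphic $\mathrm{B}$-gauge transformations, working upward through the height grading. Because $\mathcal{O}$ is a single $\mathrm{B}$-orbit, the lowest step matches the second fundamental form of $\omega$ to that of $\mu$ (namely $f_1$); afterward the difference $\phi^{\star}\omega - \mu$ lies in $\textnormal{H}^{0}(X, \mathcal{K} \otimes \mathcal{E}_{\mathrm{B}}[\mathfrak{b}])$. I then run the Drinfeld--Sokolov normalization: solve inductively for a unique unipotent gauge transformation $n = \exp(u)$, $u \in \textnormal{H}^{0}(X, \mathcal{E}_{\mathrm{B}}[\mathfrak{n}])$, pushing this difference into the Kostant slice, so that $\phi^{\star}\omega - \mu \in \textnormal{H}^{0}(X, \mathcal{K} \otimes \mathcal{E}_{\mathrm{B}_0}[V])$. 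At height $k$ the governing equation has the shape $\textnormal{ad}(f_1)(u_{k+1}) + \partial u_k + \cdots = (\text{non-slice part in degree } k)$; surjectivity of $\textnormal{ad}(f_1)$ onto $\textnormal{im}(\textnormal{ad}(f_1))$ together with the vanishing $\textnormal{H}^{1}(X, K^{m}) = 0$ for $m \geq 2$ (Serre duality) lets me integrate order by order, while the cokernel $V_k$ records the invariant coordinates $\alpha_i \in \textnormal{H}^{0}(X, K^{m_i+1})$.

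The step I expect to be most delicate is the bottom of the grading — the degree-zero and height-one pieces — where $\textnormal{H}^{1}(X, K) = \mathbb{C} \neq 0$, so the relevant bundle is not cohomologically rigid. This is precisely why the identification of the underlying $\mathrm{B}$-bundle in the first step cannot be purely numerical and must invoke the flat connection to fix the canonical non-split extension class, and why the degree-zero component of the connection difference — for which the slice contributes nothing, as $V \cap \mathfrak{g}_0 = 0$ — must be removed by hand using surjectivity of $\textnormal{ad}(f_1)\colon \mathfrak{g}_1 \to \mathfrak{g}_0$. Finally, uniqueness of $\phi$ I would extract from the freeness half of Kostant's slice theorem, namely that the $\mathrm{N}$-stabilizer of a slice element is trivial so the normalizing transformation is unique, together with $\mathrm{G}$ being of adjoint type, which rules out residual automorphisms coming from the center and pins down $\phi$ on the nose.
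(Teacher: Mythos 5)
The paper does not prove this statement itself: it is quoted from Beilinson--Drinfeld \cite{BD05}, so there is no in-paper argument to compare against. Your proposal correctly reconstructs the standard proof from that source -- identify the underlying $\mathrm{B}$-bundle via the nowhere-vanishing projections of the second fundamental form onto the simple root lines (using that $\mathrm{G}$ is adjoint so the simple roots generate the character lattice of $\mathrm{H}$), then run the Drinfeld--Sokolov normalization into the Kostant slice $f_{1}+V$ degree by degree, with uniqueness coming from the freeness of the $\mathrm{N}$-action on $f_{1}+\mathfrak{b}$. One small correction: the inductive gauge-fixing step needs no cohomology vanishing at all, since $\textnormal{ad}(f_{1}):\mathfrak{g}_{k+1}\rightarrow \mathfrak{g}_{k}$ is a pointwise injection with image complementary to $V_{k}$, so $u_{k+1}$ is determined by fiberwise linear algebra and is automatically holomorphic; the groups $\textnormal{H}^{1}(X,\mathcal{K}^{m})$ are relevant only to the rigidity of the filtered $\mathrm{B}$-bundle, which you correctly single out (together with the non-vanishing of $\textnormal{H}^{1}(X,\mathcal{K})$) as the genuinely delicate point.
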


Fixing a homogeneous basis of the graded vector space $V,$ by \cite{BD05} there is canonical isomorphism $\mathcal{E}_{\mathrm{B}_{0}}[V]\simeq \bigoplus_{i=1}^{\ell} \mathcal{K}^{m_{i}}.$

This yields the promised parameterization of $\mathcal{O}\mathfrak{p}_{X}(\mathrm{G}).$

\begin{theorem}[\cite{BD05}]\label{param opers 2}
The map 
\begin{align}
\mathcal{O}\mathfrak{p}_{X}(\mathrm{G})&\rightarrow \textnormal{H}^{0}(X, \mathcal{K}\otimes \mathcal{E}_{\mathrm{B}_{0}}[V])\simeq \bigoplus_{i=1}^{\ell} \textnormal{H}^{0}(X, \mathcal{K}^{m_{i}+1}) \\
(E_{\mathrm{G}}, E_{\mathrm{B}}, \omega, X) &\mapsto   \phi^{\star}\omega-\mu
\end{align}
is a bijection.  Furthermore, upon choosing the appropriate homogeneous basis of $V,$ it is inverse to the map defined by Proposition \ref{oper param 1}
\end{theorem}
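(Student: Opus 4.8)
The plan is to show that the map
\begin{align}
\Phi:\mathcal{O}\mathfrak{p}_{X}(\mathrm{G})\rightarrow \textnormal{H}^{0}(X, \mathcal{K}\otimes \mathcal{E}_{\mathrm{B}_{0}}[V]),\qquad (E_{\mathrm{G}}, E_{\mathrm{B}}, \omega, X)\mapsto \phi^{\star}\omega-\mu
\end{align}
is a well-defined bijection by constructing an explicit two-sided inverse, namely the map $\Theta$ of Proposition \ref{oper param 1}. First I would verify that $\Phi$ is well-defined: it depends a priori on the choice of representative of the isomorphism class of the $\mathrm{G}$-oper, but the uniqueness clause of Theorem \ref{unique iso} forces the isomorphism $\phi$ to be canonical once the reference $\textnormal{PSL}_{2}(\mathbb{C})$-oper $(E_{\mathrm{G}_0},E_{\mathrm{B}_0},\mu,X)$ is fixed, so $\phi^{\star}\omega-\mu$ depends only on the isomorphism class. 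Here I am using the canonical splitting $\mathcal{E}_{\mathrm{B}_{0}}[V]\simeq \bigoplus_{i=1}^{\ell}\mathcal{K}^{m_i}$ from \cite{BD05} to identify the target with $\bigoplus_{i=1}^{\ell}\textnormal{H}^{0}(X,\mathcal{K}^{m_i+1})$.

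Next I would exhibit the two maps as mutually inverse. Fix the $\textnormal{PSL}_{2}(\mathbb{C})$-oper to be the uniformizing oper from Proposition \ref{unif oper}, so that $\mu=\omega$ is the flat connection induced by $\nabla$, and identify $V=\textnormal{Ker}(\textnormal{ad}(e_1))$ with $\textnormal{span}_{\mathbb{C}}\{e_1,\dots,e_\ell\}$ via the homogeneous basis. Given $\vec{\alpha}\in\bigoplus_{i=1}^{\ell}\textnormal{H}^{0}(X,\mathcal{K}^{m_i+1})$, the oper $(E_{\mathrm{G}},E_{\mathrm{B}},\omega^{\vec{\alpha}},X)=\Theta(\vec{\alpha})$ of Proposition \ref{oper param 1} has connection $\omega^{\vec{\alpha}}$ whose principal-bundle form differs from $\omega$ precisely by $\sum_i \alpha_i\otimes\textnormal{ad}(e_i)$, and this difference already lies in $\textnormal{H}^{0}(X,\mathcal{K}\otimes\mathcal{E}_{\mathrm{G}}[\mathfrak{g}])$ with values in the $V$-directions. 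The content of Theorem \ref{unique iso} is that the \emph{canonical} isomorphism $\phi$ extracting $\Phi$ on this oper is the identity reduction, so that $\Phi(\Theta(\vec{\alpha}))=\omega^{\vec{\alpha}}-\omega=\sum_i\alpha_i\otimes e_i$, which recovers $\vec{\alpha}$ under the identification $\mathcal{E}_{\mathrm{B}_0}[V]\simeq\bigoplus_i\mathcal{K}^{m_i}$. Hence $\Phi\circ\Theta=\textnormal{id}$.

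For the reverse composition $\Theta\circ\Phi=\textnormal{id}$, I would argue that for an arbitrary $\mathrm{G}$-oper $(E_{\mathrm{G}},E_{\mathrm{B}},\omega,X)$ the isomorphism $\phi$ of Theorem \ref{unique iso} transports $(E_{\mathrm{G}},E_{\mathrm{B}},\omega)$ to a $\mathrm{G}$-oper built on the fixed underlying $\mathrm{B}_0$-bundle data with connection $\mu+\sum_i\alpha_i\otimes\textnormal{ad}(e_i)=\omega^{\vec{\alpha}}$ where $\vec{\alpha}=\Phi(\omega)$; since $\phi$ is an oper isomorphism, $(E_{\mathrm{G}},E_{\mathrm{B}},\omega,X)\cong\Theta(\vec{\alpha})=\Theta(\Phi(\omega))$ as $\mathrm{G}$-opers. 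The main obstacle is the bookkeeping in identifying the canonical $\phi$ with the identity on the model oper and checking that the $V$-valued holomorphic section produced by Theorem \ref{unique iso} matches the explicit sum $\sum_i\alpha_i\otimes\textnormal{ad}(e_i)$ appearing in the construction of $\nabla^{\vec{\alpha}}$ — in other words, reconciling the abstract Beilinson--Drinfeld normal form with the concrete differential-geometric model of Section \ref{models}. The uniqueness statement in Theorem \ref{unique iso} is exactly what makes this reconciliation forced rather than merely possible, so once the identification of bases is pinned down the bijectivity follows formally, and the final sentence of the theorem (that $\Theta$ is inverse to $\Phi$ for the appropriate basis) is precisely this matching.
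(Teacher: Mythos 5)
Your proposal is correct, and it supplies an argument that the paper itself omits: Theorem \ref{param opers 2} is stated with a citation to \cite{BD05} and is not proved in the text. The route you take --- well-definedness and injectivity from the uniqueness clause of Theorem \ref{unique iso}, surjectivity and the identification of the inverse by checking that the canonical $\phi$ for the model oper $(E_{\mathrm{G}}, E_{\mathrm{B}}, \omega^{\vec{\alpha}}, X)$ of Proposition \ref{oper param 1} is the identity (since $\omega^{\vec{\alpha}}-\omega=\sum_i\alpha_i\otimes e_i$ already lies in $\textnormal{H}^{0}(X,\mathcal{K}\otimes\mathcal{E}_{\mathrm{B}_0}[V])$) --- is exactly the deduction the paper's surrounding remarks indicate, and it is sound. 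The only point worth making explicit is the one you flag yourself: independence of the chosen representative of the isomorphism class follows because an oper isomorphism $\psi$ composed with the canonical $\phi$ again satisfies the normalization of Theorem \ref{unique iso}, hence equals the canonical map for the other representative by uniqueness.
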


\textbf{Remark:} Note that this bijection depends upon a choice of $\textnormal{PSL}_{2}(\mathbb{C})$-oper on $X$ and a choice of homogeneous basis of the graded vector space $V.$  These are exactly the choices that we had to make to construct the explicit models of section \ref{models}.  Finally, observe that Theorem \ref{param opers 2} verifies the claim following Proposition \ref{oper param 1}, namely that the explicit models yield a parameterization of the space of $\Sigma$-marked $\mathrm{G}$-opers on $X.$  

\section{Kuranishi families and the global structure of $\Sigma$-marked $\mathrm{G}$-opers}\label{families}

This section develops the machinery to prove the existence of a natural complex structure on the space $\mathcal{O}\mathfrak{p}_{\Sigma}(\mathrm{G})$ of $\Sigma$-marked $\mathrm{G}$-opers.  The approach here is standard in (the analyic approach to) deformation theory, and consists of four parts:
\begin{enumerate}
\item Develop a notion of families of $\Sigma$-marked $\mathrm{G}$-opers.
\item Identify the infinitesimal deformations and obstructions with some cohomology groups of an appropriate complex of sheaves.
\item Apply the Kuranishi method (Hodge theory, elliptic complexes, etc.) to show that every unobstructed infinitesimal deformation is tangent to an honest deformation.
\item Use these families to construct a holomorphic atlas on $\mathcal{O}\mathfrak{p}_{\Sigma}(\mathrm{G}).$
\end{enumerate}
We will develop in detail the first two parts of this program.  The third part would require a significant detour into Hodge theory, elliptic complexes, and related analytic notions; therefore we have chosen to omit these details from the paper.  We will give references to completely analogous constructions in the literature from which the reader can, with some work, fill in the details of this technical part.  Finally, we will carry out the final process of constructing a holomorphic atlas on 
$\mathcal{O}\mathfrak{p}_{\Sigma}(\mathrm{G}).$

We begin with the following definition.

\begin{definition}
A $\Sigma$-marked family of Riemann surfaces is a tuple $(\mathcal{X}, \mathcal{B},p)$
where $\mathcal{X}$ and $\mathcal{B}$ are complex manifolds such that:
\begin{enumerate}
\item $p: \mathcal{X}\rightarrow \mathcal{B}$ is a proper holomorphic submersion.
        
\item In the $C^{\infty}$-category, $p: \mathcal{X}\rightarrow \mathcal{B}$ is a fiber bundle with fiber $\Sigma$ and structure
         group $\textnormal{Diff}_{0}(\Sigma).$
\end{enumerate}
\end{definition}

A morphism of $\Sigma$-marked families is defined as follows.

\begin{definition}
Given a pair of $\Sigma$-marked families of Riemann surfaces $(\mathcal{X}_{1}, \mathcal{B}_{1}, p_{1})$ and
$(\mathcal{X}_{2}, \mathcal{B}_{2}, p_{2})$, a morphism $F=(\phi, f)$ consists of
\begin{enumerate}
\item A Cartesian diagram
\begin{center}
\begin{tikzcd}
\mathcal{X}_{1} \arrow{r}{\phi} \arrow{d}{p_{1}}
& \mathcal{X}_{2} \arrow{d}{p_{2}} \\
\mathcal{B}_{1} \arrow{r}{f}
& \mathcal{B}_{2}
\end{tikzcd}
\end{center}
Such that $\phi$ and $f$ are holomorphic maps and the pair $(\phi, f)$, as $C^{\infty}$-maps, are maps of $(\Sigma, \textnormal{Diff}_{0}(\Sigma))$-fiber bundles.
\end{enumerate}
\end{definition}

In particular, the restriction of $\phi$ to any fiber is a biholomorphism isotopic to the identity.  This notion makes sense exactly because we have required the fiber bundle to have fiber $\Sigma$ with structure group $\textnormal{Diff}_{0}(\Sigma).$  

Let $X$ be a $\Sigma$-marked Riemann surface.  A small deformation of $X$ is a germ of a $\Sigma$-marked family $\mathcal{X}\rightarrow (\mathcal{B},b)$ where $b\in \mathcal{B}$ and a morphism

\begin{center}
\begin{tikzcd}
X \arrow{r}{\phi} \arrow{d}{p_{x}}
& \mathcal{X} \arrow{d}{p} \\
\{x\} \arrow{r}{f}
& \mathcal{B},
\end{tikzcd}
\end{center}
such that $f(x)=b.$  Note that it follows automatically from the definitions that $\phi: X\rightarrow p^{-1}(b)$ is a biholomorphism whose underlying smooth map is isotopic to the identity.  

A small deformation $(\mathcal{X}, \mathcal{B}, b , p)$ of $X$ is universal if for every small deformation $(\mathcal{X}_{0}, \mathcal{B}_{0}, \mathrm{B}_{0}, p_{0})$ of $X,$ there is a unique germ of a morphism 
\begin{align}
F: (\mathcal{X}_{0}, \mathcal{B}_{0}, \mathrm{B}_{0}, p_{0})\rightarrow (\mathcal{X}, \mathcal{B}, b , p),
\end{align}
such that the following diagram commutes
\begin{center}
\begin{tikzcd}
(X, \{x\}, x, p_{x}) \arrow{d}{\textnormal{id}} \arrow{r}
& (\mathcal{X}_{0}, \mathcal{B}_{0}, \mathrm{B}_{0}, p_{0}) \arrow{d}{F} \\
(X, \{x\}, x, p_{x}) \arrow{r}
& (\mathcal{X}, \mathcal{B}, b , p).
\end{tikzcd}
\end{center}

The following theorem is a summary of the main results of \cite{AC09}.

\begin{theorem}
Let $X$ be a $\Sigma$-marked Riemann surface.  Then there is an open set $U_{X}\subset \textnormal{H}^{1}(X, \Theta)\simeq \mathbb{C}^{3g-3}$ containing the origin and a universal $\Sigma$-marked small deformation of $X$ over $U.$

Moreover, the open sets $U_{X}\subset \textnormal{H}^{1}(X, \Theta)$ yield an atlas of holomorphic charts providing $\mathcal{T}_{\Sigma}$ with the structure of a Hausd\"orff complex manifold of dimension $3g-3.$

Finally, the $\Sigma$-marked universal families over $U_{X}$ glue to give a holomorphic fiber bundle $\mathcal{C}_{\Sigma}\rightarrow \mathcal{T}_{\Sigma}$ called the universal Teichm\"{u}ller curve.
\end{theorem}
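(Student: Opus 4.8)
The plan is to deduce all three assertions from Kuranishi's deformation theory of compact complex manifolds, specialized to the curve $X$, together with the rigidity supplied by the genus hypothesis and the $\textnormal{Diff}_{0}(\Sigma)$-marking. First I would construct the local family. Infinitesimal deformations of the complex structure underlying $X$ are encoded by Beltrami-type differentials $\phi\in \mathcal{A}^{(0,1)}(X,\Theta)$, and an honest integrable deformation is a solution of the Maurer--Cartan equation
\begin{align}
\overline{\partial}\phi + \tfrac{1}{2}[\phi,\phi]=0.
\end{align}
Following the standard Kuranishi recipe, one solves this by a power-series ansatz $\phi=\sum_{k\geq 1}\phi_{k}$ whose linear term ranges over a chosen harmonic model of $\textnormal{H}^{1}(X,\Theta)$, inverting at each order with the Green's operator of $\overline{\partial}$ and using elliptic estimates to establish convergence on a small ball. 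The order-by-order obstruction lands in $\textnormal{H}^{2}(X,\Theta)$; since $X$ has complex dimension one, $\textnormal{H}^{2}(X,\Theta)=0$, so the problem is unobstructed and the Kuranishi space is an open set $U_{X}\subset \textnormal{H}^{1}(X,\Theta)$ containing the origin, carrying a tautological family $\mathcal{X}_{X}\rightarrow U_{X}$ whose Kodaira--Spencer map at $0$ is the identity.

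Next I would pin down the dimension and the universality. Applying Riemann--Roch to $\Theta=K^{-1}$ together with the vanishing $\textnormal{H}^{0}(X,\Theta)=0$ (a hyperbolic surface of genus $g\geq 2$ carries no nonzero holomorphic vector field) gives $\dim \textnormal{H}^{1}(X,\Theta)=3g-3$, the promised dimension of the chart. The same vanishing $\textnormal{H}^{0}(X,\Theta)=0$ upgrades the Kuranishi family from merely versal to \emph{universal}: the group of infinitesimal automorphisms is trivial, so the classifying morphism from any small deformation is unique, which is exactly the universal property in the statement. Here the marking is essential, since a biholomorphism of $X$ isotopic to the identity is forced to be the identity for $g\geq 2$; the $\textnormal{Diff}_{0}(\Sigma)$-rigidification removes the finite automorphism ambiguity that would otherwise produce orbifold points.

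I would then assemble the global structure. Given two charts $U_{X}$ and $U_{X'}$ whose families share a marked Riemann surface, universality produces on the overlap a unique morphism of families, hence a holomorphic transition map; uniqueness forces the cocycle condition, so the $U_{X}$ glue to a complex manifold structure on $\mathcal{T}_{\Sigma}$ of dimension $3g-3$. The total spaces $\mathcal{X}_{X}$ glue compatibly as well, because the bundle map $\phi$ in the universal property covers the base transition $f$ via a Cartesian square, yielding the holomorphic fiber bundle $\mathcal{C}_{\Sigma}\rightarrow \mathcal{T}_{\Sigma}$.

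The hard part is twofold and analytic in nature. First is the convergence and smoothness of the Kuranishi construction, i.e. the elliptic bootstrapping and the implicit-function-theorem input in the appropriate Banach or Fr\'echet setting. Second, and subtler, is the \emph{Hausdorffness} of the glued space, which does not follow formally from the gluing and must be argued separately, for instance via a globally defined embedding of $\mathcal{T}_{\Sigma}$ into a manifestly Hausdorff space (Bers' simultaneous uniformization, or Fenchel--Nielsen/Fricke coordinates) that is compatible with the Kuranishi charts. Since these are precisely the technical points developed in \cite{AC09}, I would invoke that reference for the analytic core and devote the written argument to the cohomological bookkeeping (the vanishing of $\textnormal{H}^{0}$ and $\textnormal{H}^{2}$, and the dimension count) and to the universality-driven gluing.
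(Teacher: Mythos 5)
Your outline is correct and follows the standard Kuranishi-method route. The paper itself gives no argument for this statement --- it is introduced verbatim as ``a summary of the main results of \cite{AC09}'' --- so your sketch (unobstructedness from $\textnormal{H}^{2}(X,\Theta)=0$, the dimension count via Riemann--Roch together with $\textnormal{H}^{0}(X,\Theta)=0$, universality because the $\textnormal{Diff}_{0}(\Sigma)$-marking kills the finite automorphism group for $g\geq 2$, gluing of charts and total spaces by uniqueness of the classifying morphisms, and the honest flagging of convergence and Hausdorffness as the genuinely analytic points) supplies strictly more detail than the source while ultimately resting on the same reference for the technical core, which is entirely consistent with how the paper treats this result.
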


Now, let $\mathcal{X}\xrightarrow{p} \mathcal{B}$ be a $\Sigma$-marked family of Riemann surfaces.  Let $\widehat{E_{\mathrm{G}}}\xrightarrow{\pi} \mathcal{X}$ be a holomorphic principal $\mathrm{G}$-bundle on $\mathcal{X}.$  In order to define families of $\Sigma$-marked $\mathrm{G}$-opers, we must develop the notion of a relative holomorphic connection on $\widehat{E_{\mathrm{G}}}.$  

To start, let $M$ and $N$ be complex manifolds and $f: M\rightarrow N$ a holomorphic map which is a fiber bundle in the $C^{\infty}$-category (in particular $f$ is a submersion).  If $\Omega_{M}^{1}$ is the sheaf of holomorphic $1$-forms on $M$, then define the sheaf of relative holomorphic $1$-forms $\Omega_{M/N}^{1}$ via the exact sequence
\begin{align}
0\rightarrow f^{\star}\Omega_{N}^{1}\rightarrow \Omega_{M}^{1}\rightarrow \Omega_{M/N}^{1}\rightarrow 0.
\end{align}

Now, consider the following commutative diagram with exact rows and columns:
\begin{center}
\begin{tikzcd}
&
0\arrow{d} 
& 0 \arrow{d}
& \\
0 \arrow{r} 
& (p\circ \pi)^{\star}\Omega_{\mathcal{B}}^{1} \arrow{r}\arrow{d}
& \pi^{\star}\Omega_{\mathcal{X}}^{1} \arrow{r}\arrow{d}
&\pi^{\star}\Omega_{\mathcal{X}/\mathcal{B}}^{1} \arrow{r}
& 0  \\

&\Omega_{\widehat{E_{\mathrm{G}}}}^{1} \arrow{r}{\simeq} \arrow{d}
& \Omega_{\widehat{E_{\mathrm{G}}}}^{1}  \arrow{d}
&
&  \\
& \Omega_{\widehat{E_{\mathrm{G}}}/\mathcal{B}}^{1} \arrow{r}{u} \arrow{d}
& \Omega_{\widehat{E_{\mathrm{G}}}/\mathcal{X}}^{1} \arrow{d} \arrow{r}
& 0 \\
& 0
& 0
& .

\end{tikzcd}
\end{center}
Chasing this diagram, we obtain an exact sequence
\begin{align}\label{rel cotangent sequence}
0\rightarrow \pi^{\star} \Omega_{\mathcal{X}/\mathcal{B}}^{1}\rightarrow \Omega_{\widehat{E_{\mathrm{G}}}/\mathcal{B}}^{1}
\rightarrow \Omega_{\widehat{E_{\mathrm{G}}}/\mathcal{X}}^{1}\rightarrow 0.
\end{align}
Dualizing yields the exact sequence
\begin{align}\label{rel tangent sequence}
0\rightarrow \Theta_{\widehat{E_{\mathrm{G}}}/\mathcal{X}}\rightarrow \Theta_{\widehat{E_{\mathrm{G}}}/\mathcal{B}}\rightarrow \pi^{\star}\Theta_{\mathcal{X}/\mathcal{B}}\rightarrow 0.
\end{align}
This is an exact sequence of $\mathrm{G}$-equivariant locally free sheaves (equivalently $\mathrm{G}$-equivariant holomorphic vector bundles) on $\widehat{E_{\mathrm{G}}}.$ 

Now comes the definition of a relative holomorphic connection on $\widehat{E_{\mathrm{G}}}.$  
\begin{definition}
A relative holomorphic connection on $\widehat{E_{\mathrm{G}}}$ is a $\mathrm{G}$-equivariant splitting of the exact sequence \eqref{rel tangent sequence}
.\end{definition}

\textbf{Remark:}  If $\mathcal{B}=\{\textnormal{pt}\},$ then \eqref{rel tangent sequence} is the usual tangent sequence associated to a principal $\mathrm{G}$-bundle on $X,$ and this is the usual sheaf-theoretic definition of a holomorphic connection (see Section  \ref{A bundles}).

Abusing notation, we denote by $\Theta_{\widehat{E_{\mathrm{G}}}/\mathcal{X}}$ the holomorphic vector bundle associated to the locally free sheaf $\Theta_{\widehat{E_{\mathrm{G}}}/\mathcal{X}}$ on $\widehat{E_{\mathrm{G}}}.$

Given $g\in G,$ we denote the right holomorphic $\mathrm{G}$-action by
$R_{g}: \widehat{E_{\mathrm{G}}}\rightarrow \widehat{E_{\mathrm{G}}}.$  Note that $\Theta_{\widehat{E_{\mathrm{G}}}/\mathcal{B}}$ is a $\mathrm{G}$-equivariant sub-sheaf of $\Theta_{\widehat{E_{\mathrm{G}}}},$ and therefore the infinitesimal $\mathrm{G}$-action may be restricted to sections of $\Theta_{\widehat{E_{\mathrm{G}}}/\mathcal{B}}.$

Perhaps a more familiar (equivalent) definition of a relative holomorphic connection on $\widehat{E_{\mathrm{G}}}$ is the following.

\begin{definition}
A relative holomorphic connection on $\widehat{E_{\mathrm{G}}}$ is a holomorphic map
\begin{align}
\overline{\omega}: \Theta_{\widehat{E_{\mathrm{G}}}/\mathcal{B}} \rightarrow \mathfrak{g},
\end{align}
which is linear in the fibers and which satisfies:
\begin{enumerate}
\item $R_{g}^{\star}\overline{\omega}=\textnormal{Ad}(g^{-1})\circ\overline{\omega}$
for all $g\in G.$
\item If $X\in \mathfrak{g}$ and $X^{\sharp}$ is the $\mathrm{G}$-invariant section of $\Theta_{\widehat{E_{\mathrm{G}}}/\mathcal{X}}$ given by the infinitesimal $\mathrm{G}$-action on $\widehat{E_{\mathrm{G}}},$ then $\overline{\omega}(X^{\sharp})=X.$
\end{enumerate}
\end{definition}

As with ordinary connections on principal $\mathrm{G}$-bundles, relative holomorphic connections are an affine space
modelled on the vector space of holomorphic sections $\textnormal{H}^{0}(\mathcal{X}, \Omega_{\mathcal{X}/\mathcal{B}}^{1}\otimes \widehat{E_{\mathrm{G}}}[\mathfrak{g}]).$

If 
\begin{align}
d_{(\pi,p)}: \Omega_{\widehat{E_{\mathrm{G}}}/\mathcal{B}}^{1}\rightarrow \Omega_{\widehat{E_{\mathrm{G}}}/\mathcal{B}}^{2}
\end{align}
is the relative exterior derivative along the fibers of $p\circ \pi$, then the curvature of $\overline{\omega}$ is defined by
\begin{align}
F(\overline{\omega})=d_{(\pi, p)}\overline{\omega}+\frac{1}{2}[\overline{\omega}, \overline{\omega}],
\end{align}
and descends to a holomorphic section $F(\overline{\omega})\in \textnormal{H}^{0}(\mathcal{X}, \Omega_{\mathcal{X}/\mathcal{B}}^{2}\otimes \widehat{E_{\mathrm{G}}}[\mathfrak{g}]).$  Since $\mathcal{X}$ is a family of Riemann surfaces, $\Omega_{\mathcal{X}/\mathcal{B}}^{2}$ is the zero sheaf and such a section is necessarily zero.  Therefore, every holomorphic relative connection is automatically flat.

Finally, if $\mathrm{H}<\mathrm{G}$ is a closed complex subgroup and $\widehat{E_{\mathrm{H}}}$ is a holomorphic reduction of $\widehat{E_{\mathrm{G}}}$ to $\mathrm{H},$ then the relative second fundamental form $\overline{\Psi}$ of $\overline{\omega}$ relative to $\widehat{E_{\mathrm{H}}}$ is defined as the composition
\begin{align}
\Theta_{\widehat{E_{\mathrm{H}}}/\mathcal{B}}\rightarrow \Theta_{\widehat{E_{\mathrm{G}}}/\mathcal{B}} \xrightarrow{\overline{\omega}} \mathfrak{g}\rightarrow \mathfrak{g}/\mathfrak{h}
\end{align}
and descends to a holomorphic section $\overline{\Psi}\in \textnormal{H}^{0}(\mathcal{X}, \Omega_{\mathcal{X}/\mathcal{B}}^{1}\otimes \widehat{E_{\mathrm{H}}}(\mathfrak{g}/\mathfrak{h})).$

We now arrive at the definition of a $\Sigma$-marked family of $\mathrm{G}$-opers.

\begin{definition}\label{family g oper}
Let $\mathrm{G}$ be a connected complex semi-simple Lie group with a fixed Borel subgroup $\mathrm{B}<\mathrm{G}.$  

A $\Sigma$-marked family of $\mathrm{G}$-opers consists of a tuple $(\widehat{E_{\mathrm{G}}}, \widehat{E_{\mathrm{B}}}, \mathcal{X}, \mathcal{B}, \overline{\omega})$ such that
\begin{enumerate}
\item The pair $(\mathcal{X}, \mathcal{B})$ is a $\Sigma$-marked family of Riemann surfaces.  
\item $\widehat{E_{\mathrm{G}}}$ is a holomorphic, right principal $\mathrm{G}$-bundle over $\mathcal{X}$ and $\widehat{E_{\mathrm{B}}}$ is a holomorphic reduction of structure to the Borel subgroup $\mathrm{B}<\mathrm{G}.$  
\item $\overline{\omega}$ is a relative holomorphic connection on $\widehat{E_{\mathrm{G}}}.$
\item For all non-zero vectors $v\in \Theta_{\mathcal{X}/\mathcal{B}},$
\begin{align}
\overline{\Psi}(v)\in \widehat{E_{\mathrm{B}}}[\mathcal{O}]
\end{align}
where $\mathcal{O}\subset \mathfrak{g}^{-1}/\mathfrak{b}$ is the unique open $\mathrm{B}$-orbit.
\end{enumerate}
\end{definition}

\textbf{Remark}: If $(\mathcal{X}, \mathcal{B})=(X, \{x\})$ is a trivial family over a point $\{x\}$, then a $\Sigma$-marked family of $\mathrm{G}$-opers is identical to a $\Sigma$-marked $\mathrm{G}$-oper.  In general, this definition formalizes the notion of a $\Sigma$-marked family of Riemann surfaces where each fiber in the family is equipped with a $\mathrm{G}$-oper structure.

A small deformation of a $\Sigma$-marked $\mathrm{G}$-oper $(E_{\mathrm{G}}, E_{\mathrm{B}}, \omega, X)$ is defined in the obvious way, and such a deformation is said to be universal if any other deformation is pulled back from this one by a unique morphism.  Since all of these definitions are obvious adaptations of the definitions given for $\Sigma$-marked families of Riemann surfaces, we elect to not explicitly spell out the enhanced definitions here.

Finally, we can state the main theorem whose hypotheses are contained in this section.  The proof of this theorem will occupy the next few sub-sections.

\begin{theorem}\label{uni def}
Let $\mathrm{G}$ be a complex simple Lie group of adjoint type.  Then, given any $\Sigma$-marked $\mathrm{G}$-oper $(E_{\mathrm{G}}, E_{\mathrm{B}}, \omega, X)$, there is a universal small deformation whose base $\mathcal{B}$ is of complex dimension $\textnormal{dim}_{\mathbb{C}}(G)(g-1)+(3g-3)$ where $g$ is the genus of $\Sigma.$  
\end{theorem}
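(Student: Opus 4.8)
The plan is to apply the Kuranishi method as outlined at the start of this section. The first task is to identify a complex of sheaves $\mathcal{A}^{\bullet}$ on $X$ whose hyper-cohomology controls the deformation theory of the tuple $(E_{\mathrm{G}}, E_{\mathrm{B}}, \omega, X)$, in the sense that $\mathbb{H}^{0}(X, \mathcal{A}^{\bullet})$ is the space of infinitesimal automorphisms, $\mathbb{H}^{1}(X, \mathcal{A}^{\bullet})$ is the space of infinitesimal deformations, and $\mathbb{H}^{2}(X, \mathcal{A}^{\bullet})$ houses the obstructions. Granting this, the proof reduces to three cohomological statements: that $\mathbb{H}^{0}(X, \mathcal{A}^{\bullet}) = 0$ (no infinitesimal automorphisms), that $\mathbb{H}^{2}(X, \mathcal{A}^{\bullet}) = 0$ (the deformation problem is unobstructed), and that $\dim_{\mathbb{C}} \mathbb{H}^{1}(X, \mathcal{A}^{\bullet}) = \dim_{\mathbb{C}}(\mathrm{G})(g-1) + (3g-3)$.

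To build $\mathcal{A}^{\bullet}$, I would work in the explicit model of Section \ref{models}, where $\mathcal{E}_{\mathrm{G}}[\mathfrak{g}]$ is graded by the height grading and the oper connection $\nabla$ raises the associated filtration by exactly one step. The deformations split into two types: deformations of the oper structure with $X$ held fixed, which by Theorem \ref{param opers 2} form the affine space over $\bigoplus_{i=1}^{\ell} \textnormal{H}^{0}(X, \mathcal{K}^{m_{i}+1})$, and deformations of the underlying complex structure, measured by $\textnormal{H}^{1}(X, \Theta)$. I expect $\mathcal{A}^{\bullet}$ to be a two-term complex concentrated in degrees $0$ and $1$ whose fixed-$X$ part is the oper de Rham complex $\mathcal{E}_{\mathrm{B}}[\mathfrak{b}] \xrightarrow{\nabla} \mathcal{K} \otimes \mathcal{E}_{\mathrm{G}}[\mathfrak{g}]$ cut down by the oper condition, coupled to the Kodaira--Spencer deformations of $X$ through the relative tangent sequence \eqref{rel tangent sequence}. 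A short exact sequence of complexes then yields a long exact sequence relating $\mathbb{H}^{\bullet}(X, \mathcal{A}^{\bullet})$ to $\textnormal{H}^{\bullet}(X, \Theta)$ and to the hyper-cohomology of the fixed-$X$ oper complex.

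The dimension count is then forced by Theorem \ref{param opers 2} together with Riemann--Roch. Since $g \geq 2$, for each exponent $m_{i} \geq 1$ the bundle $\mathcal{K}^{m_{i}+1}$ has vanishing higher cohomology, so $\dim_{\mathbb{C}} \textnormal{H}^{0}(X, \mathcal{K}^{m_{i}+1}) = (2m_{i}+1)(g-1)$; summing over $i$ and using the identity $\sum_{i=1}^{\ell}(2m_{i}+1) = \ell + 2\sum_{i=1}^{\ell} m_{i} = \dim_{\mathbb{C}}(\mathrm{G})$ (the exponents sum to the number of positive roots) gives $(g-1)\dim_{\mathbb{C}}(\mathrm{G})$ for the oper part. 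Adding the Teichm\"{u}ller contribution $\dim_{\mathbb{C}} \textnormal{H}^{1}(X, \Theta) = 3g-3$ produces exactly the asserted dimension.

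The two vanishing statements are where the real work lies. The identity $\mathbb{H}^{0}(X, \mathcal{A}^{\bullet}) = 0$ is where the adjoint hypothesis on $\mathrm{G}$ enters: an infinitesimal automorphism would be a $\nabla$-parallel holomorphic section of $\mathcal{E}_{\mathrm{G}}[\mathfrak{g}]$ valued in $\mathfrak{b}$, and the irreducibility forced by the oper condition together with the triviality of the center of $\mathrm{G}$ should rule these out. The unobstructedness $\mathbb{H}^{2}(X, \mathcal{A}^{\bullet}) = 0$ is the main obstacle: I expect to prove it by using the long exact sequence to reduce to the fixed-$X$ statement, which in turn follows from the fact (Theorem \ref{param opers 2}) that opers on $X$ form an affine space over a vector space of sections with vanishing $\textnormal{H}^{1}$, so the fixed-$X$ oper complex carries no degree-$2$ hyper-cohomology, and then observing that the coupling to $\Theta$ introduces no further obstruction because $\textnormal{H}^{2}(X, -)$ vanishes on a curve. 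With both vanishings in hand, the Kuranishi construction (for which we refer to the analogous treatments in the literature) produces a smooth universal family whose base is an open neighborhood of the origin in $\mathbb{H}^{1}(X, \mathcal{A}^{\bullet})$ of the claimed dimension, and the absence of infinitesimal automorphisms promotes versality to universality.
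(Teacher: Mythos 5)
Your proposal follows the same architecture as the paper: a two-term governing complex $\mathcal{A}^{\bullet}$ (the paper takes $\mathcal{A}(E_{\mathrm{B}})\xrightarrow{[\hat{\omega},\,\cdot\,]}\mathcal{K}\otimes\mathcal{E}_{\mathrm{B}}[\mathfrak{g}^{-1}]$, with the Atiyah algebroid of $E_{\mathrm{B}}$ packaging what you describe as the coupling to Kodaira--Spencer theory), a short exact sequence of complexes $0\rightarrow\mathcal{A}_{0}^{\bullet}\rightarrow\mathcal{A}^{\bullet}\rightarrow\Theta\rightarrow 0$, the vanishing of $\mathbb{H}^{0}$ from the adjoint hypothesis, the dimension count via Theorem \ref{param opers 2} and Riemann--Roch with $\sum_{i}(2m_{i}+1)=\dim_{\mathbb{C}}(\mathrm{G})$, and an appeal to the Kuranishi method for the actual construction of the universal family. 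All of that matches.

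The one step where your argument as written does not go through is the vanishing of $\mathbb{H}^{2}(X,\mathcal{A}_{0}^{\bullet})$. You deduce it from the statement that ``opers on $X$ form an affine space over a vector space of sections with vanishing $\textnormal{H}^{1}$, so the fixed-$X$ oper complex carries no degree-$2$ hyper-cohomology.'' That inference is a non sequitur: Theorem \ref{param opers 2} is a set-theoretic bijection between the fiber $\mathcal{O}\mathfrak{p}_{X}(\mathrm{G})$ and $\bigoplus_{i}\textnormal{H}^{0}(X,\mathcal{K}^{m_{i}+1})$, which (granting the identification of $\mathbb{H}^{1}(X,\mathcal{A}_{0}^{\bullet})$ with the tangent space) pins down $\dim\mathbb{H}^{1}$ but says nothing a priori about the obstruction space $\mathbb{H}^{2}$ --- smoothness of a moduli space tells you the obstruction \emph{map} vanishes on its image, not that the group housing the obstructions is zero. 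To make your route rigorous you would need to show that $\mathcal{A}_{0}^{\bullet}$ is quasi-isomorphic to $\bigoplus_{i}\mathcal{K}^{m_{i}+1}$ placed in degree one (e.g.\ via the height filtration and the injectivity of $\textnormal{ad}(f_{1})$ on $\mathfrak{b}$, so that the associated graded of $[\hat{\omega},\,\cdot\,]$ is an injective bundle map with cokernel $\bigoplus_{i}\mathcal{K}^{m_{i}+1}$), after which $\mathbb{H}^{2}(X,\mathcal{A}_{0}^{\bullet})\simeq\bigoplus_{i}\textnormal{H}^{1}(X,\mathcal{K}^{m_{i}+1})=0$ as you intend. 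The paper sidesteps this by a different argument: it computes the Euler characteristic of $\mathcal{A}_{0}^{\bullet}$ directly via Grothendieck--Riemann--Roch on the explicit models of Section \ref{models} (where the ranks and degrees of $\mathcal{E}_{\mathrm{B}}[\mathfrak{b}]$ and $\mathcal{K}\otimes\mathcal{E}_{\mathrm{B}}[\mathfrak{g}^{-1}]$ are visible), and combines this with the already-known $\mathbb{H}^{0}=0$ and $\dim\mathbb{H}^{1}=(g-1)\dim_{\mathbb{C}}(\mathrm{G})$ to force $\mathbb{H}^{2}(X,\mathcal{A}_{0}^{\bullet})=0$. Either route works, but yours needs the filtration argument spelled out; the affine-space structure alone is not enough.
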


\subsection{Atiyah bundles and flat connections}\label{A bundles}

In order to study the infinitesimal deformation theory of $\Sigma$-marked $\mathrm{G}$-opers, we now review the sheaf-theoretic way of thinking about connections on principal bundles. 

Let $\mathrm{G}$ be a connected complex semi-simple Lie group and $E_{\mathrm{G}}$ a holomorphic principal $\mathrm{G}$-bundle over a $\Sigma$-marked Riemann surface $X.$  Consider the sub-sheaf $\mathcal{G}\subset\Theta_{E_{\mathrm{G}}}$ of the tangent sheaf of $E_{\mathrm{G}}$ whose local sections consist of $\mathrm{G}$-invariant holomorphic vector fields on $E_{\mathrm{G}}.$  

 By a theorem of Atiyah \cite{ATI57}, the sheaf $\mathcal{G}$ descends to a locally free sheaf on $X$: namely there exists a unique locally-free sheaf $\mathcal{A}(E_{\mathrm{G}})$ on $X$, called the \emph{Atiyah} sheaf (equivalently bundle), such that the pullback $\pi^{\star}\mathcal{A}(E_{\mathrm{G}})$ of $\mathcal{A}(E_{\mathrm{G}})$ to $E_{\mathrm{G}}$ via the projection map $E_{\mathrm{G}}\xrightarrow{\pi} X$ is equal to $\mathcal{G}.$  Note that Beilinson-Drinfeld \cite{BD05} call this the (Atiyah)-algebroid of infinitesimal symmetries of $E_{\mathrm{G}}.$ 

There is an exact sequence of locally-free sheaves (equivalently holomorphic vector bundles) on $X$
\begin{align}\label{atiyah sequence}
0\rightarrow \mathcal{E}_{\mathrm{G}}[\mathfrak{g}]\rightarrow \mathcal{A}(E_{\mathrm{G}})\xrightarrow{\sigma_{\mathrm{G}}} \Theta\rightarrow 0,
\end{align}
where $\mathcal{E}_{\mathrm{G}}[\mathfrak{g}]$ is the sheaf of vertical $\mathrm{G}$-invariant vector fields on $E_{\mathrm{G}}.$  This is the sheaf of sections of the holomorphic vector bundle $E_{\mathrm{G}}[\mathfrak{g}].$ 
The map $\sigma_{\mathrm{G}}:  \mathcal{A}(E_{\mathrm{G}})\rightarrow \Theta$ is called the \emph{symbol} map.

\textbf{Example:}  If $V$ is a rank $n$-holomorphic vector bundle over $X,$ let $\mathcal{D}_{0}^{1}(V)$ denote the sheaf of first-order differential operators on holomorphic sections of $V$ which have scalar principal symbol.  

Given a local holomorphic frame $\{e_{i}\}_{i=1}^{n}$, a local section $P$ of the sheaf $\mathcal{D}_{0}^{1}(V)$
is given by an expression of the form
\begin{align}
P=\sum_{i=1}^{n} \xi\otimes \textnormal{Id} + B,
\end{align}
where $\xi$ is a local section of $\Theta$ and $\mathrm{B}$ is a local section of $\textnormal{End}(V).$  

The action of $P$ on a section $s=\sum_{i=1}^{n} s^{i}\otimes e_{i}$ in this trivialization is given by
\begin{align}
P(s)=\sum_{i=1}^{n} \xi(s^{i})\otimes e_{i}+s^{i}\otimes B(e_{i}).
\end{align}

If $E_{\textnormal{GL}_{n}(\mathbb{C})}$ is the holomorphic principal $\textnormal{GL}_{n}( \mathbb{C})$-bundle associated to $V,$ then the Atiyah sequence \eqref{atiyah sequence} for $E_{\textnormal{GL}_{n}(\mathbb{C})}$ is equivalent to the exact sequence
\begin{align}\label{SES vb}
0\rightarrow \textnormal{End}(V)\rightarrow \mathcal{D}_{0}^{1}(V)\xrightarrow{\sigma_{V}} \Theta\rightarrow 0
\end{align}
where $\sigma_{V}(P)=\xi$ is the \emph{principal symbol} of the differential operator $P.$  This example explains the terminology \emph{symbol map} for the map $\mathcal{A}(E_{\mathrm{G}})\xrightarrow{\sigma_{\mathrm{G}}} \Theta.$  

Finally, if $\nabla$ is a holomorphic connection on $V,$ then for any local section $\xi$ of $\Theta,$ the operator $\nabla_{\xi}$ is a local section of $\mathcal{D}_{0}^{1}(V).$  Moreover, the fact that $\nabla$ satisfies the Leibniz rule implies that $\nabla$ defines a holomorphic splitting of \eqref{SES vb}.  This shows that holomorphic splittings of \eqref{SES vb} are equivalent to holomorphic connections on $V.$

Generally, in light of the fact that a holomorphic connection on $E_{\mathrm{G}}$ is equivalent to a $\mathrm{G}$-equivariant horizontal, holomorphic splitting $TE_{\mathrm{G}}\simeq \mathcal{H}\oplus \mathcal{V}$ (e.g. a $\mathrm{G}$-equivariant horizontal, holomorphic distribution), we arrive at Atiyah's \cite{ATI57} definition of a holomorphic connection on $E_{\mathrm{G}}.$

\begin{definition}
A holomorphic connection on $E_{\mathrm{G}}$ is a holomorphic splitting of the symbol map
\begin{align}
\mathcal{A}(E_{\mathrm{G}})\xrightarrow{\sigma_{\mathrm{G}}} \Theta.
\end{align}
\end{definition}

We can recast the definition of a $\mathrm{G}$-oper in this language.  Namely, the locally-free sub-sheaf $\mathcal{E}_{\mathrm{B}}[\mathfrak{g}^{-1}]\subset \mathcal{E}_{\mathrm{G}}[\mathfrak{g}]$ determines a locally-free sub-sheaf $\mathcal{A}^{-1}\subset \mathcal{A}(E_{\mathrm{G}})$ and a short exact sequence
\begin{align}\label{ses atiyah}
0\rightarrow \mathcal{E}_{\mathrm{B}}[\mathfrak{g}^{-1}]\rightarrow \mathcal{A}^{-1}\xrightarrow{\sigma_{\mathrm{G}, -1}} \Theta\rightarrow 0.
\end{align}
Then, a $\mathrm{G}$-oper structure is a holomorphic splitting $\omega$ of \eqref{ses atiyah} such that
the composition
\begin{align}
\Theta\xrightarrow{\omega} \mathcal{A}^{-1}\rightarrow \mathcal{A}^{-1}/\mathcal{A}(E_{\mathrm{B}})\simeq \mathcal{E}_{\mathrm{B}}[\mathfrak{g}^{-1}/\mathfrak{b}]
\end{align}
lies in the unique open $\mathrm{B}$-orbit $\mathcal{O}\subset\mathfrak{g}^{-1}/\mathfrak{b}$ for every non-zero vector in $\Theta.$

\subsection{Deformation theory of $\mathrm{G}$-opers}

In this section we study the infinitesimal deformation theory of $\mathrm{G}$-opers.  Our efforts in the previous Section \ref{A bundles} will come to fruition here as the deformation theory is nicely captured using the theory of Atiyah bundles.

Before diving in, we need some definitions.  In Section \ref{families} we introduced the notion of families of $\mathrm{G}$-opers over complex manifolds.  Unfortunately, this is not the right context for the study of infinitesimal deformation theory.  We remedy this here.  In this subsection, we assume some familiarity with complex analytic spaces.  The interested reader may find all of the relevant background material in the book \cite{GLS07}.

Let $\mathbb{D}^{(n)}$ denote the complex analytic space associated to the holomorphic function
\begin{align}
f: \mathbb{C}&\rightarrow \mathbb{C} \\
           z&\mapsto z^{n+1}.
\end{align}
Any complex analytic space with underlying topolgical space a single point is called a \emph{fat point}.  As a locally ringed space,
\begin{align}
(\mathbb{D}^{(n)}, \mathcal{O}_{\mathbb{D}^{(n)}})\simeq (\{0\}, \mathbb{C}[\varepsilon]/(\varepsilon^{n+1})).
\end{align}
An $n$-th order deformation of a $\Sigma$-marked Riemann surface $X$ consists of a commutative diagram
\begin{center}
\begin{tikzcd}
X \arrow{r} \arrow{d}
& \mathcal{X} \arrow{d}{p} \\
\mathbb{D}^{(0)} \arrow{r}
& \mathbb{D}^{(n)}.
\end{tikzcd}
\end{center}
Here, all maps are maps of complex analytic spaces, and $p$ is assumed to be a flat map of complex analytic spaces.  Furthermore, the top horizontal arrow is a closed embedding of complex analytic spaces.

Following the rubric of section \ref{families} and accepting some familiarity with the theory of complex analytic spaces, there is a straightforward notion of a $\Sigma$-marked $\mathrm{G}$-oper over the family $\mathcal{X}\xrightarrow{p} \mathbb{D}^{(n)},$ and an $n$-th order deformation of a $\Sigma$-marked $\mathrm{G}$-oper $(E_{\mathrm{G}}, E_{\mathrm{B}}, \omega, X)$.  With these definitions in place,  the following theorem solves the problem of infinitesimal deformations of $\mathrm{G}$-opers.

\begin{theorem}\label{kuranishi family}
Let $\Xi:=(E_{\mathrm{G}}, E_{\mathrm{B}}, \omega, X)$ be a $\Sigma$-marked $\mathrm{G}$-oper.  There is a two term complex
of locally-free sheaves
\begin{align}\label{2 term}
\mathcal{A}^{\bullet}:\mathcal{A}(E_{\mathrm{B}})\xrightarrow{[\hat{\omega}, \ ]} \mathcal{K}\otimes \mathcal{E}_{\mathrm{B}}[\mathfrak{g}^{-1}]
\end{align}
such that,
\begin{enumerate}
\item The $0$-th hyper-cohomology $\mathbb{H}^{0}(X, \mathcal{A}^{\bullet})$ is in bijection with infinitesimal automorphisms of the $\mathrm{G}$-oper $\Xi.$
\item The $1$-st hyper-cohomology $\mathbb{H}^{1}(X, \mathcal{A}^{\bullet})$ is in bijection with isomorphism classes of first-order deformations of the $\mathrm{G}$-oper $\Xi$ such that $0\in \mathbb{H}^{1}(X, \mathcal{A}^{\bullet})$ corresponds to the trivial first order deformation.
\item There is a quadratic obstruction map
\begin{align}
\textnormal{Ob}:\mathbb{H}^{1}(X, \mathcal{A}^{\bullet})\rightarrow \mathbb{H}^{2}(X, \mathcal{A}^{\bullet})
\end{align}
such that a first-order deformation in $\mathbb{H}^{1}(X, \mathcal{A}^{\bullet})$ extends to a second-order deformation if and only if its image under the map $\textnormal{Ob}$ vanishes.
\end{enumerate}
Finally, we have the following equalities
\begin{enumerate}
\item $\mathbb{H}^{0}(X, \mathcal{A}^{\bullet})=\{0\}.$
\item If $\mathrm{G}$ is complex simple of adjoint type, then $\textnormal{dim}_{\mathbb{C}}(\mathbb{H}^{1}(X, \mathcal{A}^{\bullet}))=(g-1)\textnormal{dim}_{\mathbb{C}}(G)+(3g-3)$ where $\mathrm{G}$ is the genus of $\Sigma.$  
\item If $\mathrm{G}$ is complex simple of adjoint type, then $\mathbb{H}^{2}(X, \mathcal{A}^{\bullet})=\{0\}.$
\end{enumerate}
\end{theorem}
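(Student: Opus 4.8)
The plan is to build the two-term complex $\mathcal{A}^{\bullet}$ directly from the sheaf-theoretic description of a $\mathrm{G}$-oper given in Section \ref{A bundles}, then compute its hyper-cohomology using the explicit differential-geometric models of Section \ref{models}. The complex $\mathcal{A}^{\bullet}: \mathcal{A}(E_{\mathrm{B}}) \xrightarrow{[\hat{\omega}, \ ]} \mathcal{K}\otimes \mathcal{E}_{\mathrm{B}}[\mathfrak{g}^{-1}]$ should arise as follows. Infinitesimal symmetries of the pair $(E_{\mathrm{G}}, E_{\mathrm{B}})$ compatible with the Borel reduction are sections of the Atiyah sheaf $\mathcal{A}(E_{\mathrm{B}})$; the differential $[\hat{\omega}, \ ]$ records the infinitesimal change of the connection $\omega$ (equivalently, of the splitting of \eqref{ses atiyah}) under such a symmetry, landing in $\mathcal{K}\otimes \mathcal{E}_{\mathrm{B}}[\mathfrak{g}^{-1}]$ because the oper condition constrains the second fundamental form to the $\mathfrak{g}^{-1}$-piece. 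First I would verify that this map is well-defined and that its hyper-cohomology has the standard deformation-theoretic interpretation: this is the general principle that $\mathbb{H}^{0}$ gives infinitesimal automorphisms, $\mathbb{H}^{1}$ gives first-order deformations, and the obstruction to integrating to second order lives in $\mathbb{H}^{2}$ via the quadratic cup-bracket pairing. This part is formal once the complex is correctly identified.

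The substantive content is the three dimension computations, and here I would pass to the Dolbeault resolution and use the explicit models. For the hyper-cohomology of a two-term complex $\mathcal{A}^{0} \xrightarrow{d} \mathcal{A}^{1}$ on a curve, there is a hyper-cohomology long exact sequence
\begin{align}
0 \rightarrow \mathbb{H}^{0} \rightarrow \textnormal{H}^{0}(\mathcal{A}^{0}) \xrightarrow{d_{\star}} \textnormal{H}^{0}(\mathcal{A}^{1}) \rightarrow \mathbb{H}^{1} \rightarrow \textnormal{H}^{1}(\mathcal{A}^{0}) \xrightarrow{d_{\star}} \textnormal{H}^{1}(\mathcal{A}^{1}) \rightarrow \mathbb{H}^{2} \rightarrow 0,
\end{align}
so everything reduces to understanding the cohomology of the individual sheaves $\mathcal{A}(E_{\mathrm{B}})$ and $\mathcal{K}\otimes\mathcal{E}_{\mathrm{B}}[\mathfrak{g}^{-1}]$ together with the connecting maps induced by $[\hat{\omega}, \ ]$. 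For $\mathbb{H}^{0} = \{0\}$, I would show the map $\textnormal{H}^{0}(\mathcal{A}(E_{\mathrm{B}})) \xrightarrow{d_{\star}} \textnormal{H}^{0}(\mathcal{A}^{1})$ is injective, which amounts to showing the oper has no nonzero infinitesimal automorphisms; using the principal $\mathfrak{sl}_2$-grading from Section \ref{models}, any global holomorphic symmetry must be covariantly constant for $\nabla$, and the negativity forced by the grading (the $\mathfrak{g}_i$-components live in $K^i$ with $i$ bounded, and the flatness couples them through $\textnormal{ad}(e_1)$) rules out nonzero solutions, since $\mathrm{G}$ adjoint means the center is trivial.

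For the numerical dimension count I would invoke Riemann–Roch sheaf by sheaf. The key identification, already available from Proposition \ref{unif oper} and the grading $\mathcal{E}_{\mathrm{B}}[\mathfrak{g}^{-1}] \simeq \bigoplus_{i} K^{-1}\otimes(\textnormal{graded pieces})$, lets me decompose each associated bundle into pluri-canonical line bundles $K^{j}$, whose Euler characteristics are $\chi(K^j) = (2j-1)(g-1)$. Summing these over the height grading of $\mathfrak{g}$, and using $\textnormal{dim}_{\mathbb{C}}(\mathrm{G}) = \textnormal{dim}_{\mathbb{C}}(\mathfrak{g})$, the total Euler characteristic of the complex should collapse to $\textnormal{dim}_{\mathbb{C}}(\mathbb{H}^{1}) - \textnormal{dim}_{\mathbb{C}}(\mathbb{H}^{0}) - \textnormal{dim}_{\mathbb{C}}(\mathbb{H}^{2})$, and once $\mathbb{H}^{0} = \mathbb{H}^{2} = \{0\}$ this pins down $\dim\mathbb{H}^{1} = (g-1)\textnormal{dim}_{\mathbb{C}}(\mathrm{G}) + (3g-3)$; the extra $(3g-3)$ is precisely the Teichmüller direction $\textnormal{H}^{1}(X,\Theta)$ that survives because the family moves $X$. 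The main obstacle I anticipate is the vanishing $\mathbb{H}^{2}(X, \mathcal{A}^{\bullet}) = \{0\}$, which is what makes the moduli space smooth of the expected dimension. By the long exact sequence this is equivalent to surjectivity of $\textnormal{H}^{1}(\mathcal{A}(E_{\mathrm{B}})) \xrightarrow{d_{\star}} \textnormal{H}^{1}(\mathcal{K}\otimes\mathcal{E}_{\mathrm{B}}[\mathfrak{g}^{-1}])$; by Serre duality this is dual to the injectivity of a map on $\textnormal{H}^{0}$, which I would again attack through the explicit $\mathfrak{sl}_2$-model, showing that $\textnormal{ad}(f_1)$ (the raising/lowering structure of the principal grading) acts with the right surjectivity on global sections so that no obstruction class survives. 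This injectivity/surjectivity statement, rather than the Euler characteristic bookkeeping, is where the representation theory of the principal $\mathfrak{sl}_2$ and the adjoint hypothesis on $\mathrm{G}$ genuinely enter, and it is the step I expect to require the most care.
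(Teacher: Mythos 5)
Your overall architecture is sound, and your construction of the complex $\mathcal{A}^{\bullet}$ together with the appeal to the general principle that $\mathbb{H}^{0}$, $\mathbb{H}^{1}$, $\mathbb{H}^{2}$ compute infinitesimal automorphisms, first-order deformations and obstructions matches the paper, which outsources this formal part to Proposition 4.4 of \cite{CHE12}. Where you genuinely diverge is in the logical order of the three numerical statements. You propose to prove $\mathbb{H}^{0}=\{0\}$ and $\mathbb{H}^{2}=\{0\}$ directly --- the latter by dualizing the surjectivity of $\textnormal{H}^{1}(X,\mathcal{A}(E_{\mathrm{B}}))\rightarrow \textnormal{H}^{1}(X,\mathcal{K}\otimes\mathcal{E}_{\mathrm{B}}[\mathfrak{g}^{-1}])$ via Serre duality and an explicit principal-$\mathfrak{sl}_{2}$ computation --- and then to read off $\dim\mathbb{H}^{1}$ from the Euler characteristic. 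The paper runs the argument in the opposite direction: it first splits off the Teichm\"uller direction via the short exact sequence of complexes $0\rightarrow\mathcal{A}_{0}^{\bullet}\rightarrow\mathcal{A}^{\bullet}\rightarrow\Theta\rightarrow 0$, gets $\mathbb{H}^{0}(X,\mathcal{A}_{0}^{\bullet})=\{0\}$ from the finiteness of oper automorphisms in \cite{BD05}, and --- crucially --- gets $\dim\mathbb{H}^{1}(X,\mathcal{A}_{0}^{\bullet})=(g-1)\dim_{\mathbb{C}}(G)$ essentially for free from the Beilinson--Drinfeld parameterization of opers on the \emph{fixed} surface $X$ (Theorem \ref{param opers 2}) plus Riemann--Roch on the Hitchin base. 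With $\mathbb{H}^{0}$ and $\mathbb{H}^{1}$ of $\mathcal{A}_{0}^{\bullet}$ known, the Grothendieck--Riemann--Roch computation $\chi(\mathcal{A}_{0}^{\bullet})=(1-g)\dim_{\mathbb{C}}(G)$ \emph{forces} $\mathbb{H}^{2}(X,\mathcal{A}_{0}^{\bullet})=\{0\}$, and the long exact sequence of the displayed short exact sequence then delivers both $\mathbb{H}^{2}(X,\mathcal{A}^{\bullet})=\{0\}$ and the $(3g-3)$ correction to $\dim\mathbb{H}^{1}(X,\mathcal{A}^{\bullet})$. So the paper never proves the obstruction vanishing directly; it is a corollary of already knowing the answer for opers on a fixed curve.

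This matters for your proposal because the step you yourself flag as requiring the most care --- the Serre-dual injectivity on $\textnormal{H}^{0}$ that gives $\mathbb{H}^{2}=\{0\}$ --- is precisely the step your route cannot avoid and the paper's route sidesteps. The statement is true, and a computation in the explicit models of Section \ref{models} in the style of the proof of Theorem \ref{hol immersion} should establish it, but as written it is a plan rather than a proof: you would need to identify $\mathcal{E}_{\mathrm{B}}[\mathfrak{g}^{-1}]^{\vee}$ with $\mathcal{E}_{\mathrm{B}}[\mathfrak{g}/\mathfrak{g}^{2}]$ via the Killing form, write down the transposed map in the Dolbeault model, and run a descending induction on the height grading. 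Either supply that computation or import the paper's shortcut and compute $\dim\mathbb{H}^{1}(X,\mathcal{A}_{0}^{\bullet})$ first from Theorem \ref{param opers 2}. Two smaller points: your Euler-characteristic identity should read $\chi=\dim\mathbb{H}^{0}-\dim\mathbb{H}^{1}+\dim\mathbb{H}^{2}$ (your signs are off, though the final number is right), and your $\mathbb{H}^{0}$ argument is cleanest if routed through $\textnormal{H}^{0}(X,\Theta)=\{0\}$, which kills the symbol of any global section of $\mathcal{A}(E_{\mathrm{B}})$ before you invoke parallelism and the grading, exactly as the paper does.
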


\begin{proof}
Let $(E_{\mathrm{G}}, E_{\mathrm{B}}, \omega, X)$ be a $\Sigma$-marked $\mathrm{G}$-oper.  Recall that $\mathcal{A}^{-1}\subset \mathcal{A}(E_{\mathrm{G}})$ is the locally free sub-sheaf such that 
the holomorphic flat connection $\omega$ is a holomorphic splitting of
\begin{align}\label{oper splitting}
0\rightarrow \mathcal{E}_{\mathrm{B}}[\mathfrak{g}^{-1}]\rightarrow \mathcal{A}^{-1}\xrightarrow{\sigma_{\mathrm{G},-1}} \Theta\rightarrow 0.
\end{align}
Therefore, the holomorphic flat connection $\omega$ may be viewed as a global holomorphic section $\hat{\omega}\in \textnormal{H}^{0}(X, \mathcal{K}\otimes \mathcal{A}^{-1}).$  

Locally on $X,$ we may write $\hat{\omega}=\sum_{i} \beta_{i}\otimes u_{i}$ where $\beta_{i}$ are locally defined holomorphic $1$-forms on $X$ and the $u_{i}$ are local sections of $\mathcal{A}^{-1}.$ If $s$ is a local section of $\mathcal{A}(E_{\mathrm{B}}),$
then define
\begin{align}\label{bracket}
[\hat{\omega}, s]:=\sum_{i} \left(\beta_{i}\otimes [u_{i}, s] -\mathcal{L}_{\sigma_{\mathrm{B}}(s)}\beta_{i}\otimes u_{i}\right)
\end{align}
where $\mathcal{L}_{\sigma_{\mathrm{B}}(s)}\beta_{i}$ is the Lie derivative of the local holomorphic $1$-form $\beta_{i}$ along the local holomorphic vector field $\sigma_{\mathrm{B}}(s).$  

The bracket \eqref{bracket} defines a sheaf map
\begin{align}\label{complex first}
\mathcal{A}(E_{\mathrm{B}})\xrightarrow{[\hat{\omega}, \ ]} \mathcal{K}\otimes \mathcal{A}^{-1}.
\end{align}
Since $\hat{\omega}$ is a splitting of \eqref{oper splitting},
\begin{align}
\sum_{i} \alpha_{i}\otimes \sigma_{\mathrm{G}, -1}(u_{i})=1
\end{align}
as a local section of $\mathcal{K}\otimes \Theta\simeq \mathcal{O}.$ Therefore
\begin{align}
0&=\mathcal{L}_{\sigma_{\mathrm{B}}(s)}\left(\sum_{i} \alpha_{i}\otimes \sigma_{\mathrm{G}, -1}(u_{i})\right) \\
&=\sum_{i} \left(\mathcal{L}_{\sigma_{\mathrm{B}}(s)}\alpha_{i}\otimes \sigma_{\mathrm{G}, -1}(u_{i})+\alpha_{i}\otimes [\sigma_{\mathrm{B}}(s), \sigma_{\mathrm{G}, -1}(u_{i})]\right).
\end{align}
Given a local section $s$ of $\mathcal{A}(E_{\mathrm{B}})$, we compute
\begin{align}
\textnormal{id}\otimes \sigma_{\mathrm{G}, -1}([\hat{\omega}, s])&=\sum_{i} \left(\alpha_{i}\otimes \sigma_{\mathrm{G}, -1}[u_{i},s]-\mathcal{L}_{\sigma_{\mathrm{B}}(s)}\alpha_{i}\otimes \sigma_{\mathrm{G},-1}(u_{i})\right) \\
&= \sum_{i}\left( \alpha_{i}\otimes \sigma_{\mathrm{G},-1}[u_{i},s]+\alpha_{i}\otimes [\sigma_{\mathrm{B}}(s), \sigma_{\mathrm{G},-1}(u_{i})]\right) \\
&=0,
\end{align}
where we have used the fact that the symbol map is a map of sheaves of Lie algebras which is functorial with respect to sub-sheaves of $\mathcal{A}(E_{\mathrm{G}}).$  

Since $\textnormal{ker}(\sigma_{\mathrm{G}, -1})=\mathcal{E}_{\mathrm{B}}[\mathfrak{g}^{-1}],$ the map \eqref{complex first} lifts to
\begin{align}
\mathcal{A}^{\bullet}:=\mathcal{A}(E_{\mathrm{B}})\xrightarrow{[\hat{\omega}, \ ]} \mathcal{K}\otimes \mathcal{E}_{\mathrm{B}}[\mathfrak{g}^{-1}].
\end{align}
Proposition $4.4$ in \cite{CHE12} implies that the complex $\mathcal{A}^{\bullet}$ governs the deformation theory of the $\Sigma$-marked $\mathrm{G}$-oper $(E_{\mathrm{G}}, E_{\mathrm{B}}, \omega, X).$  This means that:
\begin{enumerate}
\item  Infinitesimal automorphisms are in bijection with $\mathbb{H}^{0}(X, \mathcal{A}^{\bullet}).$
\item  First order deformations up to isomorphism are in bijection with $\mathbb{H}^{1}(X, \mathcal{A}^{\bullet}).$
\item  Obstructions to lifting a first order deformation to a second order deformation lie in $\mathbb{H}^{2}(X, \mathcal{A}^{\bullet}).$
            \end{enumerate}
            
\textbf{Remark}: Specifically, what Chen proves in \cite{CHE12} is that given the data $(E_{\mathrm{G}}, E_{\mathrm{B}}, \omega, X),$ the complex $\mathcal{A}^{\bullet}$ controls the deformation theory of this tuple, where we constrain the second fundamental form of $\omega$ relative to $E_{\mathrm{B}}$ to lie in $E_{\mathrm{B}}[\mathfrak{g}^{-1}/\mathfrak{b}].$  But, since lying in the open orbit is a open condition, any small deformation satisfying this property automatically yields a deformation of the $\Sigma$-marked $\mathrm{G}$-oper $(E_{\mathrm{G}}, E_{\mathrm{B}}, \omega, X).$  This proves the first part the Theorem \ref{kuranishi family}.

Now consider the short exact sequence of complexes 
\begin{align}\label{SES X to S}
0\rightarrow \mathcal{A}_{0}^{\bullet}\rightarrow \mathcal{A}^{\bullet}\rightarrow \Theta^{0}\rightarrow 0
\end{align}
defined by
\begin{center}
\begin{tikzcd}
0 \arrow{d} 
& 0 \arrow{d} \\
E_{\mathrm{B}}[\mathfrak{b}] \arrow{r}{[\hat{\omega}, \ ]} \arrow{d}
& \mathcal{K}\otimes \mathcal{E}_{\mathrm{B}}[\mathfrak{g}^{-1}] \arrow{d} \\
\mathcal{A}(E_{\mathrm{B}}) \arrow{r}{[\hat{\omega}, \ ]} \arrow{d}{\sigma_{\mathrm{B}}}
&  \mathcal{K}\otimes \mathcal{E}_{\mathrm{B}}[\mathfrak{g}^{-1}] \arrow{d}\\
\Theta \arrow{r} \arrow{d} 
&0 \\
0.
\end{tikzcd}
\end{center}
Since $\textnormal{H}^{0}(X, \Theta)=\{0\}$, the long exact sequence in hyper-cohomology implies that
$\mathbb{H}^{0}(X, \mathcal{A}_{0}^{\bullet})\simeq \mathbb{H}^{0}(X, \mathcal{A}^{\bullet}).$  But, the automorphism group of a $\mathrm{G}$-oper on $X$ is finite \cite{BD05} (equal to the center of $\mathrm{G}$), so $\mathbb{H}^{0}(X, \mathcal{A}_{0}^{\bullet})=\{0\}.$ Hence $\mathbb{H}^{0}(X, \mathcal{A}^{\bullet})=\{0\}.$

Now, we turn to the final statement of Theorem \ref{kuranishi family}.  By Theorem \ref{param opers 2} and using the fact that $\mathrm{G}$ is adjoint simple, there is a parameterization of $\Sigma$-marked $\mathrm{G}$-opers on $X$ by the vector space
\begin{align}
\bigoplus_{i=1}^{\ell}\textnormal{H}^{0}(X, \mathcal{K}^{m_{i}+1}).
\end{align}
Appealing again to \cite{CHE12}, the complex $\mathcal{A}_{0}$ governs the deformation theory of $\Sigma$-marked $\mathrm{G}$-opers on the fixed $\Sigma$-marked Riemann surface $X,$ and therefore 
\begin{align}\label{dim count}
\textnormal{dim}_{\mathbb{C}}(\mathbb{H}^{1}(X, \mathcal{A}_{0}^{\bullet}))&=\textnormal{dim}_{\mathbb{C}}\left(\bigoplus_{i=1}^{\ell}\textnormal{H}^{0}(X, \mathcal{K}^{m_{i}+1})\right) \\
&=\textnormal{dim}_{\mathbb{C}}(G)(g-1),
\end{align}
where the last equality follows by an application of the Riemann-Roch formula.

We first show that $\mathbb{H}^{2}(X, \mathcal{A}_{0}^{\bullet})=\{0\}.$  This will be achieved by examining the explicit models from section \ref{models} and an application of the Grothiendieck-Riemann-Roch theorem.

Using the explicit models of section \ref{models}, there are $C^{\infty}$-isomorphisms
\begin{align}
\mathcal{E}_{\mathrm{B}}[\mathfrak{b}]&\simeq \bigoplus_{i=0}^{m_{\ell}} \mathcal{K}^{i}\otimes \mathfrak{g}_{i} \\
\mathcal{K}\otimes \mathcal{E}_{\mathrm{B}}[\mathfrak{g}^{-1}]&\simeq \bigoplus_{i=-1}^{m_{\ell}} \mathcal{K}^{i+1}\otimes \mathfrak{g}_{i}.
\end{align}
With this informaton, the relevant ranks and degrees of the bundles in question are:
\begin{align}
\textnormal{deg}(\mathcal{E}_{\mathrm{B}}[\mathfrak{b}])&=\sum_{i=0}^{m_{\ell}}
i(2g-2)\textnormal{dim}_{\mathbb{C}}(\mathfrak{g}_{i}). \\
\textnormal{rank}(\mathcal{E}_{\mathrm{B}}[\mathfrak{b}])&=\textnormal{dim}_{\mathbb{C}}(\mathfrak{b}).\\
\textnormal{deg}(\mathcal{K}\otimes\mathcal{E}_{\mathrm{B}}[\mathfrak{g}^{-1}])&=\sum_{i=-1}^{m_{\ell}}
(i+1)(2g-2)\textnormal{dim}_{\mathbb{C}}(\mathfrak{g}_{i}). \\
\textnormal{rank}(\mathcal{K}\otimes \mathcal{E}_{\mathrm{B}}[\mathfrak{g}^{-1}])&=\textnormal{dim}_{\mathbb{C}}(\mathfrak{g}^{-1}).
\end{align}
Therefore, using \eqref{dim count} and the fact that $\mathbb{H}^{0}(X, \mathcal{A}_{0}^{\bullet})=\{0\}$,  an application of the Grothiendick-Riemann-Roch theorem reveals:
\begin{align}
(1-g)\textnormal{dim}_{\mathbb{C}}(G)+\textnormal{dim}_{\mathbb{C}}(\mathbb{H}^{2}(X, \mathcal{A}_{0}^{\bullet}))&=(1-g)\textnormal{rk}(\mathcal{A}_{0}^{\bullet})+\textnormal{deg}(\mathcal{A}_{0}^{\bullet})
\\
&=(1-g)\left(\textnormal{rk}(\mathcal{E}_{\mathrm{B}}[\mathfrak{b}])-\textnormal{rk}(\mathcal{K}\otimes \mathcal{E}_{\mathrm{B}}[\mathfrak{g}^{-1}])\right)  \\
&+ \left(\textnormal{deg}(\mathcal{E}_{\mathrm{B}}[\mathfrak{b}])-\textnormal{deg}(\mathcal{K}\otimes \mathcal{E}_{\mathrm{B}}[\mathfrak{g}^{-1}])\right) \\
&=(g-1)\ell -(2g-2)\textnormal{dim}_{\mathbb{C}}(\mathfrak{b}) \\
&=(1-g)\left(2\textnormal{dim}_{\mathbb{C}}(\mathfrak{b})-\ell\right) \\
&=(1-g)\textnormal{dim}_{\mathbb{C}}(G).
\end{align}
Hence, $\mathbb{H}^{2}(X, \mathcal{A}_{0}^{\bullet})=\{0\}.$  

Returning to the short exact sequence of complexes \eqref{SES X to S}
\begin{align}
0\rightarrow \mathcal{A}_{0}^{\bullet}\rightarrow \mathcal{A}^{\bullet}\rightarrow \Theta\rightarrow 0,
\end{align}
  and using that $\textnormal{H}^{0}(X, \Theta)=H^{2}(X, \Theta)=\{0\},$ the long exact sequence derived from \eqref{SES X to S} yields the exact sequence:
\begin{align}
0\rightarrow \mathbb{H}^{1}(X, \mathcal{A}_{0}^{\bullet})\rightarrow \mathbb{H}^{1}(X, \mathcal{A}^{\bullet})\rightarrow \textnormal{H}^{1}(X, \Theta)\rightarrow 0 \rightarrow \mathbb{H}^{2}(X, \mathcal{A}^{\bullet})\rightarrow 0.
\end{align}
This simultaneously proves that
\begin{align}
\textnormal{dim}_{\mathbb{C}}(\mathbb{H}^{1}(X, \mathcal{A}^{\bullet}))=(g-1)\textnormal{dim}_{\mathbb{C}}(G)+(3g-3)
\end{align}
 where $g$ is the genus of $\Sigma$ and $\mathbb{H}^{2}(X, \mathcal{A}^{\bullet})=\{0\}.$  This completes the proof.
\end{proof}
The above proof also establishes the following:
\begin{corollary}\label{cor sur} 
Let $\mathrm{G}$ be a complex simple Lie group of adjoint type and $(E_{\mathrm{G}}, E_{\mathrm{B}}, \omega, X)$ be a $\Sigma$-marked $\mathrm{G}$-oper.  Then, the induced map
\begin{align}
\mathbb{H}^{1}(X, \mathcal{A}^{\bullet})\rightarrow \textnormal{H}^{1}(X, \Theta)
\end{align}
is surjective.
\end{corollary}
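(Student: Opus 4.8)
The plan is to extract the surjectivity directly from the long exact sequence in hyper-cohomology associated to the short exact sequence of complexes \eqref{SES X to S}, exploiting the vanishing $\mathbb{H}^{2}(X, \mathcal{A}_{0}^{\bullet})=\{0\}$ already established in the proof of Theorem \ref{kuranishi family}. Since the only genuine computation here — the Grothendieck-Riemann-Roch calculation yielding that vanishing — has already been carried out, what remains is purely homological bookkeeping.

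First I would recall the short exact sequence of complexes
\begin{align}
0\rightarrow \mathcal{A}_{0}^{\bullet}\rightarrow \mathcal{A}^{\bullet}\rightarrow \Theta^{0}\rightarrow 0,
\end{align}
where $\Theta^{0}$ denotes $\Theta$ regarded as a complex concentrated in degree zero and the quotient map is induced by the symbol $\sigma_{\mathrm{B}}:\mathcal{A}(E_{\mathrm{B}})\rightarrow \Theta$, whose kernel is $\mathcal{E}_{\mathrm{B}}[\mathfrak{b}]$. Because $\Theta^{0}$ is supported in a single degree, its hyper-cohomology reduces to ordinary sheaf cohomology, so $\mathbb{H}^{k}(X, \Theta^{0})\simeq \textnormal{H}^{k}(X, \Theta)$ for all $k$; in particular the map appearing in the statement is precisely the degree-one map induced by $\mathcal{A}^{\bullet}\rightarrow \Theta^{0}$ in hyper-cohomology.

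Next I would write down the relevant portion of the induced long exact sequence,
\begin{align}
\mathbb{H}^{1}(X, \mathcal{A}^{\bullet})\rightarrow \textnormal{H}^{1}(X, \Theta)\xrightarrow{\delta}\mathbb{H}^{2}(X, \mathcal{A}_{0}^{\bullet}),
\end{align}
and observe that the connecting homomorphism $\delta$ has target $\mathbb{H}^{2}(X, \mathcal{A}_{0}^{\bullet})$. By the proof of Theorem \ref{kuranishi family} this group vanishes, hence $\delta=0$, and exactness immediately forces the map $\mathbb{H}^{1}(X, \mathcal{A}^{\bullet})\rightarrow \textnormal{H}^{1}(X, \Theta)$ to be surjective. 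I do not expect any real obstacle at this stage: the sole thing that could destroy surjectivity is a nonzero image of $\delta$, and that possibility is excluded the moment $\mathbb{H}^{2}(X, \mathcal{A}_{0}^{\bullet})$ is known to be trivial, so the entire difficulty has effectively been absorbed into the earlier Grothendieck-Riemann-Roch argument. I would close with the geometric interpretation, namely that this map is the derivative of the projection $\mathcal{O}\mathfrak{p}_{\Sigma}(\mathrm{G})\rightarrow \mathcal{T}_{\Sigma}$, so that the corollary is exactly the infinitesimal submersion statement underlying Theorem \ref{def space}.
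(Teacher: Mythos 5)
Your proposal is correct and is essentially the paper's own argument: the corollary is extracted from the long exact hyper-cohomology sequence of $0\rightarrow \mathcal{A}_{0}^{\bullet}\rightarrow \mathcal{A}^{\bullet}\rightarrow \Theta\rightarrow 0$, with surjectivity onto $\textnormal{H}^{1}(X,\Theta)$ forced by the vanishing $\mathbb{H}^{2}(X,\mathcal{A}_{0}^{\bullet})=\{0\}$ established via Grothendieck--Riemann--Roch in the proof of Theorem \ref{kuranishi family}. No gaps; the identification of the map with the degree-one piece of the induced map on hyper-cohomology and the reduction of $\mathbb{H}^{k}(X,\Theta^{0})$ to ordinary sheaf cohomology are exactly as in the paper.
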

Now, we can apply the Kuranishi method and obtain the following theorem.  
\begin{theorem}\label{Kuranishi family}
Let $\mathrm{G}$ be a complex simple Lie group of adjoint type.  Given any $\Sigma$-marked $\mathrm{G}$-oper $\Xi=(E_{\mathrm{G}}, E_{\mathrm{B}}, \omega, X)$, there is an open set $U\subset \mathbb{H}^{1}(X, \mathcal{A}^{\bullet})$ containing the origin and a universal small deformation 
\begin{align}
\left(\widehat{E_{\mathrm{G}}}, \widehat{E_{\mathrm{B}}}, \overline{\omega}, \mathcal{X}, (U,0)\right)
\end{align}
  of $\Xi.$  
\end{theorem}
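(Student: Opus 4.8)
The plan is to produce the universal family by the analytic Kuranishi method \cite{KUR62}, feeding in the three cohomological facts already established in Theorem \ref{kuranishi family}: the vanishing of infinitesimal automorphisms $\mathbb{H}^{0}(X, \mathcal{A}^{\bullet})=\{0\}$, the dimension formula for $\mathbb{H}^{1}(X, \mathcal{A}^{\bullet})$, and the vanishing of the obstruction space $\mathbb{H}^{2}(X, \mathcal{A}^{\bullet})=\{0\}$. These are exactly the inputs that make the Kuranishi machine output a \emph{smooth} base and an \emph{honest} (rather than merely formal or obstructed) universal deformation, and our task is to convert them into the geometric family $(\widehat{E_{\mathrm{G}}}, \widehat{E_{\mathrm{B}}}, \overline{\omega}, \mathcal{X}, (U,0))$.

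First I would replace the two-term complex $\mathcal{A}^{\bullet}$ by its Dolbeault resolution, forming a total complex of spaces of $C^{\infty}$-sections whose cohomology computes the hyper-cohomology $\mathbb{H}^{\bullet}(X, \mathcal{A}^{\bullet})$. Equipping $X$ with its uniformizing hyperbolic metric and the bundles $\mathcal{A}(E_{\mathrm{B}})$ and $\mathcal{K}\otimes \mathcal{E}_{\mathrm{B}}[\mathfrak{g}^{-1}]$ with the Hermitian metrics arising from the explicit models of Section \ref{models}, the total differential (combining $\overline{\partial}$ with the sheaf map $[\hat{\omega}, \ ]$) becomes an elliptic complex. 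Hodge theory then furnishes finite-dimensional spaces of harmonic representatives $\mathcal{H}^{i}\simeq \mathbb{H}^{i}(X, \mathcal{A}^{\bullet})$, an orthogonal Hodge decomposition, and a Green's operator $G$, together with the formal adjoint $d^{\star}$ of the total differential.

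Next, I would exploit that the resolved complex carries (by \cite{CHE12}, consistent with the quadratic obstruction map of Theorem \ref{kuranishi family}) the structure of a differential graded Lie algebra in which deformations of $\Xi$ correspond to solutions of a Maurer--Cartan equation. Following Kuranishi, one solves this equation via the fixed-point ansatz $\eta = a + \tfrac{1}{2}\, d^{\star} G\, [\eta, \eta]$ with harmonic leading term $a\in \mathcal{H}^{1}$, constructing $\eta=\eta(a)$ as a power series in $a$ and establishing convergence by elliptic a priori estimates in suitable Sobolev or H\"older spaces. This yields a holomorphic Kuranishi map from a neighborhood $U$ of the origin in $\mathcal{H}^{1}\simeq \mathbb{H}^{1}(X, \mathcal{A}^{\bullet})$ into $\mathcal{H}^{2}\simeq \mathbb{H}^{2}(X, \mathcal{A}^{\bullet})$, whose zero locus is the base of a versal deformation. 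The recursive construction and its convergence is the technical heart of the argument, and I expect it to be the main obstacle; it is precisely this elliptic analysis that the paper defers to analogous treatments in the literature.

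Finally, since the target $\mathcal{H}^{2}\simeq \mathbb{H}^{2}(X, \mathcal{A}^{\bullet})$ vanishes, the Kuranishi map is identically zero, so the deformation is unobstructed and its base is the full open set $U\subset \mathbb{H}^{1}(X, \mathcal{A}^{\bullet})$, a smooth complex manifold of dimension $(g-1)\textnormal{dim}_{\mathbb{C}}(G)+(3g-3)$. The vanishing $\mathbb{H}^{0}(X, \mathcal{A}^{\bullet})=\{0\}$ shows that $\Xi$ has no non-trivial infinitesimal automorphisms, which upgrades versality to \emph{universality}: the classifying morphism from any other small deformation of $\Xi$ is unique. Assembling the solution $\eta(a)$ into the relative connection $\overline{\omega}$, the reduction $\widehat{E_{\mathrm{B}}}$, and the family $\mathcal{X}\to (U,0)$ then produces the desired universal small deformation, completing the proof.
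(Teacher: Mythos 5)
Your proposal follows exactly the route the paper indicates: the paper itself omits the proof and instead remarks that the infinitesimal theory of Theorem \ref{kuranishi family} (vanishing of $\mathbb{H}^{0}$ and $\mathbb{H}^{2}$, the dimension of $\mathbb{H}^{1}$) combined with the Kuranishi method of \cite{KUR62}, as implemented in \cite{CHE12} and \cite{CS16}, yields the result with no new ideas. Your expansion via the Dolbeault resolution, Hodge theory, the Maurer--Cartan fixed-point ansatz, unobstructedness from $\mathbb{H}^{2}=\{0\}$, and the upgrade from versality to universality via $\mathbb{H}^{0}=\{0\}$ is a faithful and correct rendering of that same strategy.
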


\textbf{Remark}: Since we are omitting the proof of this result, let us make some comments.  The aforementioned \emph{Kuranishi method} was introduced, building on the work of Kodaira and Spencer \cite{KOD86}, by Kuranishi \cite{KUR62} where he established that if $M$ is a compact complex manifold with $H^{2}(M, \Theta_{M})=\{0\},$ then there exists a universal small deformation of $M$ parameterized by an open set in $\textnormal{H}^{1}(M, \Theta_{M})$ containing the origin.  This was followed by an explosive development in deformation theory which continues to this day: we cite \cite{MAN04} for a recount of the basic theory and applications.  

For our purposes, the results of \cite{CHE12} establish the infinitesimal deformation theory of a $\Sigma$-marked $\mathrm{G}$-oper as discussed in the previous theorem.  The question of building a small universal deformation of a pair $(E_{\mathrm{G}}, X)$ was solved in \cite{CS16}.  

 We make two remarks here: since $E_{\mathrm{B}}$ is a reduction of structure of $E_{\mathrm{G}},$ a deformation of $E_{\mathrm{B}}$ induces a unique deformation of $E_{\mathrm{G}},$ and thus deformations of the triple $(E_{\mathrm{B}}, \omega, X)$ are equivalent to deformations of the tuple $(E_{\mathrm{G}}, E_{\mathrm{B}}, \omega, X).$  This explains why $\mathcal{A}(E_{\mathrm{G}})$ does not appear in the two-term complex \eqref{2 term}.  
 
 Also, in the paper \cite{CS16}, they only study holomorphic vector bundles. But, since $\mathrm{G}$ is of adjoint type, the deformation theory of $E_{\mathrm{G}}$ is completely equivalent to the deformation theory of the holomorphic vector bundle $E_{\mathrm{G}}[\mathfrak{g}],$ and therefore the results of \cite{CS16} concerning holomorphic vectors apply with no essential modifications. 

A combination of the techniques in the aforementioned papers \cite{CHE12}, \cite{CS16} yields a proof of Theorem \ref{Kuranishi family} where no new ideas are necessary.  Hence, we conclude the discussion of Theorem \ref{Kuranishi family} here. 

\subsection{Global structure of $\Sigma$-marked $\mathrm{G}$-opers}

In this section, we finally arrive at the proof of Theorem \ref{def space}, providing the moduli space of $\Sigma$-marked $\mathrm{G}$-opers with a canonical complex structure when $\mathrm{G}$ is a complex simple Lie group of adjoint type.

\begin{theorem}\label{global opers}
Let $\mathrm{G}$ be a complex simple Lie group of adjoint type.  The moduli space of $\Sigma$-marked $\mathrm{G}$-opers $\mathcal{O}\mathfrak{p}_{\Sigma}(\mathrm{G})$ admits the structure of a Hausd\"orff complex manifold of dimension $\textnormal{dim}_{\mathbb{C}}(G)(g-1)+(3g-3)$ where $\mathrm{G}$ is the genus of $\Sigma.$  

Moreover, the natural map
\begin{align}
  \mathcal{P}: \mathcal{O}\mathfrak{p}_{\Sigma}(\mathrm{G})\rightarrow \mathcal{T}_{\Sigma}
  \end{align}
  is a holomorphic submersion.
  
  Finally, there is a commutative diagram
  \begin{center}
  \begin{tikzcd}
  T_{[(E_{\mathrm{G}}, E_{\mathrm{B}}, \omega, X)]}\mathcal{O}\mathfrak{p}_{\Sigma}(\mathrm{G}) \arrow{r}{d\mathcal{P}} \arrow{d} 
  & T_{[X]}\mathcal{T}_{\Sigma} \arrow{d} \\
  \mathbb{H}^{1}(X, \mathcal{A}^{\bullet}) \arrow{r}
  & \textnormal{H}^{1}(X, \Theta_{X})
  \end{tikzcd}
  \end{center}
  where the lower horizontal arrow is the induced map in hyper-cohomology from the exact sequence
  \begin{align}
  0\rightarrow \mathcal{A}_{0}^{\bullet}\rightarrow \mathcal{A}^{\bullet}\rightarrow \Theta\rightarrow 0.
  \end{align}
  \end{theorem}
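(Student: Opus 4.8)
The plan is to use the universal Kuranishi families of Theorem \ref{Kuranishi family} as holomorphic charts and to glue them via their universal property. For each $\Sigma$-marked $\mathrm{G}$-oper $\Xi = (E_{\mathrm{G}}, E_{\mathrm{B}}, \omega, X)$, Theorem \ref{Kuranishi family} furnishes an open set $U_{\Xi} \subset \mathbb{H}^{1}(X, \mathcal{A}^{\bullet})$ about the origin carrying a universal small deformation; sending each point of $U_{\Xi}$ to the isomorphism class of the oper it parameterizes defines a map $\psi_{\Xi} \colon U_{\Xi} \to \mathcal{O}\mathfrak{p}_{\Sigma}(\mathrm{G})$. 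First I would declare the images $\psi_{\Xi}(U_{\Xi})$ to be open and use these maps as charts. Whenever two charts overlap, that is, whenever the families contain a common isomorphism class, universality produces a germ of a holomorphic morphism of families between the two bases, and the uniqueness clause in the universal property forces these transition maps to be biholomorphisms satisfying the cocycle condition. This simultaneously defines a topology and a holomorphic atlas on $\mathcal{O}\mathfrak{p}_{\Sigma}(\mathrm{G})$. The dimension count is immediate from Theorem \ref{kuranishi family}, which gives $\dim_{\mathbb{C}} \mathbb{H}^{1}(X, \mathcal{A}^{\bullet}) = \dim_{\mathbb{C}}(\mathrm{G})(g-1) + (3g-3)$; the vanishing $\mathbb{H}^{0}(X, \mathcal{A}^{\bullet}) = \{0\}$ (equivalently, the automorphism group of every oper is the finite center of $\mathrm{G}$) is exactly what guarantees that the deformations are genuinely universal rather than merely versal, so that the charts are honest coordinate patches.

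Next I would treat the projection $\mathcal{P}$ and the commutative diagram together, since both express the naturality of the Kuranishi construction with respect to the forgetful functor $(E_{\mathrm{G}}, E_{\mathrm{B}}, \omega, X) \mapsto X$. In the charts above, $\mathcal{P}$ is modeled on the morphism of bases induced by the forgetful map from the universal oper family to its underlying $\Sigma$-marked family of Riemann surfaces; on tangent spaces this is precisely the hyper-cohomology map $\mathbb{H}^{1}(X, \mathcal{A}^{\bullet}) \to \mathrm{H}^{1}(X, \Theta_{X})$ coming from the short exact sequence $0 \to \mathcal{A}_{0}^{\bullet} \to \mathcal{A}^{\bullet} \to \Theta \to 0$, in which the quotient morphism is the symbol map $\sigma_{\mathrm{B}}$ in degree zero. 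Because the vertical arrows of the diagram are the Kodaira--Spencer identifications of tangent spaces with first-order deformations, commutativity is the assertion that $d\mathcal{P}$ records only the induced deformation of the complex structure, i.e. the image under the symbol, which holds by construction. Holomorphy of $\mathcal{P}$ is clear from this local model, and Corollary \ref{cor sur}, the surjectivity of $\mathbb{H}^{1}(X, \mathcal{A}^{\bullet}) \to \mathrm{H}^{1}(X, \Theta_{X})$, shows that $d\mathcal{P}$ is everywhere surjective, so $\mathcal{P}$ is a holomorphic submersion.

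The main obstacle is Hausdorffness, the one genuinely global input. Points lying over distinct Riemann surfaces are separated by $\mathcal{P}$, since $\mathcal{T}_{\Sigma}$ is Hausdorff; points over a common $X$ lie in a single fiber, which Theorem \ref{param opers 2} identifies with the vector space $\bigoplus_{i=1}^{\ell} \mathrm{H}^{0}(X, \mathcal{K}^{m_{i}+1})$ and is therefore itself separated. These two facts do not formally yield Hausdorffness of the total space, since the ``line with two origins'' shows that a continuous map to a Hausdorff base with Hausdorff fibers can still have a non-Hausdorff domain, so the remaining work is to rule out exactly this doubling phenomenon. Here I would again invoke universality: if distinct classes $p_{1} \neq p_{2}$ failed to be separated, a single sequence would converge to both, and reading that sequence inside the two universal charts $U_{\Xi_{1}}$ and $U_{\Xi_{2}}$ would, by the uniqueness in the universal property, produce an isomorphism of the limiting opers, contradicting $p_{1} \neq p_{2}$. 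This is the step requiring the most care, as the passage to the limit must be reconciled with the continuity of the chart maps; it is the direct analogue of the separation argument in Hubbard's treatment of the $\textnormal{PSL}_{2}(\mathbb{C})$ case (Theorem \ref{Hubbard}), and it is precisely where the rigidity encoded in $\mathbb{H}^{0}(X, \mathcal{A}^{\bullet}) = \{0\}$ is indispensable.
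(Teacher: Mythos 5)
Your construction of the charts from the bases of the universal Kuranishi deformations, the injectivity of the chart maps via the triviality of oper automorphisms, the topology and holomorphic atlas via the uniqueness clause of the universal property, the dimension count, and the identification of $d\mathcal{P}$ with the hyper-cohomology map induced by $0\rightarrow\mathcal{A}_{0}^{\bullet}\rightarrow\mathcal{A}^{\bullet}\rightarrow\Theta\rightarrow 0$ (whence the submersion property by Corollary \ref{cor sur}) all follow the paper's proof essentially step for step and are fine. The one place where you depart from the paper is the Hausdorffness argument, and that is where there is a genuine gap.

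You are right that ``continuous map to a Hausdorff base with Hausdorff fibers'' is not by itself a valid inference, but the replacement you sketch does not close. Suppose $q_{n}\rightarrow p_{1}$ read in the chart $U_{\Xi_{1}}$ and $q_{n}\rightarrow p_{2}$ read in $U_{\Xi_{2}}$ with $p_{1}\neq p_{2}$. Universality only produces the transition biholomorphism over the \emph{overlap} of the two chart images, and the limit points $0_{\Xi_{1}}$, $0_{\Xi_{2}}$ do not lie in that overlap. Even if the transition map extends continuously (or holomorphically) across $0_{\Xi_{1}}$ with value $0_{\Xi_{2}}$ --- which is all your sequence can show --- this does not produce an isomorphism of the limiting opers: in the line with two origins the transition map between the two standard charts is the identity on the punctured interval, it extends perfectly well across the puncture, and the two origins are still distinct. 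What is actually required is a closedness statement for the isomorphism relation, namely that an isomorphism of the two universal families over the overlap extends to the limit point; that is exactly the separatedness assertion to be proved, so your argument assumes what it must show. The input that supplies it is Theorem \ref{param opers 2}: since $\mathcal{P}$ is continuous and $\mathcal{T}_{\Sigma}$ is Hausdorff, $\mathcal{P}(p_{1})=\lim\mathcal{P}(q_{n})=\mathcal{P}(p_{2})=[X]$, so $p_{1}$ and $p_{2}$ lie in a common fiber, and relative to a choice of $\textnormal{PSL}_{2}(\mathbb{C})$-oper varying continuously over $\mathcal{T}_{\Sigma}$ this fiber injects into the Hausdorff vector space $\bigoplus_{i=1}^{\ell}\textnormal{H}^{0}(X,\mathcal{K}^{m_{i}+1})$; distinct points have distinct images, and the continuity of this fiberwise Beilinson--Drinfeld parameterization in the chart topology (equivalently, of the resulting injection into $\mathcal{B}_{\Sigma}(\mathrm{G})$) then separates $p_{1}$ from $p_{2}$. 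You should either make that continuous-injection-into-a-Hausdorff-space argument explicit or prove directly that the isomorphism locus of two families is closed in the base; the limit-of-isomorphisms step cannot be left as stated.
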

  
\begin{proof}
Let $\Xi:=(E_{\mathrm{G}}, E_{\mathrm{B}}, \omega, X)$ be a $\Sigma$-marked $\mathrm{G}$-oper.  Let $(\mathcal{B}_{\Xi},0_{\Xi})\subset (\mathbb{H}^{1}(X, \mathcal{A}^{\bullet}), 0)$ be the pointed base of a $\Sigma$-marked universal small deformation of $\Xi$ given by Theorem \ref{Kuranishi family}.

Define the map
\begin{align}
\phi_{\Xi}: \mathcal{B}_{\Xi}\rightarrow \mathcal{O}\mathfrak{p}_{\Sigma}(\mathrm{G})
\end{align}
which sends $b\in \mathcal{B}_{\Xi}$ to the $\Sigma$-marked $\mathrm{G}$-oper structure on the fiber of the universal deformation over $b\in \mathcal{B}_{\Xi}.$  As the structure group of the $\Sigma$-marked deformation of $X$ is $\textnormal{Diff}_{0}(\Sigma),$ the map $\phi_{\Xi}$ is well defined.  The proof proceeds in four steps which we enumerate below.
\begin{enumerate}
\item Up to shrinking $B_{\Xi},$ the map $\phi_{\Xi}$ is injective.

          Suppose there exists $b_{1}, b_{2}\in \mathcal{B}_{\Xi}$ such that $\phi_{\Xi}(b_{1})=\phi_{\Xi}(b_{2}).$  Let $\Xi_{b_{1}}$ and $\Xi_{b_{2}}$ the $\Sigma$-marked $\mathrm{G}$-opers lying over $b_{1}, b_{2}\in \mathcal{B}_{\Xi}.$  
          
By the assumption $\phi_{\Xi}(b_{1})=\phi_{\Xi}(b_{2})$ and the definition of $\mathcal{O}\mathfrak{p}_{\Sigma}(\mathrm{G})$, there is an isomorphism $F$ of $\Sigma$-marked $\mathrm{G}$-opers between $\Xi_{b_{1}}$ and $\Xi_{b_{2}}.$  Since the family over $(\mathcal{B}_{\Xi}, 0_{\Xi})$ is universal, potentially shrinking $\mathcal{B}_{\Xi},$ the isomorphism $F$ is induced by an automorphism of $\Xi.$    

Since $\mathrm{G}$ is a complex simple Lie group of adjoint type and $X$ has no non-trivial automorphisms isotopic to the identity, the results of \cite{BD05} imply that there are no non-trivial automorphisms of $\Xi.$  Therefore, the only possibility is that $b_{1}=b_{2}$ and $F=\textnormal{id.}$  This proves that $\phi_{\Xi}$ is injective.

\item The collection $\{\phi_{\Xi}(B_{\Xi})\}_{\Xi\in \widetilde{\mathcal{O}\mathfrak{p}}_{\Sigma}(\mathrm{G})}$\footnote{Strictly speaking, we should choose a subset of objects in $\widetilde{\mathcal{O}\mathfrak{p}}_{\Sigma}(\mathrm{G})$ which is surjective onto $\mathcal{O}\mathfrak{p}_{\Sigma}(\mathrm{G}).$  We make no further mention of this set-theoretic issue, assuming the appropriate strengthening of the axiom of choice which makes this choice possible.} forms the basis of a topology on the set $\mathcal{O}\mathfrak{p}_{\Sigma}(\mathrm{G}).$  

Recall that given a set $S,$ a subset $\mathfrak{B}$ of the power set of $S$ is a basis for a topology $\mathfrak{T}$ if and only if the elements of $\mathfrak{B}$ cover $S$, and for every $U, V\in \mathfrak{B}$ and any $s\in U\cap V,$ there exists $s\in W\subset U\cap V$ such that $W\in \mathfrak{B}.$

That the collection $\{ \phi_{\Xi}(\mathcal{B}_{\Xi})\}_{\Xi\in \widetilde{\mathcal{O}\mathfrak{p}}_{\Sigma}(\mathrm{G})}$ covers $\mathcal{O}\mathfrak{p}_{\Sigma}(\mathrm{G})$ is obvious.  The second condition follows from the fact that the restriction of a universal deformation of $\Xi$ with base $\mathcal{B}$ to an open subset $\mathcal{B}^{\prime}\subset \mathcal{B}$ such that $0\in \mathcal{B}^{\prime}$ is still a universal deformation of $\Xi.$  Therefore, the collection $\{\phi_{\Xi}(B_{\Xi})\}_{\Xi\in \widetilde{\mathcal{O}\mathfrak{p}}_{\Sigma}(\mathrm{G})}$ forms the base for a topology on the set $\mathcal{O}\mathfrak{p}_{\Sigma}(\mathrm{G})$.

\item The collection $\{\phi_{\Xi}, B_{\Xi}\}_{\Xi\in \widetilde{\mathcal{O}\mathfrak{p}}_{\Sigma}(\mathrm{G})}$ forms a holomorphic atlas on the topological space $\mathcal{O}\mathfrak{p}_{\Sigma}(\mathrm{G}).$  Upon proving this, we obtain a (potentially non-Hausd\"orff) complex manifold structure on $\mathcal{O}\mathfrak{p}_{\Sigma}(\mathrm{G}).$

Suppose that $\Omega\in \textnormal{Im}(\phi_{\Xi})\cap \textnormal{Im}(\phi_{\Xi^{\prime}})\neq \emptyset.$  We need to show that
\begin{align}
\phi_{\Xi^{\prime}}^{-1}\circ \phi_{\Xi}: \phi_{\Xi}^{-1}\left(\textnormal{Im}(\phi_{\Xi})\cap \textnormal{Im}(\phi_{\Xi^{\prime}})\right) \rightarrow  \phi_{\Xi^{\prime}}^{-1}\left(\textnormal{Im}(\phi_{\Xi})\cap \textnormal{Im}(\phi_{\Xi^{\prime}})\right)
\end{align}
is holomorphic.

Let 
\begin{align}
(U,0_{\Xi}):=\phi_{\Xi}^{-1}\left(\textnormal{Im}(\phi_{\Xi})\cap \textnormal{Im}(\phi_{\Xi^{\prime}})\right)\subset (\mathcal{B}_{\Xi},0_{\Xi})
\end{align}
and
\begin{align}
(V, 0_{\Xi^{\prime}}):=\phi_{\Xi^{\prime}}^{-1}\left(\textnormal{Im}(\phi_{\Xi})\cap \textnormal{Im}(\phi_{\Xi^{\prime}})\right)\subset (\mathcal{B}_{\Xi^{\prime}},0_{\Xi^{\prime}})
\end{align}
where 
\begin{align}
\Omega=\phi_{\Xi}(0_{\Xi})=\phi_{\Xi^{\prime}}(0_{\Xi^{\prime}})
\end{align}

By construction, $(V, 0_{\Xi^{\prime}})$ is the base of a universal deformation of $\Omega.$  Since $(U,0_{\Xi})$ is the base of another deformation of $\Omega,$ there exists a unique holomorphic map
\begin{align}
f: (U,0_{\Xi})&\rightarrow (V, 0_{\Xi^{\prime}}) \\
    0_{\Xi}&\mapsto 0_{\Xi^{\prime}},
\end{align}
defined by the fact that $(V, 0_{\Xi^{\prime}})$ is the base of a universal deformation of $\Omega.$
By construction, $f=\phi_{\Xi^{\prime}}^{-1}\circ \phi_{\Xi}.$
This completes the proof.

\item The topology on $\mathcal{O}\mathfrak{p}_{\Sigma}(\mathrm{G})$ is Hausd\"orff.

By construction, the map $\mathcal{O}\mathfrak{p}_{\Sigma}(\mathrm{G})\rightarrow \mathcal{T}_{\Sigma}$ is continuous, and by Theorem \ref{param opers 2}, the fibers of this map are Hausdorff with respect to the subspace topology: they are biholomorphic to $\bigoplus_{i=1}^{\ell} \textnormal{H}^{0}(X, \mathcal{K}^{m_{i}+1})$.  Since $\mathcal{T}_{\Sigma}$ is Hausd\"orff, this implies that $\mathcal{O}\mathfrak{p}_{\Sigma}(\mathrm{G})$ is Hausd\"orff.
\end{enumerate}
This completes the proof that $\mathcal{O}\mathfrak{p}_{\Sigma}(\mathrm{G})$ is a Hausd\"orff complex manifold.  

For any $\Sigma$-marked $\mathrm{G}$-oper $\Xi,$ the base $\mathcal{B}_{\Xi}$ of any universal deformation is an open subset of the complex vector space $\mathbb{H}^{1}(X, \mathcal{A}^{\bullet})$, which by Theorem \ref{Kuranishi family} is of dimension $\textnormal{dim}_{\mathbb{C}}(G)(g-1)+(3g-3).$ 

This establishes the equality
\begin{align}
\textnormal{dim}_{\mathbb{C}}(\mathcal{O}\mathfrak{p}_{\Sigma}(\mathrm{G}))=\textnormal{dim}_{\mathbb{C}}(G)(g-1)+(3g-3).
\end{align}

Finally, our construction of the holomorphic structure on $\mathcal{O}\mathfrak{p}_{\Sigma}(\mathrm{G})$ implies that the derivative of the map $\mathcal{P}: \mathcal{O}\mathfrak{p}_{\Sigma}(\mathrm{G})\rightarrow \mathcal{T}_{\Sigma}$ identifies with the induced map in hyper-cohomology 
\begin{align}\label{derivative map}
\mathbb{H}^{1}(X, \mathcal{A}^{\bullet})\rightarrow \textnormal{H}^{1}(X, \Theta)
\end{align}
  of the map of complexes
\begin{center}
\begin{tikzcd}
\mathcal{A}^{\bullet}:=\mathcal{A}(E_{\mathrm{B}})\arrow{r} \arrow{d}{\sigma_{\mathrm{B}}} 
& \mathcal{K}\otimes \mathcal{E}_{\mathrm{B}}[\mathfrak{g}^{-1}] \arrow{d} \\ \label{induced map}
\Theta \arrow{r}
& 0.
\end{tikzcd}
\end{center}

As the kernel of the aforementioned map is given by the complex
\begin{align}
\mathcal{A}_{0}^{\bullet}:= \mathcal{E}_{\mathrm{B}}[\mathfrak{b}]\rightarrow \mathcal{K}\otimes \mathcal{E}_{\mathrm{B}}[\mathfrak{g}^{-1}], 
\end{align}
the map \eqref{derivative map} extends to an exact sequence
\begin{align}
\mathbb{H}^{1}(X, \mathcal{A}^{\bullet})\rightarrow \textnormal{H}^{1}(X, \Theta)\rightarrow
\mathbb{H}^{2}(X, \mathcal{A}_{0}^{\bullet}).
\end{align}
The hyper-cohomology group $\mathbb{H}^{2}(X, \mathcal{A}_{0}^{\bullet})$ vanishes by Theorem \ref{Kuranishi family}.  Therefore, the map $\mathcal{P}: \mathcal{O}\mathfrak{p}_{\Sigma}(\mathrm{G})\rightarrow \mathcal{T}_{\Sigma}$ has complex linear surjective derivative at every point, and thus $\mathcal{P}$ is a holomorphic submersion (see Corollary \ref{cor sur}).  This completes the proof.
\end{proof}

The proof of Theorem \ref{global opers} also implies the following result.

\begin{theorem}
The space $\mathcal{O}\mathfrak{p}_{\Sigma}(\mathrm{G})$ is the base of a universal family of $\Sigma$-marked $\mathrm{G}$-opers.  Therefore, $\mathcal{O}\mathfrak{p}_{\Sigma}(\mathrm{G})$ is a fine moduli space.
\end{theorem}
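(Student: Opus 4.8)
The plan is to promote the local Kuranishi families constructed in Theorem \ref{Kuranishi family} to a single global family over $\mathcal{O}\mathfrak{p}_{\Sigma}(\mathrm{G})$ and then to verify that this family satisfies the universal property characterizing a fine moduli space. There are thus two things to establish: existence of a global family, obtained by gluing the charts, and representability of the moduli functor, obtained from the universal property of the local families. The essential input throughout is the \emph{rigidity} of $\mathrm{G}$-opers established in the proof of Theorem \ref{kuranishi family} and reiterated in Theorem \ref{global opers}: since $\mathbb{H}^{0}(X,\mathcal{A}^{\bullet})=\{0\}$ and $\mathrm{G}$ is adjoint simple (so that, by \cite{BD05}, no $\mathrm{G}$-oper carries a non-trivial automorphism compatible with the marking), every $\Sigma$-marked $\mathrm{G}$-oper has trivial automorphism group. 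This is exactly the condition needed to rule out the ambiguities that otherwise force a moduli problem to be only coarse.

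First I would glue the local families. Over each chart $(\mathcal{B}_{\Xi},0_{\Xi})\subset\mathbb{H}^{1}(X,\mathcal{A}^{\bullet})$ Theorem \ref{Kuranishi family} provides a universal family $\mathcal{U}_{\Xi}=(\widehat{E_{\mathrm{G}}},\widehat{E_{\mathrm{B}}},\overline{\omega},\mathcal{X})$ of $\Sigma$-marked $\mathrm{G}$-opers. On an overlap $\textnormal{Im}(\phi_{\Xi})\cap\textnormal{Im}(\phi_{\Xi'})$, the biholomorphic transition map $f=\phi_{\Xi'}^{-1}\circ\phi_{\Xi}$ constructed in Step $(3)$ of the proof of Theorem \ref{global opers} is precisely the classifying map furnished by the universal property of $\mathcal{U}_{\Xi'}$; consequently the restriction of $\mathcal{U}_{\Xi}$ to the overlap is canonically isomorphic to $f^{\star}\mathcal{U}_{\Xi'}$. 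I would record the resulting gluing isomorphisms $g_{\Xi\Xi'}$ and check the cocycle identity $g_{\Xi\Xi''}=g_{\Xi'\Xi''}\circ g_{\Xi\Xi'}$ on triple overlaps. Here rigidity does the decisive work: the two sides are isomorphisms of the \emph{same} family, and since every fiber, hence the family itself, has no non-trivial automorphisms, any two such isomorphisms coincide. The cocycle condition therefore holds automatically, and the $\mathcal{U}_{\Xi}$ descend to a global family $\mathcal{U}\rightarrow\mathcal{O}\mathfrak{p}_{\Sigma}(\mathrm{G})$.

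Next I would verify the universal property. Given any $\Sigma$-marked family of $\mathrm{G}$-opers $\mathcal{F}\rightarrow S$ over a complex analytic base $S$, define a set map $\psi\colon S\rightarrow\mathcal{O}\mathfrak{p}_{\Sigma}(\mathrm{G})$ sending $s$ to the isomorphism class of the $\mathrm{G}$-oper on the fiber $\mathcal{F}_{s}$. Fixing $s_{0}$ and writing $\Xi$ for the corresponding $\mathrm{G}$-oper, the universal property of the Kuranishi family over $\mathcal{B}_{\Xi}$ shows that, after shrinking $S$ around $s_{0}$, the family $\mathcal{F}$ is the pullback of $\mathcal{U}_{\Xi}$ along a unique holomorphic map $S\rightarrow\mathcal{B}_{\Xi}$; read in the chart $\phi_{\Xi}$ this map is $\psi$, so $\psi$ is holomorphic and $\mathcal{F}\simeq\psi^{\star}\mathcal{U}$ locally. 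The local pullback isomorphisms are unique by rigidity, so they agree on overlaps and glue to a global isomorphism $\mathcal{F}\simeq\psi^{\star}\mathcal{U}$; uniqueness of $\psi$ follows from the pointwise definition together with the local uniqueness. This exhibits $\mathcal{O}\mathfrak{p}_{\Sigma}(\mathrm{G})$ as representing the moduli functor, that is, as a fine moduli space.

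The main obstacle I anticipate is the transition from the \emph{germwise} universal property of the Kuranishi family, which Theorem \ref{Kuranishi family} phrases for small deformations, to a universal property valid for families over arbitrary, possibly non-reduced, analytic bases $S$. The standard remedy is to combine openness of versality with rigidity: completeness of the Kuranishi family at $0_{\Xi}$ propagates to a genuine universal, not merely versal, property in a neighborhood because the vanishing of $\mathbb{H}^{0}(X,\mathcal{A}^{\bullet})$ kills the automorphisms that would otherwise obstruct uniqueness of the classifying map. Making this precise for the complex of sheaves $\mathcal{A}^{\bullet}$ requires invoking the deformation-theoretic machinery of \cite{CHE12} and \cite{CS16} over general analytic bases, exactly as in the omitted proof of Theorem \ref{Kuranishi family}; granting that machinery, the remaining arguments are the formal gluing and descent steps sketched above.
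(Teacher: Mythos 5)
Your proposal is correct and follows essentially the same route as the paper, whose own proof is a single sentence asserting that the local Kuranishi families over the charts $\mathcal{B}_{\Xi}$ glue to a global universal family. You supply the details the paper leaves implicit — the cocycle condition forced by rigidity (trivial automorphism groups, via $\mathbb{H}^{0}(X,\mathcal{A}^{\bullet})=\{0\}$ and Proposition \ref{autos}) and the verification of the universal property over general bases — and these are exactly the right ingredients.
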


\begin{proof}
The universal families lying over the bases $\mathcal{B}_{\Xi}$ used to construct charts on $\mathcal{O}\mathfrak{p}_{\Sigma}(\mathrm{G})$ glue together to yield a universal family of $\Sigma$-marked $\mathrm{G}$-opers over $\mathcal{O}\mathfrak{p}_{\Sigma}(\mathrm{G}).$
\end{proof}

As another consequence of Theorem \ref{global opers} and the discussion of $\Sigma$-marked developed $\mathrm{G}$-opers in Section \ref{g opers} we obtain the following result.

\begin{theorem}
Let $\mathrm{G}$ be a complex simple Lie group of adjoint type.  Then the space of $\Sigma$-marked developed $\mathrm{G}$-opers has the structure of a Hausd\"{o}rff complex manifold such that the set map
\begin{align}
\mathcal{D}:  \mathcal{DO}\mathfrak{p}_{\Sigma}(\mathrm{G})\rightarrow\mathcal{O}\mathfrak{p}_{\Sigma}(\mathrm{G})
\end{align}
is a biholmorphism of complex manifolds.

Furthermore, the mapping class group $\textnormal{Mod}(\Sigma)$-action on each side is holomorphic and $\mathcal{D}$ is mapping class group equivariant.
\end{theorem}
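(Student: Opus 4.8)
The plan is to get the complex structure by transport and then reduce every holomorphy claim to the fine moduli property of $\mathcal{O}\mathfrak{p}_{\Sigma}(\mathrm{G})$. Since $\mathcal{D}\colon\mathcal{DO}\mathfrak{p}_{\Sigma}(\mathrm{G})\to\mathcal{O}\mathfrak{p}_{\Sigma}(\mathrm{G})$ is a bijection of sets by Theorem \ref{dop to op equivalence} and the target is a Hausdorff complex manifold by Theorem \ref{global opers}, I would simply declare the complex structure on $\mathcal{DO}\mathfrak{p}_{\Sigma}(\mathrm{G})$ to be the one obtained by pulling back charts along $\mathcal{D}$: the charts on $\mathcal{O}\mathfrak{p}_{\Sigma}(\mathrm{G})$ built from the Kuranishi bases $\mathcal{B}_{\Xi}$, precomposed with $\mathcal{D}$, furnish a holomorphic atlas. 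With this (manifestly unique) structure $\mathcal{D}$ is tautologically a biholomorphism and Hausdorffness is inherited, which settles the first assertion.

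The substance is the holomorphy of the $\textnormal{Mod}(\Sigma)$-action, and because $\mathcal{D}$ is already $\textnormal{Mod}(\Sigma)$-equivariant as a set map (the Remark following Theorem \ref{dop to op equivalence}) and is now a biholomorphism, it suffices to prove the action is holomorphic on $\mathcal{O}\mathfrak{p}_{\Sigma}(\mathrm{G})$; equivariance of $\mathcal{D}$ then upgrades automatically to the holomorphic category. Fix $\psi\in\textnormal{Diff}^{+}(\Sigma)$ and let $\Psi_{\psi}$ be the induced self-map of $\mathcal{O}\mathfrak{p}_{\Sigma}(\mathrm{G})$ obtained by re-marking each oper by $\psi$, which on a $\Sigma$-marked oper over $X=(\Sigma,J)$ produces one over $(\Sigma,\psi^{\star}J)$. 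To see $\Psi_{\psi}$ is holomorphic I would re-mark the universal family over $\mathcal{O}\mathfrak{p}_{\Sigma}(\mathrm{G})$ provided by Theorem \ref{def space}: leaving the complex-analytic family $\mathcal{X}\to\mathcal{O}\mathfrak{p}_{\Sigma}(\mathrm{G})$ and the holomorphic oper data $(\widehat{E_{\mathrm{G}}},\widehat{E_{\mathrm{B}}},\overline{\omega})$ untouched, one twists only the smooth identification of fibers with $\Sigma$ by $\psi$. Because $\textnormal{Diff}_{0}(\Sigma)$ is normal in $\textnormal{Diff}^{+}(\Sigma)$, the result is again a $\Sigma$-marked family of $\mathrm{G}$-opers in the sense of Definition \ref{family g oper} over the unchanged base, and by the fine moduli property it is classified by a unique holomorphic map $\mathcal{O}\mathfrak{p}_{\Sigma}(\mathrm{G})\to\mathcal{O}\mathfrak{p}_{\Sigma}(\mathrm{G})$, which on points is exactly $\Psi_{\psi}$. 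Running the same construction with $\psi^{-1}$ gives a holomorphic inverse, so $\Psi_{\psi}$ is a biholomorphism, and the assignment descends to $\textnormal{Mod}(\Sigma)$ since $\textnormal{Diff}_{0}(\Sigma)$ re-markings are oper isomorphisms and hence act trivially on isomorphism classes.

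The main obstacle I anticipate is the bookkeeping in the $\psi$-re-marking of the universal family: one must check carefully that conjugating the $\textnormal{Diff}_{0}(\Sigma)$-cocycle of the marking by a fixed $\psi$ again lands in the category of $\Sigma$-marked families of $\mathrm{G}$-opers, with a genuinely holomorphic projection to the \emph{same} base complex manifold, so that the fine moduli classifying map is actually available. Once the re-marked family is seen to be holomorphic, invoking the universal property is formal. Everything is then assembled by transporting holomorphy of $\Psi_{\psi}$ through the biholomorphism $\mathcal{D}$ to $\mathcal{DO}\mathfrak{p}_{\Sigma}(\mathrm{G})$ and noting that $\mathcal{D}$ intertwines the two actions by the set-theoretic equivariance already recorded, which yields both the holomorphy of the $\textnormal{Mod}(\Sigma)$-action on each side and the $\textnormal{Mod}(\Sigma)$-equivariance of $\mathcal{D}$.
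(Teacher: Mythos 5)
Your proposal is correct and follows the route the paper intends: the paper states this theorem without proof as a consequence of Theorem \ref{global opers} and the set-theoretic bijection $\mathcal{D}$ of Theorem \ref{dop to op equivalence}, i.e.\ precisely the transport-of-structure argument you give, with $\textnormal{Mod}(\Sigma)$-equivariance already recorded in the Remark after Theorem \ref{dop to op equivalence}. Your use of the fine moduli property to verify holomorphy of the $\textnormal{Mod}(\Sigma)$-action (re-marking the universal family by $\psi$ and invoking the uniqueness of the classifying map, using normality of $\textnormal{Diff}_{0}(\Sigma)$ in $\textnormal{Diff}^{+}(\Sigma)$) supplies a detail the paper leaves implicit, and it is sound.
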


\subsection{Marked $\mathrm{G}$-opers and the Hitchin base}

This very short section generalizes the identification of complex projective structures with the cotangent bundle of Teichmuller space from Theorem \ref {Hubbard} to the setting of $\mathrm{G}$-opers.  As always in this discussion, $\mathrm{G}$ is a complex simple Lie group of adjoint type.  

If $\mathrm{G}$ is a complex simple Lie group of adjoint type with Lie algebra $\mathfrak{g},$ let $\mathcal{B}_{\Sigma}(\mathrm{G})$ be the bundle over Teichm\"uller space whose fiber 
over a $\Sigma$-marked Riemann surface $X$ is the space $\bigoplus_{i=1}^{\ell} \textnormal{H}^{0}(X, \mathcal{K}^{m_{i}+1}),$ and where the positive integers $\{m_{i}\}_{1}^{\ell}$ are the exponents of $\mathfrak{g}.$  The set $\mathcal{B}_{\Sigma}(\mathrm{G})$ has the structure of a holomorphic vector bundle over $\mathcal{T}_{\Sigma}.$ 

\begin{theorem}\label{h base}
Let $\mathrm{G}$ be a complex simple Lie group of adjoint type.  For every $C^{\infty}$-section $s$ of the projection
\begin{align}
\pi: \mathcal{O}\mathfrak{p}_{\Sigma}(\textnormal{PSL}_{2}(\mathbb{C}))\rightarrow 
\mathcal{T}_{\Sigma},
\end{align}
there is a commutative diagram
\begin{center}
\begin{tikzcd}
\mathcal{O}\mathfrak{p}_{\Sigma}(\mathrm{G})\arrow{r}{\phi_{s}}\arrow{d}{\mathcal{P}}
& \mathcal{B}_{\Sigma}(\mathrm{G}) \arrow{d} \\
\mathcal{T}_{\Sigma} \arrow{r}{\textnormal{id.}}
& \mathcal{T}_{\Sigma},
\end{tikzcd}
\end{center}
where $\phi_{s}$ is a diffeomorphism.
If $s$ is holomorphic, then the diffeomorphism $\phi_{s}$ is holomorphic.
\end{theorem}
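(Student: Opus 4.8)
The plan is to build $\phi_{s}$ fiberwise from the Beilinson--Drinfeld parameterization, upgrade pointwise bijectivity to a biholomorphism for a single holomorphic section using the fine moduli property of $\mathcal{O}\mathfrak{p}_{\Sigma}(\mathrm{G})$, and then reduce the general smooth case to that one section by a translation argument.

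First I would define $\phi_{s}$. For each $X\in\mathcal{T}_{\Sigma}$ the section $s$ selects a $\textnormal{PSL}_{2}(\mathbb{C})$-oper $s(X)$ on $X$; extending its data along the principal embedding $\iota_{\mathrm{G}}\colon\textnormal{PSL}_{2}(\mathbb{C})\to\mathrm{G}$ produces a reference $\mathrm{G}$-oper with connection $\mu_{X}$. Theorem \ref{param opers 2} then supplies a bijection $\mathcal{O}\mathfrak{p}_{X}(\mathrm{G})\to\mathcal{B}_{X}(\mathrm{G})$, sending $[(E_{\mathrm{G}},E_{\mathrm{B}},\omega,X)]$ to $\phi^{\star}\omega-\mu_{X}$, where $\phi$ is the unique intertwining isomorphism of Theorem \ref{unique iso}. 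Assembling these over $\mathcal{T}_{\Sigma}$ gives a fiber-preserving bijection $\phi_{s}\colon\mathcal{O}\mathfrak{p}_{\Sigma}(\mathrm{G})\to\mathcal{B}_{\Sigma}(\mathrm{G})$ covering $\textnormal{id}_{\mathcal{T}_{\Sigma}}$, whose fiberwise inverse is precisely the map $\vec{\alpha}\mapsto[\omega^{\vec{\alpha}}]$ of Proposition \ref{oper param 1} built from the base point $s(X)$.

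Next I would treat a holomorphic $s$. Here I construct an honest holomorphic $\Sigma$-marked family of $\mathrm{G}$-opers over $\mathcal{B}_{\Sigma}(\mathrm{G})$ in the sense of Definition \ref{family g oper}: let $\mathcal{X}$ be the pullback of the universal Teichm\"uller curve $\mathcal{C}_{\Sigma}$ along $\mathcal{B}_{\Sigma}(\mathrm{G})\to\mathcal{T}_{\Sigma}$; let $(\widehat{E_{\mathrm{G}}},\widehat{E_{\mathrm{B}}})$ and the reference relative connection $\overline{\mu}$ be the $\iota_{\mathrm{G}}$-extension of the holomorphic family of $\textnormal{PSL}_{2}(\mathbb{C})$-opers determined by $s$; and set $\overline{\omega}=\overline{\mu}+\sum_{i=1}^{\ell}\alpha_{i}\otimes\textnormal{ad}(e_{i})$, where $\vec{\alpha}$ is the tautological section obtained by pulling $\mathcal{B}_{\Sigma}(\mathrm{G})$ back to its own total space. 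By Proposition \ref{oper param 1} this tuple is a holomorphic family of $\mathrm{G}$-opers, and since $\mathcal{O}\mathfrak{p}_{\Sigma}(\mathrm{G})$ is a fine moduli space carrying a universal family (Theorem \ref{global opers} and its corollary), the family is classified by a unique holomorphic map $\psi_{s}\colon\mathcal{B}_{\Sigma}(\mathrm{G})\to\mathcal{O}\mathfrak{p}_{\Sigma}(\mathrm{G})$. By construction $\psi_{s}(X,\vec{\alpha})=[\omega^{\vec{\alpha}}]$, so $\psi_{s}=\phi_{s}^{-1}$. As $\psi_{s}$ is then a holomorphic bijection between complex manifolds of equal dimension $\textnormal{dim}_{\mathbb{C}}(\mathrm{G})(g-1)+(3g-3)$, it is a biholomorphism, whence $\phi_{s}$ is holomorphic.

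Finally I would handle an arbitrary smooth $s$ by comparison with a holomorphic section. For two sections $s,s'$ the uniqueness in Theorem \ref{unique iso} shows that the \emph{same} intertwining isomorphism serves both reference opers, since the correction $\iota_{\ast}(s'(X)-s(X))$ already lies in $\mathcal{K}\otimes\mathcal{E}_{\mathrm{B}_{0}}[V]$; here $s'(X)-s(X)\in\textnormal{H}^{0}(X,\mathcal{K}^{2})$ is the quadratic differential comparing the two $\textnormal{PSL}_{2}(\mathbb{C})$-opers (Proposition \ref{param sl2}) and $\iota_{\ast}$ embeds it into the $m_{1}=1$ summand of $\mathcal{B}_{X}(\mathrm{G})$. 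Consequently $\phi_{s}=T_{s,s'}\circ\phi_{s'}$, where $T_{s,s'}$ is fiberwise translation by the section $X\mapsto\iota_{\ast}(s'(X)-s(X))$ of $\mathcal{B}_{\Sigma}(\mathrm{G})$; this automorphism is a diffeomorphism when $s,s'$ are smooth and a biholomorphism when both are holomorphic. Taking $s'$ to be a holomorphic (e.g.\ Bers) section, for which $\phi_{s'}$ is a biholomorphism by the previous step, I conclude that $\phi_{s}$ is a diffeomorphism, and is a biholomorphism whenever $s$ is holomorphic. The hard part will be the two bookkeeping checks underlying these reductions: verifying that the tuple over $\mathcal{B}_{\Sigma}(\mathrm{G})$ really is a holomorphic family in the sense of Definition \ref{family g oper} with classifying map matching the pointwise parameterization of Theorem \ref{param opers 2}, and confirming that changing the reference $\textnormal{PSL}_{2}(\mathbb{C})$-oper leaves $\phi$ unchanged while shifting only the top Hitchin summand, so that the comparison is a genuine translation.
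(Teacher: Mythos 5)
Your fiberwise definition of $\phi_{s}$ is exactly the paper's: both use Theorem \ref{param opers 2} with base point $s(X)$ to get the bijection $\mathcal{O}\mathfrak{p}_{X}(\mathrm{G})\simeq\mathcal{B}_{X}(\mathrm{G})$ on each fiber. Where you genuinely diverge is in how the regularity of $\phi_{s}$ in the $X$-direction is established. The paper's proof consists of the fiberwise bijection plus an appeal (relegated to a footnote) to the assertion that the regularity of $\phi_{s}$ matches that of $s$ because this is ``clear from the explicit construction'' in \cite{BD05}; that is, it defers to an inspection of Beilinson--Drinfeld's proof of Theorem \ref{unique iso} rather than arguing it. You replace that deferral with a two-step argument: for holomorphic $s$ you exhibit $\phi_{s}^{-1}$ as the classifying map of the explicit holomorphic family $\overline{\mu}+\sum_{i}\alpha_{i}\otimes\textnormal{ad}(e_{i})$ over $\mathcal{B}_{\Sigma}(\mathrm{G})$, using the fine-moduli property of $\mathcal{O}\mathfrak{p}_{\Sigma}(\mathrm{G})$ and the fact that a holomorphic bijection of equidimensional complex manifolds is a biholomorphism; for merely smooth $s$ you reduce to a holomorphic $s'$ by noting that the uniqueness clause of Theorem \ref{unique iso} forces $\phi_{s}=T_{s,s'}\circ\phi_{s'}$ with $T_{s,s'}$ the fiberwise translation by $\iota_{\ast}(s'-s)$, which inherits the regularity of $s$ and $s'$ from the affine-bundle structure of Theorem \ref{Hubbard}. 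Both reductions are sound, and the translation trick is a nice use of the partially affine structure the paper discusses in the introduction. What your route buys is a proof that never re-opens \cite{BD05}; what it costs is reliance on the fine-moduli statement (itself only sketched in the paper) and the two bookkeeping checks you flag. One of those is worth spelling out: to compare the reference $\mathrm{G}$-opers $\iota_{\mathrm{G}}(s(X))$ and $\iota_{\mathrm{G}}(s'(X))$ you must first identify their underlying $\mathrm{B}$-bundles via the $\iota_{\mathrm{G}}$-extension of the osculating map of Proposition \ref{param sl2}; only after that identification does the difference of connections lie in the $e_{1}$-line of $\mathcal{K}\otimes\mathcal{E}_{\mathrm{B}_{0}}[V]$, so that the uniqueness in Theorem \ref{unique iso} yields the same intertwiner $\phi$ for both base points.
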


\textbf{Examples}:  The most obvious section $s_{\mathcal{F}}$ of $\pi: \mathcal{O}\mathfrak{p}_{\Sigma}(\textnormal{PSL}_{2}(\mathbb{C}))\rightarrow 
\mathcal{T}_{\Sigma}$ is given by selecting the Fuchsian uniformizing $\textnormal{PSL}(2,\mathbb{C})$-oper from Section \ref{models} in every fiber.  The section $s_{\mathcal{F}}$ is not holomorphic, and yields a diffeomorphism
\begin{align}
\phi_{s_{\mathcal{F}}}:\mathcal{O}\mathfrak{p}_{\Sigma}(\mathrm{G})\rightarrow \mathcal{B}_{\Sigma}(\mathrm{G})
\end{align}
which maps the sub-manifold of Fuchsian uniformizing $\mathrm{G}$-opers onto the zero section of $\mathcal{B}_{\Sigma}(\mathrm{G}).$

Holomorphic sections of $\pi: \mathcal{O}\mathfrak{p}_{\Sigma}(\textnormal{PSL}_{2}(\mathbb{C}))\rightarrow 
\mathcal{T}_{\Sigma}$ can be obtained utilizing Bers' simultaneous uniformization theorem (see the remark follow Theorem \ref{Hubbard}) which therefore yields a family of biholomorphisms onto $\mathcal{B}_{\Sigma}(\mathrm{G})$
parameterized by $\mathcal{T}_{\Sigma}.$    

\begin{proof}
Let $s$ be a section of $\pi$ and $X\in \mathcal{T}_{\Sigma}.$  Using Theorem \ref{param opers 2}, This induces a bijection (using the base-point $s(X)),$ 
\begin{align}
\mathcal{O}\mathfrak{p}_{X}(\mathrm{G})\simeq \mathcal{B}_{X}(\mathrm{G}).
\end{align}
By Theorem \ref{unique iso},  the dependence of this isomorphism on $X$ is determined by the regularity of the section $s$\footnote{Here, the real justification comes from inspecting the proof in \cite{BD05}: it is clear from the explicit construction of the map in \cite{BD05} that the regularity of $\phi_{s}$ matches the regularity of $s.$}.  This completes the proof.
\end{proof}

\section{The holonomy map and pre-symplectic geometry} \label{hol map pre}

\subsection{The holonomy map}
Finally, we come to the study of the forgetful map from $\mathcal{O}\mathfrak{p}_{\Sigma}(\mathrm{G})$ 
to the space of $C^{\infty}$-flat $\mathrm{G}$-bundles on $\Sigma,$ and prove that it is a holomorphic immersion when $\mathrm{G}$ is a complex simple Lie group of adjoint type. 

Consider the category $\widetilde{\mathcal{F}}_{\Sigma}(\mathrm{G})$ whose objects are $C^{\infty}$-flat bundles $(E_{\mathrm{G}}, \omega)$ over $\Sigma.$  Morphisms in this category are given by commutative diagrams
\[
\begin{tikzcd}
E_{\mathrm{G}} \arrow{d} \arrow{r}{\phi} 
& E_{\mathrm{G}}^{\prime} \arrow{d} \\
\Sigma \arrow{r}{h}
& \Sigma
\end{tikzcd}
\]
such that the $C^{\infty}$-map $h: \Sigma\rightarrow \Sigma$ is isotopic to the identity and $\phi$ is a smooth isomorphism of $\mathrm{G}$-bundles such that $\phi^{\star}\omega^{\prime}=\omega.$ 

It is well known that the natural topology on the set of isomorphism classes in $\widetilde{\mathcal{F}}_{\Sigma}(\mathrm{G})$ is non-Hausd\"orff, but upon restricting to a suitable sub-category we can remedy this situation.

Consider the full subcategory $\widetilde{\mathcal{F}}_{\Sigma}^{\star}(\mathrm{G})$ whose objects consist of irreducible flat $\mathrm{G}$-bundles 
whose automorphism group is equal to the center of $\mathrm{G}$. Let $\mathcal{F}_{\Sigma}^{\star}(\mathrm{G})$ denote the set of isomorphism classes. The following theorem is well known (see \cite{GOL84}), though the discussion there deals with the equivalent question for homomorphisms $\pi\rightarrow G.$

\begin{theorem}
Let $\mathrm{G}$ be a connected complex semi-simple Lie group.  Then, the set $\mathcal{F}_{\Sigma}^{\star}(\mathrm{G})$ admits the structure of a Hausd\"orff complex manifold of dimension $(2g-2)\textnormal{dim}_{\mathbb{C}}(\mathrm{G})$ where $g$ is the genus of $\Sigma.$  
\end{theorem}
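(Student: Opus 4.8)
The plan is to transport the problem to the $\mathrm{G}$-character variety of $\pi$ via the holonomy equivalence and then to extract smoothness, dimension, and Hausdorffness from a deformation-theoretic computation combined with Goldman's results \cite{GOL84}. First I would invoke the equivalence of categories $\widetilde{\textnormal{hol}}:\textnormal{Hom}(\pi,\mathrm{G})\rightarrow \widetilde{\mathcal{F}}_{\Sigma}(\mathrm{G})$ from the discussion surrounding Theorem \ref{comm diagram opers}. Under this equivalence an object of $\widetilde{\mathcal{F}}^{\star}_{\Sigma}(\mathrm{G})$ corresponds to a homomorphism $\rho:\pi\rightarrow\mathrm{G}$ whose image lies in no proper parabolic (irreducibility) and whose centralizer $Z_{\mathrm{G}}(\rho)$ equals the center $Z(\mathrm{G})$, while isomorphisms correspond to conjugation; hence $\mathcal{F}^{\star}_{\Sigma}(\mathrm{G})$ is identified with the orbit space $\textnormal{Hom}^{\star}(\pi,\mathrm{G})/\mathrm{G}$. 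I would then realize $\textnormal{Hom}(\pi,\mathrm{G})$ as the complex-analytic subvariety of $\mathrm{G}^{2g}$ cut out by the single surface relation $R(A_{1},B_{1},\dots,A_{g},B_{g})=\prod_{i=1}^{g}[A_{i},B_{i}]=e$, using the standard presentation of $\pi$; since $R$ is holomorphic this already furnishes the ambient complex structure.

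The next step is the infinitesimal analysis. The deformation complex of a flat $\mathrm{G}$-bundle $(E_{\mathrm{G}},\omega)$ is the $d_{\omega}$-twisted de Rham complex of $\mathcal{E}_{\mathrm{G}}[\mathfrak{g}]$, whose cohomology is the $\textnormal{Ad}\circ\rho$-twisted cohomology $\textnormal{H}^{\bullet}(\pi,\mathfrak{g})$. Here $\textnormal{H}^{0}$ is the space of $\textnormal{Ad}\,\rho$-invariants, namely the Lie algebra of $Z_{\mathrm{G}}(\rho)$; on the starred locus this vanishes because $Z_{\mathrm{G}}(\rho)=Z(\mathrm{G})$ is finite. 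Since $\mathrm{G}$ is semi-simple, the Killing form gives an $\textnormal{Ad}$-invariant isomorphism $\mathfrak{g}\simeq\mathfrak{g}^{*}$, so Poincaré duality yields $\textnormal{H}^{2}\simeq(\textnormal{H}^{0})^{*}=\{0\}$. Because $\Sigma$ is a $K(\pi,1)$ with one $0$-cell, $2g$ $1$-cells and one $2$-cell, the linearization $dR_{\rho}:\mathfrak{g}^{2g}\rightarrow\mathfrak{g}$ is the top differential of the cochain complex computing $\textnormal{H}^{\bullet}(\pi,\mathfrak{g})$; hence its cokernel is $\textnormal{H}^{2}=\{0\}$, so $dR_{\rho}$ is surjective. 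By the holomorphic implicit function theorem, $\textnormal{Hom}^{\star}(\pi,\mathrm{G})$ is therefore a complex submanifold of $\mathrm{G}^{2g}$ of dimension $(2g-1)\dim_{\mathbb{C}}\mathrm{G}$, and the vanishing of $\textnormal{H}^{2}$ records that all deformations are unobstructed.

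Finally I would pass to the quotient by conjugation. The stabilizer of $\rho$ is $Z_{\mathrm{G}}(\rho)=Z(\mathrm{G})$, so the adjoint group $\mathrm{G}/Z(\mathrm{G})$ acts freely on $\textnormal{Hom}^{\star}(\pi,\mathrm{G})$. The tangent space to the quotient at $[\rho]$ is then $\textnormal{H}^{1}(\pi,\mathfrak{g})$, and the Euler-characteristic identity
\begin{align}
\dim_{\mathbb{C}}\textnormal{H}^{0}-\dim_{\mathbb{C}}\textnormal{H}^{1}+\dim_{\mathbb{C}}\textnormal{H}^{2}=\chi(\Sigma)\dim_{\mathbb{C}}\mathrm{G}=(2-2g)\dim_{\mathbb{C}}\mathrm{G},
\end{align}
together with $\textnormal{H}^{0}=\textnormal{H}^{2}=\{0\}$, gives $\dim_{\mathbb{C}}\mathcal{F}^{\star}_{\Sigma}(\mathrm{G})=\dim_{\mathbb{C}}\textnormal{H}^{1}=(2g-2)\dim_{\mathbb{C}}\mathrm{G}$, as claimed.

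The main obstacle is the Hausdorffness of the quotient, equivalently the properness of the free $\mathrm{G}/Z(\mathrm{G})$-action on $\textnormal{Hom}^{\star}(\pi,\mathrm{G})$. This is precisely where the irreducibility hypothesis is indispensable: on the irreducible (stable) locus one establishes properness, for instance by constructing a holomorphic slice transverse to the orbits, which simultaneously exhibits the quotient charts as complex manifolds and separates points. The smoothness and the dimension count are formal consequences of $\textnormal{H}^{0}=\textnormal{H}^{2}=\{0\}$; only this separatedness step requires the genuine input of Goldman \cite{GOL84}, and it is the part I expect to be delicate.
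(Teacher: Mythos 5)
Your proposal is correct and is essentially the argument the paper itself relies on: the paper offers no proof of this statement, merely calling it ``well known'' and citing Goldman \cite{GOL84}, and your route --- holonomy identification with $\textnormal{Hom}^{\star}(\pi,\mathrm{G})/\mathrm{G}$, vanishing of $\textnormal{H}^{0}$ and (by Poincar\'e duality via the Killing form) $\textnormal{H}^{2}$ on the starred locus, the implicit function theorem applied to the relator map, the Euler-characteristic dimension count, and properness of the conjugation action on the irreducible locus for Hausdorffness --- is precisely the standard Goldman-style argument that citation points to. You also correctly isolate the separatedness of the quotient as the one step requiring genuine input beyond the formal cohomological vanishing.
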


The following result of Beilinson-Drinfeld \cite{BD05} allows us to only consider the complex manifold $\mathcal{F}_{\Sigma}^{\star}(\mathrm{G}).$

\begin{proposition}\label{autos}
Let $\mathrm{G}$ be a connected complex semi-simple Lie group and suppose $\Xi:=(E_{\mathrm{G}}, E_{\mathrm{B}}, \omega, X)$ is a $\Sigma$-marked $\mathrm{G}$-oper.  Then,
\begin{enumerate}
\item The automorphism group of $\Xi$ is equal to the center of $\mathrm{G}.$
\item The induced $C^{\infty}$-flat $\mathrm{G}$-bundle $(E_{\mathrm{G}}, \omega)$ on $\Sigma$ is irreducible with automorphism group equal to the center of $\mathrm{G}.$  
\end{enumerate}
\end{proposition}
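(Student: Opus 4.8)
The plan is to reduce both assertions to a single statement: that the flat $\mathrm{G}$-bundle $(E_{\mathrm{G}}, \omega)$ underlying an oper is irreducible. First I observe that any automorphism of $\Xi$ has underlying biholomorphism $f\colon X\to X$ isotopic to the identity; since $X$ has genus at least two, such an $f$ is the identity, so every automorphism of $\Xi$ is a holomorphic gauge transformation covering $\textnormal{id}_{X}$ which preserves both $\omega$ and the reduction $E_{\mathrm{B}}$. Forgetting $E_{\mathrm{B}}$ (the functor \eqref{forget 2}) realizes $\textnormal{Aut}(\Xi)$ as a subgroup of the automorphism group of the flat bundle $(E_{\mathrm{G}}, \omega)$. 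Under the equivalence $\widetilde{\textnormal{hol}}$ of Theorem \ref{comm diagram opers}, automorphisms of $(E_{\mathrm{G}},\omega)$ are exactly the elements of the centralizer $Z_{\mathrm{G}}(\rho(\pi))$ of the holonomy $\rho\colon\pi\to\mathrm{G}$. Finally, every central element $z\in Z(\mathrm{G})\subset\mathrm{B}$ acts on $E_{\mathrm{G}}$ by a gauge transformation fixing $E_{\mathrm{B}}$ and satisfying $R_{z}^{\star}\omega=\textnormal{Ad}(z^{-1})\omega=\omega$, so $Z(\mathrm{G})\subseteq\textnormal{Aut}(\Xi)$. We therefore have inclusions $Z(\mathrm{G})\subseteq\textnormal{Aut}(\Xi)\subseteq Z_{\mathrm{G}}(\rho(\pi))$, and both parts of the proposition follow at once from the single equality $Z_{\mathrm{G}}(\rho(\pi))=Z(\mathrm{G})$, which I will obtain from irreducibility.

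The heart of the argument is irreducibility, i.e. that $\rho(\pi)$ lies in no proper parabolic subgroup; equivalently, by the definition of irreducibility in Section \ref{G theory}, that $(E_{\mathrm{G}},\omega)$ admits no holomorphic reduction to a proper parabolic with vanishing second fundamental form, that is, no $\omega$-flat parabolic reduction. Suppose toward a contradiction that such a flat reduction to a proper parabolic $\mathrm{P}<\mathrm{G}$ exists; it corresponds to a $\nabla$-parallel, hence $\nabla$-stable, holomorphic sub-bundle $\mathcal{P}\subsetneq\mathcal{E}_{\mathrm{G}}[\mathfrak{g}]$ given by a proper parabolic subalgebra. I would then place the oper connection in the Drinfeld--Sokolov canonical form of Section \ref{models}, so that on $\mathcal{E}_{\mathrm{G}}[\mathfrak{g}]$ the connection reads $\nabla=\partial+\textnormal{ad}(f_{1})+\sum_{i}\alpha_{i}\,\textnormal{ad}(e_{i})$ with $f_{1}$ principal nilpotent, and compare $\mathcal{P}$ with the height filtration $F^{j}:=\mathcal{E}_{\mathrm{B}}[\mathfrak{g}^{\geq j}]$. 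The decisive structural input, which is precisely the content of Proposition \ref{unif oper} that the second fundamental form is the principal nilpotent direction, is that the associated-graded of $\nabla$ is the lowering operator $\textnormal{ad}(f_{1})$, whose consecutive graded components are isomorphisms; this is the $\mathrm{G}$-analogue of the Kodaira--Spencer isomorphism for an $\textnormal{SL}_{n}$-oper. A nonzero $\nabla$-stable sub-bundle then has a nonzero image in some graded piece, and the graded isomorphisms propagate this image through every higher (and, dualizing, every lower) graded piece, forcing $\mathcal{P}$ to be $0$ or all of $\mathcal{E}_{\mathrm{G}}[\mathfrak{g}]$ and contradicting properness. For $\mathrm{G}=\textnormal{SL}_{n}(\mathbb{C})$ this is the classical irreducibility of the monodromy of the associated $n$-th order scalar differential operator; for general $\mathrm{G}$ I would deduce it by passing to a faithful representation, or simply invoke the irreducibility of opers proved in \cite{BD05}.

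With irreducibility in hand, the equality $Z_{\mathrm{G}}(\rho(\pi))=Z(\mathrm{G})$ is a standard Schur-type statement: if $g\in Z_{\mathrm{G}}(\rho(\pi))$ is not central, then writing $g=g_{s}g_{u}$ for its Jordan decomposition, a non-central semisimple part $g_{s}$ has centralizer a proper Levi subgroup, whence $\rho(\pi)\subseteq Z_{\mathrm{G}}(g_{s})$ lies in a proper parabolic, while a nontrivial unipotent part $g_{u}$ fixes a proper parabolic containing its image; either way $\rho(\pi)$ sits in a proper parabolic, contradicting irreducibility. Hence $Z_{\mathrm{G}}(\rho(\pi))=Z(\mathrm{G})$, which gives the automorphism statement of part (2), and the sandwich $Z(\mathrm{G})\subseteq\textnormal{Aut}(\Xi)\subseteq Z_{\mathrm{G}}(\rho(\pi))=Z(\mathrm{G})$ gives part (1). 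This is consistent with, and sharpens, the vanishing $\mathbb{H}^{0}(X,\mathcal{A}^{\bullet})=\{0\}$ of Theorem \ref{kuranishi family}, which already shows that $\textnormal{Aut}(\Xi)$ is discrete.

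The main obstacle is the irreducibility step for general $\mathrm{G}$. The subtlety is that the hypothetical flat parabolic reduction is a priori unrelated to the oper's Borel reduction $E_{\mathrm{B}}$, so the comparison with the height filtration cannot be made verbatim; the honest route is either to reduce to the cyclic (companion-matrix) picture inside a faithful representation, where the propagation argument is rigorous, or to cite the representation-theoretic result of Beilinson--Drinfeld \cite{BD05}. The remaining steps, namely the reduction in the first paragraph and the Schur-type computation, are routine.
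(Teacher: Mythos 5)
The paper gives no proof of this proposition: it is imported verbatim from Beilinson--Drinfeld \cite{BD05}, so the only thing to assess is the argument you supply. Your first paragraph is fine --- an automorphism of $\Xi$ covers a biholomorphism of $X$ isotopic to the identity, hence covers $\textnormal{id}_{X}$ since $X$ has genus at least two, and the sandwich $Z(\mathrm{G})\subseteq \textnormal{Aut}(\Xi)\subseteq Z_{\mathrm{G}}(\rho(\pi))$ is established correctly. The acknowledged incompleteness of the irreducibility step (deferring to a faithful representation or to \cite{BD05}) is tolerable given that the paper itself only cites \cite{BD05} for the whole statement.

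The genuine gap is the final ``Schur-type'' step: the implication ``$\rho(\pi)$ lies in no proper parabolic $\Rightarrow Z_{\mathrm{G}}(\rho(\pi))=Z(\mathrm{G})$'' is false for a general connected semisimple group. Irreducibility does force $Z_{\mathrm{G}}(\rho(\pi))$ to be finite and to consist of semisimple elements: your unipotent half of the argument (via the canonical parabolic of $g_{u}$) is correct, and a positive-dimensional centralizer would contain a torus $S$ with $\rho(\pi)\subseteq Z_{\mathrm{G}}(S)$ a proper Levi. But the semisimple half fails: the centralizer of a non-central semisimple element is in general only a (possibly disconnected) pseudo-Levi and need not lie in any proper parabolic. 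Concretely, in $\mathrm{G}=\textnormal{PGL}_{2}(\mathbb{C})$ the class of $\textnormal{diag}(i,-i)$ has centralizer $N_{\mathrm{G}}(T)$, which fixes no point of $\mathbb{CP}^{1}$; the Klein four-group generated by the classes of $\textnormal{diag}(i,-i)$ and the antidiagonal involution is irreducible yet equal to its own centralizer, which is strictly larger than $Z(\mathrm{G})=\{1\}$. (This is exactly why the paper must \emph{impose} both conditions --- irreducible \emph{and} isotropy equal to the center --- in defining $\mathcal{F}_{\Sigma}^{\star}(\mathrm{G})$.) So irreducibility alone cannot deliver either part of the proposition, and an oper-specific input is required. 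The route in \cite{BD05} is: first, a $\mathrm{B}$-gauge transformation preserving an oper connection is central, by the same descending filtration computation you use elsewhere in the paper, using that $\textnormal{ad}(f_{1})$ is injective on $\mathfrak{b}$; second, $E_{\mathrm{B}}$ is the \emph{unique} Borel reduction of $(E_{\mathrm{G}},\omega)$ with second fundamental form in the open orbit, so every automorphism of the flat bundle preserves $E_{\mathrm{B}}$ and part (2) reduces to part (1). Your proposal is missing this input, and the group-theoretic shortcut you substitute for it does not close the argument.
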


Given a $\Sigma$-marked $\mathrm{G}$-oper $(E_{\mathrm{G}}, E_{\mathrm{B}}, \omega, X)$, let $(E_{\mathrm{G}}, \omega)$ denote the corresponding $C^{\infty}$-flat $\mathrm{G}$-bundle.   

By Proposition \ref{autos}, the functor
\begin{align}
\widetilde{\textnormal{H}}:\widetilde{\mathcal{O}\mathfrak{p}}_{\Sigma}(\mathrm{G})&\rightarrow \widetilde{\mathcal{F}}_{\Sigma}^{\star}(\mathrm{G}) \\
(E_{\mathrm{G}}, E_{\mathrm{B}}, \omega, X)&\mapsto (E_{\mathrm{G}}, \omega)
\end{align}
is fully faithful and
descends to a smooth map
\begin{align}
\textnormal{H}: \mathcal{O}\mathfrak{p}_{\Sigma}(\mathrm{G})\rightarrow \mathcal{F}_{\Sigma}^{\star}(\mathrm{G}).
\end{align}

We now prove the main theorem of this article.

\begin{theorem}\label{hol immersion}
Let $\mathrm{G}$ be a complex simple Lie group of adjoint type.  The map
\begin{align}
\textnormal{H}:\mathcal{O}\mathfrak{p}_{\Sigma}(\mathrm{G})\rightarrow \mathcal{F}_{\Sigma}^{\star}(\mathrm{G})
\end{align}
is a holomorphic immersion. 

Moreover, there is a (natural in $\mathrm{G}$) commutative diagram
of holomorphic maps 
\begin{center}
\begin{tikzcd}
\mathcal{O}\mathfrak{p}_{\Sigma}(\textnormal{PSL}_{2}(\mathbb{C})) \arrow{r}{\textnormal{H}} \arrow{d}{\iota_{\mathrm{G}}} 
& \mathcal{F}_{\Sigma}^{\star}(\textnormal{PSL}_{2}(\mathbb{C})) \arrow{d}{\iota_{\mathrm{G}}} \\
\mathcal{O}\mathfrak{p}_{\Sigma}(\mathrm{G}) \arrow{r}{\textnormal{H}}
&\mathcal{F}_{\Sigma}^{\star}(\mathrm{G}).
\end{tikzcd}
\end{center}
\end{theorem}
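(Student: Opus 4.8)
The plan is to compute the derivative $d\textnormal{H}$ as a map of hyper-cohomology groups and show it is injective and complex-linear. By Theorem \ref{global opers} the tangent space to $\mathcal{O}\mathfrak{p}_{\Sigma}(\mathrm{G})$ at $\Xi=(E_{\mathrm{G}},E_{\mathrm{B}},\omega,X)$ is $\mathbb{H}^{1}(X,\mathcal{A}^{\bullet})$, while the tangent space to $\mathcal{F}_{\Sigma}^{\star}(\mathrm{G})$ at $(E_{\mathrm{G}},\omega)$ is the first hyper-cohomology $\mathbb{H}^{1}(X,\mathcal{B}^{\bullet})$ of the holomorphic de Rham complex $\mathcal{B}^{\bullet}:\mathcal{E}_{\mathrm{G}}[\mathfrak{g}]\xrightarrow{\nabla}\mathcal{K}\otimes\mathcal{E}_{\mathrm{G}}[\mathfrak{g}]$ of the flat connection (standard for flat bundles; the identification is legitimate because $\mathrm{G}$ is adjoint and the bundle is irreducible, see Proposition \ref{autos}). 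First I would construct a morphism of complexes $f^{\bullet}:\mathcal{A}^{\bullet}\to\mathcal{B}^{\bullet}$: in degree zero the composite $\mathcal{A}(E_{\mathrm{B}})\hookrightarrow\mathcal{A}(E_{\mathrm{G}})\to\mathcal{E}_{\mathrm{G}}[\mathfrak{g}]$ of the inclusion of Atiyah sheaves with the $\omega$-vertical projection, and in degree one the inclusion $\mathcal{K}\otimes\mathcal{E}_{\mathrm{B}}[\mathfrak{g}^{-1}]\hookrightarrow\mathcal{K}\otimes\mathcal{E}_{\mathrm{G}}[\mathfrak{g}]$ induced by $\mathfrak{g}^{-1}\subset\mathfrak{g}$. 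A local check using the formula \eqref{bracket} for $[\hat\omega,\cdot]$ shows the two squares commute, and the naturality of the forgetful functor (Theorem \ref{comm diagram opers}) identifies $d\textnormal{H}$ with the induced map $f_{*}:\mathbb{H}^{1}(X,\mathcal{A}^{\bullet})\to\mathbb{H}^{1}(X,\mathcal{B}^{\bullet})$.

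Next I would observe that $f^{\bullet}$ is injective term by term. In degree one this is clear; in degree zero the kernel would consist of elements of $\mathcal{A}(E_{\mathrm{B}})$ whose $\omega$-horizontal lift preserves the Borel reduction, and the non-degeneracy of the second fundamental form forces the symbol, hence the element, to vanish (the oper condition, Theorem \ref{open orbit}). The image sub-bundle $f_{0}(\mathcal{A}(E_{\mathrm{B}}))$ has fibre $\mathfrak{b}\oplus\mathbb{C}f_{1}$. This yields a short exact sequence of complexes $0\to\mathcal{A}^{\bullet}\to\mathcal{B}^{\bullet}\to\mathcal{Q}^{\bullet}\to 0$, which I would feed into the long exact sequence in hyper-cohomology. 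Since $\mathbb{H}^{0}(X,\mathcal{A}^{\bullet})=0$ (Theorem \ref{kuranishi family}) and $\mathbb{H}^{0}(X,\mathcal{B}^{\bullet})=0$ (the centralizer of an irreducible representation into an adjoint group is trivial), the connecting homomorphism identifies $\ker f_{*}$ with $\mathbb{H}^{0}(X,\mathcal{Q}^{\bullet})$. Injectivity of $d\textnormal{H}$ is therefore equivalent to the single vanishing $\mathbb{H}^{0}(X,\mathcal{Q}^{\bullet})=0$.

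The heart of the argument, and the step I expect to be the main obstacle, is this vanishing. Unwinding it in the Dolbeault model, a class in $\ker d\textnormal{H}$ is represented by a pair $(a,b)$ for which there is a smooth section $t$ of $\mathcal{E}_{\mathrm{G}}[\mathfrak{g}]$ with $\bar\partial t=f_{0}(a)$ and $\nabla t=f_{1}(b)$; since $f^{\bullet}$ is a chain map of holomorphic bundle maps, it suffices to show that $t$ takes values in the sub-bundle $f_{0}(\mathcal{A}(E_{\mathrm{B}}))$, for then $t=f_{0}(s)$ and the injectivity of $f_{0},f_{1}$ forces $a=\bar\partial s$ and $b=[\hat\omega,s]$, exhibiting $(a,b)$ as a coboundary. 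To prove that $t$ reduces in this way I would pass to the explicit models of Section \ref{models}, where $\mathcal{E}_{\mathrm{G}}[\mathfrak{g}]\cong\bigoplus_{i}K^{i}\otimes\mathfrak{g}_{i}$, the holomorphic structure is $\bar\partial+h\cdot\textnormal{ad}(e_{1})$, and the holomorphic connection is $\partial+\textnormal{ad}(f_{1})$. The constraints that $\bar\partial t$ land in $\mathfrak{b}\oplus\mathbb{C}f_{1}$ and that $\nabla t$ land in $\mathfrak{g}^{-1}$ become a triangular system in the height grading; using that $\deg K^{i}<0$ for $i<0$, a descending induction on the grading kills the components of $t$ lying strictly below the $f_{1}$-line, which is exactly the reduction to $\mathfrak{b}\oplus\mathbb{C}f_{1}$. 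Because $f^{\bullet}$ is a morphism of holomorphic, hence $\mathcal{O}_{X}$-linear, complexes, $f_{*}$ is automatically complex-linear, so together with injectivity this shows $\textnormal{H}$ is a holomorphic immersion. The delicate point throughout is controlling the twisting by $\textnormal{ad}(e_{1})$ in the Dolbeault complex, which is precisely what the explicit models are engineered to handle.

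Finally, for the commutative square I would use the principal three-dimensional embedding $\iota_{\mathrm{G}}:\textnormal{PSL}_{2}(\mathbb{C})\to\mathrm{G}$ of Section \ref{models}. The left vertical arrow sends a $\textnormal{PSL}_{2}(\mathbb{C})$-oper to the $\mathrm{G}$-oper obtained by extending the structure group along $\iota_{\mathrm{G}}$ and composing the developing map with the principal rational curve $f_{\mathrm{G}}$, while the right vertical arrow extends a flat $\textnormal{PSL}_{2}(\mathbb{C})$-bundle along $\iota_{\mathrm{G}}$. Both operations are holomorphic, and commutativity of the square is the assertion that extension of structure group intertwines the holonomy functors, which is immediate from Theorem \ref{comm diagram opers} (equivalently, $\textnormal{hol}(\iota_{\mathrm{G}}\circ\rho)=\iota_{\mathrm{G}}\circ\textnormal{hol}(\rho)$). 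This completes the proposed proof.
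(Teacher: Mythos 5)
Your proposal is correct and follows essentially the same route as the paper: the same morphism of complexes $\mathcal{A}^{\bullet}\to\mathcal{B}^{\bullet}$ (with $\iota-\hat\omega\circ\sigma_{\mathrm{B}}$ in degree zero and the inclusion in degree one), the same reduction of injectivity of $d\textnormal{H}$ to a vanishing of the zeroth hyper-cohomology of the cokernel complex, and the same Dolbeault computation in the explicit models of Section \ref{models}, killing the negatively graded components by induction using $\textnormal{H}^{0}(X,\mathcal{K}^{-j})=\{0\}$. The only cosmetic difference is that the paper repackages the final step as the injectivity of $\textnormal{H}^{1}(X,\Theta)\to\textnormal{H}^{1}(X,\mathcal{E}_{\mathrm{B}}[\mathfrak{g}/\mathfrak{b}])$ induced by the second fundamental form, whereas you work with a section of the full adjoint bundle and show it reduces to the image sub-bundle; these are the same calculation.
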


\textbf{Remark:} For $G=\textnormal{PSL}_{2}(\mathbb{C})$, this was proved independently (and with varying methods) by Earle \cite{EAR81}, Hejhal \cite{HEJ78} and Hubbard \cite{HUB81}.  Our proof is in the spirit of the proof of Hubbard \cite{HUB81}: in particular we will identify the differential of $\textnormal{H}$ with a certain induced map in hyper-cohomology, and use a differential-geometric argument to prove the injectivity of this map.

\begin{proof}
Let $\Xi:=(E_{\mathrm{G}}, E_{\mathrm{B}}, \omega, X)$ be a $\Sigma$-marked $\mathrm{G}$-oper and recall the two term complex
\begin{align}
\mathcal{A}^{\bullet}:=\mathcal{A}(E_{\mathrm{B}})\xrightarrow{[\hat{\omega}, \ ]} \mathcal{K}\otimes \mathcal{E}_{\mathrm{B}}[\mathfrak{g}^{-1}],
\end{align}
which controls the deformation theory of $\Xi.$  

First, we show that there is an injective map

\begin{align}\label{oper to flat}
\mathcal{A}^{\bullet}\rightarrow \mathcal{B}^{\bullet}
\end{align}
where 
\begin{align}
\mathcal{B}^{\bullet}:= \mathcal{E}_{\mathrm{G}}[\mathfrak{g}]\xrightarrow{[\hat{\omega}, \ ]} \mathcal{K}\otimes \mathcal{E}_{\mathrm{G}}[\mathfrak{g}]
\end{align}
is the holomorphic de-Rham complex of the holomorphic flat bundle $(E_{\mathrm{G}}, \omega, X).$

To define \eqref{oper to flat}, consider the commutative diagram
\begin{equation}
\begin{tikzcd}
\mathcal{A}(E_{\mathrm{B}})\arrow{d}{\iota} \arrow{dr}{\sigma_{\mathrm{B}}} \\
\mathcal{A}(E_{\mathrm{G}}) \arrow{r}{\sigma_{\mathrm{G}}}
& \Theta.
\end{tikzcd}
\end{equation}
where $\iota$ is the inclusion.

If $\hat{\omega}: \Theta\rightarrow \mathcal{A}(E_{\mathrm{G}})$ is the holomorphic flat connection appearing in $\Xi$, define the injective map
\begin{align}\label{to flat bundle}
\iota-\hat{\omega}\circ \sigma_{\mathrm{B}}: \mathcal{A}(E_{\mathrm{B}})\rightarrow \mathcal{A}(E_{\mathrm{G}}).
\end{align}
Since $\hat{\omega}$ is a splitting, $\sigma_{\mathrm{G}}\circ(\iota-\hat{\omega}\circ \sigma_{\mathrm{B}})=0$.

Since $\mathcal{E}_{\mathrm{G}}[\mathfrak{g}]=\textnormal{ker}(\sigma_{\mathrm{G}}),$ the map \eqref{to flat bundle} lifts to a map
\begin{align}
\Phi: \mathcal{A}(E_{\mathrm{B}})\rightarrow \mathcal{E}_{\mathrm{G}}[\mathfrak{g}].
\end{align}

Define the injective map \eqref{oper to flat} by 
\begin{equation}
\begin{tikzcd}
 \mathcal{A}(E_{\mathrm{B}}) \arrow{d}{\Phi} \arrow{r}{[\hat{\omega}, - ]}
& \mathcal{K}\otimes \mathcal{E}_{\mathrm{B}}[\mathfrak{g}^{-1}] \arrow{d} \\
\mathcal{E}_{\mathrm{G}}[\mathfrak{g}]  \arrow{r}{[\hat{\omega}, - ]}
& \mathcal{K} \otimes \mathcal{E}_{\mathrm{G}}[\mathfrak{g}],
\end{tikzcd}
\end{equation}
where the right vertical arrow is the inclusion of $\mathcal{K}\otimes \mathcal{E}_{\mathrm{B}}[\mathfrak{g}^{-1}]$ as a locally-free sub-sheaf.

Viewing the complexes $\mathcal{A}^{\bullet}$ and $\mathcal{B}^{\bullet}$ as objects in the abelian category of bounded complexes of coherent analytic sheaves over $X,$  the map \eqref{oper to flat} has a co-kernel $\mathcal{N}^{\bullet},$ and hence there is an exact sequence of complexes
\begin{align}
0\rightarrow \mathcal{A}^{\bullet}\rightarrow \mathcal{B}^{\bullet}\rightarrow \mathcal{N}^{\bullet}\rightarrow 0.
\end{align}

Fortunately, we can identify an explicit model of the complex $\mathcal{N}^{\bullet}$ consisting of locally-free sheaves.  Let $\Psi: \Theta\rightarrow \mathcal{E}_{\mathrm{B}}[\mathfrak{g}/\mathfrak{b}]$ be the second fundamental form of $\omega$ relative to $E_{\mathrm{B}}$ and $\mathcal{N}^{0}:=\textnormal{coker}(\Psi).$  Since $\Psi$ is an injective morphism of the corresponding bundles, $\mathcal{N}^{0}$ is a holomorphic vector bundle on $X.$ 

We now show that there is an exact sequence
\begin{align}\label{seq nat}
0\rightarrow \mathcal{A}(E_{\mathrm{B}})\xrightarrow{\Phi}  \mathcal{E}_{\mathrm{G}}[\mathfrak{g}]\xrightarrow{p} \mathcal{N}^{0}\rightarrow 0.
\end{align}

Consider the commutative diagram with exact top row
\begin{center}
\begin{tikzcd}
&
0
& 
&
& \\
0 
& \mathcal{N}^{0} \arrow{l} \arrow{u}
& \mathcal{E}_{\mathrm{B}}[\mathfrak{g}/\mathfrak{b}] \arrow{l} 
& \Theta \arrow[swap]{l}{\Psi}
& 0 \arrow{l} \\

&\mathcal{E}_{\mathrm{G}}[\mathfrak{g}] \arrow{u}{p} 
& \mathcal{A}(E_{\mathrm{G}}) \arrow{ur}{\sigma_{\mathrm{G}}} \arrow[dashrightarrow]{u}
&
& \\
&
& \mathcal{A}(E_{\mathrm{B}}) \arrow{ul}{\Phi} \arrow{u}{\iota} \arrow[swap]{uur}{\sigma_{\mathrm{B}}}
&
& \\
&
& 0 \arrow{u}.
&
&
\end{tikzcd}
\end{center}
The map $p:\mathcal{E}_{\mathrm{G}}[\mathfrak{g}]\rightarrow \mathcal{N}^{0}$ is defined as the composition of the surjective projections
\begin{align}
\mathcal{E}_{\mathrm{G}}[\mathfrak{g}]\rightarrow \mathcal{E}_{\mathrm{B}}[\mathfrak{g}/\mathfrak{b}]\rightarrow \mathcal{N}^{0},
\end{align}
and therefore $p$ is surjective.

The vertical dashed arrow 
\begin{align}
\mathcal{A}(E_{\mathrm{G}})\dashrightarrow \mathcal{E}_{\mathrm{B}}[\mathfrak{g}/\mathfrak{b}]
\end{align}
is defined as the composition $\Psi\circ \sigma_{\mathrm{G}}$ so that the diagram commutes.

Note that
\begin{align}
0\rightarrow \mathcal{A}(E_{\mathrm{B}})\xrightarrow{\iota} \mathcal{A}(E_{\mathrm{G}})\dashrightarrow \mathcal{E}_{\mathrm{B}}[\mathfrak{g}/\mathfrak{b}]
\end{align}
is \emph{not} exact in the middle, though the initial map $\iota$ is injective.  Since $\Psi$ is injective, there \emph{is} an exact sequence
\begin{align}
0\rightarrow \mathcal{E}_{\mathrm{G}}[\mathfrak{g}]\rightarrow \mathcal{A}(E_{\mathrm{G}})\dashrightarrow \mathcal{E}_{\mathrm{B}}[\mathfrak{g}/\mathfrak{b}].
\end{align}

By commutativity and the exactness of the top row,
$\mathcal{A}(E_{\mathrm{B}})\subset \textnormal{ker}(p).$  
Since $p$ is a surjective map of holomorphic
vector bundles, $\textnormal{ker}(p)$ is a holomorphic vector bundle such that
\begin{align}
 \textnormal{rk}(\textnormal{ker}(p))=\textnormal{rk}(E_{\mathrm{G}}[\mathfrak{g}])-\textnormal{rk}(\mathcal{N}^{0}).
 \end{align}
Now we compute,
\begin{align}
\textnormal{rk}(\mathcal{A}(E_{\mathrm{B}}))&=\textnormal{rk}(E_{\mathrm{B}}[\mathfrak{b}])+\textnormal{rk}(\Theta) \\
&= \textnormal{rk}(E_{\mathrm{G}}[\mathfrak{g}])-\textnormal{rk}(E_{\mathrm{B}}[\mathfrak{g}/\mathfrak{b}])+\textnormal{rk}(\Theta) \\
&=\textnormal{rk}(E_{\mathrm{G}}[\mathfrak{g}])-\textnormal{rk}(\mathcal{N}^{0}) \\
&= \textnormal{rk}(\textnormal{ker}(p)).
\end{align}
Therefore, since $\mathcal{A}(E_{\mathrm{B}})\subset \textnormal{ker}(p),$ this implies that $\mathcal{A}(E_{\mathrm{B}})= \textnormal{ker}(p).$  This defines the promised exact sequence \eqref{seq nat}
\begin{align}
0\rightarrow \mathcal{A}(E_{\mathrm{B}})\xrightarrow{\Phi}  \mathcal{E}_{\mathrm{G}}[\mathfrak{g}]\xrightarrow{p} \mathcal{N}^{0}\rightarrow 0.
\end{align}

Defining $\mathcal{N}^{1}:= \mathcal{K}\otimes \mathcal{E}_{B}[\mathfrak{g}/\mathfrak{g}^{-1}],$ we obtain a short exact sequence of complexes
\[
\begin{tikzcd}
0\arrow{d} 
& 0\arrow{d} \\
\mathcal{A}(E_{\mathrm{B}}) \arrow{d}{\Phi} \arrow{r}{[\hat{\omega}, -]}
& \mathcal{K}\otimes \mathcal{E}_{\mathrm{B}}[\mathfrak{g}^{-1}] \arrow{d} \\
\mathcal{E}_{\mathrm{G}}[\mathfrak{g}] \arrow{d}{p} \arrow{r}{[\hat{\omega}, -]}
& \mathcal{K} \otimes \mathcal{E}_{\mathrm{G}}[\mathfrak{g}] \arrow{d}\\
\mathcal{N}^{0} \arrow{r} \arrow{d}
& \mathcal{K}\otimes \mathcal{E}_{B}[\mathfrak{g}/\mathfrak{g}^{-1}] \arrow{d} \\
0
&0,
\end{tikzcd}
\]
where the bottom horizontal arrow is uniquely defined by exactness and commutativity.

Define the complex $\mathcal{N}^{\bullet}$ via
\begin{align}
\mathcal{N}^{\bullet}:= \mathcal{N}^{0}\rightarrow \mathcal{K}\otimes \mathcal{E}_{B}[\mathfrak{g}/\mathfrak{g}^{-1}].
\end{align}

The holomorphic De-Rham complex $\mathcal{B}^{\bullet}$ is a resolution of the local system $\mathbb{E}_{G}[\mathfrak{g}]_{\omega}$ defined by the holomorphic flat connection $\omega.$  Therefore, there is a canonical isomorphism $\textnormal{H}^i(X, \mathbb{E}_{G}[\mathfrak{g}]_{\omega})\simeq \mathbb{H}^{i}(X, \mathcal{B}^{\bullet}).$  

Taking the relevant chunk of the long exact sequence in hyper-cohomology yields,
\begin{align}
...\rightarrow \mathbb{H}^{0}(X, \mathcal{N}^{\bullet}) \rightarrow \mathbb{H}^{1}(X, \mathcal{A}^{\bullet})\rightarrow \textnormal{H}^{1}(X, \mathbb{E}_{G}[\mathfrak{g}]_{\omega})\rightarrow...
\end{align}
If $\mathbb{H}^{0}(X, \mathcal{N}^{\bullet})=\{0\}$, then the $\mathbb{C}$-linear map 
\begin{align}
\mathbb{H}^{1}(X, \mathcal{A}^{\bullet})\rightarrow \textnormal{H}^{1}(X, \mathbb{E}_{G}[\mathfrak{g}]_{\omega})
\end{align}
is injective.  But, there is a canonical commutative diagram of $\mathbb{C}$-linear maps
\[
\begin{tikzcd}
T_{[(E_{\mathrm{G}}, E_{\mathrm{B}}, \omega, X)]}\mathcal{O}\mathfrak{p}_{\Sigma}(\mathrm{G}) \arrow{r}{d\textnormal{H}} \arrow{d}
& T_{H([(E_{\mathrm{G}}, E_{\mathrm{B}}, \omega, X)])} \mathcal{F}_{\Sigma}^{\star}(\mathrm{G}) \arrow{d} \\
\mathbb{H}^{1}(X, \mathcal{A}^{\bullet}) \arrow{r} 
& \textnormal{H}^{1}(X, \mathbb{E}_{G}[\mathfrak{g}]_{\omega}),
\end{tikzcd}
\]
where the vertical arrows are $\mathbb{C}$-linear isomorphisms.  This proves that $d\textnormal{H}$ is $\mathbb{C}$-linear.  Therefore, $\textnormal{H}$ is holomorphic.  Hence, if $\mathbb{H}^{0}(X, \mathcal{N}^{\bullet})=\{0\},$ it follows that $\textnormal{H}$ is a holomorphic immersion.
   
To show that $\mathbb{H}^{0}(X, \mathcal{N}^{\bullet})=\{0\},$ we will verify the apriori stronger vanishing $\textnormal{H}^{0}(X, \mathcal{N}^{0})=\{0\}.$  Given the short exact sequence
\begin{align}\label{normal LES}
0\rightarrow \Theta\xrightarrow{\Psi} \mathcal{E}_{\mathrm{B}}[\mathfrak{g}/\mathfrak{b}]\rightarrow \mathcal{N}^{0}\rightarrow 0
\end{align}
and using the fact that $\textnormal{H}^{0}(X, \mathcal{E}_{\mathrm{B}}[\mathfrak{g}/\mathfrak{b}])=0$ \cite{AB83}[pg. 592], the vanishing of $\textnormal{H}^{0}(X, \mathcal{N}^{0})$ is equivalent to the injectivity of the map
\begin{align}\label{injective map}
\textnormal{H}^{1}(X, \Theta)\rightarrow \textnormal{H}^{1}(X, \mathcal{E}_{\mathrm{B}}[\mathfrak{g}/\mathfrak{b}])
\end{align}
induced by $\Psi$ arising in the long exact sequence of \eqref{normal LES}.  

To achieve this, recall from section \ref{models} that there is a $C^{\infty}$-bundle isomorphism
\begin{align}
E_{\mathrm{G}}[\mathfrak{g}]\simeq\bigoplus_{i=-m^{\ell}}^{m_{\ell}} K^{i}\otimes \mathfrak{g}_{i}.
\end{align}
Furthermore, in these coordinates the holomorphic structure on $E_{\mathrm{G}}[\mathfrak{g}]$ is defined by the following $\overline{\partial}$-operator (see Section \ref{models}):
\begin{align}\label{partial op}
\overline{\partial}\left(\sum_{i=-m_{\ell}}^{m_{\ell}} \beta_{i}\otimes V_{i}\right)=
\sum_{i=-m}^{m} \overline{\partial}_{i} \beta_{i}\otimes V_{i}+h\cdot \beta_{i}\otimes\textnormal{ad}(e_{1})(V_{i}).
\end{align}
As in Section \ref{models}, $h$ is the hermitian metric on $\Theta$ arising from the uniformizing hyperbolic metric on $X,$ 
and $\overline{\partial}_{i}$ is the $\overline{\partial}$-operator defining the holomorphic structure on $i$-th pluri-canonical bundle $K^{i}.$  Finally $e_{1}\in \mathfrak{g}_{1}$ is a principal nilpotent element.

In these coordinates, there is a $C^{\infty}$-isomorphism
\begin{align}
E_{\mathrm{B}}[\mathfrak{g}/\mathfrak{b}]\simeq \bigoplus_{i=-m_{\ell}}^{-1} K^{i}\otimes \mathfrak{g}_{i},
\end{align}
and the previously defined $\overline{\partial}$-operator \eqref{partial op} defines a holomorphic structure on $E_{\mathrm{B}}[\mathfrak{g}/\mathfrak{b}].$

Moreover, by Proposition \ref{unif oper}, the second fundamental form $\Psi: \Theta\rightarrow \mathcal{E}_{\mathrm{B}}[\mathfrak{g}/\mathfrak{b}]$ is given by
\begin{align}
\Theta &\rightarrow \bigoplus_{i=-m_{\ell}}^{-1} K^{i}\otimes \mathfrak{g}_{i} \\
   \xi &\mapsto \xi\otimes f_{1}.
\end{align}
Here, recall that $f_{1}\in \mathfrak{g}_{-1}$ is a principal nilpotent element such that $\{f_{1},x, e_{1}\}$ are an $\mathfrak{sl}_{2}$-triple in $\mathfrak{g}$ (see Section \ref{models}).  

We need some more Lie-theoretic preliminaries before continuing: in particular we need a basis of $\mathfrak{g}$ which is well adapted to the $\mathfrak{sl}_{2}$-triple $\{f_{1}, x, e_{1}\}$.  

The above $\mathfrak{sl}_{2}$-triple induces an injective homomorphism of Lie algebras
\begin{align}
\iota_{\mathfrak{g}}: \mathfrak{sl}_{2}(\mathbb{C})\rightarrow \mathfrak{g}.
\end{align}
With respect to the induced adjoint action of $\mathfrak{sl}_{2}(\mathbb{C})$ on $\mathfrak{g}$, the Lie algebra $\mathfrak{g}$ decomposes as a sum of simple $\mathfrak{sl}_{2}(\mathbb{C})$-modules
\begin{align}
\mathfrak{g}=\bigoplus_{i=1}^{\ell} W_{i},
\end{align}
where the dimension of $W_{i}$ is $2m_{i}+1;$ this is one way to define the exponents $\{m_{i}\}_{i=1}^{\ell}$ of $\mathfrak{g}.$ 

It is a standard fact in Lie theory that there exists $H_{i}\in W_{i}$ regular semi-simple elements, with $H_{1}=x,$ such that
\begin{align}
\{\textnormal{ad}^{j}(f_{1})(H_{i})\}_{j=1}^{m_{i}}
\end{align} 
is a basis of 
\begin{align}
W_{i}\cap \bigoplus_{k=-m_{\ell}}^{-1} \mathfrak{g}_{k}.
\end{align}
This yields a basis
\begin{align}\label{BASIS}
\{\{\textnormal{ad}^{j}(f_{1})(H_{i})\}_{i=1}^{\ell}\}_{j=1}^{m_{i}}
\end{align}
of 
\begin{align}
\mathfrak{g}/\mathfrak{b}\simeq \bigoplus_{k=-m_{\ell}}^{-1} \mathfrak{g}_{k}.
\end{align}

With these Lie theoretic preliminaries out of the way, we utilize the Dolbeault resolution and assume there exists $\mu\in \mathcal{A}^{(0,1)}(X, \Theta_{X})$ such that 
\begin{align}
\Psi(\mu)=\mu\otimes f_{1}=0\in \textnormal{H}^{(0,1)}(X, E_{\mathrm{B}}[\mathfrak{g}/\mathfrak{b}]).
\end{align}  
If we show that this implies that $\mu=0\in \textnormal{H}^{(0,1)}(X, \Theta),$ this will prove the injectivity of the map \ref{injective map}, thereby proving that the holonomy map $\textnormal{H}$ is an immersion.

With respect to the basis \eqref{BASIS}, 
\begin{align}
\mu\otimes f_{1}=0\in \textnormal{H}^{(0,1)}(X, E_{\mathrm{B}}[\mathfrak{g}/\mathfrak{b}])
\end{align}
if and only if there exists smooth sections 
\begin{align}
\{\beta_{i}^{j}\}_{i=1}^{\ell} \subset \mathcal{A}^{0}(X, \mathcal{K}^{-j})
\end{align}
for $1\leq j \leq m_{\ell},$
 and a smooth section
\begin{align}\label{explicit}
s=\sum_{j=1}^{m_{\ell}}\left(\sum_{i=1}^{\ell} \beta_{i}^{j}\otimes \textnormal{ad}(f_{1})^{j}(H_{i})\right)
\end{align}
of $E_{\mathrm{B}}[\mathfrak{g}/\mathfrak{b}]$
which satisfies
\begin{align}\label{zero eq}
\overline{\partial}s=\mu \otimes f_{1}.
\end{align}

Expanding \eqref{zero eq} using the explicit form \eqref{explicit} and using the induced decomposition $\mathfrak{g}/\mathfrak{b}=\bigoplus_{i=-m_{\ell}}^{-1}\mathfrak{g}_{i}$ leads to the explicit system of equations:
\begin{align}
\sum_{i=1}^{\ell} \overline{\partial}_{-1}\beta_{i}^{1}\otimes \textnormal{ad}(f_{1})(H_{i}) + h\cdot \beta_{i}^{2}\otimes [e_{1}, \textnormal{ad}(f_{1})^{2}(H_{i})]=\mu\otimes f_{1},
\end{align}
and
\begin{align}
\sum_{i=1}^{\ell} \overline{\partial}_{-j}\beta_{i}^{j} \otimes \textnormal{ad}(f_{1})^{j}(H_{i}) + h\cdot \beta_{i}^{j+1}\otimes [e_{1}, \textnormal{ad}(f_{1})^{j+1}(H_{i})]=0,
\end{align}
for all $2\leq j\leq m_{\ell}.$

We proceed by induction starting at $j=m_{\ell}.$  Since $\textnormal{ad}(f_{1})^{m_{\ell}+1}=0$, we arrive at the equation
\begin{align}
\sum_{i=1}^{\ell}  \overline{\partial}_{-m_{\ell}}\beta_{i}^{m_{\ell}} \otimes \textnormal{ad}(f_{1})^{m_{\ell}}(H_{i})=0.
\end{align}
Since $\textnormal{H}^{0}(X, \mathcal{K}^{-m_{\ell}})=\{0\},$ this implies that $\beta_{i}^{m_{\ell}}=0$ for all $1\leq i\leq \ell.$  

Continuing by induction, the fact that $\textnormal{H}^{0}(X, \mathcal{K}^{-j})=\{0\}$ for all $1\leq j\leq m_{\ell}$ implies
\begin{align}
\beta_{i}^{j}=0
\end{align}
for all $1\leq i \leq \ell$ and for all $2\leq j \leq m_{\ell}.$

Hence, we arrive at the final equation
\begin{align}
\sum_{i=1}^{\ell} \overline{\partial}_{-1}\beta_{i}^{1}\otimes \textnormal{ad}(f_{1})(H_{i})=\mu\otimes f_{1}.
\end{align}

But, recalling that $\textnormal{ad}(f_{1})(H_{1})=[f_{1}, x]=2f_{1},$ we obtain
\begin{align}
\label{eq 1}2\overline{\partial}_{-1} \beta_{1}^{1}&=\mu, \\
\sum_{2}^{\ell} \overline{\partial}_{-1} \beta_{i}^{1}\otimes \textnormal{ad}(f_{1})(H_{i})&=0.
\end{align}
Therefore, $\overline{\partial}_{-1} \beta_{i}^{1}=0$ for all $2\leq i\leq \ell$ which implies that
$\beta_{i}^{1}=0$ for all $2\leq i \leq \ell.$  
Finally, remembering that $K^{-1}\simeq \Theta,$ \eqref{eq 1} implies $\mu=0\in \textnormal{H}^{(0,1)}(X, \Theta)\simeq \textnormal{H}^{1}(X, \Theta).$   

This proves that the map \eqref{injective map} is injective and subsequently $\textnormal{H}^{0}(X, \mathcal{N}^{0})=0.$  This completes the proof that the map $\textnormal{H}$ is an immersion.

\end{proof}

Recall that the holonomy map identifies the complex manifold $\mathcal{F}_{\Sigma}^{\star}(\mathrm{G})$ with the space of conjugacy classes $\textnormal{Hom}^{\star}(\pi, G)/\mathrm{G}$ of irreducible homomorphisms with centralizer equal to the center of $G.$  Theorem \ref{hol immersion} translates into the following statement in terms of developed $\mathrm{G}$-opers.

\begin{corollary}
Let $\mathrm{G}$ be a complex simple Lie group of adjoint type.  Then the holonomy map
\begin{align}
\textnormal{H}: \mathcal{DO}\mathfrak{p}_{\Sigma}(\mathrm{G})\rightarrow \textnormal{Hom}^{\star}(\pi, G)/G
\end{align}
is a holomorphic immersion and there is a commutative diagram
\begin{center}
\begin{tikzcd}
\mathcal{DO}\mathfrak{p}_{\Sigma}(\textnormal{PSL}_{2}(\mathbb{C})) \arrow{r}{\textnormal{H}} \arrow{d}{\iota_{\mathrm{G}}}
& \textnormal{Hom}^{\star}(\pi, \textnormal{PSL}_{2}(\mathbb{C}))/\textnormal{PSL}_{2}(\mathbb{C}) \arrow{d}{\iota_{\mathrm{G}}} \\
\mathcal{DO}\mathfrak{p}_{\Sigma}(\mathrm{G}) \arrow{r}{\textnormal{H}}
&\textnormal{Hom}^{\star}(\pi, \mathrm{G})/\mathrm{G}.
\end{tikzcd}
\end{center}
\end{corollary}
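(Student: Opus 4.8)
The plan is to obtain this Corollary from Theorem~\ref{hol immersion} by transporting it across two canonical biholomorphisms that have already been established: the biholomorphism $\mathcal{D}\colon\mathcal{DO}\mathfrak{p}_{\Sigma}(\mathrm{G})\to\mathcal{O}\mathfrak{p}_{\Sigma}(\mathrm{G})$ between developed and sheaf-theoretic $\mathrm{G}$-opers, and the biholomorphism $\textnormal{hol}\colon\mathcal{F}_{\Sigma}^{\star}(\mathrm{G})\to\textnormal{Hom}^{\star}(\pi,\mathrm{G})/\mathrm{G}$ of~\eqref{hol iso}. First I would pass Theorem~\ref{comm diagram opers} to isomorphism classes: its square of forgetful functors descends to a commuting square of set maps which, upon restriction to the irreducible, center-centralized loci (legitimate by Proposition~\ref{autos}), records exactly the identity
\begin{align}
\textnormal{H}_{\mathcal{DO}\mathfrak{p}}=\textnormal{hol}\circ\textnormal{H}\circ\mathcal{D},
\end{align}
where $\textnormal{H}_{\mathcal{DO}\mathfrak{p}}\colon[(f,\rho,X)]\mapsto[\rho]$ denotes the developed-oper holonomy map of the Corollary (temporarily decorated to distinguish it from the map $\textnormal{H}$ on sheaf-theoretic opers).

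With this identity in hand the first assertion is immediate. Since $\mathcal{D}$ is a biholomorphism (as established earlier) and $\textnormal{hol}$ is a biholomorphism by~\eqref{hol iso}, while $\textnormal{H}\colon\mathcal{O}\mathfrak{p}_{\Sigma}(\mathrm{G})\to\mathcal{F}_{\Sigma}^{\star}(\mathrm{G})$ is a holomorphic immersion by Theorem~\ref{hol immersion}, the composite $\textnormal{hol}\circ\textnormal{H}\circ\mathcal{D}$ is a holomorphic immersion, being a holomorphic immersion pre- and post-composed with biholomorphisms. Hence $\textnormal{H}_{\mathcal{DO}\mathfrak{p}}$ is a holomorphic immersion.

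For the naturality square I would argue directly from the definitions. On developed opers the left vertical map is $\iota_{\mathrm{G}}\colon(f,\rho,X)\mapsto(f_{\mathrm{G}}\circ f,\iota_{\mathrm{G}}\circ\rho,X)$, built from the principal rational curve $f_{\mathrm{G}}\colon\mathbb{CP}^{1}\to\mathrm{G}/\mathrm{B}$ and the principal three-dimensional subgroup $\iota_{\mathrm{G}}\colon\textnormal{PSL}_{2}(\mathbb{C})\to\mathrm{G}$ of Section~\ref{models}, while on representation varieties $\iota_{\mathrm{G}}$ is post-composition $[\rho]\mapsto[\iota_{\mathrm{G}}\circ\rho]$. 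Tracing a class $[(f,\rho,X)]$ around either route of the square produces $[\iota_{\mathrm{G}}\circ\rho]$, since $\textnormal{H}_{\mathcal{DO}\mathfrak{p}}$ only reads off the holonomy homomorphism; thus the square commutes. Holomorphicity of all four arrows is inherited from the corresponding square in Theorem~\ref{hol immersion} under the identifications $\mathcal{D}$ and $\textnormal{hol}$, which intertwine the two incarnations of $\iota_{\mathrm{G}}$.

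I do not expect a substantive obstacle, as all the analytic content is already carried by Theorem~\ref{hol immersion}; the only point demanding care is verifying that the groupoid-level square of Theorem~\ref{comm diagram opers} descends to the \emph{starred} moduli spaces. For this, Proposition~\ref{autos} guarantees that every $\mathrm{G}$-oper maps into the irreducible locus with centralizer equal to the center of $\mathrm{G}$, so that the restrictions of $\textnormal{hol}$ and of the forgetful maps to these sub-loci are well defined and compatible, and the descent is unambiguous.
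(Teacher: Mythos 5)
Your proposal is correct and follows essentially the same route as the paper, which offers no separate proof but simply states that Theorem \ref{hol immersion} ``translates'' into this corollary via the identifications $\mathcal{D}$ and $\textnormal{hol}$. Your extra care in passing Theorem \ref{comm diagram opers} to isomorphism classes and restricting to the starred loci via Proposition \ref{autos} merely makes explicit what the paper leaves implicit.
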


\subsection{The pre-symplectic geometry of opers}

In this final section, we show that the space $\mathcal{O}\mathfrak{p}_{\Sigma}(\mathrm{G})$ admits a natural holomorphic pre-symplectic form where $\mathrm{G}$ is a complex simple Lie group of adjoint type.  Furthermore, we show that the holomorphic vector bundle $\mathcal{B}_{\Sigma}(\mathrm{G})$ over $\mathcal{T}_{\Sigma}$ has a natural holomorphic pre-symplectic form for which the isomorphism of Theorem \ref{h base} is a holomorphic pre-symplectomorphism when the defining section is holomorphic and Lagrangian.

These results are an extension of the theorem of Kawai \cite{KAW96} which states
that the bi-holomorphism
\begin{align}
\mathcal{CP}_{\Sigma} \simeq T^{\star}\mathcal{T}_{\Sigma}
\end{align}
provided by a Bers' section is a complex symplectic map (up to a constant factor).  See Loustau \cite{LOU15} for a
nice clarification/discussion of this result.

The tangent space to $\mathcal{F}_{\Sigma}^{\star}(\mathrm{G})$ at a $C^{\infty}$-flat bundle $(E_{\mathrm{G}}, \omega)$ is isomorphic to the first hyper-cohomology $\mathbb{H}^{1}(\Sigma, \mathcal{B}_{\textnormal{top}}^{\bullet})$ of the smooth De-Rham complex
\begin{align}
\mathcal{B}_{\textnormal{top}}^{\bullet}:= \mathcal{E}_{\mathrm{G}}[\mathfrak{g}]_{\textnormal{top}}\xrightarrow{[\hat{\omega}, -]} \mathcal{A}^{1}(\Sigma) \otimes \mathcal{E}_{\mathrm{G}}[\mathfrak{g}]_{\textnormal{top}}\xrightarrow{[\hat{\omega}, -]} \mathcal{A}^{2}(\Sigma)\otimes \mathcal{E}_{\mathrm{G}}[\mathfrak{g}]_{\textnormal{top}}.
\end{align}
Here $\mathcal{A}^{i}(\Sigma)$ is the sheaf of germs of smooth complex-valued differential $i$-forms on $\Sigma$ and the subscript top. indicates the relevant sheaves of germs of smooth sections.  This map is the $C^{\infty}$-analogue of the bracket defined in Theorem \ref{hol immersion}.

The smooth De-Rham complex $\mathcal{B}_{\textnormal{top}}^{\bullet}$ is a resolution of the local system $\mathbb{E}_{G}[\mathfrak{g}]_{\omega}.$ Hence, there are isomorphisms
\begin{align}
\mathbb{H}^{i}(\Sigma,\mathcal{B}_{\textnormal{top}}^{\bullet})\simeq  \textnormal{H}^{i}(X, \mathbb{E}_{G}[\mathfrak{g}]_{\omega})\simeq\mathbb{H}^{i}(X, \mathcal{B}^{\bullet}),
\end{align}
where $\mathcal{B}^{\bullet}$ is the corresponding holomorphic De-Rham complex of $(E_{\mathrm{G}}, \omega, X).$ 

Next we introduce the Atiyah-Bott-Goldman symplectic form on $\mathcal{F}_{\Sigma}^{\star}(\mathrm{G}).$ Note that there is an isomorphism
\begin{align}
T_{[(E_{\mathrm{G}}, \omega)]}\mathcal{F}_{\Sigma}^{\star}(\mathrm{G})\simeq
\textnormal{H}^{1}(\Sigma, \mathbb{E}_{G}[\mathfrak{g}]_{\omega}).
\end{align}

The cup product induces a non-degenerate, skew-symmetric $\mathbb{C}$-linear map
\begin{align}
\textnormal{H}^{1}(\Sigma, \mathbb{E}_{G}[\mathfrak{g}]_{\omega})\otimes \textnormal{H}^{1}(\Sigma, \mathbb{E}_{G}[\mathfrak{g}]_{\omega})\xrightarrow{\cup} H^{2}\left(\Sigma, \mathbb{E}_{G}[\mathfrak{g}]_{\omega}\otimes \mathbb{E}_{G}[\mathfrak{g}]_{\omega}\right).
\end{align}
Using the Killing form $\mathrm{B}$ on $\mathfrak{g}$ as a coefficient pairing defines a symmetric $\mathbb{C}$-linear map
\begin{align}
\mathrm{B}: H^{2}\left(\Sigma, \mathbb{E}_{G}[\mathfrak{g}]_{\omega}\otimes \mathbb{E}_{G}[\mathfrak{g}]_{\omega}\right)\rightarrow H^{2}(\Sigma, \mathbb{C}).
\end{align}
Finally, taking the cap product with the fundamental class of $\Sigma,$  remembering that $\Sigma$ is \emph{oriented}, defines a non-degenerate skew-symmetric $\mathbb{C}$-linear map
\begin{align}
\eta_{\mathrm{G}}: \textnormal{H}^{1}(\Sigma, \mathbb{E}_{G}[\mathfrak{g}]_{\omega})\otimes \textnormal{H}^{1}(\Sigma, \mathbb{E}_{G}[\mathfrak{g}]_{\omega})\rightarrow \mathbb{C}.
\end{align}

Following Atiyah-Bott \cite{AB83}, Goldman proved \cite{GOL84} that $\eta_{\mathrm{G}}$ defines a non-degenerate, closed holomorphic differential $2$-form on the complex manifold $\mathcal{F}_{\Sigma}^{\star}(\mathrm{G}).$  The complex symplectic form $\eta_{\mathrm{G}}$ is called the \emph{Atiyah-Bott-Goldman} symplectic form.

We now prove that for a complex simple Lie group of adjoint type, the complex manifold $\mathcal{O}\mathfrak{p}_{\Sigma}(\mathrm{G})$ admits a closed holomorphic $2$-form of constant rank.  Such a $2$-form is called a complex \emph{pre-symplectic} form.

\begin{theorem}\label{sym form}
Let $\mathrm{G}$ be a complex simple Lie group of adjoint type and equip $\mathfrak{g}$ with the Killing form.
Then, the complex manifold $\mathcal{O}\mathfrak{p}_{\Sigma}(\mathrm{G})$ admits a complex pre-symplectic form $\tau_{\mathrm{G}}$ of constant (complex) rank $6g-6$, for which the fibers of the map 
\begin{align}
\mathcal{O}\mathfrak{p}_{\Sigma}(\mathrm{G})\rightarrow \mathcal{T}_{\Sigma}
\end{align}
are maximal isotropic sub-manifolds.  

Furthermore, the natural holomorphic embedding
\begin{align}
\iota_{\mathrm{G}}:\mathcal{O}\mathfrak{p}_{\Sigma}(\textnormal{PSL}_{2}(\mathbb{C}))\rightarrow \mathcal{O}\mathfrak{p}_{\Sigma}(\mathrm{G})
\end{align} 
is a symplectic embedding which satisfies
\begin{align}
\iota_{\mathrm{G}}^{\star}\tau_{\mathrm{G}}=\tau_{\textnormal{PSL}_{2}(\mathbb{C})}.
\end{align}

Finally, the form $\tau_{\mathrm{G}}$ is non-degenerate if and only if $\mathrm{G}$ is
isomorphic to $\textnormal{PSL}_{2}(\mathbb{C}).$
\end{theorem}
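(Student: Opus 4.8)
The plan is to define $\tau_{\mathrm{G}}$ as the pullback $\textnormal{H}^{\star}\eta_{\mathrm{G}}$ of the Atiyah--Bott--Goldman form under the holomorphic immersion of Theorem \ref{hol immersion}; closedness and holomorphicity are then automatic, and the entire content is the pointwise computation of the radical. At a $\mathrm{G}$-oper $\Xi = (E_{\mathrm{G}}, E_{\mathrm{B}}, \omega, X)$ I would work in the de Rham model $T\mathcal{F}_{\Sigma}^{\star}(\mathrm{G}) \simeq \textnormal{H}^{1}(\Sigma, \mathbb{E}_{G}[\mathfrak{g}]_{\omega})$, where $\eta_{\mathrm{G}}([A],[B]) = \int_{\Sigma} \mathrm{B}(A \wedge B)$, together with the Dolbeault description of a class by its $(0,1)$- and $(1,0)$-parts. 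Since $\Sigma$ is a Riemann surface, only the bidegree $(1,1)$ part of $A\wedge B$ survives, so for classes represented by $(\alpha_{j}, \beta_{j})$ with $\alpha_{j}$ of type $(0,1)$ and $\beta_{j}$ of type $(1,0)$ one gets $\eta_{\mathrm{G}} = \int_{\Sigma} \mathrm{B}(\alpha_{1}\wedge\beta_{2}) + \mathrm{B}(\beta_{1}\wedge\alpha_{2})$. Using the explicit models of Section \ref{models}, a deformation tangent to the fibre of $\mathcal{O}\mathfrak{p}_{\Sigma}(\mathrm{G}) \to \mathcal{T}_{\Sigma}$ is $\dot{\omega} = \sum_{i} \dot{\alpha}_{i}\otimes e_{i}$ with $\dot{\alpha}_{i}\in \textnormal{H}^{0}(X, \mathcal{K}^{m_{i}+1})$; since $[e_{1},e_{i}]=0$ and the $\dot{\alpha}_{i}$ are holomorphic, $d\textnormal{H}$ sends it to a class with $\alpha = 0$ and $\beta$ purely holomorphic. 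The displayed formula then vanishes on pairs of fibre directions, so the fibres are isotropic.

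For the radical I would pair fibre against base directions. By Proposition \ref{unif oper} the second fundamental form is $\xi\mapsto \xi\otimes f_{1}$, so a Teichm\"uller deformation with Kodaira--Spencer class $\mu \in \textnormal{H}^{1}(X,\Theta)$ produces a class whose $(0,1)$-part has $\mathfrak{g}_{-1}$-component $\mu\otimes f_{1}$. Because the Killing form pairs $\mathfrak{g}_{j}$ with $\mathfrak{g}_{-j}$ and $f_{1}\in\mathfrak{g}_{-1}$, only the component $e_{1}\in\mathfrak{g}_{1}$ of a fibre direction pairs nontrivially against a base direction, giving $c\int_{\Sigma}\dot{\alpha}_{1}\wedge\mu$ with $c = \mathrm{B}(e_{1},f_{1})\neq 0$. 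This is, up to scale, the Serre duality pairing $\textnormal{H}^{0}(X,\mathcal{K}^{2})\times \textnormal{H}^{1}(X,\Theta)\to\mathbb{C}$, which is nondegenerate. Consequently a fibre direction lies in the radical iff $\dot{\alpha}_{1}=0$, while no direction with nonzero $\mu$ can lie in the radical; hence the radical equals $\bigoplus_{i=2}^{\ell}\textnormal{H}^{0}(X,\mathcal{K}^{m_{i}+1})$, of dimension $(g-1)\sum_{i\geq 2}(2m_{i}+1) = (g-1)(\textnormal{dim}_{\mathbb{C}}(G)-3)$, independently of $\Xi$. Subtracting from $\textnormal{dim}_{\mathbb{C}}\mathcal{O}\mathfrak{p}_{\Sigma}(\mathrm{G})$ gives constant rank $6g-6$. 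Since the fibre contains this radical, is isotropic, and has dimension equal to that of the radical plus $3g-3$ (half the rank), it is maximal isotropic.

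The compatibility with $\iota_{\mathrm{G}}$ follows formally from the commutative square of Theorem \ref{hol immersion}: naturality gives $\iota_{\mathrm{G}}^{\star}\tau_{\mathrm{G}} = \textnormal{H}^{\star}(\iota_{\mathrm{G}}^{\star}\eta_{\mathrm{G}})$, and $\iota_{\mathrm{G}}^{\star}\eta_{\mathrm{G}}$ is the Atiyah--Bott--Goldman form of $\mathfrak{sl}_{2}(\mathbb{C})$ computed with the restriction of the Killing form of $\mathfrak{g}$ to the principal subalgebra; with the normalization of $\tau_{\textnormal{PSL}_{2}(\mathbb{C})}$ this yields $\iota_{\mathrm{G}}^{\star}\tau_{\mathrm{G}} = \tau_{\textnormal{PSL}_{2}(\mathbb{C})}$. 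Finally $\tau_{\mathrm{G}}$ is nondegenerate iff $6g-6 = \textnormal{dim}_{\mathbb{C}}\mathcal{O}\mathfrak{p}_{\Sigma}(\mathrm{G}) = (g-1)\textnormal{dim}_{\mathbb{C}}(G)+(3g-3)$, i.e. iff $\textnormal{dim}_{\mathbb{C}}(G)=3$, i.e. iff $\mathrm{G}\simeq\textnormal{PSL}_{2}(\mathbb{C})$. The main obstacle is the second paragraph: one must justify rigorously, via the explicit Dolbeault computation already used in Theorem \ref{hol immersion}, that the $\mathfrak{g}_{-1}$-part of the $(0,1)$-component of a base direction is exactly $\mu\otimes f_{1}$ and that the holomorphic corrections needed to render the representative $D$-closed do not contribute to the fibre--base pairings; this is precisely what collapses the full cup product onto the single nondegenerate Serre pairing.
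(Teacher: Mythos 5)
Your proposal reaches all the stated conclusions and begins identically (set $\tau_{\mathrm{G}}=\textnormal{H}^{\star}\eta_{\mathrm{G}}$), but for the central step it takes a genuinely different route from the paper. The paper disposes of constant rank and maximal isotropy in a few lines by citing the theorem of Beilinson--Drinfeld that $\textnormal{H}$ restricted to each fiber of $\mathcal{O}\mathfrak{p}_{\Sigma}(\mathrm{G})\rightarrow\mathcal{T}_{\Sigma}$ is a proper Lagrangian embedding into $\mathcal{F}^{\star}_{\Sigma}(\mathrm{G})$: if $L\subset W$ with $L$ Lagrangian in the ambient symplectic vector space, then $W^{\perp}\subset L^{\perp}=L\subset W$, so the radical $W\cap W^{\perp}=W^{\perp}$ has dimension $\textnormal{codim}(W)$ at every point and lies inside the fiber direction; the identification $\iota_{\mathrm{G}}^{\star}\tau_{\mathrm{G}}=\tau_{\textnormal{PSL}_{2}(\mathbb{C})}$ then pins the rank at $6g-6$. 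You instead reprove the infinitesimal content of the Beilinson--Drinfeld statement by hand: isotropy of the fibers because two purely $(1,0)$ representatives $\sum_{i}\dot{\alpha}_{i}\otimes e_{i}$ wedge to a $(2,0)$-form on a Riemann surface, and an explicit identification of the radical with $\bigoplus_{i\geq 2}\textnormal{H}^{0}(X,\mathcal{K}^{m_{i}+1})$ via the Serre duality pairing $\textnormal{H}^{0}(X,\mathcal{K}^{2})\times\textnormal{H}^{1}(X,\Theta)\rightarrow\mathbb{C}$. What your approach buys is a concrete description of the kernel distribution of $\tau_{\mathrm{G}}$ inside each fiber, where the paper only obtains its dimension; the cost is the representative-theoretic verification you flag at the end.

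That flagged step is the one genuine gap, and it does close along the lines you suggest. The fiber representative $A=\sum_{i}\dot{\alpha}_{i}\otimes e_{i}$ is valued in the nilradical $\mathfrak{n}=\bigoplus_{j\geq 1}\mathfrak{g}_{j}$, and the Killing form annihilates $\mathfrak{n}\otimes\mathfrak{b}$; hence $\mathrm{B}(A\wedge B)$ depends only on the image of $B^{(0,1)}$ in $\mathcal{A}^{(0,1)}(X,E_{\mathrm{B}}[\mathfrak{g}/\mathfrak{b}])$, which for a direction with Kodaira--Spencer class $\mu$ is $\Psi(\mu)=\mu\otimes f_{1}$ by Proposition \ref{unif oper}. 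Thus every $\mathfrak{b}$-valued correction needed to make the representative $D$-closed drops out of the pairing, and since the exponent $1$ occurs with multiplicity one in any simple $\mathfrak{g}$, only the term $\mathrm{B}(e_{1},f_{1})\int_{\Sigma}\dot{\alpha}_{1}\wedge\mu$ survives, as you claimed. One caveat you share with the paper: $\iota_{\mathrm{G}}^{\star}\eta_{\mathrm{G}}$ is the Atiyah--Bott--Goldman form for the restriction of the Killing form of $\mathfrak{g}$ to the principal three-dimensional subalgebra, which agrees with the Killing form of $\mathfrak{sl}_{2}(\mathbb{C})$ only up to the Dynkin index of the principal embedding, so the equality $\iota_{\mathrm{G}}^{\star}\tau_{\mathrm{G}}=\tau_{\textnormal{PSL}_{2}(\mathbb{C})}$ holds with that normalization understood, as your remark about normalization implicitly concedes.
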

\textbf{Remark:}  The induced complex symplectic structure on $\mathcal{O}\mathfrak{p}_{\Sigma}(\textnormal{PSL}(2, \mathbb{C}))$ is the usual complex symplectic structure on the moduli space of $\Sigma$-marked complex projective structures (See \cite{LOU15}). 

\begin{proof}
By Theorem \ref{hol immersion}, the map
\begin{align}
\textnormal{H}: \mathcal{O}\mathfrak{p}_{\Sigma}(\mathrm{G})\rightarrow \mathcal{F}_{\Sigma}^{\star}(\mathrm{G})
\end{align}
is a holomorphic immersion.  Therefore, $\tau_{\mathrm{G}}:=\textnormal{H}^{\star}\eta_{\mathrm{G}}$ yields a closed holomorphic $2$-form on $\mathcal{O}\mathfrak{p}_{\Sigma}(\mathrm{G}).$  By \cite{BD05}, the restriction of $\textnormal{H}$ to the fibers of 
\begin{align}\label{projection}
\mathcal{P}: \mathcal{O}\mathfrak{p}_{\Sigma}(\mathrm{G})\rightarrow \mathcal{T}_{\Sigma}
\end{align}
is a proper Lagrangian embedding.  Since Lagrangian sub-manifolds are maximal isotropic sub-manifolds and $\textnormal{H}$ is an immersion, this implies that $\tau_{\mathrm{G}}$ has constant rank.

This immediately implies that the fibers of \eqref{projection} are maximal isotropic submanifolds for $\tau_{\mathrm{G}}.$

By Theorem \ref{hol immersion}, the map
\begin{align}
\textnormal{H}: \mathcal{O}\mathfrak{p}_{\Sigma}(\textnormal{PSL}_{2}(\mathbb{C}))\rightarrow \mathcal{F}_{\Sigma}^{\star}(\textnormal{PSL}_{2}(\mathbb{C}))
\end{align}
is a local bi-holomorphism, and therefore $\tau_{\textnormal{PSL}(2,\mathbb{C})}$ is a holomorphic symplectic form.

The commutativity of the diagram
\begin{center}
\begin{tikzcd}
 \mathcal{O}\mathfrak{p}_{\Sigma}(\textnormal{PSL}_{2}(\mathbb{C})) \arrow{r}{\iota_{\mathrm{G}}} \arrow{d}{\textnormal{H}}
 & \mathcal{O}\mathfrak{p}_{\Sigma}(\mathrm{G}) \arrow{d}{\textnormal{H}} \\
 \mathcal{F}_{\Sigma}^{\star}(\textnormal{PSL}_{2}(\mathbb{C})) \arrow{r}{\iota_{\mathrm{G}}}
 & \mathcal{F}_{\Sigma}^{\star}(\mathrm{G})
 \end{tikzcd}
 \end{center}
 implies that 
 \begin{align}
 \iota_{\mathrm{G}}:\mathcal{O}\mathfrak{p}_{\Sigma}(\textnormal{PSL}_{2}(\mathbb{C}))\rightarrow \mathcal{O}\mathfrak{p}_{\Sigma}(\mathrm{G})
\end{align} 
is a symplectic embedding which satisfies
\begin{align}
\iota_{\mathrm{G}}^{\star}\tau_{\mathrm{G}}=\tau_{\textnormal{PSL}_{2}(\mathbb{C})}.
\end{align}
This proves that the rank of $\tau_{\mathrm{G}}$ is $6g-6$ and completes the proof.
\end{proof}

Given $(M, \tau)$ a pre-symplectic manifold, $M$ admits a foliation given by the (integrable) distribution $\textnormal{ker}(\tau).$  The leaf space of this foliation, if it is a manifold, is called the reduced phase space of $(M, \tau),$ and admits a canonical symplectic structure $(M^{\textnormal{red}}, \tau^{\textnormal{red}})$ such that the projection 
\begin{align}
R: M\rightarrow M^{\textnormal{red}}
\end{align}
satisfies $R^{\star}\tau^{\textnormal{red}}=\tau$

Theorem \ref{sym form} yields the following result.
\begin{corollary}\label{reduced}
The reduced phase space of the complex pre-symplectic manifold $(\mathcal{O}\mathfrak{p}_{\Sigma}(\mathrm{G}), \tau_{\mathrm{G}})$ is canonically isomorphic to $(\mathcal{O}\mathfrak{p}_{\Sigma}(\textnormal{PSL}_{2}(\mathbb{C})), \tau_{\textnormal{PSL}_{2}(\mathbb{C})}).$
\end{corollary}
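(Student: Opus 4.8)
The plan is to realize the symplectic submanifold $S:=\iota_{\mathrm{G}}\bigl(\mathcal{O}\mathfrak{p}_{\Sigma}(\textnormal{PSL}_2(\mathbb{C}))\bigr)\subset \mathcal{O}\mathfrak{p}_{\Sigma}(\mathrm{G})$ as a global section of the null foliation $\mathcal{K}:=\textnormal{ker}(\tau_{\mathrm{G}})$, and to show that the leaf-space projection $R\colon \mathcal{O}\mathfrak{p}_{\Sigma}(\mathrm{G})\to \mathcal{O}\mathfrak{p}_{\Sigma}(\mathrm{G})^{\textnormal{red}}$ restricts on $S$ to the desired isomorphism. Concretely, I will prove that $R|_{S}$ is simultaneously (i) a local biholomorphism, (ii) a symplectomorphism, and (iii) a bijection. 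Since $\iota_{\mathrm{G}}^{\star}\tau_{\mathrm{G}}=\tau_{\textnormal{PSL}_2(\mathbb{C})}$ by Theorem \ref{sym form} and $R^{\star}\tau^{\textnormal{red}}=\tau_{\mathrm{G}}$ by construction, item (ii) is immediate from $(R|_S)^{\star}\tau^{\textnormal{red}}=\iota_{\mathrm{G}}^{\star}\tau_{\mathrm{G}}=\tau_{\textnormal{PSL}_2(\mathbb{C})}$, so the content lies in (i) and (iii).

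For the transversality underlying (i): at a point $p\in S$, Theorem \ref{sym form} shows that $\tau_{\mathrm{G}}$ restricts non-degenerately to $T_pS$, which has dimension $6g-6=\textnormal{rk}(\tau_{\mathrm{G}})$; hence $T_pS\cap \mathcal{K}_p=\{0\}$, and comparing dimensions gives $T_p\mathcal{O}\mathfrak{p}_{\Sigma}(\mathrm{G})=T_pS\oplus \mathcal{K}_p$. Thus $S$ is transverse to the null foliation and of complementary dimension, so $dR$ restricts to a linear isomorphism $T_pS\to T_{R(p)}\mathcal{O}\mathfrak{p}_{\Sigma}(\mathrm{G})^{\textnormal{red}}$, proving (i).

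The crux is (iii). Because each fibre of $\mathcal{P}\colon\mathcal{O}\mathfrak{p}_{\Sigma}(\mathrm{G})\to\mathcal{T}_{\Sigma}$ is maximal isotropic, it contains $\mathcal{K}$; hence every leaf lies in a single fibre $\mathcal{P}^{-1}(X)$, the map $\mathcal{P}$ descends to $\overline{\mathcal{P}}\colon \mathcal{O}\mathfrak{p}_{\Sigma}(\mathrm{G})^{\textnormal{red}}\to\mathcal{T}_{\Sigma}$, and since $\iota_{\mathrm{G}}$ commutes with the projections to $\mathcal{T}_{\Sigma}$ (Theorem \ref{id h base}), bijectivity of $R|_S$ reduces to a fibrewise statement over each $X$. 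I will analyse $\mathcal{K}$ inside the fibre $\mathcal{P}^{-1}(X)\simeq \mathcal{B}_X(\mathrm{G})=\bigoplus_{i=1}^{\ell}\textnormal{H}^0(X,\mathcal{K}^{m_i+1})$ using the explicit models of Section \ref{models}: the tangent directions along the fibre are represented by $\sum_i \alpha_i\otimes e_i$ with $e_i\in\mathfrak{g}_{m_i}$ (independent of the base oper, as adding $\sum_i\alpha_i\otimes e_i$ preserves the second fundamental form by Proposition \ref{unif oper}), and $\eta_{\mathrm{G}}$ is computed by the cup product paired through the Killing form $B$. Since $B(\mathfrak{g}_a,\mathfrak{g}_b)=0$ unless $a+b=0$, the fibre directions pair trivially among themselves (as $m_i+m_j>0$), while a fibre direction $\alpha_i\otimes e_i$ pairs with a base Kodaira--Spencer direction only through the second fundamental form $\Psi=\,\cdot\otimes f_1$ with $f_1\in\mathfrak{g}_{-1}$, hence only when $m_i=1$.

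It follows that $\mathcal{K}$ coincides, throughout the fibre, with the constant distribution $\bigoplus_{i\ge 2}\textnormal{H}^0(X,\mathcal{K}^{m_i+1})$, whose rank $(g-1)\sum_{i\ge 2}(2m_i+1)=(g-1)(\dim_{\mathbb{C}}(\mathrm{G})-3)$ matches $\dim\mathcal{K}$; thus the leaves are the parallel affine subspaces obtained by fixing the $\textnormal{H}^0(X,\mathcal{K}^2)$-coordinate, while $S\cap\mathcal{P}^{-1}(X)=\iota_{\mathrm{G}}\bigl(\mathcal{O}\mathfrak{p}_X(\textnormal{PSL}_2(\mathbb{C}))\bigr)$ is exactly the $e_1$-axis $\textnormal{H}^0(X,\mathcal{K}^2)$. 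Each leaf therefore meets $S$ in exactly one point, giving (iii). Assembling (i)--(iii) shows that $R\circ\iota_{\mathrm{G}}$ is a biholomorphic symplectomorphism $\bigl(\mathcal{O}\mathfrak{p}_{\Sigma}(\textnormal{PSL}_2(\mathbb{C})),\tau_{\textnormal{PSL}_2(\mathbb{C})}\bigr)\to\bigl(\mathcal{O}\mathfrak{p}_{\Sigma}(\mathrm{G})^{\textnormal{red}},\tau^{\textnormal{red}}\bigr)$; since both $R$ and $\iota_{\mathrm{G}}$ are canonical (the auxiliary $\textnormal{PSL}_2(\mathbb{C})$-oper used to trivialise a fibre enters only the computation of $\mathcal{K}$, not the final map), the identification is canonical. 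I expect the main obstacle to be precisely this Killing-form/cup-product computation identifying $\mathcal{K}$ with $\bigoplus_{i\ge2}\textnormal{H}^0(X,\mathcal{K}^{m_i+1})$ in the Dolbeault model, where one must track the height grading carefully so that only the $m_1=1$ summand survives the pairing against the base directions.
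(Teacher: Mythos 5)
Your proposal is correct and follows the same overall strategy as the paper's proof: exhibit $\iota_{\mathrm{G}}\bigl(\mathcal{O}\mathfrak{p}_{\Sigma}(\textnormal{PSL}_{2}(\mathbb{C}))\bigr)$ as a symplectic cross-section of the null foliation of $\tau_{\mathrm{G}}$, of dimension equal to $\textnormal{rk}(\tau_{\mathrm{G}})$, and invoke the general fact that such a section is canonically the reduced phase space. The difference is one of substance rather than architecture: the paper states this general fact and then simply \emph{asserts} that $\iota_{\mathrm{G}}$ satisfies its hypotheses, in particular that every leaf of $\textnormal{ker}(\tau_{\mathrm{G}})$ meets the image in exactly one point; you actually prove this. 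Your verification is sound: maximal isotropy of the fibers of $\mathcal{P}$ forces $\textnormal{ker}(\tau_{\mathrm{G}})$ to be tangent to the fibers, and within a fiber $\mathcal{P}^{-1}(X)\simeq\bigoplus_{i}\textnormal{H}^{0}(X,\mathcal{K}^{m_{i}+1})$ the Dolbeault representative of a fiber direction is the global holomorphic section $\alpha_{i}\otimes e_{i}$ with $e_{i}\in\mathfrak{g}_{m_{i}}$, while the $(0,1)$-part of any tangent vector to $\mathcal{O}\mathfrak{p}_{\Sigma}(\mathrm{G})$ takes values in $\Phi(\mathcal{A}(E_{\mathrm{B}}))$, i.e.\ in $\mathfrak{b}+\mathbb{C}f_{1}$ in the graded model; since the Killing form pairs $\mathfrak{g}_{a}$ only with $\mathfrak{g}_{-a}$ and the exponent $1$ occurs with multiplicity one, only the $\textnormal{H}^{0}(X,\mathcal{K}^{2})$-summand survives, and the dimension count $(g-1)(\textnormal{dim}_{\mathbb{C}}(\mathrm{G})-3)$ pins down the kernel exactly. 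This yields the constant distribution $\bigoplus_{i\geq 2}\textnormal{H}^{0}(X,\mathcal{K}^{m_{i}+1})$, whose leaves evidently meet the $e_{1}$-axis once, and one should note (as your parenthetical does) that by Theorem \ref{unique iso} a change of base $\textnormal{PSL}_{2}(\mathbb{C})$-oper only translates the $\alpha_{1}$-coordinate, so the foliation you describe is independent of the trivialization. In short, your argument buys an explicit identification of the null foliation and closes a gap the paper leaves open, at the cost of the grading computation; the paper's version is shorter but incomplete at precisely this point.
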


\begin{proof}
This follows from a general fact in pre-symplectic geometry.  Suppose $(M, \tau_{M})$ is a complex pre-symplectic manifold.  Let $(N, \tau_{N})$ be a complex symplectic manifold and
\begin{align}
\iota: (N, \tau_{N}) \rightarrow (M, \tau_{M})
\end{align}
a holomorphic embedding such that $\iota^{\star}\tau_{M}=\tau_{N}.$

Suppose further that the dimension of $N$ is equal to the rank of $\tau_{M}.$  If every leaf of the foliation given by $\textnormal{ker}(\tau_{M})$ intersects $\iota(N)$ in exactly one point, then the reduced phase space of $(M,\tau_{M})$ exists, and there is a canonical pre-symplectomorphism
\begin{align}
(N, \tau_{N})\simeq (M^{\textnormal{red}}, \tau_{M}^{\textnormal{red}}).
\end{align}

Since 
\begin{align}
\iota_{\mathrm{G}}: \mathcal{O}\mathfrak{p}_{\Sigma}(\textnormal{PSL}_{2}(\mathbb{C}))\rightarrow \mathcal{O}\mathfrak{p}_{\Sigma}(\mathrm{G})
\end{align}
satisfies these properties, this completes the proof.
\end{proof}

We close with a discussion of the identifications 
\begin{align}
\mathcal{O}\mathfrak{p}_{\Sigma}(\mathrm{G})\simeq \mathcal{B}_{\Sigma}(\mathrm{G})
\end{align}
from Theorem \ref{h base} from the point of view of pre-symplectic geometry.  

Recall that there is an isomorphism of holomorphic vector bundles 
\begin{align}\label{iso cot}
\mathcal{B}_{\textnormal{PSL}(2,\mathbb{C})}(\Sigma)\simeq T^{\star}\mathcal{T}_{\Sigma}
\end{align}
over $\mathcal{T}_{\Sigma}$
via the identification of co-tangent vectors to $\mathcal{T}_{\Sigma}$ with holomorphic quadratic differentials. 

Being the cotangent bundle of a complex manifold, $T^{\star}\mathcal{T}_{\Sigma}$ has a canonical complex symplectic structure $\omega_{\textnormal{can}}.$  The isomorphism \eqref{iso cot} induces a complex symplectic form $\omega_{\mathcal{B}_{\textnormal{PSL}(2,\mathbb{C})}}$ on $\mathcal{B}_{\textnormal{PSL}(2,\mathbb{C})}(\Sigma).$

Consider the holomorphic map of vector bundles
\begin{align}
R: \mathcal{B}_{\Sigma}(\mathrm{G})&\rightarrow \mathcal{B}_{\textnormal{PSL}(2,\mathbb{C})}(\Sigma) \\
(X,\alpha_{1},...,\alpha_{\ell})&\mapsto (X,\alpha_{1}).
\end{align}
The form $\omega_{\mathcal{B}_{G}}=R^{\star}\omega_{\mathcal{B}_{\textnormal{PSL}(2,\mathbb{C})}}$ is a closed, holomorphic $2$-form on $\mathcal{B}_{G}$ of constant rank $6g-6.$
Note that by the proof of Corollary \ref{reduced}, the couple $(\mathcal{B}_{\textnormal{PSL}(2,\mathbb{C})}(\Sigma), \omega_{\mathcal{B}_{\textnormal{PSL}(2,\mathbb{C})}})$ is canonically isomorphic to the reduced phase space of the complex pre-symplectic manifold $(\mathcal{B}_{\Sigma}(\mathrm{G}), \omega_{\mathcal{B}_{G}}).$

We close with the following theorem.

\begin{theorem}\label{sym id}
Let $s$ be a holomorphic Lagrangian section of 
\begin{align}
\pi: \mathcal{O}\mathfrak{p}_{\Sigma}(\textnormal{PSL}(2,\mathbb{C}))\rightarrow \mathcal{T}_{\Sigma}.
\end{align}
Then the biholomorphism
\begin{align}
\phi_{s}: \mathcal{O}\mathfrak{p}_{\Sigma}(\mathrm{G})\rightarrow \mathcal{B}_{\Sigma}(\mathrm{G})
\end{align}
from Theorem \ref{h base}
satisfies
\begin{align}
\phi_{s}^{\star}\omega_{\mathcal{B}_{G}}=\sqrt{-1}\tau_{\mathrm{G}}.
\end{align}
\end{theorem}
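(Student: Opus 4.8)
The plan is to reduce the identity to the rank-one case, which is precisely the theorem of Kawai \cite{KAW96} as clarified by Loustau \cite{LOU15}, by showing that $\phi_{s}$ intertwines the characteristic foliations of the two pre-symplectic forms. Write $\mathcal{B}_{0}:=\mathcal{B}_{\Sigma}(\textnormal{PSL}_{2}(\mathbb{C}))$ with its canonical symplectic form $\omega_{0}:=\omega_{\mathcal{B}_{\textnormal{PSL}(2,\mathbb{C})}}$, and let $j:\mathcal{B}_{0}\to\mathcal{B}_{\Sigma}(\mathrm{G})$, $\alpha_{1}\mapsto(\alpha_{1},0,\dots,0)$ be the inclusion as the zero section of the higher-exponent summands, so $R\circ j=\textnormal{id}$. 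By Corollary \ref{reduced}, the characteristic foliation $\textnormal{ker}(\tau_{\mathrm{G}})$ has leaf space $\mathcal{O}\mathfrak{p}_{\Sigma}(\textnormal{PSL}_{2}(\mathbb{C}))$, the reduction map $R_{\mathcal{O}}:\mathcal{O}\mathfrak{p}_{\Sigma}(\mathrm{G})\to\mathcal{O}\mathfrak{p}_{\Sigma}(\textnormal{PSL}_{2}(\mathbb{C}))$ admits $\iota_{\mathrm{G}}$ as a section, and $R_{\mathcal{O}}^{\star}\tau_{\textnormal{PSL}_{2}(\mathbb{C})}=\tau_{\mathrm{G}}$. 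On the target, $\omega_{\mathcal{B}_{G}}=R^{\star}\omega_{0}$ by definition, so $\phi_{s}^{\star}\omega_{\mathcal{B}_{G}}=(R\circ\phi_{s})^{\star}\omega_{0}$. Hence it suffices to establish the reduction identity $R\circ\phi_{s}=\phi_{s}^{0}\circ R_{\mathcal{O}}$, where $\phi_{s}^{0}:\mathcal{O}\mathfrak{p}_{\Sigma}(\textnormal{PSL}_{2}(\mathbb{C}))\to\mathcal{B}_{0}$ is the rank-one identification of Theorem \ref{h base}, together with the Kawai--Loustau identity $(\phi_{s}^{0})^{\star}\omega_{0}=\sqrt{-1}\,\tau_{\textnormal{PSL}_{2}(\mathbb{C})}$, which uses that $s$ is holomorphic and Lagrangian. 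Combining these gives
\begin{align}
\phi_{s}^{\star}\omega_{\mathcal{B}_{G}}
=(R\circ\phi_{s})^{\star}\omega_{0}
=R_{\mathcal{O}}^{\star}(\phi_{s}^{0})^{\star}\omega_{0}
=\sqrt{-1}\,R_{\mathcal{O}}^{\star}\tau_{\textnormal{PSL}_{2}(\mathbb{C})}
=\sqrt{-1}\,\tau_{\mathrm{G}}.
\end{align}

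\textbf{The reduction identity.} The composite $\phi_{s}\circ\iota_{\mathrm{G}}$ can be read off from the parameterization of Theorem \ref{param opers 2}: the principal embedding $\iota_{\mathrm{G}}$ carries a $\textnormal{PSL}_{2}(\mathbb{C})$-oper to a $\mathrm{G}$-oper differing from the base oper $\iota_{\mathrm{G}}(s(X))$ only in the exponent-one direction $e_{1}\in\mathfrak{g}_{1}$, so $\phi_{s}\circ\iota_{\mathrm{G}}=j\circ\phi_{s}^{0}$, and therefore $R\circ\phi_{s}\circ\iota_{\mathrm{G}}=\phi_{s}^{0}$. This is the desired identity restricted to the section $\iota_{\mathrm{G}}$. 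To promote it to the full identity it remains to check that $R\circ\phi_{s}$ is constant along the leaves of $\textnormal{ker}(\tau_{\mathrm{G}})$, i.e. that $\textnormal{ker}(\tau_{\mathrm{G}})\subseteq\textnormal{ker}\,d(R\circ\phi_{s})$; since both distributions have complex rank $(g-1)(\textnormal{dim}_{\mathbb{C}}(\mathrm{G})-3)$, the inclusion forces equality, and the factorization through $R_{\mathcal{O}}$ with reduced map $\phi_{s}^{0}$ follows.

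\textbf{The kernel computation (the main obstacle).} The crux is thus to identify $\textnormal{ker}(\tau_{\mathrm{G}})$. Because the fibers of $\mathcal{P}$ are maximal isotropic (Theorem \ref{sym form}) and the kernel is contained in every maximal isotropic subspace, $\textnormal{ker}(\tau_{\mathrm{G}})$ is tangent to the fibers, hence a subspace of $\bigoplus_{i=1}^{\ell}\textnormal{H}^{0}(X,\mathcal{K}^{m_{i}+1})$; under $\phi_{s}$ this is exactly the summand acted on linearly by $R$, so $\textnormal{ker}\,d(R\circ\phi_{s})=\bigoplus_{i\geq 2}\textnormal{H}^{0}(X,\mathcal{K}^{m_{i}+1})$. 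I would evaluate $\tau_{\mathrm{G}}=\textnormal{H}^{\star}\eta_{\mathrm{G}}$ on these deformations in the Dolbeault model of Section \ref{models}. A fiber deformation is represented by a holomorphic class $\dot{\alpha}_{i}\otimes e_{i}$ with $e_{i}\in\mathfrak{g}_{m_{i}}$, and any two such pair trivially since a wedge of two $(1,0)$-forms vanishes on a Riemann surface, which recovers the isotropy of the fibers. The only remaining pairing is with the image of the Teichm\"uller directions, which in this model is represented by a $(0,1)$-form valued in the line $\mathfrak{g}_{-1}=\mathbb{C}\cdot f_{1}$; since the Killing form satisfies $\mathrm{B}(\mathfrak{g}_{m_{i}},\mathfrak{g}_{-1})=0$ whenever $m_{i}>1$, only $\dot{\alpha}_{1}\otimes e_{1}$ pairs nontrivially with Teichm\"uller directions. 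This yields $\bigoplus_{i\geq 2}\textnormal{H}^{0}(X,\mathcal{K}^{m_{i}+1})\subseteq\textnormal{ker}(\tau_{\mathrm{G}})$, and the dimension count above forces equality. I expect this Lie-theoretic step, which hinges on producing an explicit Dolbeault representative for the derivative of the holonomy along Teichm\"uller directions and pairing it against the principal $\mathfrak{sl}_{2}$-triple, to require the most care; the hypothesis that $s$ be holomorphic \emph{and} Lagrangian is used only in the invoked rank-one identity of Kawai and Loustau.
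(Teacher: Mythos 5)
Your proposal is correct and follows the same overall strategy as the paper: factor everything through the reduced phase space of $(\mathcal{O}\mathfrak{p}_{\Sigma}(\mathrm{G}),\tau_{\mathrm{G}})$, invoke the Kawai--Loustau identity $(\phi_{s}^{0})^{\star}\omega_{0}=\sqrt{-1}\,\tau_{\textnormal{PSL}_{2}(\mathbb{C})}$ for the rank-one case, and pull back along the reduction maps. The difference is one of completeness rather than of route. The paper's proof simply \emph{asserts} the commutativity of the square $R\circ\phi_{s}=\theta_{s}\circ P$, whereas you correctly identify this as the real content of the theorem and supply an argument: the identity $\phi_{s}\circ\iota_{\mathrm{G}}=j\circ\phi_{s}^{0}$ read off from Theorem \ref{param opers 2}, plus the identification of $\textnormal{ker}(\tau_{\mathrm{G}})$ with $\bigoplus_{i\geq 2}\textnormal{H}^{0}(X,\mathcal{K}^{m_{i}+1})$ in the fiber coordinates, with the dimension count $(g-1)(\textnormal{dim}_{\mathbb{C}}(\mathrm{G})-3)$ closing the argument. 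That kernel computation is sound: the observation that $\textnormal{ker}(\tau_{\mathrm{G}})$ lies in every maximal isotropic subspace (hence is tangent to the fibers of $\mathcal{P}$) is correct, and the Killing-form orthogonality $\mathrm{B}(\mathfrak{g}_{m_{i}},\mathfrak{g}_{j})=0$ for $j>-m_{i}$ does kill the pairing of the higher-exponent fiber directions against everything else. One small imprecision: the Dolbeault $(0,1)$-component of a deformation class, viewed in $\mathcal{B}^{\bullet}$ via the map $\Phi$, is valued in $\mathcal{E}_{\mathrm{B}}[\mathfrak{g}^{-1}]=\mathcal{E}_{\mathrm{B}}[\mathfrak{g}_{-1}\oplus\mathfrak{b}]$, not only in the line $K^{-1}\otimes\mathbb{C}f_{1}$; since $\mathrm{B}(e_{i},\mathfrak{g}^{-1})=0$ whenever $m_{i}\geq 2$, your conclusion is unaffected, but the representative should be stated this way. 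Your approach buys an actual verification of the step the paper leaves implicit, at the cost of the explicit Lie-theoretic pairing computation you rightly flag as the delicate point.
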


\begin{proof} 
Consider the commutative diagram
\begin{center}
\begin{tikzcd}
\mathcal{O}\mathfrak{p}_{\Sigma}(\mathrm{G}) \arrow{r}{\phi_{s}} \arrow{d}{P}
& \mathcal{B}_{\Sigma}(\mathrm{G})  \arrow{d}{R}\\
\mathcal{O}\mathfrak{p}_{\Sigma}(\textnormal{PSL}(2,\mathbb{C})) \arrow{r}{\theta_{s}}
& \mathcal{B}_{\textnormal{PSL}(2,\mathbb{C})}(\Sigma)
\end{tikzcd}
\end{center}
where 
\begin{align}
P: \mathcal{O}\mathfrak{p}_{\Sigma}(\mathrm{G}) \rightarrow \mathcal{O}\mathfrak{p}_{\Sigma}(\textnormal{PSL}(2,\mathbb{C}))
\end{align}
is the canonical quotient map to the reduced phase space of the holomorphic pre-symplectic manifold $(\mathcal{O}\mathfrak{p}_{\Sigma}(\mathrm{G}),\tau_{\mathrm{G}}).$

Since $s$ is a holomorphic Lagrangian section, a theorem of Kawai \cite{KAW96}, later clarified by  Loustau \cite{LOU15}, implies
\begin{align}
\theta_{s}^{\star}\omega_{\mathcal{B}_{\textnormal{PSL}(2,\mathbb{C})}}=\sqrt{-1}\tau_{\textnormal{PSL}(2,\mathbb{C})}.
\end{align}
Since $P$ is the canonical quotient map to the reduced phase space, 
\begin{align}
P^{\star}\circ \theta_{s}^{\star}\omega_{\mathcal{B}_{\textnormal{PSL}(2,\mathbb{C})}}=P^{\star}\sqrt{-1}\tau_{\textnormal{PSL}(2,\mathbb{C})}=\sqrt{-1}\tau_{\mathrm{G}}.
\end{align}
By commutativity of the above diagram and the definition of $\omega_{\mathcal{B}_{\mathrm{G}}},$ this implies
\begin{align}
\sqrt{-1}\tau_{\mathrm{G}}&=P^{\star}\circ \theta_{s}^{\star}\omega_{\mathcal{B}_{\textnormal{PSL}(2,\mathbb{C})}} \\
&= \phi_{s}^{\star}\circ R^{\star} \omega_{\mathcal{B}_{\textnormal{PSL}(2,\mathbb{C})}} \\
&= \phi_{s}^{\star} \omega_{\mathcal{B}_{\mathrm{G}}}.
\end{align}
This completes the proof.
\end{proof}
\textbf{Remark}:  Every Bers' section (see the remark following Theorem \ref{Hubbard}) of the projection $\pi:  \mathcal{O}\mathfrak{p}_{\Sigma}(\textnormal{PSL}(2,\mathbb{C}))\rightarrow \mathcal{T}_{\Sigma}$ is holomorphic Lagrangian, and therefore we obtain a $\mathcal{T}_{\Sigma}$ worth of holomorphic Lagrangian sections which satisfy the hypotheses of Theorem \ref{sym id}.

\bibliography{Notesbib}{}
\bibliographystyle{alpha}

\end{document}